\numberwithin{equation}{section}
\newcounter{keepeqno}
\newcommand{\BA}{{\mathbb {A}}}
\newcommand{\BC}{{\mathbb {C}}}
\newcommand{\BF}{{\mathbb {F}}}
\newcommand{\BG}{{\mathbb {G}}}
\newcommand{\BL}{{\mathbb {L}}}
\newcommand{\BQ}{{\mathbb {Q}}}
\newcommand{\BR}{{\mathbb {R}}}
\newcommand{\BZ}{{\mathbb {Z}}}
\newcommand{\CA}{{\mathcal {A}}}
\newcommand{\CB}{{\mathcal {B}}}
\newcommand{\CC}{{\mathcal {C}}}
\newcommand{\CE}{{\mathcal {E}}}
\newcommand{\CF}{{\mathcal {F}}}
\newcommand{\CH}{{\mathcal {H}}}
\newcommand{\CI}{{\mathcal {I}}}
\newcommand{\CK}{{\mathcal {K}}}
\newcommand{\CL}{{\mathcal {L}}}
\newcommand{\CM}{{\mathcal {M}}}
\newcommand{\CP}{{\mathcal {P}}}
\newcommand{\CS}{{\mathcal {S}}}
\newcommand{\CT}{{\mathcal {T}}}
\newcommand{\CW}{{\mathcal {W}}}
\newcommand{\CZ}{{\mathcal {Z}}}
\newcommand{\FC}{{\mathfrak {C}}}
\newcommand{\FD}{{\mathfrak {D}}}
\newcommand{\FR}{{\mathfrak {R}}}
\newcommand{\FX}{{\mathfrak {X}}}
\newcommand{\FZ}{{\mathfrak {Z}}}
\newcommand{\Fc}{{\mathfrak {c}}}
\newcommand{\Ff}{{\mathfrak {f}}}
\newcommand{\Fm}{{\mathfrak {m}}}
\newcommand{\Fo}{{\mathfrak {o}}}
\newcommand{\Fp}{{\mathfrak {p}}}
\newcommand{\Fr}{{\mathfrak {r}}}
\newcommand{\Fz}{{\mathfrak {z}}}
\newcommand{\RB}{{\mathrm {B}}}
\newcommand{\RG}{{\mathrm {G}}}
\newcommand{\RI}{{\mathrm {I}}}
\newcommand{\RM}{{\mathrm {M}}}
\newcommand{\RN}{{\mathrm {N}}}
\newcommand{\RO}{{\mathrm {O}}}
\newcommand{\RP}{{\mathrm {P}}}
\newcommand{\RT}{{\mathrm {T}}}
\newcommand{\RU}{{\mathrm {U}}}
\newcommand{\RW}{{\mathrm {W}}}
\newcommand{\cusp}{{\mathrm{cusp}}}
\newcommand{\cpt}{{\mathrm{cpt}}}
\newcommand{\GJ}{{\mathrm{GJ}}}
\newcommand{\GL}{{\mathrm{GL}}}
\newcommand{\Hom}{{\mathrm{Hom}}}
\newcommand{\Ind}{{\mathrm{Ind}}}
\newcommand{\Id}{{\mathrm{Id}}}
\newcommand{\ord}{{\mathrm{ord}}}
\newcommand{\rank}{{\mathrm{rank}}}
\renewcommand{\Re}{{\mathrm{Re}}}
\newcommand{\Res}{{\mathrm{Res}}}
\newcommand{\Rep}{{\mathrm{Rep}}}
\newcommand{\SL}{{\mathrm{SL}}}
\newcommand{\Sp}{{\mathrm{Sp}}}
\newcommand{\Span}{{\mathrm{Span}}}
\newcommand{\tr}{{\mathrm{tr}}}
\newcommand{\ud}{\,\mathrm{d}}
\newcommand{\vol}{{\mathrm{vol}}}
\newcommand{\ovl}{\overline}
\newcommand{\wt}{\widetilde}
\newcommand{\wh}{\widehat}
\newcommand{\bs}{\backslash}
\def\alp{{\alpha}}
\def\ac{\mathrm{ac}}
\def\bet{{\beta}}
\def\del{{\delta}}
\def\Del{{\Delta}}
\def\diag{{\rm diag}}
\def\veps{{\varepsilon}}
\def\sig{{\sigma}}
\def\zet{{\zeta}}
\def\std{\rm std}
\def\ome{{\omega}}
\def\Ome{{\Omega}}
\def\lam{{\lambda}}
\def\Lam{{\Lambda}}
\def\gam{{\gamma}}
\def\Gam{{\Gamma}}
\def\wb{\overline} 
\def\vpi{\varpi}
\def\vphi{\varphi}
\def\Rep{\mathrm{Rep}}
\newtheorem{thm}{Theorem}[section]
\newtheorem{dfn}[thm]{Definition}
\newtheorem{prp}[thm]{Proposition}
\newtheorem{lem}[thm]{Lemma}
\newtheorem{cor}[thm]{Corollary}
\newtheorem{ass}[thm]{Assumption}
\newtheorem{cnj}[thm]{Conjecture}
\newcommand{\Rmnum}[1]{\expandafter\@slowromancap\romannumeral #1@}
\begin{document}

\title[Fourier Operators and Poisson Formulae on $\GL_1$]
{Certain Fourier Operators and their Associated Poisson Summation Formulae on $\GL_1$}

\author{Dihua Jiang}
\address{School of Mathematics\\
University of Minnesota\\
Minneapolis, MN 55455, USA}
\email{dhjiang@math.umn.edu}

\author{Zhilin Luo}
\address{Department of Mathematics\\
University of Chicago\\
Chicago IL, 60637, USA}
\email{zhilinchicago@uchicago.edu}

\subjclass[2010]{Primary 11F66, 43A32, 46S10; Secondary 11F70, 22E50, 43A80}

\keywords{Invariant Distribution, Fourier Operator, Poisson Summation Formula, Automorphic Representation, Automorphic $L$-function, Representation of Real and $p$-adic Reductive Groups,
Spectral Interpretation of Critical Zeros of Automorphic $L$-functions.}

\thanks{The research of this paper is supported in part by the NSF Grant DMS--1901802.}

\date{\today}

\begin{abstract}
In this paper, we explore a possibility to utilize harmonic analysis on $\GL_1$ to understand Langlands
automorphic $L$-functions in general, as a vast generalization of the pioneering work of J. Tate (\cite{Tt50}).
For a split reductive group $G$ over a number field $k$, let $G^\vee(\BC)$ be its complex dual group and
$\rho$ be an $n$-dimensional complex representation of $G^\vee(\BC)$.
For any irreducible cuspidal automorphic representation
$\sig$ of $G(\BA)$, where $\BA$ is the ring of adeles of $k$, we introduce the space
$\CS_{\sig,\rho}(\BA^\times)$ of $(\sig,\rho)$-Schwartz functions on $\BA^\times$ and
$(\sig,\rho)$-Fourier operator $\CF_{\sig,\rho,\psi}$ that takes $\CS_{\sig,\rho}(\BA^\times)$ to
$\CS_{\wt{\sig},\rho}(\BA^\times)$, where $\wt{\sig}$ is the contragredient of $\sig$.
By assuming the local Langlands functoriality for the pair $(G,\rho)$, we show (Theorem \ref{thm:rho-AC}) that the $(\sig,\rho)$-theta functions
\[
\Theta_{\sig,\rho}(x,\phi):=\sum_{\alp\in k^\times}\phi(\alp x)
\]
converges absolutely for all $\phi\in\CS_{\sig,\rho}(\BA^\times)$, and state Conjecture \ref{cnj:main}
and Conjecture \ref{cnj:ref} on $(\sigma,\rho)$-Poisson summation formula on $\GL_1$.
Then we prove Conjectures \ref{cnj:main} and \ref{cnj:ref} when $G=\GL_n$ and $\rho$ is
the standard representation of $\GL_n(\BC)$ (Theorem \ref{thm:PSF}). The proof of Theorem \ref{thm:PSF}
uses substantially the local theory of Godement-Jacquet (\cite{GJ72}) for the standard $L$-functions of
$\GL_n$ and the Poisson summation formula for the classical Fourier transform on affine spaces. As an application of the $\GL_1$-harmonic analysis as developed in Sections 2-7, we
provide (Theorem \ref{zero}) a spectral interpretation of the critical zeros of the standard $L$-functions $L(s,\pi\times\chi)$ for any irreducible cuspidal automorphic representation $\pi$ of $\GL_n(\BA)$
and idele class character $\chi$ of $k$, which is a reformulation of Theorem 2 of \cite{S01} in the adelic framework of A. Connes in \cite{Cn99} and is an extension of
Theorem III.1 of \cite{Cn99} from the Hecke $L$-functions $L(s,\chi)$ to the automorphic $L$-functions $L(s,\pi\times\chi)$.
\end{abstract}

\maketitle
\tableofcontents

\section{Introduction}\label{sec-I}

Let $k$ be a number field and $\BA$ be the ring of adeles of $k$. It is well known that $\BA$ is a locally
compact abelian group and the diagonal embedding of $k$ into $\BA$ is a lattice, i.e. the image, which is
still denoted by $k$, is discrete and the quotient $k\bs\BA$ is compact. The classical theory of harmonic
analysis on the quotient $k\bs\BA$ leads to a great impact, via the famous 1950 Princeton Thesis of
J. Tate (\cite{Tt50}), to the modern development of Number Theory, especially to the theory of automorphic
$L$-functions.

In Tate's thesis, the classical Fourier transform and the associated Poisson summation formula are responsible
for the meromorphic continuation and global functional equation of the Hecke $L$-function $L(s,\chi)$ attached
to automorphic characters $\chi$ of $k^\times\bs\BA^\times$.

In their pioneering work in 1972, R. Godement and H. Jacquet extended the work of Tate on $L(s,\chi)$ to the standard automorphic $L$-function $L(s,\pi)$ attached to any irreducible cuspidal automorphic representation $\pi$ of
$\GL_n(\BA)$ (\cite{GJ72}). In their work, the Fourier transform and the associated Poisson summation
formula for $\RM_n(k)\bs\RM_n(\BA)$ are responsible for the meromorphic continuation and global functional equation of $L(s,\pi)$. Here $\RM_n$ denotes the space of all $n\times n$ matrices.

In 2000, A. Braverman and D. Kazhdan (\cite{BK00}) proposed that there should exist a generalized
Fourier transform $\CF_{\rho, \psi}$ on $G(\BA)$ for any reductive group $G$ defined over $k$ and any
finite dimensional complex representation $\rho$ of the $L$-group $^LG$; and if the associated Poisson
summation formula could be established, then there is a hope to prove the Langlands conjecture on
meromorphic continuation and global functional equation for automorphic $L$-function $L(s,\pi,\rho)$ attached to the pair $(\pi,\rho)$, where $\pi$ is any irreducible cuspidal automorphic representation of
$G(\BA)$.
In his 2020 paper (\cite{N20}), B. C. Ng\^o suggests that such generalized Fourier transforms could
be put in a framework that generalizes the classical Hankel transform for harmonic analysis on $\GL_1$ and
might be more useful in the trace formula approach to establish the Langlands conjecture of
functoriality in general.

In this paper, we explore a possibility that utilizes harmonic analysis on $\GL_1$ to understand Langlands
automorphic $L$-functions in general, as a vast generalization of the classical work of Tate (\cite{Tt50}).
For a $k$-split reductive group $G$, let $G^\vee(\BC)$ be its complex dual group and
$\rho$ be an $n$-dimensional complex representation of $G^\vee(\BC)$.
We introduce, for any irreducible cuspidal automorphic representation
$\sig$ of $G(\BA)$, the space
$\CS_{\sig,\rho}(\BA^\times)$ of $(\sig,\rho)$-Schwartz functions on $\BA^\times$ and the
$(\sig,\rho)$-Fourier operator $\CF_{\sig,\rho,\psi}$ that takes $\CS_{\sig,\rho}(\BA^\times)$ to
$\CS_{\wt{\sig},\rho}(\BA^\times)$, where $\wt{\sig}$ is the contragredient of $\sig$.
Then we show (Theorem \ref{thm:rho-AC}) that the $(\sig,\rho)$-theta functions
\begin{align}\label{Theta-1}
\Theta_{\sig,\rho}(x,\phi):=\sum_{\alp\in k^\times}\phi(\alp x)
\end{align}
converges absolutely for all $\phi\in\CS_{\sig,\rho}(\BA^\times)$. Our conjecture for the corresponding Poisson
summation formula is stated as follows.

\begin{cnj}[$(\sigma,\rho)$-Poisson Summation Formula]\label{cnj:main}
Let $\rho\colon G^\vee(\BC)\to\GL_n(\BC)$ be any finite dimensional representation of the complex
dual group $G^\vee(\BC)$.
For any given $\sigma\in\CA_\cusp(G)$, there exist nontrivial $k^\times$-invariant linear functionals
$\CE_{\sigma,\rho}$ and $\CE_{\wt{\sigma},\rho}$ on $\CS_{\sigma,\rho}(\BA^\times)$ and
$\CS_{\wt{\sigma},\rho}(\BA^\times)$, respectively, such that the
$(\sigma,\rho)$-Poisson Summation Formula:
\[
\CE_{\sigma,\rho}(\phi)
=
\CE_{\wt{\sigma},\rho}(\CF_{\sigma,\rho,\psi}(\phi))
\]
holds for $\phi\in\CS_{\sigma,\rho}(\BA^\times)$, where $\CS_{\sigma,\rho}(\BA^\times)$ and $\CF_{\sigma,\rho,\psi}$ are defined in Section \ref{ssec-AC-sig-rho-TF}.
\end{cnj}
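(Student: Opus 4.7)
The plan is to reduce Conjecture \ref{cnj:main} to the already-established standard case on $\GL_n$ (Theorem \ref{thm:PSF}) by exploiting the local (and global) Langlands functoriality hypothesized in the setup. Concretely, given an irreducible cuspidal $\sig$ of $G(\BA)$ and the representation $\rho\colon G^\vee(\BC)\to\GL_n(\BC)$, functoriality produces at each place $v$ an irreducible admissible representation $\pi_v=\rho_*(\sig_v)$ of $\GL_n(k_v)$ satisfying
\[
L(s,\sig_v,\rho)=L(s,\pi_v),\qquad \epsilon(s,\sig_v,\rho,\psi_v)=\epsilon(s,\pi_v,\psi_v).
\]
Under the global form of functoriality, the restricted tensor product $\pi:=\otimes_v'\pi_v$ is an automorphic representation of $\GL_n(\BA)$, typically an isobaric sum $\pi=\pi_1\boxplus\cdots\boxplus\pi_r$ of cuspidal constituents on smaller general linear groups, and one has the global identities $L(s,\sig,\rho)=L(s,\pi)$ and $L(s,\wt\sig,\rho)=L(s,\wt\pi)$.

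Since the local Schwartz space $\CS_{\sig_v,\rho}(k_v^\times)$ and the local Fourier operator $\CF_{\sig_v,\rho,\psi_v}$ are defined (see Section \ref{ssec-AC-sig-rho-TF}) purely in terms of the local $L$- and $\gam$-factors attached to $\sig_v$ and $\rho$, the matching above yields canonical identifications
\[
\CS_{\sig_v,\rho}(k_v^\times)=\CS_{\pi_v,\std}(k_v^\times),\qquad \CF_{\sig_v,\rho,\psi_v}=\CF_{\pi_v,\std,\psi_v}
\]
at every place. Taking the restricted tensor products (with the basic functions at unramified places corresponding under the identification) gives $\CS_{\sig,\rho}(\BA^\times)=\CS_{\pi,\std}(\BA^\times)$ and $\CF_{\sig,\rho,\psi}=\CF_{\pi,\std,\psi}$, and similarly on the $\wt\sig$ side. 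In this way the $(\sig,\rho)$-Poisson summation formula is rewritten verbatim as the $(\pi,\std)$-Poisson summation formula on $\GL_1$.

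It remains to define the functional $\CE_{\sig,\rho}$ and to dispose of the cuspidal-versus-isobaric issue. I would set
\[
\CE_{\sig,\rho}(\phi):=\sum_{\alp\in k^\times}\phi(\alp)=\Theta_{\sig,\rho}(1,\phi),
\]
which converges absolutely by Theorem \ref{thm:rho-AC} and is manifestly $k^\times$-invariant (by substitution $\alp\mapsto\bet\alp$ in the defining sum). Under the identification above it coincides with $\CE_{\pi,\std}$. For $\pi$ cuspidal the conjectured identity is Theorem \ref{thm:PSF}. For the isobaric $\pi=\boxplus_i\pi_i$ produced by functoriality, Theorem \ref{thm:PSF} must be extended: the standard $L$-function and $\gam$-factor factor through the cuspidal constituents, so the Schwartz space and Fourier operator decompose as tensor/convolution products of those attached to the $\pi_i$, and the Poisson summation formula for $\pi$ should follow from those for the $\pi_i$ by an unfolding argument, together with a careful analysis of the boundary contributions from the poles of the individual $\Theta_{\pi_i,\std}$ that must cancel to produce the absolutely convergent $\Theta_{\pi,\std}$.

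The principal obstacle is clearly the input hypothesis itself: local Langlands functoriality for an arbitrary pair $(G,\rho)$, and its global counterpart asserting that $\otimes_v'\pi_v$ is automorphic, is a deep open problem except in special cases (classical groups, $\Sym^2$ and $\wedge^2$ of $\GL_n$, etc.). Thus the argument above yields a clean conditional proof of Conjecture \ref{cnj:main} modulo functoriality, but unconditional progress is currently restricted to those special cases. In line with the Braverman–Kazhdan–Ng\^o philosophy underlying the present paper, one expects the implication to ultimately run in both directions — the $(\sig,\rho)$-Poisson summation formula should itself be a tool for establishing functoriality — so in practice the proof will have to proceed iteratively, and the extension of Theorem \ref{thm:PSF} to isobaric $\pi$ sketched above is the first concrete step that can be carried out independently of the functoriality conjecture.
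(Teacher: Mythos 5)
The statement you are proving is a conjecture, and the paper itself only establishes it in special cases, so the right comparison is with what the paper actually proves. Your reduction is exactly the paper's: under Assumption \ref{FarS} the spaces $\CS_{\sigma_\nu,\rho}(k_\nu^\times)$ and operators $\CF_{\sigma_\nu,\rho,\psi_\nu}$ are \emph{defined} to be $\CS_{\pi_\nu}(k_\nu^\times)$ and $\CF_{\pi_\nu,\psi_\nu}$ for $\pi_\nu=\pi_\nu(\sigma_\nu,\rho)$ (see \eqref{localSS} and \eqref{localFO}), so the $(\sigma,\rho)$-statement is verbatim a statement about $\pi=\otimes_\nu\pi_\nu$ on $\GL_n$; taking $\CE_{\sigma,\rho}(\phi)=\Theta_{\sigma,\rho}(1,\phi)$, which converges by Theorem \ref{thm:rho-AC}, and invoking Theorem \ref{thm:PSF} when the global transfer $\pi$ is cuspidal is precisely Corollary \ref{cor:CE}. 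Up to that point your argument is correct and is the same conditional argument the paper gives.

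The gap is in your final paragraph on isobaric $\pi$. You assert that the boundary contributions "must cancel" so that the theta-function identity $\Theta_\pi(1,\phi)=\Theta_{\wt\pi}(1,\CF_{\pi,\psi}(\phi))$ extends to isobaric transfers by unfolding. This is not expected to be true on the full Schwartz space: in the proof of Theorem \ref{thm:PSF} the vanishing of the boundary term $\RB_f(h,g)$ in \eqref{ps-9} is forced by the \emph{cuspidality} of $\beta_1,\beta_2$, and for non-cuspidal $\pi$ those boundary sums over singular matrices genuinely contribute. That is exactly why the paper's Theorem \ref{thm:weakPSL2} treats only square-integrable $\pi$ and, even then, only on the subspace $\CS_\pi^{\circ\circ}(\BA^\times)$ where the local support conditions kill $\RB_f$ for trivial reasons, and why Conjecture \ref{cnj:ref} claims $\CE_{\sigma,\rho}(\phi)=\Theta_{\sigma,\rho}(1,\phi)$ only for $\phi\in\CS_{\sigma,\rho}^{\circ\circ}(\BA^\times)$, allowing the functionals on the full space to differ from the theta functionals (presumably by regularized boundary terms). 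So your "first concrete step that can be carried out independently of functoriality" is not a routine unfolding; it is the substantive open part of the conjecture, and as stated your sketch of it would fail.
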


It is expected that such Poisson summation formulae on $\GL_1$ should be
responsible for the Langlands conjecture on the global functional equation of automorphic $L$-functions
associated the pairs $(\sig,\rho)$. Variants of Conjecture \ref{cnj:main} will be discussed in
Section \ref{ssec-RCmain} and see
Conjecture \ref{cnj:ref} for details.

In this paper we prove Conjecture \ref{cnj:main} when $G=\GL_n$ and $\rho$
is the standard representation of $\GL_n(\BC)$ (Theorem \ref{thm:PSF}), which we call a $\pi$-Poisson
summation formula on $\GL_1$ for any irreducible cuspidal automorphic representation $\pi$ of $\GL_n(\BA)$.
A variant of Theorem \ref{thm:PSF} is established in Theorem \ref{thm:weakPSL2} when
$\pi$ is an irreducible square-integrable automorphic representation of $\GL_n(\BA)$.
It is important to
point out that by Theorem \ref{thm:PSF}, Conjecture \ref{cnj:main} holds for $(G,\rho)$ and an
irreducible cuspidal automorphic representation $\sig$ of $G(\BA)$ if the global Langlands functoriality is known for the pair $(G,\rho)$ and the functorial transfer to $\GL_n$ of $\sig$ is cuspidal.
Note that
in Theorem \ref{thm:PSF}, the $k^\times$-invaraint linear functional $\CE_{\sigma,\rho}(\phi)$ in this case
is taken to be the $\pi$-theta function $\Theta_{\pi}(1,\phi)$.
As explained in Corollary \ref{cor:CE}, according to the global Langlands functoriality, this happens in general,
except some singular situations of $\sigma$ with respect to $\rho$.
The desire is to
prove Conjecture \ref{cnj:main} without using the global Langlands functoriality. It is expected that
Conjecture \ref{cnj:main} can be proved directly for a split classical group $G$ and the standard
representation $\rho$ of the complex dual group $G^\vee(\BC)$, by using the doubling method of I. Piatetski-Shapiro and S. Rallis in \cite{GPSR87}) and the recent work of L. Zhang and the authors in \cite{JLZ20} and of
J. Getz and B. Liu in \cite{GL21}.

To formulate and prove the $\pi$-Poisson summation formula on $\GL_1$ for any irreducible
cuspidal automorphic representation $\pi$ of $\GL_n(\BA)$ (Theorem \ref{thm:PSF}), we consider the determinant morphism:
\begin{align}\label{Fa}
\det\colon\RM_n\to\BG_a; \quad \GL_n\to \BG_m,
\end{align}
where $\BG_a(k)=k$ and $\BG_m(k)=\GL_1(k)=k^\times$, and descend the local theory of Godement-Jacquet in \cite{GJ72} from $\GL_n$ to $\GL_1$. With the fully developed local theory for $\GL_1$ in hand, we are able to establish a $\pi$-Poisson summation formula on $\GL_1$ for each irreducible cuspidal automorphic
representation $\pi$ of $\GL_n(\BA)$, by descending the Poisson summation formula for the classical
Fourier transform on the affine space $\RM_n$. In consequence, the global functional equation of
the standard automorphic $L$-functions $L(s,\pi)$ can be established by applying harmonic analysis on
$\GL_1$.

We first develop the local theory for the localization $F=k_\nu$ of the number field $k$ at any local place $\nu$ of $k$. In Section \ref{sec-GJTR}, after recalling the local theory of Mellin transforms, mainly from
\cite[Chapter I]{Ig78}, we review the local theory of Godement-Jacquet (\cite{GJ72}) and reformulate it
in the framework of the Braverman-Kazhdan proposal (\cite{BK00}).

In Section \ref{sec-piSF-FO}, we fully develop the local theory of harmonic analysis on $\GL_1$ for
the Langlands local $L$-functions $L(s,\pi)$ and $\gam$-functions $\gam(s,\pi,\psi)$, attached to any
irreducible admissible representations $\pi$ of $\GL_n(F)$.
When $F$ is non-Archimedean, we take $\pi$ to be irreducible smooth representations of $\GL_n(F)$; and when $F$ is Archimedean, we take $\pi$ to be irreducible Casselman-Wallach representations of $\GL_n(F)$
(\cite{Cas89}, \cite{Wal92}, \cite{SZ11}, and \cite{BerK14}).
The set of equivalence classes of all such representations of $\GL_n(F)$ is denoted by $\Pi_F(n)$.

We introduce the space of $\pi$-Schwartz functions on $F^\times$ for any $\pi\in\Pi_F(n)$, which is denoted by $\CS_\pi(F^\times)$ (Definition \ref{def:piSS}). More precisely,
for any $\pi\in\Pi_F(n)$, the $\pi$-Schwartz functions are constructed through the fiber integration
\begin{align}
\phi_{\xi,\vphi_\pi}(x) :=
\int_{\det g=x}
\xi(g)\vphi_\pi(g)\ud_x g
\end{align}
as defined in \eqref{fibration}, where $\xi(g)=|\det g|_F^{\frac{n}{2}}\cdot f(g)$ with $f\in\CS(\RM_n(F))$, the space of the usual Schwartz functions
on $\RM_n(F)$, and $\vphi_\pi(g)\in\CC(\pi)$, the space of matrix coefficients of $\pi$. It is proved in
Proposition~\ref{prp:smooth} that the integral converges absolutely and defines a smooth function on $F^\times$. The first local result is Theorem \ref{thm:1-zeta}, which establishes the local
theory of zeta integrals on $\GL_1$ for the Langlands local $L$-function $L(s,\pi)$ for any
$\pi\in\Pi_F(n)$. Then we introduce in \eqref{eq:1-FO} the $\pi$-Fourier operator $\CF_{\pi,\psi}$ that
is a linear transformation from the $\pi$-Schwartz space
$\CS_\pi(F^\times)$ to the $\wt{\pi}$-Schwartz space $\CS_{\wt{\pi}}(F^\times)$, where $\wt{\pi}$ is the contragredient of $\pi$.
The second local result is Theorem \ref{thm:1-FE}, which establishes the local functional equation
for the $\GL_1$ zeta integrals with $\CF_{\pi,\psi}$ as the symmetry operator and the Langlands local
$\gam$-function $\gam(s,\pi,\psi)$ as the proportional constant for any $\pi\in\Pi_F(n)$. It is important
to point out that the local theory of Godement-Jacquet (\cite{GJ72}) serves as a base of the local theory
developed in this section.

In Section \ref{sec-PSF}, we develop the global theory of harmonic analysis on $\GL_1$ for the
standard automorphic $L$-functions $L(s,\pi)$ attached to any $\pi\in\CA_\cusp(\GL_n)$, the set of all equivalence classes of irreducible cuspidal automorphic
representations $\pi=\otimes_\nu\pi_\nu$ of $\GL_n(\BA)$. We introduce the $\pi$-Schwartz space
\[
\CS_\pi(\BA^\times)=\otimes_{\nu\in|k|}\CS_{\pi_\nu}(k_\nu^\times)
\]
in \eqref{piSS-BA}, where the restricted tensor product over the set $|k|$ of all local places of $k$ is taken with respect to the baisc function
$\BL_{\pi_\nu}$ (as defined in Theorem \ref{thm:1-zeta}) at almost all finite local places. Then we introduce the $\pi$-Fourier operator
\[
\CF_{\pi,\psi}(\phi)(x)=\prod_{\nu\in|k|}\CF_{\pi_\nu,\psi_\nu}(\phi_\nu)(x_\nu)
\]
in \eqref{FO-BA}, with $\phi=\otimes_\nu\phi_\nu\in\CS_\pi(\BA^\times)$. The main global result in this section is
Theorem \ref{thm:PSF}, which establishes the $\pi$-Poisson summation formula
\[
\sum_{\gam\in k^\times}\phi(\gam x)=\sum_{\gam\in k^\times}\CF_{\pi,\psi}(\phi)(\gam x^{-1})
\]
for any $\phi\in\CS_\pi(\BA^\times)$ and $x\in\BA^\times$. By applying the global Mellin transform,
we are able to obtain the global functional equation for the global zeta integrals on $\GL_1$, which
produces the global functional equation for the standard automorphic $L$-functions $L(s,\pi)$ for
any $\pi\in\CA_\cusp(\GL_n)$.

In order to understand the Poisson summation formulae in Conjecture \ref{cnj:main}, it is desirable to
explore variants of Theorem \ref{thm:PSF} when the automorphic representation $\pi$ may not be cuspidal,
from the point of view of the global Langlands functoriality. In Section \ref{sec-ACGTF}, we first show that
for any irreducible admissible representation $\pi$ of $\GL_n(\BA)$, which may not be automorphic, but
satisfies Assumption \ref{ass-FH}, the $\pi$-theta functions
\[
\Theta_\pi(x,\phi)=\sum_{\gam\in k^\times}\phi(\gam x)
\]
converge absolutely for any $\phi\in\CS_\pi(\BA^\times)$ and any $x\in\BA^\times$
(Theorem \ref{thm:AC}). Then we show that Assumption \ref{ass-FH} holds for any automorphic representation $\pi$ of
$\GL_n(\BA)$ (Proposition \ref{prp:Ass}). With Theorem~\ref{thm:AC}, we are ready to explore more general
situation in order to formulate Conjecture \ref{cnj:main} and its variant (Conjecture \ref{cnj:ref}).

For any $k$-split reductive group $G$ and any finite-dimensional representation $\rho$ of
the complex dual group $G^\vee(\BC)$, we define in Section \ref{ssec-AC-sig-rho-TF}
the relevant Schwartz spaces $\CS_{\sigma,\rho}(\BA^\times)$, called the $(\sigma,\rho)$-Schwartz space, in \eqref{globalSS}, and $(\sigma,\rho)$-Fourier operators $\CF_{\sigma,\rho,\psi}$ in \eqref{globalFO} for any
irreducible cuspidal automorphic representation $\sig$ of $G(\BA)$, under the assumption (Assumption \ref{FarS}) that the local Langlands reciprocity map exists for $G$ over all finite local places $\nu$ of $k$.
We prove in such a generality the absolute convergence of the $(\sig,\rho)$-theta function
$\Theta_{\sig,\rho}(x,\phi)$ as defined in \eqref{Theta-1}
for  any $\phi\in\CS_{\sig,\rho}(\BA^\times)$ and any $x\in\BA^\times$ (Theorem \ref{thm:rho-AC}).

In Section \ref{sec-VCmain}, after we establish a characterization of some special type $(\sig,\rho)$-Schwartz
functions in Theorem \ref{thm:testfunction} at all local places of $k$, we prove a variant of Theorem \ref{thm:PSF}
when $\pi$ is an irreducible square-integrable automorphic representation of $\GL_n(\BA)$
(Theorem \ref{thm:weakPSL2}). Finally we write down  a variant of Conjecture \ref{cnj:main} with more details in Conjecture \ref{cnj:ref}.

In order to understand the Poisson summation formulae in Conjecture \ref{cnj:main} and Conjecture \ref{cnj:ref}, we have to explore
and develop harmonic analysis on $\GL_1$ initiated by the $(\sig,\rho)$-Fourier operator
$\CF_{\sig,\rho,\psi}$
and the $(\sig,\rho)$-Schwartz space $\CS_{\sig,\rho}(\BA^\times)$, both locally and globally. In \cite{JL},
the authors take a first step to define for an irreducible admissible
representation $\pi\in\Pi_F(n)$ with $F=k_\nu$ a distribution $\pi$-kernel $k_{\pi,\psi}(x)$ on $F^\times$, which represents the relevant local $\gam$-function as the Mellin transform of the $\pi$-kernel function $k_{\pi,\psi}(x)$, and makes the relevant Fourier operator as a generalization of the classical Hankel transform,
which is the convolution integral operator with $k_{\pi,\psi}(x)$ as the kernel function. We hope that this study to
understand the Langlands automorphic $L$-functions (and the local Langlands $\gam$-functions) brings new insight towards analysis on $\GL_1$.

As an application of the $\GL_1$-harmonic analysis as developed in Section 2-7, we provide in Section \ref{sec-CZ} a spectral interpretation of the critical zeros of the automorphic $L$-functions
$l(s,\pi\times\chi)$ (Theorem \ref{zero}) for any irreducible cuspidal automorphic representation $\pi$ of $\GL_n(\BA)$ and any character $\chi$ of the idele class group of $k$.
It can be viewed as a reformulation of \cite[Theorem 2]{S01} in the adelic framework of A. Connes in \cite{Cn99} and an extension of \cite[Theorem III.1]{Cn99} from Hecke $L$-functions
$L(s,\chi)$ to automorphic $L$-functions $L(s,\pi\times\chi)$. The proof uses
a combination of arguments in \cite{S01}, and those in \cite{Cn99}, together with the results developed in
Sections 2-7. Further results along the line of \cite{Cn99} will be written in our forthcoming work.

We would like to thank Wee Teck Gan and Michael Harris for their questions and comments on the expression
on the local Langlands conjecture in the previous version of the paper and thank Herv\'e Jacquet and Stephen Kudla for their questions and comments on the subject matter of the paper.

\section{Godement-Jacquet Theory and Reformulation}\label{sec-GJTR}

\subsection{Mellin transforms}\label{ssec-LMT}
We recall the local theory of Mellin transforms from the book of Igusa (\cite[Chapter I]{Ig78}) and state them in a slightly more general situation in order to treat the case
that meromorphic functions may have poles that are not real numbers. Since the proofs are almost the same, we omit the details.

Let $F$ be a local field of characteristic zero. This means that it is either the complex field $\BC$, the real field $\BR$, or a finite extension of the $p$-adic field $\BQ_p$ for some prime $p$.

When $F$ is non-Archimedean, let $\Fo_F$ be the ring of integers with maximal ideal $\Fp_F$ and fix a uniformizer $\vpi_F$ of $\Fp_F$. Let $\Fo_F/\Fp_F = \kappa_F\simeq \BF_q$. Fix the norm $|x|_F = q^{-\ord_F(x)}$ where $\ord_F:F\to \BZ$ is the valuation on $F$ such that $\ord_F(\vpi_F)=1$. Fix the Haar measure $\ud^+x$ on $F$ so that $\vol(\ud^+x,\Fo_F)= 1$. Let $\psi =\psi_F$ be an additive character of $F$ which is trivial on $\Fo_F$ but non-trivial on $\vpi^{-1}_F\cdot \Fo_F$. In particular the standard Fourier transform defined via $\psi_F$ is self-dual w.r.t $\ud^+x$. Similarly, fix a multiplicative Haar measure $\ud^\times x$ on $F^\times$, which is normalized so that $\vol(\ud^\times x,\Fo^\times_F) = 1$. In particular $\ud^\times x = \frac{1}{\zet_F(1)}\cdot\frac{\ud^+x}{|x|_F}$, where $\zet_F(s)$ is the local Dedekind zeta factor attached to $F$.

When $F$ is Archimedean, fix the following norm on $F$,
$$
|z|_F = \bigg\{
\begin{matrix}
\text{absolute value of $z$}, & F=\BR\\
z\wb{z}, & F=\BC
\end{matrix}.
$$
Take the Haar measure $\ud^+x$ on $F$ that is the usual Lebesgue measure on $F$, and set
$$
\ud^\times x =
\bigg\{
\begin{matrix}
\frac{\ud^+x}{2|x|_F}, & F=\BR,\\
\frac{\ud^+x}{2\pi |x|_F}, & F=\BC
\end{matrix}
$$
the multiplicative Haar measures on $F^\times$.
The additive character $\psi = \psi_F$ of $F$ is chosen as follows
$$
\psi_F(x) =
\begin{cases}
\exp(2\pi ix), & F=\BR,\\
\exp(2\pi i(x+\wb{x})), &F=\BC.
\end{cases}
$$
For convenience, set the following norm on $F$,
$$
|\cdot| =
\begin{cases}
|\cdot|_F, & F\neq \BC,\\
|\cdot|_F^{\frac{1}{2}}, & F=\BC.
\end{cases}
$$
We denote by $\FX(F^\times)$ the set of all quasi-characters of $F^\times$. Define the topological group $\Omega_F$ to be $\{\pm 1\}$ if $F=\BR$, $\BC^\times_1$ if $F=\BC$, and the unit group
$\Fo_F^\times$ if $F$ is non-archimedean. It is clear that any $\chi\in\FX(F^\times)$ can be written as
\begin{align}\label{chi}
\chi(x)=\chi_u(x)=\chi_{u,\ome}(x)=
|x|_F^u\omega(\ac(x)),
\end{align}
for any $x\in F^\times$, with $u\in\BC$ and $\ome\in\Ome_F^\wedge$, the Pontryagin dual of $\Ome_F$. Here $\ac(x)=\frac{x}{|x|_F}\in\Fo_F^\times$, if $F$ is non-Archimedean, and
\[
\ac(x) =
\begin{cases}
\frac{x}{|x|_F}\in \{\pm1\}, & F=\BR,\\
\frac{x}{|x|} = \frac{x}{|x|^{1/2}_F}\in \BC^\times_1, & F=\BC.
\end{cases}
\]
It is clear that the unitary character $\omega$ of $\Ome_F$ is uniquely determined by $\chi\in\FX(F^\times)$, in particular, we have
\begin{align}\label{ome}
\omega(\ac(x))=\ac(x)^p,
\end{align}
with $p\in\{0,1\}$ if $F=\BR$ and $p\in\BZ$ if $F=\BC$. Hence we may sometimes write $\chi=(u,\ome)$ and $\ome(x)=\ome(\ac(x))$ for $x\in F^\times$.

For any local field $F$ of characteristic zero, following \cite[Sections I.4 and I.5]{Ig78}, we define the following two spaces of functions associated to the local field $F$.

\begin{dfn}\label{dfn:CF}
Let $\CF(F^\times)$ be the space of complex-valued functions $\Ff$ such that
\begin{enumerate}
\item $\Ff\in \CC^\infty(F^\times)$, the space of all smooth functions on $F^\times$.
\item When $F$ is non-Archimedean, $\Ff(x)=0$ for $|x|_F$ sufficiently large. When $F$ is Archimedean, we denote $\Ff^{(n)}:=\frac{\ud^n\Ff}{\ud x^n}$ if $F=\BR$, and
$\Ff^{(n)}=\Ff^{(a+b)}:=\frac{\partial^{a+b}\Ff}{\partial^ax\partial^b\wb{x}}$ if $F=\BC$ and $n=a+b$. Then we have
\[
\Ff^{(n)}(x) = \mathrm{o}(|x|_F^\rho)
\]
as $|x|_F\to \infty$ for any $\rho$ and any $n=a+b\in \BZ_{\geq 0}$ with $a,b\in\BZ_{\geq 0}$.
\item When $F$ is Archimedean, there exists
\begin{itemize}
\item
a sequence $\{m_k\}_{k=0}^\infty$ of positive integers,
\item
a sequence of smooth functions $\{a_{k,m}\}$ on $\{\pm 1\}$ if $F=\BR$ and on $\BC^\times_1$ if $F=\BC$, parametrized by $m=1,2,...,m_k$ and $k\in \BZ_{\geq 0}$,
\item
a sequence $\{\lam_k\}_{k=0}^\infty$ of complex numbers with $\{\Re(\lam_k)\}_{k=0}^\infty$ a strictly increasing sequence of real numbers with no finite accumulation point and  $\Re(\lam_0)\geq \lam\in \BR$,
\end{itemize}
such that
$$
\lim_{|x|_F\to 0}
\bigg\{
\Ff(x)-\sum_{k=0}^\infty
\sum_{m=1}^{m_k}
a_{k,m}(\ac(x))|x|_F^{\lam_k}
(\ln(|x|_F))^{m-1}
\bigg\} =0.
$$
The limit is term-wise differentiable and uniform (even after term-wise differentiation) in $\ac(x)$.

When $F$ is non-Archimedean, one can take the sequence $\{\lam_k\}$ to be a finite set $\Lam$ and the sequence $\{m_k\}$ to be a finite subset of $\BZ_{\geq0}$. The smooth functions $\{a_{k,m}(\ac(x))\}$
are on the unit group $\Fo_F^\times$.
\end{enumerate}
\end{dfn}
Since the topological group $\Omega$ is compact and abelian,
we have the Fourier expansion for the smooth functions
$\{a_{k,m}(\ac(x))\}$ on $\Omega$:
\[
a_{k,m}(\ac(x)) = \sum_{\omega\in\Omega^\wedge}a_{k,m,\omega}\omega(\ac(x)).
\]
In the Archimedean case, we may write $a_{k,m,\omega}=a_{k,m,p}$ with $p\in\{0,1\}$ if $F=\BR$ and $p\in\BZ$ if $F=\BC$.

\begin{dfn}\label{dfn:CZ}
With the same notation as in Definition \ref{dfn:CF}, let $\CZ(\FX(F^\times))$ be the space of complex-valued functions $\Fz(\chi_{s,\ome})=\Fz(|\cdot|_F^s\ome(\ac(\cdot)))$ on $\FX(F^\times)$ such that
\begin{enumerate}
\item $\Fz(\chi_{s,\ome})$ is meromorphic on $\FX(F^\times)$ with poles at most for $s=-\lam_j$ with $\lam_j$ belonging to the given set  $\{\lam_k\}_{k=0}^\infty$
if $F$ is Archimedean; and belonging to the given finite set $\Lam$ if $F$ is non-Archimedean.
\item
For any $k\geq 0$, the difference
\[
\Fz(\chi_{s,\ome})-\sum_{m=1}^{m_k}\frac{b_{k,m,\ome}}{(s+\lam_k)^m}
\]
is holomorphic for $s$ in a small neighborhood of $-\lam_k$ if $F$ is Archimedean; and is
a polynomial in $\BC[q^s,q^{-s}]$ if $F$ is non-Archimedean.
\item
When $F$ is non-Archimedean, the function $\Fz(\chi_{s,\ome})$ is identically zero for almost all characters $\ome\in\Ome^\wedge$ with $\Ome=\Fo_F^\times$. When $F$ is Archimedean,
for every polynomial $P(s,p)$ in $s,p$ with coefficients in $\BC$, and every pair of real numbers $a<b$, the function $P(s,p)\Fz(\chi_{s,\ome})$ is bounded when $s$ belongs to the vertical strip
\begin{align}\label{eq:vst}
S_{a,b}=\{s\in\BC\mid a\leq\Re(s)\leq b\},
\end{align}
with neighborhoods of $-\lam_0,-\lam_1,...$ removed therefrom. More precisely, there exists a constant $c$ depending only on $P$, $\Fz$, $a,b$, but neither on $s$ nor on $p$, such that
$$
|P(s,p)\Fz(\chi_{s,\ome})|\leq c
$$
when $s$ runs in the vertical strip $S_{a,b}$ with small neighborhoods of $-\lam_0,-\lam_1,...$ removed.
\end{enumerate}
\end{dfn}

The main results on the local theory of Mellin transforms established in \cite[Chapter I]{Ig78} is as follows.

\begin{thm}[Mellin Transforms]\label{thm:MT}
There is a bijective linear correspondence $\CM = \CM_F$ between the space $\CF(F^\times)$ and the space $\CZ(\FX(F^\times))$. More precisely, for $\Ff\in \CF(F^\times)$,
$$
\CM(\Ff)(\chi_{s,\ome}) = \int_{F^\times}
\Ff(x)\chi_{s,\ome}(x)\ud^\times x
$$
defines a holomorphic function on
\[
\FX_{-\sig_0}(F^\times) = \{\chi_{s,\ome}(\cdot)=|\cdot|_F^s\ome(\ac(\cdot))\in \FX(F^\times)\mid \Re(s)>-\sig_0\}
\]
for some $\sig_0\in\BR$,
which has a meromorphic continuation to all characters $\chi_{s,\ome}\in\FX(F^\times)$
and belongs to $\CZ(\FX(F^\times))$ after meromorphic continuation.
Conversely, for $\Fz\in \CZ(\FX(F^\times))$ and $x\in F^\times$, the Mellin inverse transform $\CM_F^{-1}(\Fz)(x)$ belongs to the space $\CF(F^\times)$. Moreover, the following identities hold
\[
\CM(\CM^{-1}(\Fz))=\Fz, \qquad \CM^{-1}(\CM(\Ff))=\Ff
\]
for any $\Ff\in\CF(F^\times)$ and $\Fz\in\CZ(\FX(F^\times))$. Here the Mellin inverse transform is explicitly given as follows.

When $F$ is Archimedean, the Mellin inverse transform $\CM_F^{-1}(\Fz)(x)$ is given by
\begin{align}\label{eq:MIT-ar}
\CM^{-1}(\Fz)(x) :=
\sum_{\ome\in\Ome_F^\wedge}
\frac{1}{2\pi i}
\int^{\sig+i\infty}_{\sig-i\infty}
\Fz(\chi_{s,\ome})
\chi_{s,\ome}(x)^{-1}\ud s
\end{align}
with $\ome(\ac(x))=\ac(x)^p$, which defines a function $\Ff$ in $\CF(F^\times)$ independent of $\sig>-\sig_0$, and the coefficients $a_{k,m,p}$ and $b_{k,m,p}$ satisfy the following relations:
$$
b_{k,m,p} = (-1)^{m-1}(m-1)!\cdot a_{k,m,-p}
$$
for every $k\geq 0,m\geq 1$ with $p\in\{0,1\}$ if $F=\BR$ and $p\in\BZ$ if $F=\BC$. The coefficients $a_{k,m,p}$ and $b_{k,m,p}$ satisfy the following relations:
\[
b_{\lam,m,\ome}=\sum_{j=m}^{m_\lam}e_{j,m}(-\ln q)^{j-1}a_{\lam,j,\ome^{-1}}
\]
with $e_{j,m}$ defined by the following identity of polynomials in a formal unknown $t$
\[
t^{n-1}=\sum_{\ell=1}^ne_{n,\ell}\begin{pmatrix}t+\ell-1\\\ell-1\end{pmatrix}.
\]

When $F$ is non-Archimedean, the Mellin inverse transform $\CM_F^{-1}(\Fz)(x)$ is given by
\begin{align}\label{eq:MIT-na}
\CM_F^{-1}(\Fz)(x)
:=
\sum_{\ome\in\Ome^\wedge}\bigg(\Res_{z=0}(\Fz(\chi_{s,\ome})|x|_F^{-s}q^s)\bigg)\ome(\ac(x))^{-1},
\end{align}
which defines a function $\Ff$ in $\CF(F^\times)$. Here $z=q^{-s}$ for abbreviation.
\end{thm}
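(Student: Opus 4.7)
The plan is to establish the bijection $\CM$ in three stages: first verify that $\CM$ sends $\CF(F^\times)$ into $\CZ(\FX(F^\times))$ with the asserted pole structure, then verify the inverse formulas \eqref{eq:MIT-ar} and \eqref{eq:MIT-na} land back in $\CF(F^\times)$ with the asserted asymptotic expansion, and finally check that the two maps are mutually inverse. The guiding principle throughout is to trade polar data at each $s = -\lambda_k$ for asymptotic data at $x = 0$ via residue calculus, while using the multiplicative decomposition $F^\times \simeq \Omega_F \times \BR_{>0}$ (respectively $\Omega_F \times q^{\BZ}$ in the non-Archimedean case) together with Pontryagin duality on the compact abelian group $\Omega_F$ to handle the unit-direction Fourier expansion.

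For the forward direction, given $\Ff \in \CF(F^\times)$, I would use Definition \ref{dfn:CF}(3) to write, for each $K$,
$$
\Ff(x) = \sum_{k=0}^{K-1}\sum_{m=1}^{m_k} a_{k,m}(\ac(x))\,|x|_F^{\lambda_k}(\ln|x|_F)^{m-1} + R_K(x),
$$
with $R_K$ vanishing to order greater than $-\Re(\lambda_{K-1})$ at the origin. Combining this with the decay/support at infinity (Definition \ref{dfn:CF}(2)) gives absolute convergence of $\int_{F^\times} \Ff(x)\chi_{s,\omega}(x)\, d^\times x$ for $\Re(s) > -\Re(\lambda_0)$, and the meromorphic continuation comes by letting $K$ grow. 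The explicit principal parts then arise, after inserting the Fourier expansion $a_{k,m}(\ac(x)) = \sum_\omega a_{k,m,\omega}\omega(\ac(x))$ and applying orthogonality on $\Omega_F$, from the elementary integrals $\int_{|x|_F \leq 1} |x|_F^{s+\lambda_k}(\ln|x|_F)^{m-1}\, d^\times x$; these are computed by differentiating a geometric/logarithmic identity $m-1$ times in $s$, producing the pole $b_{k,m,\omega}/(s+\lambda_k)^m$ together with the stated relation $b_{k,m,p} = (-1)^{m-1}(m-1)!\, a_{k,m,-p}$ in the Archimedean case (the non-Archimedean binomial identity emerges analogously from expanding $q^{-s(j-1)}$ in $z = q^{-s}$). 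The vertical-strip bounds required by Definition \ref{dfn:CZ}(3) follow from repeated integration by parts: each application of the Euler operator $x\partial_x$ extracts a factor of $s$, and each derivative along $\Omega_F$ extracts a factor of $p$, so the smoothness of $\Ff$ upgrades to arbitrary polynomial decay jointly in $s$ and $p$.

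For the reverse direction, given $\Fz \in \CZ(\FX(F^\times))$, the polynomial estimates of Definition \ref{dfn:CZ}(3) guarantee absolute convergence of \eqref{eq:MIT-ar} first in $\Im(s)$ for fixed $\omega$, and then in $\omega$ after integration by parts in $\ac(x)$ converts powers of $p$ into convergence factors. Shifting the contour leftward past $s = -\lambda_0, -\lambda_1, \ldots$ collects residues that reproduce exactly the asymptotic expansion of Definition \ref{dfn:CF}(3) by inverting the coefficient relation above; shifting rightward produces the rapid decay (or compact support) at infinity, yielding $\CM^{-1}(\Fz) \in \CF(F^\times)$. The non-Archimedean case \eqref{eq:MIT-na} is formally identical once the contour integral is replaced by residue extraction at $z = 0$, exploiting that $\Fz$ is Laurent polynomial in $q^{-s}$. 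Mutual inversion $\CM \circ \CM^{-1} = \mathrm{id}$ and $\CM^{-1} \circ \CM = \mathrm{id}$ then follows by Fubini, multiplicative Mellin inversion on $\BR_{>0}$ (or $q^{\BZ}$), and Pontryagin duality on $\Omega_F$. The main obstacle I anticipate is the uniform Archimedean vertical-strip estimate in Definition \ref{dfn:CZ}(3), where polynomial control must be simultaneous in $s$ and in the character parameter $p$, requiring careful bookkeeping of constants as differential operators in both $x$ and $\ac(x)$ are applied; all remaining steps are straightforward extensions of \cite[Chapter I]{Ig78} to accommodate poles $\lambda_k$ that need not be real, which is precisely the generalization the excerpt highlights.
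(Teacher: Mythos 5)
Your proposal is correct and follows essentially the same route as the paper, which itself omits the details and defers to the classical treatment in \cite[Chapter I]{Ig78}: term-by-term Mellin transform of the asymptotic expansion at $x=0$ plus a remainder to get the principal parts and coefficient relations, integration by parts (Euler operator and angular derivatives) for the vertical-strip bounds, and contour shifting/residue extraction at $z=0$ for the inverse direction, with the only new feature being that the exponents $\lam_k$ may be non-real, which your argument accommodates. Minor wording slips (e.g., convergence of the sum over $\ome$ in \eqref{eq:MIT-ar} comes directly from the rapid decay of $\Fz$ in $p$ required by Definition \ref{dfn:CZ}(3), not from integration by parts in $\ac(x)$) do not affect the strategy.
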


\subsection{Local theory of Godement-Jacquet}\label{ssec-GJ}

Let $\RG_n:=\GL_n$ be the general linear group defined over $F$. Fix the following maximal (open if $F$ is non-Archimedean) compact subgroup $K$ of $\RG_n(F)=\GL_n(F)$,
\begin{align}\label{K}
K=
\begin{cases}
\GL_n(\Fo_F), & F\text{ is non-Archimedean};\\
\RO(n), &  F=\BR;\\
\RU(n), & F=\BC.
\end{cases}
\end{align}
Fix the Haar measure $\ud g = \frac{\ud^+g}{|\det g|_F^n}$ on $\RG_n(F)$ where $\ud^+g$ is the measure induced from the standard additive measure on $\RM_n(F)$, the $F$-vector space of $n\times n$-matrices. In particular, $\RG_n(F)$ embeds into $\RM_n(F)$ in a standard way.

Let $\Pi_F(n)$ be the set of equivalence classes of irreducible smooth representations of $\RG_n(F)$ when $F$ is non-Archimedean;
and of irreducible Casselman-Wallach representations of $\RG_n(F)$ when $F$ is Archimedean.
Set $\CC(\pi)$ the space of smooth matrix coefficients attached to $\pi$.

Let $\CS(\RM_n(F))$ be the space of the standard Schwartz-Bruhat functions on $\RM_n(F)$. The standard Fourier transform $\CF_\psi$ acting on $\CS(\RM_n(F))$ is defined as follows,
\begin{equation}\label{eq:FTMAT}
\CF_\psi(f)(x) = \int_{\RM_n(F)}
\psi(\tr(xy))f(y)\ud^+y
\end{equation}
where $\psi$ is a non-trivial additive character of $F$.
Moreover,  the standard Fourier transform $\CF_\psi$ extends to a unitary operator on the space $L^2(\RM(F),\ud^+x)$ and satisfies the following identity:
\begin{equation}\label{eq:FTId}
\CF_\psi\circ \CF_{\psi^{-1}}  =\Id.
\end{equation}

For any $\pi\in \Pi_F(n)$ and any quasi-character $\chi\in\FX(F^\times)$, the local zeta integral of Godement and Jacquet is defined by
\begin{equation}\label{eq:zi-GJ}
\CZ(s,f,\vphi_\pi,\chi) =
\int_{\RG_n(F)}
f(g)\vphi_\pi(g)\chi(\det g)|\det g|_F^{s+\frac{n-1}{2}}\ud g,
\end{equation}
for any $f\in\CS(\RM_n(F))$ and $\varphi_\pi\in\CC(\pi)$.
The following theorem contains the main results of the local theory for the Godement and Jacquet zeta integrals (\cite[Chapter I]{GJ72}).

\begin{thm}\label{thm:GJ}
With the notation introduced above, the following statements hold for any $f\in\CS(\RM_n(F))$ and $\vphi_\pi\in\CC(\pi)$.
\begin{enumerate}
\item The zeta integral $\CZ(s,f,\vphi_\pi,\chi)$ defined in \eqref{eq:zi-GJ} is absolutely convergent for $\Re(s)$ sufficiently large and admits a meromorphic continuation to $s\in \BC$.
\item $\CZ(s,f,\vphi_\pi,\chi)$ is a holomorphic multiple of the Langlands local $L$-function $L(s,\pi\times \chi)$ associated to $(\pi,\chi)$ and the standard embedding
$$
{\std}\colon\GL_n(\BC)\times\GL_1(\BC)\rightarrow\GL_n(\BC).
$$
Moreover, when $F$ is non-Archimedean, the fractional ideal $\CI_{\pi,\chi}$ that is generated by the local zeta integrals $\CZ(s,f,\vphi_\pi,\chi)$ is of the form:
\begin{align*}
\CI_{\pi,\chi} = \{
\CZ(s,f,\vphi_\pi,\chi)\mid f\in \CS(\RM_n(F)),\vphi_\pi\in \CC(\pi)
 \}
=L(s,\pi\times \chi)\cdot \BC[q^s,q^{-s}];
\end{align*}
and when $F$ is Archimedean, the local zeta integrals $\CZ(s,f,\vphi_\pi,\chi)$, with unitary characters $\chi$, have the following property: Let $S_{a,b}$ be the vertical strip for any $a<b$, as defined in \eqref{eq:vst}.
If $P_\chi(s)$ is a polynomial in $s$ such that the product $P_\chi(s)L(s,\pi\times\chi)$ is bounded in the vertical strip $S_{a,b}$, then the product $P_\chi(s) \CZ(s,f,\vphi_\pi,\chi)$ must be bounded in the same
vertical strip $S_{a,b}$.
\item The local functional equation
$$
\CZ(1-s,\CF_\psi(f),\vphi^\vee_\pi,\chi^{-1}) =
\gam(s,\pi\times \chi,\psi)
\cdot \CZ(s,f,\vphi_\pi,\chi)
$$
holds after meromorphic continuation,
where the function $\vphi^\vee_\pi(g):= \vphi_\pi(g^{-1})\in \CC(\wt{\pi})$, and $\gam(s,\pi\times \chi,\psi)$ is the Langlands local gamma function associated to $(\pi,\chi)$ and $\std$.
\item When $F$ is non-Archimedean and $\pi$ is unramified, take $f^\circ(g)={\bf 1}_{\RM_n(\Fo_F)}(g)$ to
be the characteristic function of $\RM_n(\Fo_F)$ and $\varphi_\pi^\circ(g)$ to be the zonal spherical function associated to $\pi$, then the following identity:
\[
\CZ(s,f^\circ,\vphi_\pi^\circ,\chi)=L(s,\pi\times\chi)
\]
holds for any unramified characters $\chi$ and all $s\in\BC$ as meromorphic functions in $s$.
\end{enumerate}
\end{thm}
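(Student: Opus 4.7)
The plan is to follow the classical Godement--Jacquet strategy adapted to the present normalizations, viewing each zeta integral as a Mellin transform (in the sense of Theorem \ref{thm:MT}) of a function on $F^\times$ obtained by fiber integration along the determinant map $\det\colon\RG_n\to\BG_m$.

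For part (1), I would control the integrand through the Cartan decomposition $\RG_n(F)=KA^+K$ and a standard upper bound on the matrix coefficient of the form $|\vphi_\pi(a)|\leq C|\det a|_F^{\sigma}\Xi(a)$, where $\sigma$ is determined by the central exponents of $\pi$ and $\Xi$ is the Harish-Chandra spherical function (respectively the corresponding moderate-growth bound in the Casselman--Wallach setting). Combined with the rapid decay of $f\in\CS(\RM_n(F))$, this yields absolute convergence for $\Re(s)$ sufficiently large. For meromorphic continuation, I would invoke the asymptotic expansion of matrix coefficients along the torus (Jacquet-module asymptotics in the $p$-adic case, Casselman's leading exponents in the Archimedean case) to rewrite the integral as a finite sum of Mellin transforms of functions lying in $\CF(F^\times)$ as in Definition \ref{dfn:CF}; Theorem \ref{thm:MT} then produces the continuation and places the result in $\CZ(\FX(F^\times))$. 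The non-Archimedean portion of (2) follows because shifting $s\mapsto s+m$ corresponds to a rescaling that preserves $\CS(\RM_n(F))$, so the family of zeta integrals is a $\BC[q^s,q^{-s}]$-submodule of $\BC(q^{-s})$, and the ideal property forces the form $L(s,\pi\times\chi)\cdot\BC[q^s,q^{-s}]$ once $L(s,\pi\times\chi)$ is defined as the normalized generator. The Archimedean boundedness in vertical strips is a direct translation of property (3) of Definition \ref{dfn:CZ} through the same asymptotic reduction.

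For part (3), the cleanest route is the uniqueness-of-invariant-functionals argument: the space of $\RG_n(F)\times\RG_n(F)$-equivariant bilinear forms on $\CS(\RM_n(F))\otimes\CC(\pi)$ satisfying the prescribed equivariance character is at most one-dimensional, so both sides of the asserted identity define elements of this one-dimensional space and are therefore proportional by a meromorphic scalar which, by definition, is $\gam(s,\pi\times\chi,\psi)$. Identifying this with the Langlands local $\gam$-function then follows from part (4) plus the stability of $\gam$-factors under highly ramified twists. For part (4), the Cartan decomposition $\RG_n(F)=\bigsqcup_{\lambda\in\BZ^n_+}K\vpi^\lambda K$ reduces the zeta integral with $f^\circ=\mathbf{1}_{\RM_n(\Fo_F)}$ to a sum over dominant cocharacters $\lambda\geq 0$; substituting Macdonald's formula for $\vphi_\pi^\circ(\vpi^\lambda)$ and evaluating the sum by a Cauchy-type identity in the Satake parameters delivers exactly the Euler factor $\det(1-\chi(\vpi_F)q^{-s}A_\pi)^{-1}$, where $A_\pi$ is the Satake matrix.

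The main obstacle, in my estimation, is the Archimedean half of part (2): establishing the precise vertical-strip boundedness of $\CZ(s,f,\vphi_\pi,\chi)$ dominated by that of $L(s,\pi\times\chi)$ requires a quantitative form of the Casselman--Wallach asymptotic expansion that is uniform in $s$ and matches the Schwartz decay of $f$ on $\RM_n(F)$ against the torus asymptotics of $\vphi_\pi$ at both $0$ and $\infty$. By comparison, parts (3) and (4) are structural once the uniqueness statement and Macdonald's formula are in hand, and the non-Archimedean parts of (1)--(2) reduce cleanly to manipulations within $\BC[q^s,q^{-s}]$.
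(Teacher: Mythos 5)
The paper itself does not prove Theorem \ref{thm:GJ}; it quotes it as known, attributing the non-Archimedean case to \cite[Theorem 3.3]{GJ72} and the Archimedean case to \cite{GJ72} (for $K$-finite vectors) together with \cite[Section 4.7]{J09} and \cite[Theorem 3.10]{Li19} for the extension to smooth Casselman--Wallach vectors and for the boundedness in vertical strips. So your proposal must be measured against the classical Godement--Jacquet argument, of which it is a recognizable outline: Cartan decomposition plus coefficient bounds for convergence, rationality and the fractional-ideal structure in the $p$-adic case, and the unramified computation via Macdonald's formula are all the standard ingredients, and part (4) as you sketch it is essentially complete.

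There are, however, genuine gaps at the points that carry the real weight. For part (3) you rest everything on the assertion that the space of $\RG_n(F)\times\RG_n(F)$-equivariant bilinear forms on $\CS(\RM_n(F))\otimes\CC(\pi)$ with the prescribed twist is at most one-dimensional; that multiplicity-one statement is precisely the hard content, it must be established for all but finitely many values of $s$ (generically in $q^{-s}$ in the $p$-adic case) before a meromorphic proportionality constant can be extracted, and you neither prove nor reference it. Godement--Jacquet avoid it altogether, arguing by parabolic descent to the supercuspidal (resp.\ discrete-series) case, where matrix coefficients are compactly supported modulo the center and the functional equation is a direct computation. Second, the theorem identifies the generator of $\CI_{\pi,\chi}$ and the proportionality constant with the \emph{Langlands} factors $L(s,\pi\times\chi)$ and $\gam(s,\pi\times\chi,\psi)$ attached to the parameter of $(\pi,\chi)$ via $\std$; declaring ``$L(s,\pi\times\chi)$ is defined as the normalized generator'' does not deliver this, and your route through part (4) plus stability under highly ramified twists is only a strategy sketch. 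The identification comes from the compatibility of the local Langlands correspondence for $\GL_n$ with character twists and with $L$- and $\gamma$-factors (\cite{HT01}, \cite{H00}), and this needs to be invoked explicitly rather than absorbed into a definition. Third, the Archimedean half of part (2) --- vertical-strip boundedness and, implicitly, the validity of (1)--(3) for general smooth rather than $K$-finite vectors --- is exactly the point the paper outsources to \cite{J09} and \cite{Li19}; you correctly single it out as the main obstacle, but what you offer there is a description of the difficulty (uniform Casselman--Wallach asymptotics matched against Schwartz decay), not an argument, so as it stands this portion of the proposal is incomplete.
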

For the statements of the current version of Theorem \ref{thm:GJ}, we have some comments in order. When $F$ is non-Archimedean, the theorem is \cite[Theorem 3.3]{GJ72}.
When $F$ is Archimedean, the statements were established in \cite{GJ72} only for $K$-finite vectors $f$ in $\CS(\RM_n(F))$ and $\vphi_\pi$ in $\CC(\pi)$, and were extended to general smooth vectors in
\cite[Section 4.7]{J09} and also in \cite[Theorem 3.10]{Li19}. About the boundedness on vertical strips, we refer to \cite[Section 4]{J09}.

\subsection{Reformulation of Godement-Jacquet theory}\label{ssec-RGJT}

The local theory of Godement-Jacquet can be reformulated within harmonic analysis and $L^2$-theory.

For $f\in\CS(\RM_n(F))$, we define
\begin{align}\label{GJ-SF}
\xi_f(g):=|\det g|_F^{\frac{n}{2}}\cdot f(g)
\end{align}
for $g\in\RG_n(F)$. Then we define the Schwartz space on $\RG_n(F)$ to be
\begin{align}\label{GJ-SS}
\CS_{\std}(\RG_n(F)):=\{\xi\in\CC^\infty(\RG_n(F))\mid \ |\det g|^{-\frac{n}{2}}\cdot\xi(g)\in\CS(\RM_n(F))\}.
\end{align}

\begin{prp}\label{prp:GJ-L2}
The Schwartz space $\CS_{\std}(\RG_n(F))$ is a subspace of $L^2(\RG_n(F),\ud g)$, which is the space of
square-integrable functions on $\RG_n(F)$.
\end{prp}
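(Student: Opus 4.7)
The plan is to reduce the $L^2$-integrability on $\RG_n(F)$ with respect to $\ud g$ to the elementary fact that Schwartz-Bruhat functions on the affine space $\RM_n(F)\simeq F^{n^2}$ belong to $L^2(\RM_n(F),\ud^+g)$. The definitions \eqref{GJ-SF} and \eqref{GJ-SS} are set up precisely so that the weighting factor $|\det g|_F^{n/2}$ conspires with the Radon-Nikodym factor in $\ud g=\ud^+g/|\det g|_F^n$ to produce exact cancellation.

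Concretely, given $\xi\in\CS_{\std}(\RG_n(F))$, I would write $\xi(g)=|\det g|_F^{n/2}f(g)$ for some $f\in\CS(\RM_n(F))$ (reading the unadorned $|\det g|^{-n/2}$ in \eqref{GJ-SS} as $|\det g|_F^{-n/2}$ in accordance with \eqref{GJ-SF}), and carry out the one-line computation
\[
\int_{\RG_n(F)}|\xi(g)|^2\,\ud g=\int_{\RG_n(F)}|\det g|_F^{n}|f(g)|^2\cdot\frac{\ud^+g}{|\det g|_F^{n}}=\int_{\RG_n(F)}|f(g)|^2\,\ud^+g.
\]

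To finish, I would observe that the complement $\RM_n(F)\setminus\RG_n(F)$ is the vanishing locus of the polynomial $\det$, a proper Zariski-closed subset of the finite-dimensional $F$-vector space $\RM_n(F)$, hence of $\ud^+g$-measure zero in both the non-Archimedean and Archimedean settings. Therefore the last integral equals $\int_{\RM_n(F)}|f(g)|^2\,\ud^+g$, which is finite because Schwartz-Bruhat functions on a finite-dimensional $F$-vector space are square-integrable.

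There is no substantive obstacle; the proof is essentially a bookkeeping exercise. The one point that warrants care is the normalization of absolute values: the cancellation only works with $|\det g|_F^{n/2}$ and not with the normalized $|\cdot|^{n/2}$ (which differs in the case $F=\BC$), which is why I read the definition of $\CS_{\std}(\RG_n(F))$ through the lens of \eqref{GJ-SF}. Should the authors instead intend the normalized version in the complex case, one would have to check the analogous weighted statement directly, which would no longer be automatic from Schwartz decay alone.
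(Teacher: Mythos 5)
Your proof is correct and coincides with the paper's own argument: both reduce the claim to the identity $\int_{\RG_n(F)}|\xi(g)|^2\,\ud g=\int_{\RM_n(F)}|f(g)|^2\,\ud^+ g$ via the exact cancellation of $|\det g|_F^{n}$ against the measure $\ud g=\ud^+g/|\det g|_F^n$, and then invoke square-integrability of Schwartz--Bruhat functions on $\RM_n(F)$. Your side remark on reading $|\det g|^{-\frac{n}{2}}$ in \eqref{GJ-SS} as $|\det g|_F^{-\frac{n}{2}}$ is the intended normalization and does not change the argument.
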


\begin{proof}
For $\xi\in\CS_{\std}(\RG_n(F))$, write $\xi(g)=|\det g|_F^{\frac{n}{2}}\cdot f(g)$ for some
$f\in\CS(\RM_n(F))$. We deduce the square-integrability of $\xi$ by the following computation:
\[
\int_{\RG_n(F)}\xi(g)\ovl{\xi(g)}\ud g
=
\int_{\RG_n(F)}f(g)\ovl{f(g)}\ud^+ g=\int_{\RM_n(F)}f(g)\ovl{f(g)}\ud^+ g<\infty.
\]
\end{proof}

Define the distribution kernel in the local theory of Godement-Jacquet to be
\begin{align}\label{GJ-kernel}
\Phi_{\GJ}(g):=\psi(\tr g)\cdot|\det g|_F^{\frac{n}{2}}
\end{align}
where $\psi$ is a non-trivial additive character of $F$.
We compute the convolution $\Phi_{\GJ}*\xi^\vee$ for any $\xi\in\CS_{\std}(\RG_n(F))$
with $\xi(g)=|\det g|_F^{\frac{n}{2}}\cdot f(g)$ for some $f\in\CS(\RM_n(F))$:
\begin{align*}
\Phi_{\GJ}*\xi^\vee(g)
&=
\int_{\RG_n(F)}\Phi_{\GJ}(h)\xi(g^{-1}h)\ud h
=
\int_{\RG_n(F)}\psi(\tr h)\cdot|\det h|_F^{\frac{n}{2}}\cdot|\det g^{-1}h|_F^{\frac{n}{2}}\cdot f(g^{-1}h)\ud h\\
&=
\int_{\RG_n(F)}f(h)\psi(\tr gh)\cdot|\det gh|_F^{\frac{n}{2}}\cdot|\det h|_F^{\frac{n}{2}}\ud h
=
|\det g|_F^{\frac{n}{2}}\int_{\RM_n(F)}f(h)\psi(\tr gh)\ud^+ h\\
&=
|\det g|_F^{\frac{n}{2}}\cdot\CF_\psi(f)(g).
\end{align*}
Since $\CF_\psi(f)(g)$ belongs to $\CS(\RM_n(F))$, by definition, we must have that
$\Phi_{\GJ}*\xi^\vee(g)$ belongs to $\CS_{\std}(\RG_n(F))$.
We define the Fourier operator $\CF_{\GJ}$ in the Godement-Jacquet theory to be
\begin{align}\label{GJ-FO}
\CF_{\GJ}(\xi)(g):=\left(\Phi_{\GJ}*\xi^\vee\right)(g)
\end{align}
for any $\xi\in\CS_{\std}(\RG_n(F))$.

\begin{prp}\label{prp:GJ-FO}
For any $\xi\in\CS_{\std}(\RG_n(F))$
with $\xi(g)=|\det g|_F^{\frac{n}{2}}\cdot f(g)$ for some $f\in\CS(\RM_n(F))$, the Fourier operator
$\CF_{\GJ}$ on $\CS_{\std}(\RG_n(F))$ and the classical Fourier transform $\CF_\psi$ on
$\CS(\RM_n(F))$ are related by the following identity:
\[
\CF_{\GJ}(\xi)(g)=\left(\Phi_{\GJ}*\xi^\vee\right)(g)
=|\det g|_F^{\frac{n}{2}}\cdot\CF_\psi(f)(g)=
|\det g|_F^{\frac{n}{2}}\cdot\CF_\psi(|\det(\cdot)|^{-\frac{n}{2}}\xi)(g)
\]
\end{prp}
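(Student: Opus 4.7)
The plan is to verify the identity by direct computation; in fact, the calculation has essentially been performed in the paragraph immediately preceding the statement, and the proposition serves mainly to record it cleanly as a named identity for later reference. First I would unfold the convolution using the convention $\xi^\vee(h) = \xi(h^{-1})$ (following the notation from Theorem \ref{thm:GJ}), which gives
\[
(\Phi_{\GJ}*\xi^\vee)(g) = \int_{\RG_n(F)} \Phi_{\GJ}(h)\,\xi(g^{-1}h)\,\ud h,
\]
and then substitute the definitions $\Phi_{\GJ}(h) = \psi(\tr h)|\det h|_F^{n/2}$ and $\xi(g^{-1}h) = |\det(g^{-1}h)|_F^{n/2} f(g^{-1}h)$.

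Next I would perform the left-translation change of variables $h \mapsto gh$, permitted because $\ud h$ is left-invariant on $\RG_n(F)$. The additive character term becomes $\psi(\tr(gh))$, the determinant powers collect to $|\det g|_F^{n/2}\cdot|\det h|_F^n$, and converting via $\ud h = \ud^+ h/|\det h|_F^n$ eliminates the $|\det h|_F^n$ factor. This reduces the expression to
\[
|\det g|_F^{n/2} \int_{\RG_n(F)} \psi(\tr(gh))\,f(h)\,\ud^+ h.
\]
Since $\RM_n(F) \setminus \RG_n(F)$ has additive Haar measure zero and $f \in \CS(\RM_n(F))$, the integral over $\RG_n(F)$ equals the integral over $\RM_n(F)$, which by the definition \eqref{eq:FTMAT} is exactly $\CF_\psi(f)(g)$. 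This yields the first asserted equality, and the second is immediate upon inserting $f = |\det(\cdot)|_F^{-n/2}\cdot \xi$.

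The only item requiring comment is the justification of the change of variables, i.e.\ absolute convergence. After taking absolute values the integrand is bounded by $|\det g|_F^{n/2}\cdot|f(h)|$ with respect to $\ud^+ h$, whose integral over $\RM_n(F)$ is finite because $f$ is Schwartz; so Fubini applies and the manipulation is rigorous. There is no serious obstacle — the content of the proposition is the structural observation that the Godement-Jacquet Fourier operator $\CF_{\GJ}$ is nothing other than the classical Fourier transform $\CF_\psi$ on $\RM_n(F)$ conjugated by the multiplicative twist $|\det(\cdot)|_F^{n/2}$. This identification is what will subsequently allow the analytic properties (Schwartz stability, $L^2$-unitarity up to normalization, Poisson summation, etc.) of $\CF_\psi$ to be transported to $\CF_{\GJ}$.
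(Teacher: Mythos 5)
Your computation is correct and is essentially identical to the paper's own argument, which carries out exactly this unfolding of the convolution, the change of variables $h\mapsto gh$, the cancellation of $|\det h|_F^{n}$ against the Haar-measure conversion $\ud h=\ud^+h/|\det h|_F^{n}$, and the identification of the resulting integral with $\CF_\psi(f)(g)$ in the paragraph immediately preceding the proposition. Your added remark on absolute convergence (bounding by $|f|$ with respect to $\ud^+h$) is a harmless extra justification the paper leaves implicit.
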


For any $\xi\in\CS_{\std}(\RG_n(F))$ with $\xi(g)=|\det g|_F^{\frac{n}{2}}\cdot f(g)$ for some $f\in\CS(\RM_n(F))$, the zeta integral can be renormalized as
\begin{align}\label{GJ-Zeta}
\CZ(s,f,\vphi_\pi,\chi) =
\int_{\RG_n(F)}
|\det g|_F^{\frac{n}{2}}f(g)\vphi_\pi(g)\chi(\det g)|\det g|_F^{s-\frac{1}{2}}\ud g
=
\CZ(s,\xi,\vphi_\pi,\chi).
\end{align}
We compute the other side of the functional equation of the Godement-Jacquet zeta integrals:
\begin{align*}
\CZ(1-s,\CF_\psi(f),\vphi_\pi^\vee,\chi^{-1})
&=
\int_{\RG_n(F)}
|\det g|_F^{\frac{n}{2}}\CF_\psi(f)(g)\vphi_\pi^\vee(g)\chi^{-1}(g)|\det g|_F^{\frac{1}{2}-s}\ud g\\
&=
\int_{\RG_n(F)}
\CF_{\GJ}(\xi)(g)
\vphi_\pi^\vee(g)\chi^{-1}(g)|\det g|_F^{\frac{1}{2}-s}\ud g\\
&=
\CZ(1-s, \CF_{\GJ}(\xi), \vphi_\pi^\vee,\chi^{-1}).
\end{align*}

\begin{prp}\label{prp:GJ-FE}
For any $\xi\in\CS_{\std}(\RG_n(F))$, $\vphi_\pi\in\CC(\pi)$, and $\chi\in\FX(F^\times)$,
the zeta integral defined by
\[
\CZ(s,\xi,\vphi_\pi,\chi)
=
\int_{\RG_n(F)}
\xi(g)\vphi_\pi(g)\chi(\det g)|\det g|_F^{s-\frac{1}{2}}\ud g
\]
satisfies the functional equation:
\[
\CZ(1-s, \CF_{\GJ}(\xi), \vphi_\pi^\vee,\chi^{-1})
=
\gam(s,\pi\times\chi,\psi)\cdot
\CZ(s,\xi,\vphi_\pi,\chi),
\]
which holds as meromorphic functions in $s$.
\end{prp}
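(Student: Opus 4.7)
The plan is to deduce Proposition \ref{prp:GJ-FE} from Theorem \ref{thm:GJ}(3) as a formal consequence of the renormalization $\xi_f(g) = |\det g|_F^{n/2} f(g)$ that converts the Godement--Jacquet theory on $\RM_n(F)$ into harmonic analysis on $\RG_n(F)$. No new analytic input is required; the meromorphic continuation and boundedness needed to make sense of both sides have already been supplied by Theorem \ref{thm:GJ}(1)--(2).

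First, given $\xi \in \CS_{\std}(\RG_n(F))$, I would use the very definition of $\CS_{\std}(\RG_n(F))$ in \eqref{GJ-SS} to write $\xi(g) = |\det g|_F^{n/2} \cdot f(g)$ for a uniquely determined $f \in \CS(\RM_n(F))$. The identity \eqref{GJ-Zeta} then records that
\[
\CZ(s,\xi,\vphi_\pi,\chi) = \int_{\RG_n(F)} f(g) \vphi_\pi(g) \chi(\det g) |\det g|_F^{s+\frac{n-1}{2}} \, dg = \CZ(s,f,\vphi_\pi,\chi),
\]
so the left-hand side of the desired functional equation is exactly the classical Godement--Jacquet zeta integral in the notation of \eqref{eq:zi-GJ}. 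This observation is the key bookkeeping step: the shift by $n/2$ absorbed into $\xi$ balances against the shift $s \mapsto s - 1/2$ in the exponent, recovering the Godement--Jacquet exponent $s + (n-1)/2$.

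Second, I would apply Proposition \ref{prp:GJ-FO}, which identifies $\CF_{\GJ}(\xi)(g) = |\det g|_F^{n/2} \cdot \CF_\psi(f)(g)$. This shows in particular that $\CF_{\GJ}(\xi)$ again lies in $\CS_{\std}(\RG_n(F))$ and that
\[
\CZ(1-s, \CF_{\GJ}(\xi), \vphi_\pi^\vee, \chi^{-1}) = \int_{\RG_n(F)} \CF_\psi(f)(g) \vphi_\pi^\vee(g) \chi^{-1}(\det g) |\det g|_F^{(1-s) + \frac{n-1}{2}} \, dg,
\]
which is precisely $\CZ(1-s, \CF_\psi(f), \vphi_\pi^\vee, \chi^{-1})$ in the notation of \eqref{eq:zi-GJ}. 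Note that $\vphi_\pi^\vee(g) = \vphi_\pi(g^{-1})$ is a matrix coefficient of the contragredient $\wt{\pi}$, as required by Theorem \ref{thm:GJ}(3).

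Third, with both sides so identified, I would invoke Theorem \ref{thm:GJ}(3) directly, which gives
\[
\CZ(1-s, \CF_\psi(f), \vphi_\pi^\vee, \chi^{-1}) = \gam(s, \pi \times \chi, \psi) \cdot \CZ(s, f, \vphi_\pi, \chi)
\]
as meromorphic functions of $s$. Combining with the two identifications above yields the claimed equality. Since every step is either a definitional rewriting or an appeal to Theorem \ref{thm:GJ}, there is no genuine obstacle; the only point that warrants explicit mention is that the convergence of each side in its own half-plane (Theorem \ref{thm:GJ}(1)) legitimizes the computations before meromorphic continuation, after which the equation persists throughout $s \in \BC$. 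The entire proof is therefore essentially a change-of-variables argument exhibiting that the operator $\CF_{\GJ}$ and the Schwartz space $\CS_{\std}(\RG_n(F))$ are precisely the objects that convert the Godement--Jacquet functional equation into its $L^2$-normalized form with the symmetric exponent $s - \frac{1}{2}$.
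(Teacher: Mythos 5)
Your proposal is correct and follows essentially the same route as the paper: rewrite $\xi=|\det g|_F^{n/2}f(g)$ so that $\CZ(s,\xi,\vphi_\pi,\chi)=\CZ(s,f,\vphi_\pi,\chi)$ as in \eqref{GJ-Zeta}, use Proposition \ref{prp:GJ-FO} to identify $\CZ(1-s,\CF_{\GJ}(\xi),\vphi_\pi^\vee,\chi^{-1})$ with $\CZ(1-s,\CF_\psi(f),\vphi_\pi^\vee,\chi^{-1})$, and then invoke Theorem \ref{thm:GJ}(3); the exponent bookkeeping matches the paper's computation preceding the proposition. No gaps.
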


We are going to understand the Godement-Jacquet distribution $\Phi_{\GJ}$ in terms of the
Bernstein center of $\RG_n(F)$, when $F$ is non-Archimedean.
Recall from \cite{Ber84} that the Bernstein center $\FZ(G(F))$ of a reductive group $G(F)$ over a non-Archimedean local field $F$ is defined to be the endomorphism ring of the identity functor on the category of smooth representations of $G(F)$.
It turns out that the Bernstein center $\FZ(G(F))$ can be identified with the space of invariant and essentially compactly supported distributions on $G(F)$,
where an invariant distribution $\Phi$ on $G(F)$ is called essentially compactly supported if $\Phi*\CC^\infty_c(G(F))\subset \CC^\infty_c(G(F))$. It was proved in \cite{Ber84} that
through Plancherel transform, the Bernstein center $\FZ(G(F))$ can also be identified with the space of regular functions on the Bernstein variety $\Ome(G(F))$ attached to $G(F)$, where $\Ome(G(F))$ is
an infinite disjoint union of finite dimensional complex algebraic varieties.

\begin{prp}\label{prp:BC-GJ}
Let $F$ be a non-Archimedean local field of characteristic zero. For  any $m\in \BZ$, define
$$
\RG_n(F)_m = \{g\in \RG_n(F)\mid \quad|\det g|_F = q^{-m}_F\}.
$$
Let ${\bf 1}_m:=\mathbf{1}_{\RG_n(F)_m}$ be the characteristic function of
$\RG_n(F)_m\subset \RG_n(F)$.
Then the following statements hold.
\begin{enumerate}
\item The invariant distribution
\begin{align}\label{GJ-Phim}
\Phi_{\GJ,m}(g): = \Phi_{\GJ}(g)\mathbf{1}_{\RG_n(F)_m}(g)=\Phi_{\GJ}(g){\bf 1}_m(g)
\end{align}
lies in the Bernstein center $\FZ(\RG_n(F))$ of $\RG_n(F)$.
\item
Let $f_{\GJ,m}$ be the regular function on $\Ome(\RG_n(F))$ attached to $\Phi_{\GJ,m}\in \FZ(\RG_n(F))$. For every $\pi\in\Pi_F(n)$, $\chi\in\FX(F^\times)$, and $s\in\BC$,
define
\[
\pi_{\chi_s}:=\pi\otimes\chi_s=\pi\otimes\chi(\det) |\det|^s_F.
\]
Then the following Laurent series
$$
f_{\GJ}(\pi_{\chi_s})= \sum_{m\in \BZ}f_{\GJ,m}(\pi_{\chi_s})
$$
is convergent for $\Re(s)$ sufficiently large, with a meromorphic continuation to $s\in \BC$, and
$$
f_{\GJ}(\pi_{\chi_s})=\gam(\frac{1}{2},\wt{\pi_{\chi_s}},\psi)
=
\gam(\frac{1}{2}-s,\wt{\pi}\times\chi^{-1},\psi).
$$
\end{enumerate}
\end{prp}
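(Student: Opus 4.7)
The plan is to check invariance and essential compact support to place $\Phi_{\GJ,m}$ in the Bernstein center, then identify $f_{\GJ,m}(\pi_{\chi_s})$ as an integral against a matrix coefficient on the determinant slice $\RG_n(F)_m$, collapse the Laurent sum $\sum_m$ into a single Godement--Jacquet zeta integral, and evaluate it via the local functional equation of Theorem~\ref{thm:GJ}(3).

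For part (1), the three factors $\psi(\tr g)$, $|\det g|_F^{n/2}$, and ${\bf 1}_{\RG_n(F)_m}(g)$ are all class functions, so $\Phi_{\GJ,m}$ is conjugation-invariant. For essential compact support, take $h\in\CC^\infty(\RG_n(F))$ compactly supported, and by linearity reduce to the case that $h$ is supported in a single slice $\RG_n(F)_{-k}$. Then $\Phi_{\GJ,m}*h$ is automatically supported in $\RG_n(F)_{m-k}$, and for $g$ in this slice every contributing convolution variable $x$ satisfies $|\det x|_F = |\det g|_F\cdot|\det(x^{-1}g)|_F^{-1} = q^{-m}$, forcing $x\in\RG_n(F)_m$, so that $\Phi_{\GJ,m}*h$ agrees with $\Phi_{\GJ}*h$ on $\RG_n(F)_{m-k}$. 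By Proposition~\ref{prp:GJ-FO}, $\Phi_{\GJ}*h=|\det|_F^{n/2}\,\CF_\psi(|\det|_F^{-n/2}h)\in\CS_{\std}(\RG_n(F))$. In the non-Archimedean setting Schwartz functions on $\RM_n(F)$ are compactly supported, and the intersection of any such compact set with the determinant slice $\{|\det g|_F = q^{-(m-k)}\}$ is compact in $\GL_n(F)$ by the adjugate formula (the fixed lower bound on $|\det g|_F$ uniformly controls $g^{-1}$). Smoothness is preserved under convolution, so $\Phi_{\GJ,m}*h\in\CC^\infty_c(\RG_n(F))$, placing $\Phi_{\GJ,m}$ in $\FZ(\RG_n(F))$.

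For part (2), fix a smoothed matrix coefficient $\varphi = \varphi_{\pi(h)v_0,\tilde v}\in\CC(\pi)$ with $h\in\CC^\infty_c$. The identity $\pi_{\chi_s}(\Phi_{\GJ,m}*h)=f_{\GJ,m}(\pi_{\chi_s})\pi_{\chi_s}(h)$ combined with convolution associativity yields
\[
f_{\GJ,m}(\pi_{\chi_s})\cdot\varphi(1)
=
\int_{\RG_n(F)_m}\psi(\tr g)\,|\det g|_F^{n/2}\,\varphi_{\pi_{\chi_s}}(g)\,dg,
\]
interpreted as the convergent pairing $\int(\Phi_{\GJ,m}*h)(g)\varphi_{v_0,\tilde v}(g)\,dg$ afforded by part~(1). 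Substituting $\varphi_{\pi_{\chi_s}}=\chi(\det)|\det|_F^s\varphi$ and summing over $m$ in the half-plane of absolute convergence telescopes into a single integral over $\RG_n(F)$:
\[
\sum_{m\in\BZ}f_{\GJ,m}(\pi_{\chi_s})\cdot\varphi(1)
=
\int_{\RG_n(F)}\psi(\tr g)\,\varphi(g)\,\chi(\det g)\,|\det g|_F^{s+\frac{n-1}{2}+\frac{1}{2}}\,dg
=
\CZ\bigl(s+\tfrac{1}{2},\CF_\psi(\delta_1),\varphi,\chi\bigr)
\]
in the Godement--Jacquet normalization of \eqref{eq:zi-GJ}. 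Applying Theorem~\ref{thm:GJ}(3) with $(\wt\pi,\chi^{-1})$ in place of $(\pi,\chi)$ and with $f=\delta_1$, whose untransformed zeta integral $\CZ(s_1,\delta_1,\,\cdot\,,\chi^{-1})$ is constant in $s_1$ and evaluates to $\varphi(1)$, then setting $s_1=\tfrac{1}{2}-s$, yields
\[
f_{\GJ}(\pi_{\chi_s})=\gam\bigl(\tfrac{1}{2}-s,\wt\pi\times\chi^{-1},\psi\bigr)=\gam\bigl(\tfrac{1}{2},\wt{\pi_{\chi_s}},\psi\bigr).
\]
Absolute convergence of the Laurent sum for $\Re(s)\gg 0$ and its meromorphic continuation to $s\in\BC$ are inherited from the corresponding properties of the Godement--Jacquet zeta integrals in Theorem~\ref{thm:GJ}(1)--(2).

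The main obstacle is the rigorous handling of the non-Schwartz distribution $\CF_\psi(\delta_1)=\psi(\tr\cdot)$ on the Fourier-transformed side of the functional equation: since $\delta_1\notin\CS(\RM_n(F))$, Theorem~\ref{thm:GJ}(3) does not apply directly. The cleanest remedy is a Schwartz-bump approximation $f_\epsilon\to\delta_1$ (for instance $f_\epsilon=\vol(1+\vpi^N\RM_n(\Fo_F))^{-1}\cdot{\bf 1}_{1+\vpi^N\RM_n(\Fo_F)}$ in the non-Archimedean case), to which Theorem~\ref{thm:GJ}(3) does apply, combined with the uniform vertical-strip bounds from Theorem~\ref{thm:GJ}(2) to justify the limit $\epsilon\to 0$ and its interchange with the Laurent summation over $m$. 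A related point is that the slice integrals, though written against matrix coefficients that are not themselves compactly supported, correctly compute the Bernstein-center scalar because the smoothing device $v=\pi(h)v_0$ reduces the formal slice integral to the convergent integral against the compactly supported $\Phi_{\GJ,m}*h$ produced in part~(1).
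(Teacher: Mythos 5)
Your part (1) is essentially correct and runs parallel to the paper's argument: the paper convolves with $\mathbf{1}_\CK$ for open compact $\CK$ and gets $\Phi_{\GJ,m}*\mathbf{1}_\CK=|\det|_F^{n/2}\mathbf{1}_m\cdot\CF_\psi(\mathbf{1}_\CK)$, while you convolve with a general $h\in\CC_c^\infty$ supported in one determinant slice; both reduce to compactness of a Schwartz support intersected with a fixed slice. (Minor point: Proposition \ref{prp:GJ-FO} is stated for $\Phi_{\GJ}*\xi^\vee$, so you should apply it with $\xi=h^\vee$; this is harmless.)

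The genuine gap is in part (2), at the step where you ``sum over $m$ in the half-plane of absolute convergence'' to telescope the slice identities into the single integral $\int_{\RG_n(F)}\psi(\tr g)\vphi(g)\chi(\det g)|\det g|_F^{s+\frac{n}{2}-\frac{1}{2}}\,\ud g=\CZ(s+\tfrac12,\CF_\psi(\delta_1),\vphi,\chi)$. No such half-plane exists: the kernel $\psi(\tr g)$ has absolute value one and provides no decay either along the non-compact slices $\RG_n(F)_m$ or in the direction $|\det g|_\BA\to\infty$ (already the central direction makes $\int_{\RG_n(F)}|\vphi(g)|\,|\det g|_F^{\sigma}\,\ud g$ diverge for every $\sigma$), and even the individual slice integrals against the smoothed matrix coefficient are not absolutely convergent -- they are only \emph{defined} through the compactly supported convolution $\Phi_{\GJ,m}*h$ of part (1), i.e.\ as regularized iterated integrals whose recombination over $m$ is precisely what would have to be proved. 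For the same reason, the convergence of the Laurent series $\sum_m f_{\GJ,m}(\pi_{\chi_s})$ for $\Re(s)\gg 0$ is not ``inherited from Theorem \ref{thm:GJ}(1)--(2)'': those statements concern Schwartz test functions, whereas here the convergence rests on the cancellation produced by $\psi(\tr\cdot)$, which forces $f_{\GJ,m}=0$ for $m\ll 0$ (visible from $\Phi_{\GJ,m}*\mathbf{1}_\CK=|\det|_F^{n/2}\mathbf{1}_m\CF_\psi(\mathbf{1}_\CK)$ and the compact support of $\CF_\psi(\mathbf{1}_\CK)$) -- an oscillation phenomenon that absolute-convergence reasoning cannot see. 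Your proposed remedy (bump approximation $f_\epsilon\to\delta_1$ plus the vertical-strip bounds of Theorem \ref{thm:GJ}(2)) does not close this: those bounds control the $s$-dependence of a fixed zeta integral and give no uniformity in the approximation parameter, nor any justification for interchanging that limit with the sum over $m$; what is actually needed is a support argument showing that the pairing computing $f_{\GJ,m}$ only sees the kernel on a fixed compact subset of $\RM_n(F)$, uniformly in $m$, so that the truncated kernel $\CF_\psi(f_\epsilon)$ computes the same scalars once the truncation is large. None of this is supplied.

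By contrast, the paper never touches $\delta_1$ or any regularized integral: for each $m$ it proves the exact identity $\CZ(1-s,\Phi_{\GJ,m}*\xi^\vee,\vphi^\vee_{\pi[\chi]})=f_{\GJ,m}(\wt{\pi_{\chi_{s-\frac{1}{2}}}})\cdot\CZ(s,\xi,\vphi_{\pi[\chi]})$ for $\xi\in\CC_c^\infty(\RG_n(F))$, where every integral converges absolutely because $\Phi_{\GJ,m}*\xi^\vee\in\CC_c^\infty(\RG_n(F))$ by part (1); it then sums over $m$ for $\Re(s)$ sufficiently small, where the full transformed zeta integral $\CZ(1-s,\CF_{\GJ}(\xi),\vphi^\vee_{\pi[\chi]})$ does converge, and compares with the functional equation of Theorem \ref{thm:GJ}(3) to read off $\sum_m f_{\GJ,m}=\gam$. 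If you want to keep your more direct route through $\psi(\tr\cdot)$, you must replace the formal telescoping by the compact-support/truncation argument indicated above; as written, the central analytic step of your part (2) is unjustified.
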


\begin{proof}
For Part (1), we have to show that the invariant distribution $\Phi_{\GJ,m}(g)$ is essentially compact
on $\RG_n(F)$. By a simple reduction, it suffices to show that, for any open compact subgroup $\CK$ of $\RG_n(\Fo_F)$, we have
\[
\Phi_{\GJ,m}*{\bf 1}_\CK\in\CC^\infty_c(\RG_n(F)).
\]
Since ${\bf 1}_\CK(g)={\bf 1}_\CK(g^{-1})={\bf 1}_\CK^\vee(g)$,
the convolution $\Phi_{\GJ,m}*{\bf 1}_\CK=\Phi_{\GJ,m}*{\bf 1}_\CK^\vee$ can be written as
\begin{align*}
\Phi_{\GJ,m}*{\bf 1}_\CK^\vee(g)
&=
\int_{\RG_n(F)}
\Phi_{\GJ,m}(h){\bf 1}_\CK(g^{-1}h)\ud h
=
\int_{\RG_n(F)}
\Phi_{\GJ,m}(gh){\bf 1}_\CK(h)\ud h\\
&=
\int_{\RG_n(F)}
\psi(\tr gh)|\det gh|^{\frac{n}{2}}{\bf 1}_m(gh){\bf 1}_\CK(h)\ud h.
\end{align*}
By definition, ${\bf 1}_\CK(h)\neq 0$ if and only if $|\det h|_F=1$, and ${\bf 1}_m(gh)\neq 0$ if and only if
$|\det g|_F=q_F^{-m}$, i.e. $g\in\RG_n(F)_m$. This implies that ${\bf 1}_m(gh)={\bf 1}_m(g)$.
The last integral can be written as
\[
q^{-\frac{mn}{2}}_F\mathbf{1}_m(g)
\int_{\RG_n(F)}
\psi(\tr(gh))
\mathbf{1}_\CK(h)\ud h
\]
which can be written as
\begin{align*}
q_F^{-\frac{mn}{2}}\mathbf{1}_m(g)
\int_{\RM_n(F)}
\psi(\tr(gh))
\mathbf{1}_\CK(h)\ud^+ h
=q_F^{-\frac{mn}{2}}\mathbf{1}_m(g)\CF_\psi({\bf 1}_\CK)(g).
\end{align*}
Hence we obtain that
\[
\Phi_{\GJ,m}*{\bf 1}_\CK(g)=|\det g|_F^{\frac{n}{2}}\mathbf{1}_m(g)\CF_\psi({\bf 1}_\CK)(g).
\]
Since $\CF_\psi({\bf 1}_\CK)(g)\in\CS(\RM_n(F))$ and $|\det g|_F^{\frac{n}{2}}\mathbf{1}_m(g)$ is smooth
on $\RM_n(F)$, we obtain that the convolution
$\Phi_{m,\psi}*\mathbf{1}_\CK(g)$ belongs to $\CC_c^\infty(\RG_n(F))$ and
the invariant distribution $\Phi_{\GJ,m}(g)$ is essentially compact on $\RG_n(F)$.

For Part (2), recall from \cite{Ber84} that the regular function $f_{\GJ,m}$ attached to $\Phi_{\GJ,m}$
is defined as follows. For any $\pi\in \Pi_F(n)$ and $v\in \pi$, there exists an open compact subgroup
$\CK$ of $\RG_n(F)$, such that $v\in \pi^\CK$, the subspace of $\CK$-fixed vectors in $\pi$.
We may define an action of $\Phi_{\GJ,m}$ on $\pi$ via
\begin{align}\label{Phi-action-v}
\pi(\Phi_{\GJ,m})(v ):=
\pi(\Phi_{\GJ,m}*\Fc_\CK)(v),
\end{align}
where $\Fc_\CK:=\vol(\CK)^{-1}{\bf 1}_\CK$ is the normalized characteristic function of $\CK$.
Since $\Phi_{\GJ,m}*\Fc_\CK$ lies in $\CC^\infty_c(\RG_n(F))$, the right-hand side is well-defined, so is the left-hand side. It is clear that the action defined in \eqref{Phi-action-v} does not depend on the choice of
such an open compact subgroup $\CK$.
By Schur's lemma, there exists a constant $f_{\GJ,m}(\pi)$, depending on $\pi$, such that
\begin{align}\label{Phi-action-pi}
\pi(\Phi_{\GJ,m})= f_{\GJ,m}(\pi)\cdot \Id_\pi.
\end{align}
For each $m\in \BZ$, we define that for any $\xi\in \CC^\infty_c(\RG_n(F))$,
\begin{align}\label{GJ-FOm}
\CF_{\GJ,m}(\xi)(g) :=
(\Phi_{\GJ,m}*\xi^\vee)(g)
=
\int_{\RG_n(F)}
\Phi_{\GJ,m}(h)\xi(g^{-1}h)\ud h.
\end{align}

In order to include the quasi-characters $\chi\in\FX(F^\times)$ in the gamma function, we write
\begin{align}\label{chi-coeff}
\vphi_{\pi[\chi]}(g):=\vphi_\pi[\chi](g):=\vphi_\pi(g)\chi(\det g)=(\chi(g)\pi(g)v,\wt{v}),
\end{align}
with $v\in V_\pi$ and $\wt{v}\in V_{\wt{\pi}}$,
which is a matrix coefficient of $\pi$ twisted by $\chi$. We may denote the space of such twisted matrix coefficients of $\pi$ by $\CC(\pi[\chi])$. It is clear that we have
\[
\CZ(s,\xi,\vphi_{\pi[\chi]})=\CZ(s,\xi,\vphi_{\pi},\chi).
\]
For each $m\in \BZ$, $\vphi_{\pi[\chi]}\in \CC(\pi[\chi])$, and $\chi\in\FX(F^\times)$, consider the zeta function of Godement-Jacquet with $\CF_{\GJ,m}(\xi)$ as defined in \eqref{GJ-FOm}
\begin{align}\label{38-1}
\CZ(1-s,\CF_{\GJ,m}(\xi),\vphi^\vee_{\pi[\chi]})
=\CZ(1-s,\Phi_{\GJ,m}*\xi^\vee,\vphi^\vee_{\pi[\chi]}).
\end{align}
By Part (1) as proved above, we obtain that $\Phi_{\GJ,m}*\xi^\vee\in \CC^\infty_c(\RG_n(F))$ for
any $\xi\in \CC^\infty_c(\RG_n(F))$. Hence the integral in \eqref{38-1} is absolutely
convergent for any $s\in \BC$ when $\xi\in \CC^\infty_c(\RG_n(F))$.

We write the right-hand side of \eqref{38-1} as
\begin{align}\label{38-2}
\int_{\RG_n(F)}\Phi_{\GJ,m}*\xi^\vee(g)\vphi_\pi(g^{-1})\chi^{-1}(\det g)|\det g|_F^{\frac{1}{2}-s}
\ud g,
\end{align}
which is equal to
\begin{align}\label{38-3}
\int_{\RG_n(F)}\Phi_{\GJ,m}*\xi^\vee(g)
(v,\wt{\pi}(g)\wt{v})\chi_{s-\frac{1}{2}}(\det g)^{-1}
\ud g
=
(v,\int_{\RG_n(F)}\Phi_{\GJ,m}*\xi^\vee(g)\wt{\pi_{\chi_{s-\frac{1}{2}}}}(g)\wt{v}\ud g).
\end{align}
It is clear that
\[
\int_{\RG_n(F)}\Phi_{\GJ,m}*\xi^\vee(g)\wt{\pi_{\chi_{s-\frac{1}{2}}}}(g)\wt{v}\ud g
=
\wt{\pi_{\chi_{s-\frac{1}{2}}}}(\Phi_{\GJ,m}*\xi^\vee)\wt{v}
=
\wt{\pi_{\chi_{s-\frac{1}{2}}}}(\Phi_{\GJ,m})(\wt{\pi_{\chi_{s-\frac{1}{2}}}}(\xi^\vee)\wt{v}).
\]
Since $\xi^\vee$ belongs to $\CC_c^\infty(\RG_n(F))$, the vector
$\wt{\pi_{\chi_{s-\frac{1}{2}}}}(\xi^\vee)\wt{v}$ belongs to the space of
$\wt{\pi_{\chi_{s-\frac{1}{2}}}}$.
By definition, we have
\[
\wt{\pi_{\chi_{s-\frac{1}{2}}}}(\Phi_{\GJ,m})
=
f_{\GJ,m}(\wt{\pi_{\chi_{s-\frac{1}{2}}}})\cdot\RI_{\wt{\pi_{\chi_{s-\frac{1}{2}}}}}.
\]
Hence we can write the right-hand side of \eqref{38-3} as
\begin{align}\label{38-4}
(v,\int_{\RG_n(F)}\Phi_{\GJ,m}*\xi^\vee(g)\wt{\pi_{\chi_{s-\frac{1}{2}}}}(g)\wt{v}\ud g)
=
f_{\GJ,m}(\wt{\pi_{\chi_{s-\frac{1}{2}}}})\cdot(v,\wt{\pi_{\chi_{s-\frac{1}{2}}}}(\xi^\vee)\wt{v}).
\end{align}

Next we compute the twisted coefficient $(v,\wt{\pi_{\chi_{s-\frac{1}{2}}}}(\xi^\vee)\wt{v})$ on the
right-hand side of \eqref{38-4} as follows:
\begin{align*}
(v,\wt{\pi_{\chi_{s-\frac{1}{2}}}}(\xi^\vee)\wt{v})
&=
\int_{\RG_n(F)}\xi^\vee(h)(v,\wt{\pi_{\chi_{s-\frac{1}{2}}}}(h)\wt{v})\ud h
=
\int_{\RG_n(F)}\xi(h^{-1})(\pi_{\chi_{s-\frac{1}{2}}}(h^{-1})v,\wt{v})\ud h\\
&=
\int_{\RG_n(F)}\xi(h)(\pi_{\chi_{s-\frac{1}{2}}}(h)v,\wt{v})\ud h
=
\int_{\RG_n(F)}\xi(h)\vphi_{\pi[\chi]}(h)|\det h|^{s-\frac{1}{2}}\ud h\\
&=
\CZ(s,\xi,\vphi_{\pi[\chi]}).
\end{align*}
Hence we obtain the following functional equation
\begin{align}\label{38-5}
\CZ(1-s,\CF_{\GJ,m}(\xi),\vphi^\vee_{\pi[\chi]})
=
f_{\GJ,m}(\wt{\pi_{\chi_{s-\frac{1}{2}}}})\cdot
\CZ(s,\xi,\vphi_{\pi[\chi]})
\end{align}
for any $\xi\in\CC_c^\infty(\RG_n(F))$, $\vphi_\pi\in\CC(\pi)$ and $\chi\in\FX(F^\times)$.

From Theorem \ref{thm:GJ}, when $\Re(s)$ is sufficiently small, the zeta integral
$\CZ(1-s,\CF_{\GJ}(\xi),\vphi^\vee_{\pi[\chi]})$ converges absolutely for
any $\xi\in\CC_c^\infty(\RG_n(F))$, any $\vphi_\pi\in\CC(\pi)$ and any unitary character
$\chi\in\FX(F^\times)$. We write it as
\begin{align*}
\CZ(1-s,\CF_{\GJ}(\xi),\vphi^\vee_{\pi[\chi]})
&=
\sum_{m\in \BZ}
\CZ(1-s,\CF_{\GJ,m}(\xi),\vphi^\vee_{\pi[\chi]})
\\
&=
\CZ(s,\xi,\vphi_{\pi[\chi]})\cdot\sum_{m\in \BZ}f_{\GJ,m}(\wt{\pi_{\chi_{s-\frac{1}{2}}}}).
\end{align*}
By comparing with the right-hand side of the functional equation in Theorem \ref{thm:GJ}, we obtain that whenever $\Re(s)$ is sufficiently small,
\begin{align}\label{f-gamma}
f_{\GJ}(\wt{\pi_{\chi_{s-\frac{1}{2}}}})
=
\sum_{m\in \BZ}f_{\GJ,m}(\wt{\pi_{\chi_{s-\frac{1}{2}}}})
=
\gam(s,\pi\otimes\chi,\psi)=\gam(s,\pi_\chi,\psi)
\end{align}
By changing $s\to s+\frac{1}{2}$, we get
\[
f_{\GJ}(\wt{\pi_{\chi_s}})=\gam(s+\frac{1}{2},\pi_\chi,\psi)
=\gam(\frac{1}{2},\pi_{\chi_s},\psi).
\]
By taking the contragredient of $\pi_{\chi_s}$, we obtain that
\[
f_{\GJ}(\pi_{\chi_s})=\gam(\frac{1}{2},\wt{\pi_{\chi_s}},\psi)
=
\gam(\frac{1}{2}-s,\wt{\pi}\times\chi^{-1},\psi).
\]
This finishes the proof of Part (2).
\end{proof}

\section{$\pi$-Schwartz Functions and Fourier Operators}\label{sec-piSF-FO}

\subsection{Two spaces associated to $\pi$}\label{ssec-2S-pi}

For any $\pi\in \Pi_F(n)$, we are going to define two spaces associated to $\pi$:
$\CL_\pi(\FX(F^\times))$ and $\CS_\pi(F^\times)$.

The space $\CL_\pi=\CL_\pi(\FX(F^\times))$ consists of $\BC$-valued meromorphic functions $\Fz(\chi)$ on $\FX(F^\times)$ that
satisfy the following conditions.
\begin{enumerate}
\item $\Fz(\chi_{s,\ome})$ is a holomorphic multiple of the standard local $L$-function $L(s,\pi\times\omega)$ with $\chi_{s,\ome}(x)=|x|_F^s\ome(\ac(x))$.
\item If $F$ is non-Archimedean, $\Fz(\chi_{s,\ome})$ is nonzero for finitely many $\ome\in \Ome^\wedge$, and for each $\ome\in \Ome^\wedge$,
$\Fz(\chi_{s,\ome})\in L(s,\pi\times \omega)\cdot \BC[q^s,q^{-s}]$.
\item If $F$ is Archimedean, for any polynomial $P(\chi_{s,\ome})=P_{\omega}(s)$, if $P(\chi_{s,\ome}) L(s,\pi\times\omega)$ is holomorphic in any vertical strip $S_{a,b}$ as in \eqref{eq:vst},
with small neighborhoods at the possible poles of the $L$-function $L(s,\pi\times \ome)$ removed,
then for any $\Fz(\chi_{s,\ome})\in\CL_\pi$, the product $P(\chi_{s,\ome})\Fz(\chi_{s,\ome})$ is bounded in the same strip $S_{a,b}$, with small neighborhoods at the possible poles of
the $L$-function $L(s,\pi\times \ome)$ removed.
\end{enumerate}
From Part (3), we define a semi-norm to be
\[
\mu_{a,b:P}(\Fz):=\sup_{a\leq\Re(s)\leq b}|P(\chi_{s,\ome})\cdot \Fz(\chi_{s,\ome})|.
\]
Then the space $\CL_\pi$ is complete under the topology that is defined by the family of semi-norms $\mu_{a,b:P}$ for all possible choice of data $a,b; P$ as in Part (3) (\cite[Section 4]{J09}).

\begin{prp}\label{prp:Lpi-Z}
For any $\pi\in \Pi_F(n)$, the space $\CL_\pi$ is a subspace of $\CZ(\FX(F^\times))$ as defined in Definition \ref{dfn:CZ}.
\end{prp}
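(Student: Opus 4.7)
The plan is to verify the three defining conditions of $\CZ(\FX(F^\times))$ (Definition~\ref{dfn:CZ}) for any $\Fz\in\CL_\pi$. By condition (1) of $\CL_\pi$, one can write $\Fz(\chi_{s,\omega})=h(\chi_{s,\omega})\cdot L(s,\pi\times\omega)$ with $h$ holomorphic, so the analytic behavior of $\Fz$ is controlled by that of the local Langlands $L$-function $L(s,\pi\times\omega)$.

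For conditions (1) and (2) of $\CZ$ on the pole set and principal parts: in the non-Archimedean case, $L(s,\pi\times\omega)\in\BC(q^{-s})$ has finitely many poles modulo the period lattice $\tfrac{2\pi i}{\log q}\BZ$, and condition (2) of $\CL_\pi$ restricts $\omega$ to a finite subset of $\Omega^\wedge$; the union of these pole sets is then a finite $\Lambda$, and since $\Fz\in L\cdot\BC[q^s,q^{-s}]$, a standard partial-fraction decomposition produces the required Laurent structure. In the Archimedean case, each $L(s,\pi\times\omega)$ is an explicit product of $\Gamma_F$-factors whose poles lie on finitely many arithmetic progressions with real parts bounded below; the union over $\omega$ is then a discrete sequence $\{\lambda_k\}$ with $\{\Re(\lambda_k)\}$ strictly increasing and no finite accumulation point, and the Laurent expansion of $\Fz=hL$ at $s=-\lambda_k$ inherits the principal part $\sum_m b_{k,m,\omega}/(s+\lambda_k)^m$ directly from the $\Gamma$-factor expansions combined with the holomorphy of $h$ there.

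For condition (3) of $\CZ$, the non-Archimedean vanishing of $\Fz(\chi_{s,\omega})$ for almost all $\omega\in\Omega^\wedge$ is precisely the content of $\CL_\pi$(2). The main obstacle is the Archimedean boundedness: for every polynomial $P(s,p)$ and every strip $S_{a,b}$, one must show that $|P(s,p)\Fz(\chi_{s,\omega})|$ is bounded on $S_{a,b}$ with neighborhoods of the $-\lambda_k$ removed. The key input is Stirling's asymptotics, which gives exponential decay of Archimedean $\Gamma_F$-factors, and hence of $L(s,\pi\times\omega)$, in $|\Im(s)|$ on vertical strips away from poles, together with rapid decay in $|p|$ as $|p|\to\infty$ when $F=\BC$. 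Consequently $P(s,p)L(s,\pi\times\omega)$ is bounded on the relevant region for every polynomial $P$, and the growth condition $\CL_\pi$(3) then forces the analogous bound on $P(s,p)\Fz(\chi_{s,\omega})$, completing the verification.
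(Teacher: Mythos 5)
Your proposal follows essentially the same route as the paper's proof: the non-Archimedean case is read off from the structure $L(s,\pi\times\omega)\cdot\BC[q^s,q^{-s}]$ of the zeta ideals (the paper simply cites Theorem \ref{thm:GJ}), and the Archimedean case is reduced to the boundedness condition, which is verified by expressing $L(s,\pi\times\omega)$ as an explicit product of $\Gam$-factors, applying Stirling's formula, and then transferring the bound to $\Fz$ via condition (3) in the definition of $\CL_\pi$ --- exactly the argument in the paper, with your treatment of the pole/principal-part conditions being a harmless elaboration of what the paper leaves implicit.

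One correction, though: for $F=\BC$ your claim that $L(s,\pi\times\ac(\cdot)^p)$ decays rapidly as $|p|\to\infty$ is backwards. On a fixed strip $S_{a,b}$ the arguments $s+u_i+\tfrac{|l_i+p|}{2}$ have large positive real part as $|p|$ grows, so the $\Gam$-factors grow super-exponentially in $|p|$; Stirling yields decay only in the $|\Im(s)|$-direction, which is the only decay the paper invokes. Consequently the uniformity in $p$ required by Definition \ref{dfn:CZ}(3) cannot be extracted from boundedness of $P(s,p)L(s,\pi\times\ac(\cdot)^p)$ in the $p$-aspect; it has to be supplied by property (3) of $\CL_\pi$ itself (i.e.\ by the behavior of the zeta integrals in the $\omega$-aspect), which is how the paper's deduction is meant to run. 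Apart from this misstatement, your verification coincides with the paper's.
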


\begin{proof}
When $F$ is non-Archimedean, the statement is a consequence of Theorem \ref{thm:GJ}. We would like to focus on the case when $F$ is Archimedean.
In this case,  it suffices to estimate the boundedness condition. To do so, recall the following classical Stirling formula (see \cite[p.81]{J09} for instance)
\begin{equation}\label{eq:stirling}
\Gam(x+iy)\sim (2\pi)^{1/2}|y|^{x-\frac{1}{2}}e^{-\frac{\pi}{2}|y|}
\end{equation}
for $x$ fixed and $|y|\to \infty$.

Consider the Archimedean local $L$-functions $L(s,\pi\times \ome)=L(s,\pi\times \ac(\cdot)^p)$,
which can be explicitly expressed in terms of classical $\Gam$-functions with the local Langlands parameter of $\pi$. For instance, from
\cite[Section 8]{GJ72}, there exists a finite family of pairs $\{(l_i,u_i)\}_{i=1}^{t}$ with
$$
u_i\in \BC,\quad
l_i\in \begin{cases}
\BZ/2\BZ\simeq \{0,1\} , & F=\BR\\
\BZ, & F=\BC
\end{cases}
$$
such that in the fixed bounded vertical strip
$$
S_{a,b} = \{s\in \BC\mid a\leq \Re(s)\leq b\},
$$
up to a bounded factor in $S_{a,b}$, we have
$$
L(s,\pi\times \ac(\cdot)^p) \sim
\begin{cases}
\prod_{i=1}^t \Gam(\frac{s+u_i+l_i+p}{2}), & F=\BR\\
\prod_{i=1}^t \Gam(s+u_i+\frac{|l_i+p|}{2}), & F=\BC
\end{cases}
$$
with $p\in\BZ/2\BZ\simeq \{0,1\}$ if $F=\BR$; and $p\in\BZ$ if $F=\BC$.
Here $l_i+p$ is understood to be zero if both $l_i$ and $p$ are equal to $1$ when $F=\BR$.

It follows from the classical Stirling formula in \eqref{eq:stirling}, in particular the exponential decay of $\Gam(x+iy)$ along the imaginary axis, that for any polynomial
$P_\ome(s)=P(s)\in \BC[s]$ when $F=\BR$, and
$P_\ome(s)=P(s,p)\in \BC[s,p]$, the product $P(s,p) L(s,\pi\times \ac(\cdot)^p)$
is bounded in vertical strip $S_{a,b}$ with small neighborhoods at the possible poles removed. Hence from the definition of the space $\CL_\pi(\FX(F^\times))$, for any $\Fz(\chi_{s,\ome})\in \CL_\pi(\FX(F^\times))$,
the product $P(s,p)\Fz(\chi)$, with $\chi(x)=|x|_F^s\ac(x)^p$ is bounded in vertical strip $S_{a,b}$ with small neighborhoods at the possible poles of the $L$-function $L(s,\pi\times \ac(\cdot)^p)$ removed.
Therefore we obtain that the space $\CL_\pi=\CL_\pi(\FX(F^\times))$ is contained in the space $\CZ(\FX(F^\times))$, as defined in Definition \ref{dfn:CZ}.
\end{proof}

For any $\pi\in \Pi_F(n)$,  we define (Definition \ref{def:piSS}) the $\pi$-Schwartz space $\CS_\pi(F^\times)\subset \CC^\infty(F^\times)$ attached to $\pi$, by using the theory of local zeta integrals of Godement-Jacquet,
and prove that
\begin{align}\label{CSpi-CLpi}
\CS_\pi(F^\times)=\CM^{-1}(\CL_\pi)\subset\CC^\infty(F^\times)
\end{align}
by Theorem \ref{thm:GJ} and Theorem \ref{thm:MT}.

Consider the following determinant map:
\begin{align}\label{det}
\det={\det}_F\colon\RG_n(F)=\GL_n(F)\to F^\times.
\end{align}
It is clear that the kernel $\ker(\det)=\SL_n(F)$. For each $x\in F^\times$,
the fiber of the determinant map $\det$ is
\begin{align}\label{fiber}
\RG_n(F)_x:=
\{g\in \RG_n(F)\mid \det g=x\}.
\end{align}
It is clear that each fiber $\RG_n(F)_x$ is an $\SL_n(F)$-torsor. Hence one has the $\SL_n(F)$-invariant measure $\ud_x g$ that is induced from the (normalized) Haar measure $\ud_1 g$
on $\SL_n(F)$.

For $\xi\in\CS_{\std}(\RG_n(F))$ as defined in \eqref{GJ-SS}, $\varphi_\pi\in\CC(\pi)$, and $\chi\in\FX(F^\times)$,
the local zeta integral of Godement and Jacquet, as normalized in \eqref{GJ-Zeta}, can be written as
\begin{align}\label{eq:FI-1}
\CZ(s,\xi,\vphi_\pi,\chi)
=
\int_{F^\times}
\left(\int_{\RG_n(F)_x}
\xi(g)\vphi_\pi(g)\ud_x g \right)\chi(x)|x|_F^{s-\frac{1}{2}}\ud^\times x.
\end{align}
By Part (1) of Theorem \ref{thm:GJ}, the local zeta integral converges absolutely for $\Re(s)$ large. Hence the inner integral of \eqref{eq:FI-1}
\begin{align}\label{fibration}
\phi_{\xi,\vphi_\pi}(x) := \int_{\RG_n(F)_x}
\xi(g)\vphi_\pi(g)\ud_x g
=
|x|_F^{\frac{n}{2}}
\int_{\RG_n(F)_x}
f(g)\vphi_\pi(g)\ud_x g,
\end{align}
if $\xi(g)=|\det g|^{\frac{n}{2}}\cdot f(g)$ for some $f\in\CS(\RM_n(F))$,
is absolutely convergent for almost all $x\in F^\times$ and defines the fiber integration along the fibration in \eqref{det}.
Moreover, we have

\begin{prp}\label{prp:smooth}
For $\xi\in\CS_{\std}(\RG_n(F))$ and $\varphi_\pi\in\CC(\pi)$, the fiber integration in \eqref{fibration} that defines the function $\phi_{\xi,\vphi_\pi}(x)$ is absolutely convergent for all $x\in F^\times$, and
the function $\phi_{\xi,\vphi_\pi}(x)$ is smooth over $F^\times$.
\end{prp}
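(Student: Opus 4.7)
The plan is to factor the constant $|\det g|_F^{n/2} = |x|_F^{n/2}$ out of the fiber integral, reducing the problem to
\[
\phi_{\xi,\vphi_\pi}(x) = |x|_F^{n/2} \int_{\RG_n(F)_x} f(g)\,\vphi_\pi(g)\,\ud_x g
\]
with $f \in \CS(\RM_n(F))$, and then to treat the non-Archimedean and Archimedean cases separately. I will fix the global smooth section $g_0(x) := \diag(x, 1, \ldots, 1)$ of $\det\colon \RG_n(F) \to F^\times$; by the definition of $\ud_x g$ as the measure induced from the Haar measure on $\SL_n(F)$, one then has
\[
\phi_{\xi,\vphi_\pi}(x) = |x|_F^{n/2} \int_{\SL_n(F)} f(g_0(x)\,h)\,\vphi_\pi(g_0(x)\,h)\,\ud h
\]
once absolute convergence is established.

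For absolute convergence in the non-Archimedean case, $f \in \CC^\infty_c(\RM_n(F))$ and the fiber $\RG_n(F)_x = \det^{-1}(x)$ is closed in $\RM_n(F)$, so $\mathrm{supp}(f) \cap \RG_n(F)_x$ is compact; the locally constant function $\vphi_\pi$ is bounded on this compact set and convergence is immediate. In the Archimedean case, the Schwartz hypothesis on $f$ yields $|f(g_0(x) h)| \leq C_{N,x}(1 + \|h\|)^{-N}$ for every $N \geq 0$, while the smooth matrix coefficient $\vphi_\pi$ of a Casselman-Wallach representation satisfies a moderate growth bound $|\vphi_\pi(g_0(x) h)| \leq C'_x(1 + \|h\|)^{r}$ for some $r$ depending on $\pi$. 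The Haar measure on $\SL_n(F)$ has polynomial volume growth in $\|h\|$, as one sees from the $KAK$ decomposition, so choosing $N$ sufficiently large yields $f\vphi_\pi \in L^1(\SL_n(F), \ud h)$ and absolute convergence follows pointwise for every $x \in F^\times$.

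For smoothness in the non-Archimedean case, choose an open compact subgroup $K_0 \subset \RG_n(F)$ under which both $\xi$ and $\vphi_\pi$ are left $K_0$-invariant. For any $k \in K_0$ the left translation $g \mapsto kg$ carries $\RG_n(F)_{x'}$ onto $\RG_n(F)_{\det(k) x'}$, and since the Haar measures on $\RG_n(F)$ and on $F^\times$ are invariant under the corresponding translations, the fiber measure transfers correctly; a direct change of variable then gives $\phi_{\xi,\vphi_\pi}(\det(k) x') = \phi_{\xi,\vphi_\pi}(x')$. Because $\det(K_0)$ is a neighborhood of $1$ in $F^\times$, this forces $\phi_{\xi,\vphi_\pi}$ to be locally constant on $F^\times$, hence smooth. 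In the Archimedean case, I differentiate under the integral sign in the expression above. Derivatives in $x$ of $f(g_0(x) h)$ and $\vphi_\pi(g_0(x) h)$ translate into right-regular derivatives of $f$ (which remain Schwartz) and of $\vphi_\pi$ (which remain of moderate growth, since the Casselman-Wallach class is closed under Lie-algebra action); the bound from the convergence step, applied uniformly for $x$ in a compact neighborhood, produces a dominating integrable function, so dominated convergence justifies differentiation to all orders.

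The main obstacle is the Archimedean absolute convergence, where three growth inputs — rapid Schwartz decay along fibers of $\det$, moderate growth of Casselman-Wallach matrix coefficients, and polynomial volume growth of $\SL_n(F)$ — must be combined into a single integrable bound. Once this estimate is in hand, both smoothness statements follow from either local constancy or a standard differentiation-under-the-integral argument that reuses the same estimate for higher derivatives.
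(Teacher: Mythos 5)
Your proof is correct and follows essentially the same route as the paper: factor out $|x|_F^{n/2}$, restrict the Schwartz function and the moderate-growth Casselman--Wallach matrix coefficient to the closed fiber, parametrize the fiber as $\diag(x,1,\dots,1)\SL_n(F)$, and deduce convergence and smoothness. The only differences are cosmetic: you establish the Archimedean integrability by an explicit decay/volume-growth estimate instead of citing the restriction-of-Schwartz-functions results (\cite{BerZ76}, \cite{AG08}) as the paper does, and your smoothness arguments (local constancy via $\det(K_0)$, differentiation under the integral with Lie-algebra derivatives of $f$ and $\vphi_\pi$) spell out details that the paper compresses into its appeal to uniform convergence on compacta.
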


\begin{proof}
It is enough to show the proposition for the integral
\begin{align}\label{42-1}
\int_{\RG_n(F)_x}
f(g)\vphi_\pi(g)\ud_x g
\end{align}
with any $f\in\CS(\RM_n(F))$ and $\varphi_\pi\in\CC(\pi)$. In this case,
the product $f\cdot\vphi_\pi$ is smooth on $\RG_n(F)$.
Since the fiber $\RG_n(F)_x$ for any $x\in F^\times$ is closed in $\RG_n(F)$ and in $\RM_n(F)$, the restriction of $f$ to the fiber $\RG_n(F)_x$ is a Schwartz function on $\RG_n(F)_x$
(\cite{BerZ76} for $F$ non-Archimedean, and \cite[Theorem 4.6.1]{AG08} for $F$ Archimedean).

When $F$ is non-Archimedean, any $\vphi_\pi(g)\in\CC(\pi)$ is locally constant (smooth)
on $\RG_n(F)$ and hence is smooth on the fiber $\RG_n(F)_x$. This implies that the restriction of $f\cdot\vphi_\pi$ is locally constant and compactly supported on the fiber $\RG_n(F)_x$.
Hence the integral in \eqref{42-1} is absolutely convergent for all $x\in F^\times$, and defines a
smooth function in $x$ over $F^\times$.

When $F$ is Archimedean, since $\pi$ is a Casselman-Wallach representation of $\RG_n(F)$, the matrix coefficient $\vphi_\pi$ has at most of polynomial growth on $\RG_n(F)$ (\cite[Theorem~4.3.5]{Wal88}),
and so does on the fiber $\RG_n(F)_x$. This implies that the restriction of $f\cdot\vphi_\pi$ is a Schwartz function on the fiber $\RG_n(F)_x$ (\cite[Definition~4.1.1]{AG08}).
Thus the integral in \eqref{42-1} is absolutely convergent for all $x\in F^\times$.
Now we write the integral in \eqref{42-1} as
\begin{align}\label{42-2}
\int_{\RG_n(F)_x}
f(g)\vphi_\pi(g)\ud_x g
=
\int_{\SL_n(F)}
f(t_1(x)g)\vphi_\pi(t_1(x)g)\ud_1g
\end{align}
where $t_1(x)=\diag(x,1,\cdots,1)\in\RG_n(F)$ and $\ud_1g$ is the Haar measure of $\SL_n(F)$. It is clear that the absolute convergence of the integral in \eqref{42-2} is uniform when $x$ runs in any
compact subset of $F^\times$. Hence the integral in \eqref{42-1} defines a smooth function in $x$
over $F^\times$.

This finishes the proof.
\end{proof}

For $\xi\in\CS_{\std}(\RG_n(F))$ and $\varphi_\pi\in\CC(\pi)$, the function $\phi_{\xi,\vphi_\pi}(x)$ as given in Proposition \ref{prp:smooth} via the fiber integration \eqref{fibration} is called
a {\sl $\pi$-Schwartz function} on $F^\times$ associated to the pair $(\xi,\vphi_\pi)$. Here is the definition of $\pi$-Schwartz space.

\begin{dfn}[$\pi$-Schwartz Space]\label{def:piSS}
For any $\pi\in\Pi_F(n)$, the space of $\pi$-Schwartz functions is defined by
$$
\CS_\pi(F^\times)
=
\Span\{\phi_{\xi,\vphi_\pi}\in\CC^\infty(F^\times)\mid \xi\in \CS_{\std}(\RG_n(F)),\vphi_\pi\in \CC(\pi)\},
$$
where the $\pi$-Schwartz function $\phi_{\xi,\vphi_\pi}$ associated to a pair $(\xi,\vphi_\pi)$ is defined in \eqref{fibration}.
\end{dfn}

For any $\phi\in \CS_\pi(F^\times)$ and a quasi-character $\chi\in\FX(F^\times)$, define a $\GL_1$ zeta integral $\CZ(s,\phi,\chi)$ associated to the pair $(\phi,\chi)$ to be
\begin{equation}\label{eq:1-zeta}
\CZ(s,\phi,\chi) =
\int_{F^\times}
\phi(x)\chi(x)|x|_F^{s-\frac{1}{2}}\ud^\times x.
\end{equation}
When $\phi=\phi_{\xi,\vphi_\pi}$ for some $\xi\in\CS_{\std}(\RG_n(F))$ and $\varphi_\pi\in\CC(\pi)$, from Theorem \ref{thm:GJ}, we have the following identity of local zeta integrals:
\begin{align}\label{eq:zetas}
\CZ(s,\phi,\chi)=\CZ(s,\xi,\vphi_\pi,\chi),
\end{align}
which holds for $\Re(s)$ sufficiently large and then for all $s\in\BC$ by meromorphic continuation. Therefore, Theorem \ref{thm:GJ} can be restated for the $\GL_1$ zeta integrals
$\CZ(s,\phi,\chi)$.

\begin{thm}[$\GL_1$ Zeta Integrals]\label{thm:1-zeta}
The $\GL_1$ zeta integral $\CZ(s,\phi,\chi)$ as defined in \eqref{eq:1-zeta} for any $\phi\in \CS_\pi(F^\times)$ and any quasi-character $\chi\in\FX(F^\times)$ enjoy the following properties:
\begin{enumerate}
\item The zeta integral $\CZ(s,\phi,\chi)$ is absolutely convergent for $\Re(s)$ sufficiently large, and admits a meromorphic continuation to $s\in \BC$.
\item $\CZ(s,\phi,\chi)$ is a holomorphic multiple of the Langlands local $L$-function $L(s,\pi\times \chi)$ associated to $(\pi,\chi)$ and the standard embedding
$$
{\std}\colon\GL_n(\BC)\times\GL_1(\BC)\rightarrow\GL_n(\BC).
$$
Moreover, when $F$ is non-Archimedean, the fractional ideal generated by the local zeta integrals $\CZ(s,\phi,\chi)$ is of the form:
\begin{align*}
\{\CZ(s,\phi,\chi)\mid \phi\in \CS_\pi(F^\times) \}
=L(s,\pi\times \chi)\cdot \BC[q^s,q^{-s}];
\end{align*}
and when $F$ is Archimedean, the $\GL_1$ zeta integrals $\CZ(s,\phi,\chi)$, with unitary characters $\chi$, have the following property: Let $S_{a,b}$ be the vertical strip for any $a<b$, as defined in \eqref{eq:vst}.
If $P_\chi(s)$ is a polynomial in $s$ such that the product $P_\chi(s)L(s,\pi\times\chi)$ is bounded in the vertical strip $S_{a,b}$, with small neighborhoods at the possible poles of the $L$-function
$L(s,\pi\times \chi)$ removed, then the product $P_\chi(s) \CZ(s,\phi,\chi)$ must be bounded in the same vertical strip $S_{a,b}$, with small neighborhoods at the possible poles of the $L$-function
$L(s,\pi\times \chi)$ removed.
\item When $F$ is non-Archimedean, and $\pi$ is unramified, define
\[
\BL_\pi(x):=\phi_{\xi^\circ,\varphi_\pi^\circ}(x)
\]
where $\xi^\circ(g)=|\det g|^{\frac{n}{2}}{\bf 1}_{\RM_n(\Fo_F)}(g)$, with
${\bf 1}_{\RM_n(\Fo_F)}(g)$ being the characteristic function of $\RM_n(\Fo_F)$,
and $\varphi_\pi^\circ(g)$ is the zonal spherical function associated to $\pi$. Then the following identity
\[
\CZ(s,\BL_\pi,\chi)=L(s,\pi\times\chi)
\]
holds for any unramified characters $\chi$ and all $s\in\BC$ as meromorphic functions in $s$.
\end{enumerate}
\end{thm}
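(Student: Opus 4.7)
The plan is to reduce Theorem \ref{thm:1-zeta} to Theorem \ref{thm:GJ} via the identity \eqref{eq:zetas} together with the Definition \ref{def:piSS} of the $\pi$-Schwartz space. By linearity, it suffices to verify all three parts for a single $\phi = \phi_{\xi,\vphi_\pi}$ built from $\xi \in \CS_{\std}(\RG_n(F))$ and $\vphi_\pi \in \CC(\pi)$. The bridge is the quasi-character integration along the fibers of the determinant map, which turns a $\GL_n$ zeta integral into an iterated integral over $F^\times$.

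For Part (1), I would start from the absolute convergence of the Godement--Jacquet integral in Part (1) of Theorem \ref{thm:GJ}: for $\Re(s)$ sufficiently large, the integral $\int_{\RG_n(F)} |\xi(g)\vphi_\pi(g)| \cdot |\det g|_F^{\Re(s)-\frac{1}{2}} \ud g$ is finite. Via the Weil integration formula along the determinant fibration in \eqref{det}--\eqref{fiber}, this integral equals $\int_{F^\times} \phi_{|\xi|,|\vphi_\pi|}(x)|x|_F^{\Re(s)-\frac{1}{2}} \ud^\times x$, so Fubini--Tonelli applies and yields \eqref{eq:zetas}. Meromorphic continuation of $\CZ(s,\phi,\chi)$ then transfers directly from Part (1) of Theorem \ref{thm:GJ}. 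For Part (2), the identity \eqref{eq:zetas} shows that every $\CZ(s,\phi,\chi)$ is a $\BC$-linear combination of Godement--Jacquet zeta integrals, hence a holomorphic multiple of $L(s,\pi\times\chi)$. In the non-Archimedean case, the containment $\{\CZ(s,\phi,\chi) \mid \phi\in\CS_\pi(F^\times)\} \subseteq L(s,\pi\times\chi)\cdot\BC[q^s,q^{-s}]$ follows from Theorem \ref{thm:GJ}(2); the reverse containment is automatic because any $\CZ(s,\xi,\vphi_\pi,\chi)$ is of the form $\CZ(s,\phi_{\xi,\vphi_\pi},\chi)$ with $\phi_{\xi,\vphi_\pi}\in\CS_\pi(F^\times)$, so the two fractional ideals coincide. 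The Archimedean vertical-strip boundedness on $S_{a,b}$ likewise passes through \eqref{eq:zetas} from Part (2) of Theorem \ref{thm:GJ}.

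Part (3) is an explicit unramified computation: taking $\xi^\circ(g) = |\det g|_F^{n/2}\mathbf{1}_{\RM_n(\Fo_F)}(g)$ and $\vphi_\pi^\circ$ the zonal spherical function, the identity \eqref{eq:zetas} reduces $\CZ(s,\BL_\pi,\chi)$ to $\CZ(s,\xi^\circ,\vphi_\pi^\circ,\chi)$, which equals $L(s,\pi\times\chi)$ by Part (4) of Theorem \ref{thm:GJ}. The main technical obstacle is the Fubini step justifying \eqref{eq:zetas} in full generality, and in particular making sure the inner fiber integral $\phi_{\xi,\vphi_\pi}(x)$ is well-defined for \emph{every} $x\in F^\times$ (not merely almost every) so that \eqref{eq:1-zeta} makes literal sense; this is exactly what Proposition \ref{prp:smooth} provides, so once that proposition is in place the rest is bookkeeping.
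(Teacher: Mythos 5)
Your proposal is correct and follows exactly the paper's route: the paper obtains Theorem \ref{thm:1-zeta} as a restatement of Theorem \ref{thm:GJ} through the fiber-integration identity \eqref{eq:FI-1}--\eqref{fibration} and the resulting equality \eqref{eq:zetas}, with Proposition \ref{prp:smooth} supplying the pointwise well-definedness of $\phi_{\xi,\vphi_\pi}$, and Part (3) read off from Part (4) of Theorem \ref{thm:GJ} with the unramified data $(\xi^\circ,\vphi_\pi^\circ)$. Your treatment of the Fubini step and the two-way containment of the fractional ideal matches the paper's (largely implicit) argument, so there is nothing to add.
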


We are going to discuss the relation between
the $\pi$-Schwartz functions and the square-integrable functions in $L^2(F^\times,\ud^\times x)$.

\begin{prp}\label{prp:phiL2}
For any $\pi\in\Pi_F(n)$, there exists a real number $\alp_\pi$ such that for any $\phi\in\CS_\pi(F^\times)$ and for any $\kappa\geq\alp_\pi+\frac{n}{2}$, the function $|x|_F^\kappa\phi(x)$ belongs to the space
$L^2(F^\times,\ud^\times x)$ of square-integrable functions on $F^\times$.
\end{prp}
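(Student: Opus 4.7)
The plan is to transfer the $L^2$-integrability question to the Mellin side via the identification $\CS_\pi(F^\times)=\CM^{-1}(\CL_\pi)$ in~\eqref{CSpi-CLpi}, and then exploit the fact, furnished by Theorem~\ref{thm:1-zeta} together with~\eqref{eq:zetas}, that for $\phi\in\CS_\pi(F^\times)$ and any unitary $\omega$ the Mellin transform
\[
\CM(\phi)(\chi_{s,\omega})=\CZ\!\left(s+\tfrac{1}{2},\,\phi,\,\chi_{0,\omega}\right)
\]
is a holomorphic multiple of the local $L$-function $L(s+\tfrac{1}{2},\pi\times\omega)$, with growth controlled by the defining conditions of $\CL_\pi$.

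First, I would apply the Plancherel formula for the Mellin transform on $F^\times$. Since the Mellin transform of $|x|_F^\kappa\phi(x)$ equals $\CM(\phi)(\chi_{s+\kappa,\omega})$, Plancherel yields
\[
\int_{F^\times}|x|_F^{2\kappa}|\phi(x)|^2\,\ud^\times x=c_F\sum_{\omega\in\Omega_F^\wedge}\int_{\Gamma}\bigl|\CM(\phi)(\chi_{\kappa+it,\omega})\bigr|^2\,\ud t,
\]
where $\Gamma$ is the entire imaginary line in the Archimedean case and one fundamental period of length $2\pi/\log q$ in the non-Archimedean case, and $c_F>0$ depends only on measure normalizations. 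It then suffices to show that the right-hand side is finite once $\kappa\geq\alpha_\pi+\tfrac{n}{2}$.

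Next I would choose $\alpha_\pi$. The poles of the standard $L$-function $L(s,\pi\times\omega)$ lie in a left half-plane whose right edge is bounded uniformly in $\omega\in\Omega_F^\wedge$ by a constant depending only on the Langlands parameters of $\pi$ (in the Archimedean case the Gamma-poles only migrate further to the left as the discrete parameter $p$ of $\omega$ grows, and in the non-Archimedean case highly ramified twists have $L(s,\pi\times\omega)\equiv 1$). One can therefore pick a real $\alpha_\pi$ such that $L(s+\tfrac{1}{2},\pi\times\omega)$ is holomorphic on $\Re(s)>\alpha_\pi+\tfrac{n}{2}$ uniformly in $\omega$; the extra shift by $\tfrac{n}{2}$ simply encodes the normalization $\xi(g)=|\det g|_F^{n/2}f(g)$ from~\eqref{GJ-SF} built into every $\phi\in\CS_\pi(F^\times)$.

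Finally, I would bound the integrand along $\Gamma$. In the non-Archimedean case only finitely many $\omega\in\Omega_F^\wedge$ contribute to a given $\phi$ (Definition~\ref{dfn:CZ}(3)), and Theorem~\ref{thm:1-zeta}(2) presents each $\CM(\phi)(\chi_{\kappa+it,\omega})$ as $L(\kappa+\tfrac{1}{2}+it,\pi\times\omega)$ times a Laurent polynomial in $q^{\kappa+it}$, both of which are bounded along the compact $\Gamma$, yielding a finite sum of finite integrals. In the Archimedean case, Stirling's formula~\eqref{eq:stirling} forces $|L(\kappa+\tfrac{1}{2}+it,\pi\times\omega)|$ to decay exponentially in $|t|$, while the defining polynomial bounds of $\CL_\pi$ (Definition~\ref{dfn:CZ}(3) and the seminorms $\mu_{a,b:P}$) furnish estimates
\[
\bigl|\CM(\phi)(\chi_{\kappa+it,\omega})\bigr|\leq C_N(1+|t|)^{-N}(1+|p|)^{-N}
\]
for every $N\geq 0$, which simultaneously deliver $L^2$-decay in $t$ and absolute summability in $p$. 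The principal obstacle is precisely this Archimedean step, where the $\CL_\pi$-bounds must be marshaled uniformly in both the continuous parameter $t$ and the discrete parameter $p$ of $\omega$; tracking the shift $\tfrac{n}{2}$ from~\eqref{GJ-SF} through the Mellin transform then accounts for the exact threshold $\kappa\geq\alpha_\pi+\tfrac{n}{2}$ asserted in the proposition.
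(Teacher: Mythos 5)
Your route through the Mellin side is genuinely different from the paper's, which never leaves the ``physical'' side: there one writes $\phi=\phi_{\xi,\varphi_\pi}$, unfolds $\int_{F^\times}|\phi(x)|^2|x|_F^{\alpha_0}\,\ud^\times x$ into an integral over the fiber product $(\RM_n(F)\times\RM_n(F))^\circ=\{(X,Y)\mid\det X=\det Y\}$, uses that the restriction of the Schwartz function $f\otimes\overline{f}$ to this closed submanifold is again Schwartz, and uses the absolute convergence of the Godement--Jacquet zeta integral to bound $|\det g|_F^{s}\varphi_\pi(g)$ near the boundary, obtaining convergence for $\alpha_0\geq 2\alpha_\pi+n$. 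Your plan could in principle be carried out, but as written it has genuine gaps.

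First, circularity: the identification $\CS_\pi(F^\times)=\CM^{-1}(\CL_\pi)$ of \eqref{CSpi-CLpi} is not available at this point, since in the paper it is Corollary \ref{cor:SpiFpi}, whose proof goes through Proposition \ref{prp:Spi-F}, which itself quotes Proposition \ref{prp:phiL2} --- the statement you are proving. All you may use is the forward inclusion $\CM(\CS_\pi(F^\times))\subset\CL_\pi$, coming from Theorem \ref{thm:1-zeta}. Second, your opening display applies Plancherel to $|x|_F^\kappa\phi(x)$, which presupposes exactly the square-integrability to be proved; to legitimize it you must first check $|x|_F^\kappa\phi(x)\in L^1(F^\times,\ud^\times x)$ for $\kappa$ large (this does follow from the absolute convergence in Theorem \ref{thm:1-zeta}, with a threshold depending only on $\pi$) and then invoke the fact that an $L^1$ function on a locally compact abelian group with square-integrable Fourier transform lies in $L^2$ with equality of norms --- neither step appears in your write-up. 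Third, for $F=\BC$ the bound $|\CM(\phi)(\chi_{\kappa+it,\omega})|\leq C_N(1+|t|)^{-N}(1+|p|)^{-N}$ needs uniformity in the discrete parameter $p$, whereas Theorem \ref{thm:1-zeta} as stated controls $P_\chi(s)\CZ(s,\phi,\chi)$ only for each fixed $\chi$; the joint uniformity is precisely the delicate Archimedean input (ultimately \cite[Section 4]{J09}) that you would have to extract before summing over $p\in\BZ$, and you assert it rather than prove it. (Minor point: since $\alpha_\pi$ is an unspecified constant, the shift by $\tfrac{n}{2}$ requires no special tracking.) With these repairs your Mellin--Plancherel argument could be completed, but the paper's direct estimate on the fiber product avoids all of them.
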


\begin{proof}
For any $\alp_0\in\BR$, we consider the following inner product of the function $|x|^{\frac{\alp_0}{2}}\phi(x)$ for any $\phi(x)\in\CS_\pi(F^\times)$. We write $\phi=\phi_{\xi,\vphi_\pi}$ for some
$\xi\in\CS_{\std}(\RG_n(F))$ and $\vphi_\pi\in\CC(\pi)$ and write
$\xi(g)=|\det g|_F^{\frac{n}{2}}f(g)$ with $f\in\CS(\RM_n(F))$. Then
\begin{align}\label{phiL2-1}
\int_{F^\times}
\phi(x)\wb{\phi(x)}
|x|_F^{\alp_0}
\ud^\times x
=&
\int_{F^\times}
|x|_F^{\alp_0+n}
\ud^\times x
\int_{\det g_1= \det g_2 = x}
f(g_1)\vphi_\pi(g_1)
\wb{f(g_2)}\wb{\vphi_\pi(g_2)}
\ud_x g_1 \ud_x g_2\nonumber
\\
=&
\int_{(\RG_n(F)\times \RG_n(F))^\circ}
f(g_1)\vphi_\pi(g_1)
\wb{f(g_2)}\wb{\vphi_\pi(g_2)}
|\det g_1|_F^{\alp_0+n}\ud(g_1,g_2)^\circ.
\end{align}
where $(\RG_n(F)\times \RG_n(F))^\circ:=\{(g_1,g_2)\in\RG_n(F)\times\RG_n(F)\mid \det g_1=\det g_2\}$ and $\ud(g_1,g_2)^\circ$ is a Haar measure on $(\RG_n(F)\times \RG_n(F))^\circ$, which
makes the above fiber integration factorization hold.

We consider the following natural embedding
\[
(\RG_n(F)\times \RG_n(F))^\circ\hookrightarrow(\RM_n(F)\times \RM_n(F))^\circ
\]
with an open dense image,
where
\[
(\RM_n(F)\times \RM_n(F))^\circ:=\{(X,Y)\in\RM_n(F)\times\RM_n(F)\mid \det X=\det Y\},
\]
which is the fiber product with respect to the determinant map: $X\mapsto \det X$, and is a closed
subvariety of the affine space $\RM_n(F)\times\RM_n(F)$. The natural group
action of $\RG_n\times\RG_n$ on $\RM_n\times\RM_n$ via
\[
(g,h)((X,Y))=(gX,hY)
\]
for $(g,h)\in\RG_n\times\RG_n$ and $(X,Y)\in\RM_n\times\RM_n$ yields the action of
$(\RG_n(F)\times \RG_n(F))^\circ$ on $(\RM_n(F)\times \RM_n(F))^\circ$ by restriction. Take
$\ud^+X\wedge\ud^+Y$ to be an additive Haar measure on $\RM_n(F)\times\RM_n(F)$ with
$|\det gh|_F^n$ the modulus function of the action of $\RG_n\times\RG_n$ on
$\RM_n\times\RM_n$. Take the measure $\ud^+ (X,Y)^\circ$ on $(\RM_n(F)\times \RM_n(F))^\circ$,
which is the pullback of the measure $\ud^+X\wedge\ud^+Y$ through the fiber product embedding.
Then the modulus function of the action of $(\RG_n(F)\times \RG_n(F))^\circ$ on $(\RM_n(F)\times \RM_n(F))^\circ$ is
\[
|\det gh|_F^n=|\det g|_F^{2n}=|\det h|_F^{2n}
\]
for any $(g,h)\in(\RG_n(F)\times \RG_n(F))^\circ$. It is easy to check that
\[
\frac{\ud^+ (g,h)^\circ}{|\det gh|_F^n}
\]
is a Haar measure on $(\RG_n(F)\times \RG_n(F))^\circ$. Hence there is a constant $c>0$, such
that
\[
\ud(g,h)^\circ=c\cdot\frac{\ud^+ (g,h)^\circ}{|\det gh|_F^n}.
\]
The integral in \eqref{phiL2-1} can be written as
\begin{align}\label{phiL2-2}
\int_{(\RM_n(F)\times \RM_n(F))^\circ}
f(X)\vphi_\pi(X)
\wb{f(Y)}\wb{\vphi_\pi(Y)}
|\det X|_F^{\alp_0-n}\ud^+(X,Y)^\circ
\end{align}
Here we assume that $\alp_0\geq n$ and both $\vphi_\pi(g_1)$ and $\vphi_\pi(g_2)$ are viewed as measurable functions on $\RM_n(F)$ that extends by zero to the boundary $\RM_n(F)\bs \GL_n(F)$.

Since the $F$-analytical manifold $(\RM_n(F)\times \RM_n(F))^\circ$
is closed in $\RM_n(F)\times \RM_n(F)$, the restriction of the Schwartz function $f(g_1)\times \wb{f(g_2)}$ to $(\RM_n(F)\times \RM_n(F))^\circ$ is still a Schwartz function, which is smooth and compactly supported when $F$ is non-Archimedean, and is in the sense of \cite{AG08} when $F$ is Archimedean. By Theorem \ref{thm:GJ}, the zeta integral of Godement-Jacquet $\CZ(s,f,\vphi_\pi,\chi)$ converges absolutely for
$\Re(s)$ sufficiently large. It follows that for any $\pi\in\Pi_F(n)$, there exists a real number $\alp_\pi$ such that for any $\vphi_\pi\in\CC(\pi)$ and any $\Re(s)\geq\alp_\pi$, the product
$|\det(g)|_F^{s}\vphi_\pi(g)$ is bounded when $\det g$ tends to zero.

We write  the $F$-analytical closed submanifold $(\RM_n(F)\times \RM_n(F))^\circ$ as a union of two closed submanifolds:
\[
(\RM_n(F)\times \RM_n(F))^\circ=(\RM_n(F)\times \RM_n(F))^\circ_{\geq\veps}\cup(\RM_n(F)\times \RM_n(F))^\circ_{\leq\veps},
\]
where
\[
(\RM_n(F)\times \RM_n(F))^\circ_{\geq\veps} = \{(g_1,g_2)\in \RM_n(F)^{\Del\det}\mid \
|\det g_1|_F\geq \veps\}
\]
and
\[
(\RM_n(F)\times \RM_n(F))^\circ_{\leq\veps} = \{(g_1,g_2)\in \RM_n(F)^{\Del\det}\mid \
|\det g_1|_F\leq \veps\}.
\]
For any $\pi\in\Pi_F(n)$, the restriction of the product
$\vphi_\pi(g_1) \wb{\vphi_\pi(g_2)}\cdot|\det g_1|_F^{s-n}$ to the closed submanifold
$(\RM_n(F)\times \RM_n(F))^\circ_{\geq\veps}$
is of moderate growth and its restriction to the closed submanifold $(\RM_n(F)\times \RM_n(F))^\circ_{\leq\veps}$ is bounded when $\Re(s)\geq2\alp_\pi+n$.
It is also clear the Schwartz function $f(g_1)\times \wb{f(g_2)}$ on $(\RM_n(F)\times\RM_n(F))^\circ$
is still a Schwartz function when restricted to either the closed submanifold
$(\RM_n(F)\times \RM_n(F))^\circ_{\geq\veps}$ or  the closed submanifold
$(\RM_n(F)\times \RM_n(F))^\circ_{\leq\veps}$.
Hence for any $\alp_0\in\BR$ with $\alp_0\geq2\alp_\pi+n$, the following integral
$$
\int_{(\RM_n(F)\times \RM_n(F))^\circ}
f(X)\vphi_\pi(X)
\wb{f(Y)}\wb{\vphi_\pi(Y)}
|\det X|_F^{\alp_0-n}\ud^+(X,Y)^\circ
$$
converges absolutely, and so does the following integral
\[
\int_{F^\times}
\phi(x)\wb{\phi(x)}
|x|_F^{\alp_0}
\ud^\times x.
\]
It follows that the product $\phi(x) |x|_F^\kappa$ is square integrable on $F^\times$ for
$\kappa=\frac{\alp_0}{2}\geq\alp_\pi+\frac{n}{2}$.
\end{proof}

\begin{cor}\label{L2}
When $\pi\in\Pi_F(n)$ is unitarizable, for any $\phi\in\CS_\pi(F^\times)$, the function
$|x|_F^{\frac{n}{2}}\cdot\phi(x)$ belongs to the space $L^2(F^\times,\ud^\times x)$ of square-integrable functions on $F^\times$.
\end{cor}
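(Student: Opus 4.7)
The plan is to deduce Corollary \ref{L2} directly from Proposition \ref{prp:phiL2} by showing that when $\pi\in\Pi_F(n)$ is unitarizable, the constant $\alp_\pi$ appearing in the proof of that proposition may be taken to be $0$. Inspecting the argument for Proposition \ref{prp:phiL2}, the role of $\alp_\pi$ is precisely to control the product $|\det g|_F^s\vphi_\pi(g)$ as $\det g\to 0$: any real $\alp_\pi$ such that $|\det g|_F^s\vphi_\pi(g)$ is bounded in a neighborhood of $\det g=0$ for every $\vphi_\pi\in\CC(\pi)$ and every $\Re(s)\geq \alp_\pi$ is admissible in the conclusion $\kappa\geq\alp_\pi+\tfrac{n}{2}$.

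The key step is therefore to verify that for a unitarizable $\pi$, every matrix coefficient $\vphi_\pi\in\CC(\pi)$ is already bounded on all of $\RG_n(F)$, which allows $\alp_\pi=0$. To see this, one uses the $\RG_n(F)$-invariant Hermitian inner product $\langle\cdot,\cdot\rangle$ on $\pi$ (in the Archimedean case, on the Hilbert space completion of the Casselman--Wallach representation $\pi$, restricted to smooth vectors). This inner product identifies $\wt{\pi}$ with the complex conjugate of $\pi$, so that any $\vphi_\pi(g)=(\pi(g)v,\wt{v})$ can be rewritten as $\langle \pi(g)v,w\rangle$ for a corresponding vector $w$ in the space of $\pi$. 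Unitarity of $\pi(g)$ together with the Cauchy--Schwarz inequality then yields
\[
|\vphi_\pi(g)|=|\langle \pi(g)v,w\rangle|\leq \|\pi(g)v\|\cdot\|w\|=\|v\|\cdot\|w\|,
\]
so $\vphi_\pi$ is uniformly bounded on $\RG_n(F)$. In particular, for every $s\in\BC$ with $\Re(s)\geq 0$, the function $|\det g|_F^s\vphi_\pi(g)$ remains bounded as $\det g\to 0$, hence $\alp_\pi=0$ is admissible.

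With $\alp_\pi=0$, Proposition \ref{prp:phiL2} applied to $\kappa=\tfrac{n}{2}$ (which satisfies $\kappa\geq \alp_\pi+\tfrac{n}{2}$) gives $|x|_F^{n/2}\phi(x)\in L^2(F^\times,\ud^\times x)$ for every $\phi\in\CS_\pi(F^\times)$, as required. The only mildly delicate point is the passage from unitarizability to boundedness of smooth matrix coefficients in the Archimedean, Casselman--Wallach setting; this, however, is standard, since the smooth vectors are a dense subspace of the unitary Hilbert space completion and the smooth pairing coincides with the restriction of the unitary inner product, so the Cauchy--Schwarz bound applies verbatim. No other step poses any real obstacle.
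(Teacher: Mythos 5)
Your proof is correct and follows essentially the same route as the paper: unitarizability gives boundedness of all smooth matrix coefficients, which lets the estimate in the proof of Proposition \ref{prp:phiL2} run with $\alp_\pi=0$, i.e. at the endpoint $\kappa=\tfrac{n}{2}$ (the paper phrases this as re-running that proof with $\alp_0=n$ after pulling out the constant $c(\vphi_\pi)$ bounding the coefficient). The only difference is cosmetic: you spell out the Cauchy--Schwarz argument for boundedness of matrix coefficients, including in the Casselman--Wallach setting, which the paper simply asserts.
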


\begin{proof}
If $\pi\in\Pi_F(n)$ is unitarizable, then the matrix coefficient $\varphi_\pi(g)$ is bounded above over
$G_n(F)$. For $\phi\in\CS_\pi(F^\times)$, we write $\phi=\phi_{\xi,\vphi_\pi}$ with
$\xi\in\CS_{\std}(\RG_n(F))$ and $\vphi_\pi\in\CC(\pi)$, and write
$\xi(g)=|\det g|_F^{\frac{n}{2}}\cdot f(g)$ with $f\in\CS(\RM_n(F))$. We compute the inner product
of $|x|_F^{\frac{n}{2}}\cdot\phi(x)$ as follows:
\begin{align}\label{L2-1}
\int_{F^\times}
\phi(x)\wb{\phi(x)}|x|_F^n
\ud^\times x
&\leq
\int_{F^\times}|x|_F^{2n}
\int_{\RG_n(F)_x}
|f(g_1)\vphi_\pi(g_1)|\ud_x g_1
\int_{\RG_n(F)_x}
|f(g_2)\vphi_\pi(g_2)|
\ud_x g_2
\ud^\times x\nonumber\\
&\leq
c(\vphi_\pi)\cdot
\int_{F^\times}|x|_F^{2n}
\int_{\RG_n(F)_x}
|f(g_1)|\ud_x g_1
\int_{\RG_n(F)_x}
|f(g_2)|
\ud_x g_2
\ud^\times x
\end{align}
for some positive constant $c(\vphi_\pi)$ depending on $\vphi_\pi$.
Following the proof of Proposition \ref{prp:phiL2}, we obtain that
\begin{align}\label{L2-2}
\int_{F^\times}
\phi(x)\wb{\phi(x)}|x|_F^n
\ud^\times x
\leq c\cdot c(\vphi_\pi)
\int_{(\RM_n(F)\times\RM_n(F))^\circ}
|f(X)|\cdot|\wb{f(Y)}|\ud(X,Y)^\circ.
\end{align}
The integral on the right-hand side of \eqref{L2-2} comes from the integral in \eqref{phiL2-2} with
$\alp_0=n$. As explained in the proof of Proposition \ref{prp:phiL2}, the product
$f(X)\times\wb{f(Y)}$ is a Schwartz function on $(\RM_n(F)\times\RM_n(F))^\circ$. Hence
the integral on the right-hand side of \eqref{L2-2} converges.
\end{proof}

By using Proposition \ref{prp:phiL2} and Theorem \ref{thm:1-zeta}, together with Theorem \ref{thm:MT}, we are able to understand the $\pi$-Schwartz space $\CS_\pi(F^\times)$ by means of the $L$-functions $L(s,\pi\times\chi)$ for
any $\pi\in\Pi_F(n)$.

\begin{prp}\label{prp:Spi-F}
For any $\pi\in\Pi_F(n)$, the $\pi$-Schwartz space $\CS_\pi(F^\times)$ is contained in the space $\CF(F^\times)$ as defined in Definition \ref{dfn:CF}
\end{prp}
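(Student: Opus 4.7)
The plan is to pass through the Mellin transform using Theorem \ref{thm:MT}, which gives a bijective correspondence between $\CF(F^\times)$ and $\CZ(\FX(F^\times))$. The strategy is first to show $\CM(\phi) \in \CZ(\FX(F^\times))$ for every $\phi \in \CS_\pi(F^\times)$, and then to verify that the Mellin inversion recovers $\phi$ exactly, forcing $\phi$ to lie in $\CF(F^\times)$.

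Fix $\phi \in \CS_\pi(F^\times)$. By Proposition \ref{prp:smooth}, $\phi$ is smooth on $F^\times$, and by Proposition \ref{prp:phiL2} the function $|x|_F^\kappa \phi(x)$ lies in $L^2(F^\times,\ud^\times x)$ for $\kappa$ sufficiently large. The latter guarantees absolute convergence of $\CM(\phi)(\chi_{s,\omega})$ on a right half-plane $\Re(s) > \sigma_0$. Relating the Mellin transform directly to the $\GL_1$ zeta integral defined in \eqref{eq:1-zeta}, one sees
\[
\CM(\phi)(\chi_{s-\frac{1}{2},\omega}) = \CZ(s,\phi,\omega)
\]
for each unitary $\omega \in \Omega_F^\wedge$. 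Applying Theorem \ref{thm:1-zeta} $\omega$-by-$\omega$ yields the meromorphic continuation of $\CM(\phi)$ to all of $\FX(F^\times)$, together with the following: in the non-Archimedean case, $\CZ(s,\phi,\omega) \in L(s,\pi \times \omega) \cdot \BC[q^s,q^{-s}]$ and vanishes for all but finitely many $\omega$; in the Archimedean case, the required polynomial-weighted boundedness on vertical strips with small neighborhoods of poles of $L(s,\pi \times \omega)$ removed. These are precisely the defining conditions of $\CL_\pi(\FX(F^\times))$. Hence $\CM(\phi) \in \CL_\pi \subseteq \CZ(\FX(F^\times))$ by Proposition \ref{prp:Lpi-Z}.

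By Theorem \ref{thm:MT}, $\CM^{-1}(\CM(\phi)) \in \CF(F^\times)$, so it only remains to verify the pointwise identity $\phi = \CM^{-1}(\CM(\phi))$, from which $\phi \in \CF(F^\times)$ follows. In the non-Archimedean case, the compact support of the underlying $f \in \CS(\RM_n(F))$ in the fiber integration \eqref{fibration} forces $\phi$ to vanish for $|x|_F$ sufficiently large, so the residue identity \eqref{eq:MIT-na} follows by a direct contour-shift computation summed over $\omega \in \Omega_F^\wedge$. In the Archimedean case, the Mellin--Plancherel theorem applied to the weighted $L^2$ function $|x|_F^\kappa \phi(x)$ yields equality with the contour integral \eqref{eq:MIT-ar} almost everywhere on an admissible vertical line $\Re(s) = \sigma$; smoothness of both sides then promotes this to pointwise equality.

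The main technical obstacle is the last step, namely matching the abstract $L^2$-Mellin inversion with the explicit forms \eqref{eq:MIT-ar} and \eqref{eq:MIT-na} prescribed in Theorem \ref{thm:MT}, so that the element produced by $\CM^{-1}$ is literally $\phi$ rather than merely a representative in $\CF(F^\times)$ with the same Mellin data. Once this identification is established, the inclusion $\CS_\pi(F^\times) \subseteq \CF(F^\times)$ is immediate.
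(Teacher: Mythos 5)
Your proposal is correct and follows essentially the same route as the paper: first show via Theorem \ref{thm:1-zeta} and Proposition \ref{prp:Lpi-Z} that $\CM(\phi)\in\CL_\pi\subset\CZ(\FX(F^\times))$, then identify $\phi$ with $\CM^{-1}(\CM(\phi))\in\CF(F^\times)$ by twisting with $|x|_F^\kappa$ (Proposition \ref{prp:phiL2}) so that both functions lie in $L^2(F^\times,\ud^\times x)$, invoking Mellin--Plancherel uniqueness, and upgrading to pointwise equality by smoothness. The only (cosmetic) difference is that the paper runs this weighted-$L^2$ comparison uniformly in the Archimedean and non-Archimedean cases, whereas you replace it by a direct residue computation in the non-Archimedean case.
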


\begin{proof}
Note first that the $\GL_1$ zeta integral attached to $\phi\in\CS_\pi(F^\times)$ is the same of the Mellin transform of $\phi$ up to a shift in $s$ by the unramified part of $\chi$.
By Theorem \ref{thm:1-zeta} and Proposition \ref{prp:Lpi-Z}, the image of $\CS_\pi(F^\times)$ under Mellin transform is contained the space $\CL_\pi(\FX(F^\times))$ and hence in the space $\CZ(\FX(F^\times))$.
By Theorem \ref{thm:MT}, for any $\phi\in \CS_\pi(F^\times)$, there exists $\phi_0\in \CF(F^\times)$, such that
\begin{align}\label{CM=0}
\CM(\phi-\phi_0)(\chi) = 0
\end{align}
holds identically for any quasi-character $\chi\in\FX(F^\times)$.
It remains to show that $\phi-\phi_0 = 0$ holds identically.
By smoothness of $\phi$ and $\phi_0$, it suffices to show that after unramified twist, both $\phi$ and $\phi_0$ are square-integrable on $F^\times$.

For $\phi_0\in \CF(F^\times)$, there exists $s_0\in \BR$ such that for any $\Re(s)>s_0$,
$$
\lim_{x\to 0}
\phi_0(x)|x|_F^{s+1} = 0
$$
and the limit is preserved by differentiation on both sides when $F$ is Archimedean. It follows that
$\phi_0(x)|x|_F^{s}$ is indeed square integrable on $F^\times$ for $\Re(s)>s_0$,
via the asymptotic formula appearing in the definition of $\CF(F^\times)$.

For any $\phi\in \CS_\pi(F^\times)$, by Proposition \ref{prp:phiL2}, there exists $\alp_\pi\in\BR_{>0}$ such that the function $|x|_F^s\phi(x)$ is square-integrable if $\Re(s)\geq\alp_\pi+\frac{n}{2}$.
By taking $\kappa>\max\{s_0,\alp_\pi+\frac{n}{2}\}$, we obtain that
both $\phi_0(x)|x|_F^\kappa$ and $\phi(x)|x|_F^\kappa$ are square-integrable over $F^\times$. From \eqref{CM=0}, we obtain that the Mellin transform
\[
\CM(\phi(x)|x|_F^\kappa-\phi_0(x)|x|_F^\kappa)(\chi)=0
\]
for all quasi-characters $\chi\in\FX(F^\times)$, in particular, for all unitary characters $\chi$ of $F^\times$. Therefore, by the Mellin inversion formula (Theorem \ref{thm:MT}),
we obtain that
\[
\phi(x)|x|_F^\kappa-\phi_0(x)|x|_F^\kappa=0
\]
as functions in the space $L^2(F^\times,\ud^\times x)$. Since both $\phi(x)$ and $\phi_0(x)$ are smooth, we must have that $\phi(x)=\phi_0(x)\in\CF(F^\times)$.
\end{proof}

Finally we are ready to characterize the Mellin inversion $\CM^{-1}(\CL_\pi)$ in terms of the $\pi$-Schwartz space $\CS_\pi(F^\times)$ as in \eqref{CSpi-CLpi}.
\begin{cor}\label{cor:SpiFpi}
For any $\pi\in\Pi_F(n)$, the Mellin inversion $\CM^{-1}(\CL_\pi)$ coincides with the space $\CS_\pi(F^\times)$:
\[
\CS_\pi(F^\times)=\CM^{-1}(\CL_\pi)\subset\CC^\infty(F^\times).
\]
In particular, the space $\CC_c^\infty(F^\times)$ of smooth compactly supported functions on $F^\times$ is contained in the $\pi$-Schwartz space $\CS_\pi(F^\times)$.
\end{cor}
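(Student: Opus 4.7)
The plan is to establish the identity $\CS_\pi(F^\times) = \CM^{-1}(\CL_\pi)$ by double containment, leveraging the bijectivity of the Mellin transform on $\CF(F^\times)$ from Theorem \ref{thm:MT}; the auxiliary statement $\CC_c^\infty(F^\times) \subset \CS_\pi(F^\times)$ will then follow directly.

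For the inclusion $\CS_\pi(F^\times) \subset \CM^{-1}(\CL_\pi)$: given $\phi \in \CS_\pi(F^\times)$, Proposition \ref{prp:Spi-F} places $\phi$ inside $\CF(F^\times)$, so that $\CM(\phi)$ is well-defined by Theorem \ref{thm:MT}. A direct change of variable identifies $\CM(\phi)(\chi_{s,\ome})$ with the $\GL_1$ zeta integral $\CZ(s+\frac{1}{2},\phi,\ome)$ defined in \eqref{eq:1-zeta}. Then Theorem \ref{thm:1-zeta} verifies the defining properties of $\CL_\pi$: $\CM(\phi)$ is a holomorphic multiple of $L(s,\pi\times\ome)$ with the required Laurent-polynomial behavior in the non-Archimedean setting, or the vertical-strip boundedness in the Archimedean setting.

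For the reverse containment, given $\phi_0 \in \CM^{-1}(\CL_\pi) \subset \CF(F^\times)$, the task is to produce $\phi_1 \in \CS_\pi(F^\times)$ with $\CM(\phi_1) = \CM(\phi_0)$; then injectivity of $\CM$ on $\CF(F^\times)$ forces $\phi_0 = \phi_1 \in \CS_\pi(F^\times)$. In the non-Archimedean case, this surjectivity is immediate from Theorem \ref{thm:1-zeta}(2), which supplies the full fractional ideal $L(s,\pi\times\ome) \cdot \BC[q^s,q^{-s}]$ as zeta integrals for each $\ome$; combined with the finite-support-in-$\ome$ condition in the definition of $\CL_\pi$ and linearity, any $\Fz \in \CL_\pi$ can be assembled. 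In the Archimedean case, one must realize an arbitrary meromorphic multiple of $L(s,\pi\times\ome)$ with the prescribed vertical-strip growth as the zeta integral of some $\pi$-Schwartz function; I would use the boundedness characterization in Theorem \ref{thm:1-zeta} together with completeness of $\CL_\pi$ under the semi-norm topology introduced after Proposition \ref{prp:Lpi-Z}. This Archimedean surjectivity is the main obstacle, being more delicate than the $p$-adic ideal-theoretic argument.

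Finally, once the equality is established, the inclusion $\CC_c^\infty(F^\times) \subset \CS_\pi(F^\times)$ reduces to verifying that $\CM(\CC_c^\infty(F^\times)) \subset \CL_\pi$. For $\phi \in \CC_c^\infty(F^\times)$, the Mellin transform $\CM(\phi)(\chi_{s,\ome})$ is a Laurent polynomial in $q^{\pm s}$ supported on finitely many $\ome$ (non-Archimedean case) or an entire Paley--Wiener function with rapid decay in vertical strips (Archimedean case). Since $L(s,\pi\times\ome)$ is nonvanishing as a product of $\Gam$-factors or as a rational function in $q^{-s}$ with no zeros, the quotient $\CM(\phi)/L(s,\pi\times\ome)$ is holomorphic, and polynomial growth times rapid decay yields the required bounds. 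Hence $\CM(\phi) \in \CL_\pi$, giving $\phi \in \CM^{-1}(\CL_\pi) = \CS_\pi(F^\times)$.
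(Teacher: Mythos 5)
Your overall route is the same as the paper's: use Proposition \ref{prp:Spi-F} to place $\CS_\pi(F^\times)$ inside $\CF(F^\times)$ (so that $\CM$ is injective there by Theorem \ref{thm:MT}), identify $\CM(\phi)$ with the zeta integral $\CZ(s,\phi,\chi)$, invoke Theorem \ref{thm:1-zeta} for the containment $\CM(\CS_\pi(F^\times))\subset\CL_\pi$, and finish the statement about $\CC_c^\infty(F^\times)$ by a Paley--Wiener argument (nonvanishing of the local $L$-factor, Laurent-polynomial behavior in the non-Archimedean case, rapid decay in vertical strips in the Archimedean case). All of that matches the paper's proof.

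The genuine gap is exactly the step you flag and do not close: the reverse inclusion requires that the zeta-integral image of $\CS_\pi(F^\times)$ be \emph{all} of $\CL_\pi$. You get this in the non-Archimedean case from the fractional-ideal statement (modulo a routine point you omit: that statement is for one fixed $\ome$ at a time, so to hit a given $\Fz\in\CL_\pi$ supported on finitely many $\ome$ you must also isolate each $\ome$-component, e.g.\ by averaging a $\pi$-Schwartz function against $\ome$ over $\Fo_F^\times$, a finite sum of multiplicative translates which stays in $\CS_\pi(F^\times)$). In the Archimedean case, however, your proposed fix --- ``boundedness characterization together with completeness of $\CL_\pi$ under the semi-norm topology'' --- is not an argument: completeness of $\CL_\pi$ says nothing about whether the image of $\CS_\pi(F^\times)$, which a priori could be a proper closed or non-dense subspace, exhausts it; you would still need density and closedness of the image, neither of which you address. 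The paper does not re-prove this surjectivity either; it reads the equality $\CM(\CS_\pi(F^\times))=\CL_\pi$ directly out of Theorem \ref{thm:1-zeta}, i.e.\ out of the Archimedean Godement--Jacquet theory as completed for smooth vectors in \cite[Section 4]{J09}, where the span of the zeta integrals is identified with precisely the space of holomorphic multiples of $L(s,\pi\times\ome)$ satisfying the stated vertical-strip bounds. So the missing Archimedean surjectivity should be quoted from that local theory (as the paper does), rather than attempted via a completeness argument that cannot deliver it.
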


\begin{proof}
By Proposition \ref{prp:Spi-F}, we have that the space $\CS_\pi(F^\times)$ is contained in the space $\CF(F^\times)$. By Theorem \ref{thm:1-zeta}, the Mellin transform ($\GL_1$ zeta integral) of the space $\CS_\pi(F^\times)$ is equal to the space $\CL_\pi=\CL_\pi(\FX(F^\times))$. Hence we obtain that $\CS_\pi(F^\times)=\CM^{-1}(\CL_\pi)$, because the Mellin transform is a bijective
correspondence between the space $\CF(F^\times)$ and the space $\CZ(\FX(F^\times))$ (Theorem \ref{thm:MT}). Finally, since the space $\CL_\pi$ contains the space of holomorphic functions on $\FX(F^\times)$ that are of Paley-Wiener type along the vertical strips,
it is clear from Theorem \ref{thm:MT} again that $\CC_c^\infty(F^\times)$ is contained in the $\pi$-Schwartz space $\CS_\pi(F^\times)$.
\end{proof}

The relevant functional equation for $\GL_1$ zeta integrals will be discussed in the next section.

\subsection{Fourier operators}\label{ssec:FO-GL1}
We define a Fourier operator $\CF_{\pi,\psi}$ from the $\pi$-Schwartz space $\CS_\pi(F^\times)$ to the $\wt{\pi}$-Schwartz space $\CS_{\wt{\pi}}(F^\times)$ for any $\pi\in \Pi_F(n)$ with smooth contragredient
$\wt{\pi}$ and prove the functional equation for $\GL_1$ zeta integrals $\CZ(s,\phi,\chi)$.

For $\phi\in\CS_\pi(F^\times)$, the Fourier operator $\CF_{\pi,\psi}(\phi)$ is defined by the following diagram:
\begin{align}\label{diag:F}
\xymatrix{
\CS_{\std}(\RG_n(F))\otimes \CC(\pi)\ar[d]\ar[rrr]^{(\CF_{\GJ},(\cdot)^{\vee})}&&& \CS_{\std}(\RG_n(F))\otimes \CC(\wt{\pi})\ar[d]\\
\CS_\pi(F^\times) \ar[rrr]^{\CF_{\pi,\psi}} &&& \CS_{\wt{\pi}}(F^\times)
}
\end{align}
where $\psi$ is a non-trivial additive character of $F$.
More precisely, for $\phi=\phi_{\xi,\vphi_\pi}\in\CS_\pi(F^\times)$ with a $\xi\in\CS_{\std}(\RG_n(F))$ and
a $\vphi_\pi\in\CC(\pi)$, we define
\begin{align}\label{eq:1-FO}
\CF_{\pi,\psi}(\phi)=\CF_{\pi,\psi}(\phi_{\xi,\vphi_\pi}):=\phi_{\CF_{\GJ}(\xi),\vphi_\pi^\vee},
\end{align}
where $\vphi_\pi^\vee(g)=\vphi_\pi(g^{-1})\in\CC(\wt{\pi})$. Hence we obtain that
\begin{align}\label{eq:1-FO-1}
\CF_{\pi,\psi}(\phi)=\CF_{\pi,\psi}(\phi_{\xi,\vphi_\pi})\in\CS_{\wt{\pi}}(F^\times).
\end{align}
It remians to check that the definition of the Fourier operator in \eqref{eq:1-FO} is independent of
the choice of $\xi\in\CS_{\std}(\RG_n(F))$ and $\vphi_\pi\in\CC(\pi)$.

\begin{prp}\label{prp:1-CF}
The Fourier operator $\CF_{\pi,\psi}$ as in \eqref{eq:1-FO} is independent of the choice of
$\xi\in\CS_{\std}(\RG_n(F))$ and $\vphi_\pi\in\CC(\pi)$.
\end{prp}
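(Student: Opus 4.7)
The plan is to pass the equality from the $\pi$-side to the $\wt{\pi}$-side using the Godement--Jacquet functional equation in its reformulated form (Proposition \ref{prp:GJ-FE}), and then to conclude from the injectivity of the Mellin transform on $\CF(F^\times)$ (Theorem \ref{thm:MT}, Proposition \ref{prp:Spi-F}). Concretely, by linearity it suffices to show: if $\{(\xi_i,\vphi_{\pi,i})\}_{i=1}^N$ is a finite collection of pairs with $\xi_i\in\CS_{\std}(\RG_n(F))$ and $\vphi_{\pi,i}\in\CC(\pi)$ such that $\sum_i\phi_{\xi_i,\vphi_{\pi,i}}\equiv 0$ on $F^\times$, then
\[
\sum_i \phi_{\CF_{\GJ}(\xi_i),\vphi_{\pi,i}^\vee}\equiv 0
\]
on $F^\times$. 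This formulation will handle simultaneously the well-definedness of $\CF_{\pi,\psi}$ on sums $\sum_i\phi_{\xi_i,\vphi_{\pi,i}}$.

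First I would test the hypothesis against the $\GL_1$-zeta integrals. By the identity $\CZ(s,\phi_{\xi,\vphi_\pi},\chi)=\CZ(s,\xi,\vphi_\pi,\chi)$ of \eqref{eq:zetas}, the vanishing assumption gives
\[
\sum_i \CZ(s,\xi_i,\vphi_{\pi,i},\chi)=0
\]
for every $\chi\in\FX(F^\times)$, first for $\Re(s)$ sufficiently large where both sides converge absolutely, and then for all $s\in\BC$ by meromorphic continuation (Theorem \ref{thm:GJ}).

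Next I would apply the Godement--Jacquet functional equation (Proposition \ref{prp:GJ-FE}) term by term:
\[
\sum_i \CZ(1-s,\CF_{\GJ}(\xi_i),\vphi_{\pi,i}^\vee,\chi^{-1})=\gam(s,\pi\times\chi,\psi)\cdot\sum_i \CZ(s,\xi_i,\vphi_{\pi,i},\chi)=0.
\]
Using \eqref{eq:zetas} again, now on the $\wt{\pi}$-side, this reads
\[
\sum_i \CZ(1-s,\phi_{\CF_{\GJ}(\xi_i),\vphi_{\pi,i}^\vee},\chi^{-1})=0
\]
for every quasi-character $\chi\in\FX(F^\times)$ and every $s\in\BC$.

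Finally I would invoke the Mellin inversion. By Proposition \ref{prp:Spi-F} each $\phi_{\CF_{\GJ}(\xi_i),\vphi_{\pi,i}^\vee}$ lies in $\CS_{\wt{\pi}}(F^\times)\subset\CF(F^\times)$, and the $\GL_1$-zeta integral is nothing but the Mellin transform evaluated on the twisted character $\chi\cdot|\cdot|_F^{s-1/2}$. Thus the Mellin transform of the sum $\Phi:=\sum_i\phi_{\CF_{\GJ}(\xi_i),\vphi_{\pi,i}^\vee}\in\CF(F^\times)$ vanishes identically on $\FX(F^\times)$, and the bijectivity in Theorem \ref{thm:MT} forces $\Phi\equiv 0$ on $F^\times$, as required. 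The only step that needs any care is the propagation of the vanishing through meromorphic continuation (where the possible poles of the $\gam$-factor must be handled by clearing denominators against a single polynomial in $s$ that is common to all $\chi_{s,\omega}$ in the finite family of characters appearing), but this is a routine consequence of the holomorphy and local boundedness properties recorded in Theorem \ref{thm:1-zeta} and Proposition \ref{prp:Lpi-Z}; there is no genuine analytic obstruction.
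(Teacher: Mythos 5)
Your argument is correct and follows essentially the same route as the paper's proof: translate the hypothesis into an identity of Godement--Jacquet zeta integrals via \eqref{eq:zetas}, pass to the other side with the functional equation of Proposition \ref{prp:GJ-FE}, and conclude by Mellin injectivity on $\CF(F^\times)$ using Proposition \ref{prp:Spi-F} and Theorem \ref{thm:MT}. Your formulation in terms of finite linear combinations (rather than two single pairs) is a harmless and slightly more careful packaging of the same argument, since $\CS_\pi(F^\times)$ is defined as a span.
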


\begin{proof}
Assume that $\phi_{\xi_1,\vphi_{\pi,1}} = \phi_{\xi_2,\vphi_{\pi,2}}$ for some
$\xi_1, \xi_2\in \CS_{\std}(\RG_n(F))$ and $\vphi_{\pi,1}, \vphi_{\pi,2}\in\CC(\pi)$. We want to show that
$\CF_{\pi,\psi}(\phi_{\xi_1,\vphi_{\pi,1}})=\CF_{\pi,\psi}(\phi_{\xi_2,\vphi_{\pi,2}})$.

From \eqref{eq:zetas}, we must have that
\[
\CZ(s,\xi_1,\vphi_{\pi,1},\chi)=\CZ(s,\xi_2,\vphi_{\pi,2},\chi)
\]
for all quasi-character $\chi\in\FX(F^\times)$ and all $s\in\BC$. Of course, the identity holds for $\Re(s)$ large and then for all $s\in\BC$ by meromorphic continuation.
By the functional equation in Proposition \ref{prp:GJ-FE}, we obtain the following identity
\[
\CZ(1-s,\CF_{\GJ}(\xi_1),\vphi^\vee_{\pi,1},\chi^{-1})
=
\CZ(1-s,\CF_{\GJ}(\xi_2),\vphi^\vee_{\pi,2},\chi^{-1})
\]
for all $\chi\in\FX(F^\times)$ with $\Re(s)$ sufficiently small first and then all $s\in\BC$ by meromorphic continuation. It follows by the identity in \eqref{eq:zetas} again
that for all $\chi\in\FX(F^\times)$ and for $\Re(s)+\Re(\chi)$ sufficiently large, the following integral
\[
\int_{F^\times}\left(\phi_{\CF_{\GJ}(\xi_1),\vphi_{\pi,1}^\vee}(x)-\phi_{\CF_{\GJ}(\xi_2),\vphi_{\pi,2}^\vee}(x)\right)
\chi(x)|x|_F^{s-\frac{1}{2}}\ud^\times x=0
\]
holds. By Proposition \ref{prp:Spi-F}, we have that
$\phi_{\CF_{\GJ}(\xi_1),\vphi_{\pi,1}^\vee}(x)-\phi_{\CF_{\GJ}(\xi_2),\vphi_{\pi,2}^\vee}(x)$
belongs to $\CF(F^\times)$. Finally, by Theorem \ref{thm:MT}, we must have
that
\[
\phi_{\CF_{\GJ}(\xi_1),\vphi_{\pi,1}^\vee}(x)-\phi_{\CF_{\GJ}(\xi_2),\vphi_{\pi,2}^\vee}(x)=0
\]
as functions on $F^\times$. Therefore, we prove that
\[
\phi_{\CF_{\GJ}(\xi_1),\vphi_{\pi,1}^\vee}(x)=\phi_{\CF_{\GJ}(\xi_2),\vphi_{\pi,2}^\vee}(x)
\]
as functions on $F^\times$, and
$\CF_{\pi,\psi}(\phi_{\xi_1,\vphi_{\pi,1}})=\CF_{\pi,\psi}(\phi_{\xi_2,\vphi_{\pi,2}})$.
\end{proof}

The following theorem on the local functional equation for the $\GL_1$ zeta integrals $\CZ(s,\phi,\chi)$ is a direct consequence of Theorem \ref{thm:GJ} and Proposition \ref{prp:1-CF}.

\begin{thm}[$\GL_1$ Functional Equation]\label{thm:1-FE}
For any $\pi\in\Pi_F(n)$ and its contragredient $\wt{\pi}\in\Pi_F(n)$, there exists a Fourier operator $\CF_{\pi,\psi}$, which takes $\phi\in\CS_\pi(F^\times)$ to $\CF_{\pi,\psi}(\phi)\in\CS_{\wt{\pi}}(F^\times)$,
such that the following functional equation holds after meromorphic continuation,
\[
\CZ(1-s,\CF_{\pi,\psi}(\phi),\chi^{-1}) =
\gam(s,\pi\times \chi,\psi)
\cdot \CZ(s,\phi,\chi),
\]
for any $\phi\in \CS_\pi(F^\times)$. Moreover, the following identities
$$
\CF_{\wt{\pi},\psi^{-1}}\circ \CF_{\pi,\psi} = \Id,\qquad \CF_{{\pi},\psi}\circ \CF_{\wt{\pi},\psi^{-1}}=\Id
$$
hold. When $F$ is non-Archimedean, and $\pi$ is unramified, the Fourier operator $\CF_{\pi,\psi}$
takes the basic function $\BL_\pi\in\CS_\pi(F^\times)$ to the basic function $\BL_{\wt{\pi}}\in\CS_{\wt{\pi}}(F^\times)$:
\[
\CF_{\pi,\psi}(\BL_\pi)=\BL_{\wt{\pi}},
\]
where the basic function $\BL_\pi$ is defined in Theorem \ref{thm:1-zeta}.
\end{thm}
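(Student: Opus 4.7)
The definition of $\CF_{\pi,\psi}$ in \eqref{eq:1-FO} is already in place, and Proposition \ref{prp:1-CF} has verified that the construction $\phi_{\xi,\vphi_\pi}\mapsto \phi_{\CF_{\GJ}(\xi),\vphi_\pi^\vee}$ is independent of the choice of $(\xi,\vphi_\pi)$, so $\CF_{\pi,\psi}$ is a well-defined linear map $\CS_\pi(F^\times)\to\CS_{\wt{\pi}}(F^\times)$. The three assertions of the theorem reduce by linearity and the defining diagram \eqref{diag:F} to the corresponding assertions at the level of matrix pairs $(\xi,\vphi_\pi)$, where everything can be read off from the Godement--Jacquet theory already recalled.

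\emph{Step 1: Functional equation.} For a generator $\phi=\phi_{\xi,\vphi_\pi}\in\CS_\pi(F^\times)$, the fibration identity \eqref{eq:zetas} gives $\CZ(s,\phi,\chi)=\CZ(s,\xi,\vphi_\pi,\chi)$, and by the definition \eqref{eq:1-FO} together with \eqref{eq:zetas} applied on the $\wt{\pi}$-side,
\[
\CZ(1-s,\CF_{\pi,\psi}(\phi),\chi^{-1})
=
\CZ(1-s,\phi_{\CF_{\GJ}(\xi),\vphi_\pi^\vee},\chi^{-1})
=
\CZ(1-s,\CF_{\GJ}(\xi),\vphi_\pi^\vee,\chi^{-1}).
\]
The Godement--Jacquet functional equation in Proposition \ref{prp:GJ-FE} now gives the factor $\gam(s,\pi\times\chi,\psi)\cdot \CZ(s,\xi,\vphi_\pi,\chi)$, which equals $\gam(s,\pi\times\chi,\psi)\cdot \CZ(s,\phi,\chi)$. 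Linearity in $\phi$ extends the identity to all of $\CS_\pi(F^\times)$, first for $\Re(s)$ in a common half-plane of convergence and then everywhere by meromorphic continuation (Theorem \ref{thm:1-zeta}(1)).

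\emph{Step 2: Involutivity.} On generators, I would compute
\[
\CF_{\wt{\pi},\psi^{-1}}\bigl(\CF_{\pi,\psi}(\phi_{\xi,\vphi_\pi})\bigr)
=
\CF_{\wt{\pi},\psi^{-1}}(\phi_{\CF_{\GJ}(\xi),\vphi_\pi^\vee})
=
\phi_{\CF_{\GJ,\psi^{-1}}\CF_{\GJ,\psi}(\xi),\,(\vphi_\pi^\vee)^\vee}.
\]
From Proposition \ref{prp:GJ-FO} (which reduces $\CF_{\GJ}$ to the classical Fourier transform on $\CS(\RM_n(F))$ up to the factor $|\det g|_F^{n/2}$) and the involutivity \eqref{eq:FTId} of the classical Fourier transform, one obtains $\CF_{\GJ,\psi^{-1}}\circ\CF_{\GJ,\psi}=\Id$ on $\CS_{\std}(\RG_n(F))$. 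Combined with the obvious identity $(\vphi_\pi^\vee)^\vee=\vphi_\pi$, this yields the first involutivity; the second is symmetric. One could also give an independent Mellin-theoretic verification via the symmetry $\gam(s,\pi\times\chi,\psi)\gam(1-s,\wt{\pi}\times\chi^{-1},\psi^{-1})=1$ combined with Step 1 and the injectivity of the Mellin transform from Theorem \ref{thm:MT}, which would serve as a cross-check.

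\emph{Step 3: Basic function.} In the unramified non-Archimedean case, Part (4) of Theorem \ref{thm:GJ} yields $\CF_\psi({\bf 1}_{\RM_n(\Fo_F)})={\bf 1}_{\RM_n(\Fo_F)}$, so that $\CF_{\GJ}(\xi^\circ)=\xi^\circ$ by Proposition \ref{prp:GJ-FO}. Moreover, the dual $(\vphi_\pi^\circ)^\vee$ of the zonal spherical function of $\pi$ is the zonal spherical function $\vphi_{\wt{\pi}}^\circ$ of $\wt{\pi}$. Plugging these into the defining formula \eqref{eq:1-FO} gives $\CF_{\pi,\psi}(\BL_\pi)=\phi_{\xi^\circ,\vphi_{\wt\pi}^\circ}=\BL_{\wt{\pi}}$. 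I expect the main subtlety of the whole argument to be in Step 2: while the computation on generators is immediate, one must ensure it descends correctly to the quotient space $\CS_\pi(F^\times)$, where different pairs $(\xi,\vphi_\pi)$ may yield the same fiber integral. This is exactly what Proposition \ref{prp:1-CF} supplies, so no genuinely new obstacle arises, but it is the point at which the internal consistency of the construction is tested.
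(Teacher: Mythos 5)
Your proposal is correct and follows essentially the same route the paper intends: the theorem is stated there as a direct consequence of Theorem \ref{thm:GJ} (via Proposition \ref{prp:GJ-FE} and Proposition \ref{prp:GJ-FO}) together with the well-definedness in Proposition \ref{prp:1-CF}, which is exactly the reduction to generators $\phi_{\xi,\vphi_\pi}$ that you carry out in Steps 1--3. One cosmetic point: the self-duality $\CF_\psi({\bf 1}_{\RM_n(\Fo_F)})={\bf 1}_{\RM_n(\Fo_F)}$ is not literally Part (4) of Theorem \ref{thm:GJ} but follows from the unramified choice of $\psi$ and the self-dual measure normalization fixed in Section \ref{ssec-LMT}; with that reference your Step 3 is complete.
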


\section{$\pi$-Poisson Summation Formula on $\GL_1$}\label{sec-PSF}

Let $k$ be a number field and $\BA$ be the ring of adeles of $k$. Denote by $|k|$ the set of all local places of $k$ and by $|k|_\infty$ the set of all Archimedean local places of $k$. We may write
\[
|k|=|k|_\infty\cup|k|_f
\]
where $|k|_f$ is the set of non-Archimedean local places of $k$. For each $\nu\in|k|$, we write $F=k_\nu$.
Let $\Pi_\BA(n)$ be the set of equivalence classes of irreducible admissible representations
of $\RG_n(\BA)$. If we write $\pi=\otimes_{\nu\in|k|}\pi_\nu$, then we assume that
$\pi_\nu\in\Pi_{k_\nu}(n)$, where at almost all finite local places $\nu$, the local representations $\pi_\nu$ are unramified. Moreover, when $\nu$ is a finite local place, $\pi_\nu$ is an irreducible admissible representation of $\RG_n(k_\nu)$, and when $\nu$ is an infinite local place, we assume that $\pi_\nu$ is of Casselman-Wallach type as representation of $\RG_n(k_\nu)$.
Let $\CA(\RG_n)\subset\Pi_\BA(n)$ be the subset consisting of equivalence classes of irreducible admissible automorphic representations
of $\GL_n(\BA)$, and $\CA_\cusp(\RG_n)$ be the subset of cuspidal members of $\CA(\RG_n)$.

\subsection{$\pi$-Schwartz space and Fourier operator}\label{ssec-SS-FO-BA}
Take any $\pi=\otimes_{\nu\in|k|}\pi_\nu\in\Pi_\BA(n)$.
For each local palce $\nu\in|k|$, the $\pi_\nu$-Schwartz space $\CS_{\pi_\nu}(k_\nu^\times)$ is defined in Definition \ref{def:piSS}.
Recall from Theorems \ref{thm:1-zeta} and \ref{thm:1-FE} the basic function
$\BL_{\pi_\nu}\in\CS_{\pi_\nu}(k_\nu^\times)$ of $\pi_\nu$
when the local component $\pi_\nu$ of $\pi$ is unramified. It is clear from the definition that
$\BL_{\pi_\nu}(1)=1$ (We have to normalize various local measures in the computations. Actually it follows from the fact that the Laurent expansion of the unramified local $L$-factor has constant term $1$).

For the given $\pi=\otimes_\nu\pi_\nu\in\Pi_\BA(n)$, we define the $\pi$-Schwartz space on
$\BA^\times$ to be
\begin{align}\label{piSS-BA}
\CS_\pi(\BA^\times):=\otimes_{\nu\in|k|}\CS_{\pi_\nu}(k_\nu^\times),
\end{align}
which is the restricted tensor product of the local $\pi_\nu$-Schwartz space $\CS_{\pi_\nu}(k_\nu^\times)$ with respect to the family of the basic functions $\BL_{\pi_\nu}$ for all the local places $\nu$ at which
$\pi_\nu$ are unramified. The factorizable vectors $\phi=\otimes_\nu\phi_\nu$ in $\CS_\pi(\BA^\times)$ can be written as
\begin{align}\label{SF-factorize}
\phi(x)=\prod_{\nu\in|k|}\phi_\nu(x_\nu).
\end{align}
Here for almost all finite local places $\nu$, $\phi_\nu(x_\nu)=\BL_{\pi_\nu}(x_\nu)$. According to
our normalization, we have $\BL_{\pi_\nu}(x_\nu)=1$ when $x_\nu\in\Fo_\nu^\times$, the unit group of
the ring $\Fo_\nu$ of integers at $\nu$. Hence for any given $x\in\BA^\times$, the product in \eqref{SF-factorize} is a finite product over Archimedean local places and finitely many non-Archimedean local places containing all ramified local places.

For any factorizable vectors $\phi=\otimes_\nu\phi_\nu$ in $\CS_\pi(\BA^\times)$, we define the
$\pi$-Fourier operator:
\begin{align}\label{FO-BA}
\CF_{\pi,\psi}(\phi):=\otimes_{\nu\in|k|}\CF_{\pi_\nu,\psi_\nu}(\phi_\nu),
\end{align}
where for each $\nu\in|k|$, $\CF_{\pi_\nu,\psi_\nu}$ is the local Fourier operator as defined in
\eqref{diag:F} and \eqref{eq:1-FO}, which takes the $\pi_\nu$-Schwartz space
$\CS_{\pi_\nu}(k_\nu^\times)$ to the $\wt{\pi}_\nu$-Schwartz space $\CS_{\wt{\pi}_\nu}(k_\nu^\times)$,
and
\[
\CF_{\pi_\nu,\psi}(\BL_{\pi_\nu})=\BL_{\wt{\pi}_\nu}
\]
when the data are unramified at $\nu$. Hence the Fourier operator $\CF_{\pi,\psi}$ as defined in
\eqref{FO-BA} maps $\pi$-Schwartz space $\CS_\pi(\BA^\times)$ to $\wt{\pi}$-Schwartz space
$\CS_{\wt{\pi}}(\BA^\times)$.

\subsection{Global zeta integral}\label{ssec-GZIs}
For any $\pi=\otimes_\nu\pi_\nu\in\CA(\RG_n)$, we define the ($\GL_1$) global zeta integrals to be
\begin{align}\label{1-gzi}
\CZ(s,\phi,\chi):=
\int_{\BA^\times}
\phi(x)\chi(x)|x|_\BA^{s-\frac{1}{2}}\ud^\times x
\end{align}
for any $\phi\in\CS_\pi(\BA^\times)$ and characters $\chi$ of $k^\times\bs\BA^\times$. When
$\phi=\otimes_\nu\phi_\nu$, we have
\[
\CZ(s,\phi,\chi)=\Pi_{\nu\in|k|}\CZ(s,\phi_\nu,\chi_\nu).
\]
Let $S$ be a finite subset of $|k|$, which contains all Archimedean local places and all the finite local places
$\nu$ at which $\pi_\nu$ or $\chi_\nu$ is ramified. Then we write
\[
\CZ(s,\phi,\chi)=L^S(s,\pi\times\chi)\cdot\Pi_{\nu\in S}\CZ(s,\phi_\nu,\chi_\nu),
\]
according to Theorem \ref{thm:1-zeta}. If $\pi$ is unitarizable, the partial $L$-function
$L^S(s,\pi\times\chi)$ converges absolutely for $\Re(s)$ large. By Theorem \ref{thm:1-zeta} again, the
finite Euler product $\Pi_{\nu\in S}\CZ(s,\phi_\nu,\chi_\nu)$ converges absolutely for $\Re(s)$ large.

\begin{prp}\label{prp:gzi}
Let $\pi\in\CA(\RG_n)$ be unitarizable. Then for any $\phi\in\CS_\pi(\BA^\times)$ and any character
$\chi$ of $k^\times\bs\BA^\times$, the zeta integral $\CZ(s,\phi,\chi)$ as defined in \eqref{1-gzi}
converges absolutely for $\Re(s)$ sufficiently large.
\end{prp}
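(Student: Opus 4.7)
The plan is to prove absolute convergence by reducing to a factorizable vector, writing the adelic integral as an Euler product, and handling the places of ramification and the places of unramification separately. By linearity of the integral in $\phi$, I would restrict to $\phi = \otimes_\nu \phi_\nu \in \CS_\pi(\BA^\times)$ factorizable as in \eqref{SF-factorize}, and pick a finite set $S \subset |k|$ containing $|k|_\infty$ and all finite places at which $\pi_\nu$ or $\chi_\nu$ is ramified, so that $\phi_\nu = \BL_{\pi_\nu}$ for every $\nu \notin S$.

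For each $\nu \in S$, Part (1) of Theorem \ref{thm:1-zeta} applied to $\phi_\nu \in \CS_{\pi_\nu}(k_\nu^\times)$ and to the quasi-character $\chi_\nu$ gives absolute convergence of the local zeta integral $\CZ(s,\phi_\nu,\chi_\nu)$ for $\Re(s)$ larger than some threshold $\sigma_\nu \in \BR$; the proof there actually produces a dominating integral bounding $\int_{k_\nu^\times}|\phi_\nu(x)|\,|x|_\nu^{\Re(s)-1/2}\,\ud^\times x$. For $\nu \notin S$, Part (3) of Theorem \ref{thm:1-zeta} identifies $\CZ(s,\BL_{\pi_\nu},\chi_\nu) = L(s,\pi_\nu\times\chi_\nu)$, and the formal Euler factorization yields
\begin{equation*}
\CZ(s,\phi,\chi) \;=\; L^S(s,\pi\times\chi)\cdot\prod_{\nu\in S}\CZ(s,\phi_\nu,\chi_\nu).
\end{equation*}
Since $\pi \in \CA(\RG_n)$ is unitarizable, the Jacquet--Shalika bounds on the Satake parameters $\{\alpha_{i,\nu}\}$ at unramified places (extended as needed via Mœglin--Waldspurger to the residual spectrum) guarantee that the partial $L$-function $L^S(s,\pi\times\chi)$ converges absolutely as an Euler product in a right half-plane $\Re(s) > s_0$.

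The main obstacle is the passage from convergence of the Euler product of local integrals to absolute integrability of $|\phi(x)|\,|x|_\BA^{\Re(s)-1/2}$ over $\BA^\times$, since $\BL_{\pi_\nu}$ and $\phi_\nu$ are in general complex-valued. To address this, I would use the explicit formula
\begin{equation*}
\BL_{\pi_\nu}(\varpi_\nu^m) \;=\; q_\nu^{-m/2}\,h_m\bigl(\alpha_{1,\nu},\dots,\alpha_{n,\nu}\bigr),\qquad m \geq 0,
\end{equation*}
(and vanishing for $m<0$) coming from the identification in Theorem \ref{thm:1-zeta}(3) and the geometric series expansion of $L(s,\pi_\nu\times\chi_\nu)$, together with the Jacquet--Shalika estimate $|\alpha_{i,\nu}| < q_\nu^{1/2}$. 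This gives a non-negative spherical majorant for $|\BL_{\pi_\nu}|$ at every $\nu \notin S$ whose product against $|x_\nu|_\nu^{\Re(s)-1/2}$ is integrable and whose Euler product converges in the same half-plane as $L^S(s,\pi\times\chi)$, after shrinking it slightly.

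Finally, combining the dominating integrals at the finite set $S$ with the dominating Euler product outside $S$, Fubini's theorem over the restricted direct product $\BA^\times$ produces a finite upper bound for $\int_{\BA^\times}|\phi(x)|\,|x|_\BA^{\Re(s)-1/2}\,\ud^\times x$ when $\Re(s)$ exceeds $\max(s_0,\sigma_\nu: \nu \in S)$, which yields the claimed absolute convergence of $\CZ(s,\phi,\chi)$.
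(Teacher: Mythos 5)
Your argument is correct and follows essentially the same route as the paper: factor $\CZ(s,\phi,\chi)=L^S(s,\pi\times\chi)\cdot\prod_{\nu\in S}\CZ(s,\phi_\nu,\chi_\nu)$ via Theorem \ref{thm:1-zeta}, use unitarizability to control the partial $L$-function, and invoke Theorem \ref{thm:1-zeta}(1) at the finitely many places in $S$; your extra majorant-plus-Fubini step for $|\BL_{\pi_\nu}|$ is a welcome refinement of a point the paper leaves implicit. One small adjustment: since $\CA(\RG_n)$ is not confined to the discrete spectrum, the uniform bound on the Satake parameters should be drawn from local unitarizability of the unramified components (Tadi\'c's classification of the spherical unitary dual, as in the proof of Proposition \ref{prp:Ass}), which gives $|\alpha_{i,\nu}|\le q_\nu^{(n-1)/2}$, rather than from the Jacquet--Shalika/M\oe glin--Waldspurger bounds, which only cover cuspidal and residual representations.
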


When $\pi$ belongs to $\CA_\cusp(\RG_n)$, which is unitary, the zeta integral $\CZ(s,\phi,\chi)$ can be
identified with the Godement-Jacquet global zeta integral. For any $f=\otimes_\nu f_\nu\in\CS(\RM_n(\BA))$
and any $\varphi_\pi\in\CC(\pi)$, the Godement-Jacquet global zeta integral is defined to be
\begin{align}\label{GJ-gzi}
\CZ(s,f,\vphi_\pi,\chi) :=
\int_{\GL_n(\BA)}
f(g)\vphi_\pi(g)\chi(\det g)|\det g|_F^{s+\frac{n-1}{2}}\ud g,
\end{align}

\begin{thm}[Theorem 13.8, \cite{GJ72}]\label{thm:GJ-gzi}
For $\pi\in\CA_\cusp(\RG_n)$ and unitary automorphic character $\chi$ of $k^\times\bs\BA^\times$,
the global zeta integral $\CZ(s,f,\vphi_\pi,\chi)$ converges absolutely for
$\Re(s)>\frac{n+1}{2}$, admits analytic continuation to an entire function in $s\in\BC$, and
satisfies the global functional equation:
\begin{align}\label{GJ-gfe}
\CZ(s,f,\varphi_\pi,\chi)=\CZ(1-s,\CF_\psi(f),\varphi_\pi^\vee,\chi^{-1}),
\end{align}
where $\CF_\psi$ is the global Fourier transform from $\CS(\RM_n(\BA))$ to $\CS(\RM_n(\BA))$ associated to
the additive character $\psi$ of $k\bs\BA$.
\end{thm}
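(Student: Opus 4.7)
The plan is to follow the classical strategy of Godement--Jacquet. I realize the matrix coefficient as
\[
\vphi_\pi(g) = \int_{Z(\BA)\GL_n(k)\bs\GL_n(\BA)} \vphi(hg)\,\ovl{\wt{\vphi}(h)}\,\ud h
\]
for cusp forms $\vphi \in V_\pi$ and $\wt{\vphi} \in V_{\wt{\pi}}$, where $Z$ denotes the center of $\GL_n$. For absolute convergence when $\Re(s) > \frac{n+1}{2}$, the change of variable $g \mapsto h^{-1}g$ unfolds the zeta integral to a double integral; via the factorization $\GL_n(\BA) = \GL_n(\BA)^1 \times \BR_{>0}$ on the outer $g$-variable, convergence reduces to Tate-style estimates, with the Schwartz behavior of $f$ controlling the $\GL_n(\BA)^1$-integration and the rapid decay of cusp forms on a Siegel domain together with the weight $|\det g|_\BA^{s+\frac{n-1}{2}}$ ensuring convergence in the $\BR_{>0}$-direction for $\Re(s)$ sufficiently large.

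For the analytic continuation and the functional equation, the decisive device is the theta kernel
\[
\Theta_f(g) := \sum_{\gamma \in \RM_n(k)} f(\gamma g),
\]
to which Poisson summation on the $k$-vector space $\RM_n(\BA)$ applies, yielding (after the natural self-pairing $(X,Y)\mapsto \psi(\tr(XY))$ on $\RM_n$) an identity of the form
\[
\Theta_f(g) = |\det g|_\BA^{-n}\,\Theta_{\CF_\psi(f)}({}^t g^{-1}).
\]
I next split the zeta integral at the locus $|\det g|_\BA = 1$. The high-determinant piece over $\{|\det g|_\BA \geq 1\}$ is entire in $s$ by the Schwartz decay of $f$ and the boundedness of $\vphi_\pi$. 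For the low-determinant piece, I substitute the theta expression and apply Poisson; the cuspidality of $\pi$ then annihilates the contribution from singular $\gamma \in \RM_n(k)\setminus \GL_n(k)$, because those orbital integrals reduce to constant-term integrals of $\vphi$ and $\wt{\vphi}$ along proper parabolics of $\GL_n$, which vanish. What remains, after the change of variable $g \mapsto {}^t g^{-1}$, is an integral against $\CF_\psi(f)$ over $\{|\det g|_\BA \geq 1\}$, which is again entire in $s$. Recombining produces an entire expression manifestly symmetric under $(s, f, \chi) \leftrightarrow (1-s, \CF_\psi(f), \chi^{-1})$, yielding the analytic continuation and functional equation simultaneously.

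The hardest step will be the vanishing of contributions from singular matrices in $\RM_n(k)\setminus \GL_n(k)$. This is exactly where cuspidality enters and separates the cuspidal case from its non-cuspidal analogue, in which residual terms produce the poles seen in Tate's thesis. Making the vanishing precise requires decomposing singular orbits along parabolic subgroups of $\GL_n$ via a Bruhat-type stratification of $\RM_n(k)$ and invoking the vanishing of constant terms of $\vphi$ and $\wt{\vphi}$ along every proper parabolic; a secondary technical point is justifying the interchanges of summation and integration at each stage, using the convergence region from the first step together with dominated convergence near the critical locus $|\det g|_\BA = 1$.
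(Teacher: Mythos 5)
Your proposal is correct and is essentially the classical Godement--Jacquet argument that the paper simply cites for this theorem (and reuses in its own proof of Theorem \ref{thm:PSF}): bound the matrix coefficient to get absolute convergence for $\Re(s)>\frac{n+1}{2}$, split the integral at $|\det g|_\BA=1$, apply Poisson summation on $\RM_n(\BA)$, and annihilate the singular (lower-rank) terms by cuspidality via vanishing of constant terms, which gives entireness and the functional equation simultaneously. Only cosmetic bookkeeping needs care: with your conjugated pairing the coefficient realization should quotient by $A_n(\BR)^+$ (or take $\wt{\vphi}\in V_{\wt{\pi}}$ without complex conjugation) so the integrand is invariant under the center, and one should note that the substitution $g\mapsto {}^t g^{-1}$ (equivalently $g\mapsto g^{-1}$) is precisely what replaces $\vphi_\pi$ by $\vphi_\pi^\vee$ on the right-hand side of \eqref{GJ-gfe}.
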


For $\Re(s)>\frac{n+1}{2}$, we write
\begin{align}\label{GJ-gzi-1}
\CZ(s,f,\vphi_\pi,\chi) =
\int_{\BA^\times}
\left(|x|_\BA^{\frac{n}{2}}
\int_{\RG_n(\BA)_x}
f(g)\vphi_\pi(g)\ud_x g\right)\chi(x)|x|_\BA^{s-\frac{1}{2}}\ud^\times x,
\end{align}
where $\RG_n(\BA)_x:=\{g\in\RG_n(\BA)\mid \det g=x\}$ is an $\SL_n(\BA)$-torsor, and the measure
$\ud_x g$ is $\SL_n(\BA)$-invariant.
As in the local situations, we define, for any $x\in\BA^\times$,
\begin{align}\label{fiber}
\phi_{\xi,\varphi_\pi}(x):=
\int_{\RG_n(\BA)_x}
\xi(g)\vphi_\pi(g)\ud_x g
=
|x|_\BA^{\frac{n}{2}}
\int_{\RG_n(\BA)_x}
f(g)\vphi_\pi(g)\ud_x g
\end{align}
where $\xi(g):=|\det g|_{\BA}^{\frac{n}{2}}\cdot f(g)$ belongs to the space
\begin{align}\label{CSstd}
\CS_{\std}(\RG_n(\BA))=\{\xi\in\CC^\infty(\RG_n(\BA))\mid \xi(g)\cdot|\det g|_\BA^{-\frac{n}{2}}\in
\CS(\RM_n(\BA))\}.
\end{align}
It is clear that
\begin{align}\label{CSstd-otimes}
\CS_{\std}(\RG_n(\BA))=\otimes_{\nu\in|k|}\CS_{\std}(\RG_n(k_\nu)).
\end{align}
Write $\RG_n(\BA)$ as a direct product decomposition:
\begin{align}\label{dpd}
\RG_n(\BA)=A_n(\BR)^+\cdot\RG_n(\BA)^1,
\end{align}
where $\RG_n(\BA)^1:=\{g\in\RG_n(\BA)\mid\ |\det g|_\BA=1\}$ and $A_n(\BR)^+$ is the identity
connected component of the center $Z_{\RG_n}(\BR)$ of $\RG_n(\BR)$.
As in \cite[Section 13]{GJ72}, any matrix coefficient $\varphi_\pi$ of the cuspidal
$\pi\in\CA_\cusp(\RG_n)$ can be written as
\begin{align}\label{gcoeff}
\varphi_\pi(g)=\int_{A_n(\BR)^+\RG_n(k)\bs\RG_n(\BA)}\alpha_\pi(hg)\alpha_{\wt{\pi}}(h)\ud h
=
\int_{\RG_n(k)\bs\RG_n(\BA)^1}\alpha_\pi(hg)\alpha_{\wt{\pi}}(h)\ud h
\end{align}
for some $\alpha_\pi\in V_\pi$ and $\alpha_{\wt{\pi}}\in V_{\wt{\pi}}$, where $V_\pi$ is the cuspidal
automorphic realization of $\pi$ in $L^2(\RG_n(k)\bs\RG_n(\BA),\omega)$ with central character
$\omega_\pi=\omega$.
In this case, we have $\omega_{\wt{\pi}}=\omega^{-1}$. In the integral in \eqref{fiber}, the coefficient
$\varphi_\pi(g)$ is bounded over $\RG_n(\BA)$. Since $f\in\CS(\RM_n(\BA))$ and $\RG_n(\BA)_x$ is a closed submanifold
in $\RM_n(\BA)$, the restriction to $\RG_n(\BA)_x$ of the Schwartz function $f$ is still a Schwartz
function on $\RG_n(\BA)_x$. Hence the integral in \eqref{fiber} converges absolutely for any
$x\in\BA^\times$, and the convergence is uniform when $x$ runs in any given compact neighborhood
of $\BA^\times$.

\begin{prp}\label{prp:fiber-1}
For $\pi\in\CA_\cusp(\RG_n)$,
the function $\phi_{\xi,\varphi_\pi}(x)$ as defined in \eqref{fiber} is smooth on $\BA^\times$.
Moreover, if $\xi(g)=\otimes_\nu\xi_\nu=|\det g|^{\frac{n}{2}}\cdot f(g) \in\CS_{\std}(\RG_n(\BA))$ with $f=\otimes_\nu f_\nu\in\CS(\RM_n(\BA))$ and $\varphi_\pi=\otimes_\nu\varphi_{\pi_\nu}$, then
the function defined by
\[
\phi_{\xi,\varphi_\pi}(x)=\prod_{\nu\in|k|}\phi_{\xi_\nu,\varphi_{\pi_\nu}}(x_\nu)
\]
for any $x\in\BA^\times$ belongs to $\CS_\pi(\BA^\times)$.
\end{prp}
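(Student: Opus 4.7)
The plan is to establish both factorization and smoothness together by reducing the global fiber integration to a product of the local fiber integrations already handled in Proposition \ref{prp:smooth}. First I would identify the global fiber: for $x = (x_\nu)_\nu \in \BA^\times$, the set $\RG_n(\BA)_x$ is the restricted direct product of the local fibers $\RG_n(k_\nu)_{x_\nu}$, each an $\SL_n(k_\nu)$-torsor, and the $\SL_n(\BA)$-invariant measure $\ud_x g$ factorizes as $\prod_\nu \ud_{x_\nu}g_\nu$ consistently with the factorization of the Tamagawa-type Haar measure on $\SL_n(\BA) = {\prod}'_\nu \SL_n(k_\nu)$. The cleanest way to see this is to use the local section $x \mapsto \diag(x,1,\ldots,1)$ employed in the proof of Proposition \ref{prp:smooth}, which globalizes into an adelic section identifying $\RG_n(\BA)_x$ with $\SL_n(\BA)$ as measure spaces.

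Once the measure factorization is in hand, for factorizable data $\xi = \otimes_\nu \xi_\nu \in \CS_{\std}(\RG_n(\BA))$ and $\varphi_\pi = \otimes_\nu \varphi_{\pi_\nu}$, the integrand splits as a product and the absolute convergence (which is already guaranteed by the global argument preceding this proposition, using the Schwartz restriction to the closed submanifold $\RG_n(\BA)_x \subset \RM_n(\BA)$ together with the boundedness of $\varphi_\pi$) yields
\[
\phi_{\xi,\varphi_\pi}(x) = \prod_{\nu \in |k|} \phi_{\xi_\nu,\varphi_{\pi_\nu}}(x_\nu).
\]
At almost all finite places the data are the standard unramified data $\xi_\nu^\circ, \varphi_{\pi_\nu}^\circ$, so by Part (3) of Theorem \ref{thm:1-zeta} the corresponding local factor equals the basic function $\BL_{\pi_\nu}(x_\nu)$. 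Because $\BL_{\pi_\nu}(x_\nu) = 1$ for $x_\nu \in \Fo_\nu^\times$ and each fixed $x \in \BA^\times$ has $x_\nu \in \Fo_\nu^\times$ outside a finite set, the displayed product is genuinely a finite product at any given $x$.

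Smoothness of $\phi_{\xi,\varphi_\pi}$ on $\BA^\times$ then follows immediately from the smoothness of each local factor: at Archimedean places from the Casselman--Wallach/Schwartz argument in Proposition \ref{prp:smooth}, at the finitely many ramified non-Archimedean places from local constancy of the same proposition, and at unramified places from local constancy of $\BL_{\pi_\nu}$ on $k_\nu^\times$. Finally, membership in $\CS_\pi(\BA^\times) = \otimes_\nu \CS_{\pi_\nu}(k_\nu^\times)$ is then a tautology from Definition \ref{def:piSS} together with the fact that $\BL_{\pi_\nu}$ is precisely the basepoint chosen at unramified places in the restricted tensor product \eqref{piSS-BA}.

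The main obstacle is the measure-theoretic identification of the global fiber $\RG_n(\BA)_x$ with the restricted product $\prod'_\nu \RG_n(k_\nu)_{x_\nu}$ carrying the product measure: one must verify that the section used to trivialize each local torsor globalizes, and that the normalizations of the local Haar measures on $\SL_n(k_\nu)$ are compatible with the unramified computation producing $\BL_{\pi_\nu}$. Everything else in the argument is a direct product-ization of the local proofs already carried out in Section \ref{sec-piSF-FO}.
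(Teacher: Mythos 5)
Your factorization argument is essentially the paper's: for pure tensors one writes the global fiber integral as a limit over finite sets $S$ of products of local fiber integrals, and at $\nu\notin S$ the unramified datum $\xi_\nu^\circ=|\det|_\nu^{n/2}\mathbf{1}_{\RM_n(\Fo_\nu)}$, $\varphi_{\pi_\nu}^\circ$ gives the local factor $\vol(\RG_n(\Fo_\nu)_{x_\nu})=1$ for $x_\nu\in\Fo_\nu^\times$; this is exactly your identification with $\BL_{\pi_\nu}$ (which is by definition $\phi_{\xi^\circ,\varphi_{\pi_\nu}^\circ}$, Theorem \ref{thm:1-zeta}(3)) together with $\BL_{\pi_\nu}(\Fo_\nu^\times)=1$. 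The measure-compatibility point you flag as the ``main obstacle'' is real but is exactly what the paper also uses implicitly in writing \eqref{fiber-3} and in normalizing so that $\vol(\RG_n(\Fo_\nu)_{x_\nu})=1$; your level of care there matches the paper's.

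The one place where your route falls short of the statement is smoothness. The first assertion of the proposition concerns $\phi_{\xi,\varphi_\pi}$ for arbitrary $\xi\in\CS_{\std}(\RG_n(\BA))$ and $\varphi_\pi\in\CC(\pi)$, not only factorizable data, and at the Archimedean places neither $\CS(\RM_n(\BA))$ nor $\CC(\pi)$ consists of finite sums of pure tensors (the tensor products are completed there), so ``smoothness of each local factor plus finiteness of the product'' does not cover the general case. The paper instead gets smoothness in one line from the fact, established in the paragraph preceding the proposition, that the fiber integral converges absolutely and uniformly for $x$ in compact subsets of $\BA^\times$ (using boundedness of the cuspidal coefficient and that the restriction of a Schwartz function to the closed fiber $\RG_n(\BA)_x\subset\RM_n(\BA)$ is Schwartz). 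You cite this convergence argument but only for convergence; you should invoke it (or some density/continuity argument, which would need extra justification) to obtain smoothness for general $\xi,\varphi_\pi$, and reserve the product expansion for the second, factorizable-data assertion.
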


\begin{proof}
Since the integral in \eqref{fiber} converges absolutely for any $x\in\BA^\times$, and the convergence is uniform when $x$ runs in any given compact neighborhood of $\BA^\times$, the function
$\phi_{\xi,\varphi_\pi}(x)$ is smooth on $\BA^\times$.

To prove the second statement, we take $f=\otimes_\nu f_\nu\in\CS(\RM_n(\BA))$.
Since $\CC(\pi)=\otimes_\nu\CC(\pi_\nu)$, we take
$\varphi_\pi=\otimes_\nu\varphi_{\pi_\nu}$ with $\varphi_{\pi_\nu}\in\CC(\pi_\nu)$.
Then there exists a finite subset $S_0$ which contains all Archimedean local places of $k$, such that for any
finite local place $\nu$ of $k$, if $\nu\not\in S_0$, then $f_\nu=f^\circ_\nu={\bf 1}_{\RM_n(\Fo_\nu)}$,
$\pi_\nu$ is unramified and $\varphi_{\pi_\nu}=\varphi_{\pi_\nu}^\circ$, which is the zonal spherical
function on $\RG_n(k_\nu)$ associated to $\pi_\nu$. For any $x\in\BA^\times$, and for any finite subset
$S$ of $|k|$ that contains $S_0$ and $x_\nu\in\Fo_\nu^\times$ if $\nu\not\in S$, we have
\begin{align}\label{fiber-3}
\phi_{\xi,\varphi_\pi}(x)=
\int_{\det g=x}
\xi(g)\vphi_\pi(g)\ud_x g
=
\lim_S\prod_{\nu\in S}
\int_{\det g_\nu=x_\nu}
\xi_\nu(g_\nu)\vphi_{\pi_\nu}(g_\nu)\ud_{x_\nu} g_\nu
\end{align}
with $\xi(g)=|\det g|_\BA^{\frac{n}{2}}\cdot f(g)$ and $\xi=\otimes_\nu\xi_\nu$, where
$\xi_\nu(g)=|\det g|_\nu^{\frac{n}{2}}\cdot f_\nu(g)$.
At $\nu\not\in S$, we have $|x_\nu|_\nu=1$ and the local integral
\[
\int_{\det g_\nu=x_\nu}
\xi_\nu(g_\nu)\vphi_{\pi_\nu}(g_\nu)\ud_{x_\nu} g_\nu
=
\int_{\det g_\nu=x_\nu}
{\bf 1}_{\RM_n(\Fo_\nu)}(g_\nu)\vphi_{\pi_\nu}^\circ(g_\nu)\ud_{x_\nu} g_\nu
=
\vol(\RG_n(\Fo_\nu)_{x_\nu})=1.
\]
Hence we obtain that $\phi_{\xi,\varphi_\pi}(x)=\prod_\nu\phi_{\xi_\nu,\varphi_{\pi_\nu}}(x_\nu)$.
\end{proof}

\begin{cor}\label{zeta-zeta}
Assume that $\pi\in\CA_\cusp(\RG_n)$ is unitary. Then for any $\phi=\phi_{\xi,\varphi_\pi}\in\CS_\pi(\BA^\times)$ with
$\xi(g)=|\det g|_\BA^{\frac{n}{2}}\cdot f(g)\in\CS_{\std}(\RG_n(\BA))$ for
some $f\in\CS(\RM_n(\BA))$ and
$\varphi_\pi\in\CC(\pi)$, the following identity
\[
\CZ(s,\phi,\chi)=\CZ(s,f,\varphi_\pi,\chi)
\]
holds for any character $\chi$ of $k^\times\bs\BA^\times$ and $\Re(s)$ sufficiently large.
\end{cor}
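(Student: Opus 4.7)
The plan is to deduce the identity directly from the fibration of $\RG_n(\BA)$ along the determinant map, exactly as previewed in \eqref{GJ-gzi-1}. For $\pi\in\CA_\cusp(\RG_n)$ unitary, Theorem \ref{thm:GJ-gzi} guarantees absolute convergence of the Godement–Jacquet global zeta integral $\CZ(s,f,\varphi_\pi,\chi)$ on the half-plane $\Re(s)>\tfrac{n+1}{2}$, and Proposition \ref{prp:gzi} gives absolute convergence of $\CZ(s,\phi,\chi)$ for $\Re(s)$ large. I will choose $s$ in the intersection of these two regions, so Fubini is available throughout.

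First I would rewrite the group $\RG_n(\BA)$ as the disjoint union $\bigsqcup_{x\in\BA^\times}\RG_n(\BA)_x$ over the fibers of $\det\colon\RG_n(\BA)\to\BA^\times$. Each fiber $\RG_n(\BA)_x$ is an $\SL_n(\BA)$-torsor and carries the induced invariant measure $\ud_x g$; the compatibility with the Haar measure $\ud g$ on $\RG_n(\BA)$ is exactly the factorization $\ud g = \ud_x g\,\ud^\times x$ (with appropriate normalization), which is what makes the presentation in \eqref{GJ-gzi-1} legitimate. Using $\xi(g)=|\det g|_\BA^{n/2}f(g)$, the integrand of $\CZ(s,f,\varphi_\pi,\chi)$ becomes
\[
f(g)\varphi_\pi(g)\chi(\det g)|\det g|_\BA^{s+\frac{n-1}{2}}
=
\xi(g)\varphi_\pi(g)\chi(\det g)|\det g|_\BA^{s-\frac{1}{2}}.
\]

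Next I would apply Fubini to integrate first along the fibers and then over $\BA^\times$, which is justified because $|\CZ(s,f,\varphi_\pi,\chi)|$ is finite (Theorem \ref{thm:GJ-gzi}) and the integrand is measurable. The inner integral on the fiber $\RG_n(\BA)_x$ is
\[
\int_{\RG_n(\BA)_x}\xi(g)\varphi_\pi(g)\ud_x g = \phi_{\xi,\varphi_\pi}(x)
\]
by the very definition \eqref{fiber}, and by Proposition \ref{prp:fiber-1} this equals $\phi(x)\in\CS_\pi(\BA^\times)$. The outer integral then becomes
\[
\int_{\BA^\times}\phi_{\xi,\varphi_\pi}(x)\chi(x)|x|_\BA^{s-\frac{1}{2}}\ud^\times x
=\CZ(s,\phi,\chi),
\]
by the definition \eqref{1-gzi}, yielding the claimed identity.

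The only potentially delicate step is the justification of Fubini, since the measure-theoretic decomposition $\ud g=\ud_x g\,\ud^\times x$ is a quotient/coarea statement that, while standard, needs the half-plane $\Re(s)>\tfrac{n+1}{2}$ of absolute convergence to apply cleanly; outside that range the inner integrals still make sense pointwise (Proposition \ref{prp:smooth} locally and Proposition \ref{prp:fiber-1} globally), but the switch of order would not be automatic. Since the corollary only asserts the identity for $\Re(s)$ sufficiently large, we are safely inside the region of absolute convergence and no further work is needed; analytic continuation of both sides, if desired, would then be a separate matter handled by Theorem \ref{thm:1-zeta} and Theorem \ref{thm:GJ-gzi}.
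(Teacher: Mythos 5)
Your proposal is correct and follows essentially the same route as the paper: the paper's argument is precisely the rewriting \eqref{GJ-gzi-1} of the Godement--Jacquet integral as an iterated integral over the fibers of $\det$, justified by the absolute convergence in Theorem \ref{thm:GJ-gzi}, with the inner fiber integral identified as $\phi_{\xi,\varphi_\pi}(x)$ via \eqref{fiber} and Proposition \ref{prp:fiber-1}, so the outer integral is $\CZ(s,\phi,\chi)$ by \eqref{1-gzi}. Your remarks on the measure decomposition $\ud g=\ud_x g\,\ud^\times x$ and on restricting to the half-plane of absolute convergence match the paper's (implicit) justification.
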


\begin{prp}\label{FO-FOnu}
If $\pi\in\CA_\cusp(\RG_n)$, then for any $\phi=\phi_{\xi,\varphi_\pi}\in\CS_\pi(\BA^\times)$ with
$\xi(g)=|\det g|_\BA^{\frac{n}{2}}\cdot f(g)\in\CS_{\std}(\RG_n(\BA))$ for
some $f\in\CS(\RM_n(\BA))$ and
$\varphi_\pi\in\CC(\pi)$, the following identity
\[
\CF_{\pi,\psi}(\phi_{\xi,\vphi_\pi})(x)
=
\phi_{\CF_{\GJ}(\xi),\vphi_\pi^\vee}(x)
\]
for any $x\in\BA^\times$. Moreover, for any $x\in\BA^\times$, the following $\BA^\times$-equivariant
property
\[
\CF_{\pi,\psi}(\phi^x)(y)=\CF_{\pi,\psi}(\phi)(yx^{-1})
\]
holds, where $\phi^x(y):=\phi(yx)$.
\end{prp}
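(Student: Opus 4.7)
The plan is to prove the two assertions separately, using the factorization of the global Fourier operator as a tensor product of local ones from \eqref{FO-BA}. The first identity will follow from a placewise application of the local definition \eqref{eq:1-FO} combined with the factorization of global fiber integrals from Proposition~\ref{prp:fiber-1}. The second (equivariance) identity will be reduced to a local statement that is then settled via the local functional equation of Theorem~\ref{thm:1-FE} and the Mellin inversion of Theorem~\ref{thm:MT}.

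For the first identity, by linearity and the restricted-tensor-product structure $\CS_{\std}(\RG_n(\BA)) = \otimes_\nu \CS_{\std}(\RG_n(k_\nu))$ from \eqref{CSstd-otimes} and $\CC(\pi) = \otimes_\nu \CC(\pi_\nu)$, it suffices to treat factorizable $\xi = \otimes_\nu \xi_\nu$ with $\xi_\nu(g) = |\det g|_\nu^{n/2} f_\nu(g)$ and $\vphi_\pi = \otimes_\nu \vphi_{\pi_\nu}$. In this case Proposition~\ref{prp:fiber-1} yields $\phi_{\xi,\vphi_\pi}(x) = \prod_\nu \phi_{\xi_\nu,\vphi_{\pi_\nu}}(x_\nu)$. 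Since the classical Fourier transform on $\RM_n(\BA)$ factorizes as $\otimes_\nu \CF_{\psi_\nu}$ over the local Schwartz spaces $\CS(\RM_n(k_\nu))$, the formula \eqref{GJ-FO} gives the analogous factorization $\CF_{\GJ}(\xi) = \otimes_\nu \CF_{\GJ}(\xi_\nu)$, and obviously $\vphi_\pi^\vee = \otimes_\nu \vphi_{\pi_\nu}^\vee$. Applying the local identity \eqref{eq:1-FO} at each place and taking the tensor product, using \eqref{FO-BA} on the left and a second appeal to Proposition~\ref{prp:fiber-1} on the right, delivers the desired global identity. The independence from the presentation of $\phi_{\xi,\vphi_\pi}$ on non-factorizable data reduces placewise to the local well-definedness in Proposition~\ref{prp:1-CF}.

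For the equivariance, again reduce to factorizable $\phi = \otimes_\nu \phi_\nu$. Since $\phi^x = \otimes_\nu \phi_\nu^{x_\nu}$, by \eqref{FO-BA} the global statement follows from the local identity $\CF_{\pi_\nu,\psi_\nu}(\phi_\nu^{x_\nu})(y) = \CF_{\pi_\nu,\psi_\nu}(\phi_\nu)(y x_\nu^{-1})$ at each place. A direct change of variables yields
\[
\CZ(s,\phi_\nu^{x_\nu},\chi_\nu) = \chi_\nu^{-1}(x_\nu)\,|x_\nu|_\nu^{\frac{1}{2}-s}\,\CZ(s,\phi_\nu,\chi_\nu),
\]
and analogously
\[
\CZ(1-s,\CF_{\pi_\nu,\psi_\nu}(\phi_\nu)^{x_\nu^{-1}},\chi_\nu^{-1}) = \chi_\nu^{-1}(x_\nu)\,|x_\nu|_\nu^{\frac{1}{2}-s}\,\CZ(1-s,\CF_{\pi_\nu,\psi_\nu}(\phi_\nu),\chi_\nu^{-1})
\]
for every quasi-character $\chi_\nu \in \FX(k_\nu^\times)$. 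Multiplying the first equality by $\gam(s,\pi_\nu\times\chi_\nu,\psi_\nu)$ and invoking the local functional equation of Theorem~\ref{thm:1-FE} shows that the Mellin transforms of $\CF_{\pi_\nu,\psi_\nu}(\phi_\nu^{x_\nu})$ and $\CF_{\pi_\nu,\psi_\nu}(\phi_\nu)^{x_\nu^{-1}}$ coincide on $\FX(k_\nu^\times)$; since both functions lie in $\CS_{\wt{\pi}_\nu}(k_\nu^\times) \subset \CF(k_\nu^\times)$ by Proposition~\ref{prp:Spi-F}, the bijectivity of $\CM$ in Theorem~\ref{thm:MT} forces equality as functions on $k_\nu^\times$. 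The main delicate point is verifying the factorization $\CF_{\GJ}(\xi) = \otimes_\nu \CF_{\GJ}(\xi_\nu)$ in the first part---which ultimately rests on the known factorization of the classical Fourier transform on $\RM_n(\BA)$---and the care required to treat non-factorizable inputs via the uniqueness supplied by Proposition~\ref{prp:1-CF}; once these are in place, everything else is formal.
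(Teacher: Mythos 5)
Your proposal is correct and follows essentially the same route as the paper: reduce to factorizable data, use the factorization of the global Fourier operator \eqref{FO-BA} together with the local definition \eqref{eq:1-FO} and the factorization of the fiber integration (Proposition~\ref{prp:fiber-1}) for the first identity, and reduce the equivariance to a local statement settled by the local functional equation of Theorem~\ref{thm:1-FE}. You merely write out in detail the change-of-variables and Mellin-injectivity step that the paper dismisses as ``a simple computation,'' which is a welcome but not substantively different elaboration.
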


\begin{proof}
Assume that $\phi=\phi_{\xi,\varphi_\pi}\in\CS_\pi(\BA^\times)$ with
$\xi(g)=|\det g|_\BA^{\frac{n}{2}}\cdot f(g)\in\CS_{\std}(\RG_n(\BA))$ for
some $f\in\CS(\RM_n(\BA))$ and
$\varphi_\pi\in\CC(\pi)$ is factorizable: $\phi=\otimes_\nu\phi_\nu$. By definition in \eqref{FO-BA}, we have
\[
\CF_{\pi,\psi}(\phi)(x)=\prod_{\nu\in|k|}\CF_{\pi_\nu,\psi_\nu}(\phi_\nu)(x_\nu).
\]
Write $\phi_\nu(x_\nu)=\phi_{\xi_\nu,\vphi_{\pi_\nu}}(x_\nu)$. Then we have
\[
\CF_{\pi_\nu,\psi_\nu}(\phi_\nu)(x_\nu)
=
\phi_{\CF_{\GJ,\nu}(\xi_\nu),\vphi_{\pi_\nu}^\vee}(x_\nu).
\]
When the data involved are unramified, we have from the simple calculation below \eqref{fiber-3} that
$\CF_{\pi_\nu,\psi_\nu}(\phi_\nu)(x_\nu)=1$. Hence we obtain
\[
\CF_{\pi,\psi}(\phi)(x)=\prod_{\nu}\CF_{\pi_\nu,\psi_\nu}(\phi_\nu)(x_\nu)
=
\prod_\nu\phi_{\CF_{\GJ,\nu}(\xi_\nu),\vphi_{\pi_\nu}^\vee}(x_\nu)
=\phi_{\CF_{\GJ}(\xi),\vphi_\pi^\vee}(x)
\]
as in \eqref{fiber-3}.

In order to verify the $\BA^\times$-equivariant
property: $\CF_{\pi,\psi}(\phi^x)(y)=\CF_{\pi,\psi}(\phi)(yx^{-1})$ for any $x,y\in\BA^\times$, it is
enough to the local Fourier operators $\CF_{\pi_\nu,\psi_\nu}$ for all local place $\nu\in|k|$ enjoy the
same equivariant property. This local equivariant property for the Fourier operators $\CF_{\pi_\nu,\psi_\nu}$ can be deduced from the local functional equation for zeta integral $\CZ(s,\phi,\chi)$ in
Theorem \ref{thm:1-FE} through a simple computation.
\end{proof}

We can deduce the following result from Theorem \ref{thm:GJ-gzi}.

\begin{thm}\label{thm:gzeta}
Let $\pi$ be an irreducible unitary cuspidal automorphic representation of $\RG_n(\BA)$ with
the local component $\pi_\nu$ being of Casselman-Wallach type at all $\nu\in|k|_\infty$.
For any $\phi\in\CS_\pi(\BA^\times)$ and any unitary character $\chi$ of $k^\times\bs\BA^\times$,
the global zeta integral $\CZ(s,\phi,\chi)$ converges absolutely for $\Re(s)>\frac{n+1}{2}$, admits analytic
continuation to an entire function in $s\in\BC$, and satisfies the functional equation
\[
\CZ(s,\phi,\chi)=\CZ(1-s,\CF_{\pi,\psi}(\phi),\chi^{-1}),
\]
where $\CF_{\pi,\psi}$ is the Fourier operator as defined in \eqref{FO-BA} that takes $\CS_\pi(\BA^\times)$
to $\CS_{\wt{\pi}}(\BA^\times)$.
\end{thm}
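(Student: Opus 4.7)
The plan is to reduce the theorem to the Godement--Jacquet global theory (Theorem \ref{thm:GJ-gzi}) by exploiting the compatibility results already established. Since $\CZ(s,\phi,\chi)$ and $\CF_{\pi,\psi}$ are both linear in $\phi$, and since $\CS_\pi(\BA^\times)$ is by construction spanned---through the restricted tensor product \eqref{piSS-BA} and Definition \ref{def:piSS}---by factorizable vectors of the form $\phi=\otimes_\nu \phi_{\xi_\nu,\varphi_{\pi_\nu}}$ (with $\phi_\nu=\BL_{\pi_\nu}=\phi_{\xi_\nu^\circ,\varphi_{\pi_\nu}^\circ}$ at almost all finite places), Proposition \ref{prp:fiber-1} reduces the problem to the case $\phi=\phi_{\xi,\varphi_\pi}$ for a globally factorizable pair $\xi=\otimes_\nu \xi_\nu\in\CS_{\std}(\RG_n(\BA))$ and $\varphi_\pi=\otimes_\nu\varphi_{\pi_\nu}\in\CC(\pi)$. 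Writing $\xi(g)=|\det g|_\BA^{n/2}f(g)$ with $f\in\CS(\RM_n(\BA))$ then identifies the $\GL_1$ data with the Godement--Jacquet data $(f,\varphi_\pi)$ for the cuspidal $\pi$.

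On the half-plane $\Re(s)>\frac{n+1}{2}$, Corollary \ref{zeta-zeta} provides the identity $\CZ(s,\phi,\chi)=\CZ(s,f,\varphi_\pi,\chi)$, and Theorem \ref{thm:GJ-gzi} guarantees that the right-hand side converges absolutely on this half-plane and extends to an entire function of $s$; a direct majorization via the fiber-integration formula \eqref{fiber} shows that $|\phi(x)||x|_\BA^{s-1/2}$ is dominated in integral by the Godement--Jacquet integrand, so the absolute convergence of $\CZ(s,\phi,\chi)$ on $\Re(s)>\frac{n+1}{2}$ follows, and its entire continuation is inherited. Invoking the Godement--Jacquet functional equation \eqref{GJ-gfe} gives
\[
\CZ(s,f,\varphi_\pi,\chi)=\CZ(1-s,\CF_\psi(f),\varphi_\pi^\vee,\chi^{-1})
\]
as an identity of entire functions. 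To match the right-hand side with $\CZ(1-s,\CF_{\pi,\psi}(\phi),\chi^{-1})$, I would apply Corollary \ref{zeta-zeta} to the unitary cuspidal representation $\wt{\pi}$, the unitary character $\chi^{-1}$, and the pair $(\CF_\psi(f),\varphi_\pi^\vee)$, using that $\CF_{\GJ}(\xi)(g)=|\det g|_\BA^{n/2}\CF_\psi(f)(g)$ lies in $\CS_{\std}(\RG_n(\BA))$ and that $\varphi_\pi^\vee\in\CC(\wt{\pi})$. Proposition \ref{FO-FOnu} then identifies $\phi_{\CF_{\GJ}(\xi),\varphi_\pi^\vee}$ with $\CF_{\pi,\psi}(\phi)$, yielding the desired identification on the reflected half-plane $\Re(1-s)>\frac{n+1}{2}$.

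The main obstacle is that the two identifications of $\CZ(\cdot,\phi,\chi)$ and $\CZ(\cdot,\CF_{\pi,\psi}(\phi),\chi^{-1})$ with Godement--Jacquet integrals hold only on the disjoint half-planes $\Re(s)>\frac{n+1}{2}$ and $\Re(1-s)>\frac{n+1}{2}$ when $n\geq 1$, so the functional equation cannot be read off as a single chain of absolutely convergent identities. It must instead be extracted via analytic continuation: each of the identities $\CZ(s,\phi,\chi)=\CZ(s,f,\varphi_\pi,\chi)$ and $\CZ(1-s,\CF_\psi(f),\varphi_\pi^\vee,\chi^{-1})=\CZ(1-s,\CF_{\pi,\psi}(\phi),\chi^{-1})$ is initially valid only on a half-plane of absolute convergence, and the cuspidality of $\pi$ and $\wt{\pi}$ is what ensures that both sides of each identity are entire in $s$, hence uniquely determined by their values on any non-empty open subset. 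Combining these continued identities with the Godement--Jacquet functional equation as an identity of entire functions then delivers the required functional equation for $\CZ(s,\phi,\chi)$ on all of $\BC$.
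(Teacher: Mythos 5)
Your proposal is correct and follows exactly the route the paper intends: the paper gives no separate argument for Theorem \ref{thm:gzeta} beyond the remark that it is deduced from Theorem \ref{thm:GJ-gzi}, and your reduction via Proposition \ref{prp:fiber-1}, Corollary \ref{zeta-zeta} (applied to both $(\pi,\chi,f,\varphi_\pi)$ and $(\wt{\pi},\chi^{-1},\CF_\psi(f),\varphi_\pi^\vee)$), and Proposition \ref{FO-FOnu} is precisely that deduction. Your explicit treatment of the two disjoint half-planes of convergence and the passage to entire functions by analytic continuation fills in a step the paper leaves implicit, and is handled correctly.
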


\subsection{$\pi$-Poisson summation formula}\label{ssec-PSF}
We establish here the Poisson summation formula on $\GL_1$ for the Fourier operator
$\CF_{\pi,\psi}$, which is associated to any $\pi\in\CA_\cusp(\RG_n)$, and takes $\CS_\pi(\BA^\times)$ to
$\CS_{\wt{\pi}}(\BA^\times)$. Technically, it is possible to establish such a summation formula from the global functional equation in Theorem \ref{thm:gzeta}. However, we are going to take a slightly different way
below.

\begin{thm}[$\pi$-Poisson Summation Formula]\label{thm:PSF}
For any  $\pi\in\CA_\cusp(\RG_n)$, take $\wt{\pi}$ to be the contragredient of $\pi$.
For any $\phi\in\CS_\pi(\BA^\times)$, the $\pi$-theta function
\[
\Theta_\pi(x,\phi):=\sum_{\alpha\in k^\times}\phi(\alpha x)
\]
converges absolutely for any $x\in\BA^\times$, and the following identity holds
\[
\Theta_\pi(x,\phi)
=
\Theta_{\wt{\pi}}(x^{-1},\CF_{\pi,\psi}(\phi)),
\]
as functions in $x\in\BA^\times$, where $\CF_{\pi,\psi}$ is the Fourier operator as defined in
\eqref{FO-BA} that takes $\CS_\pi(\BA^\times)$ to $\CS_{\wt{\pi}}(\BA^\times)$.
\end{thm}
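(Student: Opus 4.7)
The plan is to deduce the pointwise identity from the global functional equation for the $\GL_1$ zeta integrals (Theorem \ref{thm:gzeta}) via Mellin inversion on the idele class group $k^\times\backslash\BA^\times$; that functional equation itself encodes, through the work of Godement--Jacquet, the classical Poisson summation on $\RM_n(\BA)$ and its cuspidal reduction.

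First I would establish absolute convergence of $\Theta_\pi(x,\phi)$. Writing $\phi=\phi_{\xi,\vphi_\pi}$ with $\xi(g)=|\det g|_\BA^{n/2}f(g)$ for some $f\in\CS(\RM_n(\BA))$ and $\vphi_\pi\in\CC(\pi)$, and parametrizing the determinant fibers by $g=s\cdot t(y)$ with $t(y)=\diag(y,1,\ldots,1)$ and $s\in\SL_n(\BA)$, the standard folding (using $\GL_n(k)=\SL_n(k)\cdot\{t(\alpha):\alpha\in k^\times\}$ and left $\GL_n(k)$-invariance of the automorphic coefficient $\vphi_\pi$) yields
\[
\sum_{\alpha\in k^\times}\phi(\alpha x)=|x|_\BA^{n/2}\int_{\SL_n(k)\backslash\SL_n(\BA)}\vphi_\pi(st(x))\sum_{\delta\in\GL_n(k)}f(\delta st(x))\,ds.
\]
Replacing $f,\vphi_\pi$ by their absolute values and bounding $\sum_{\delta\in\GL_n(k)}|f(\delta g)|\leq\sum_{X\in\RM_n(k)}|f(Xg)|$, the resulting expression is dominated by the absolute-value Godement--Jacquet zeta integral at $s=n/2$; its convergence follows from the rapid decay of the cuspidal coefficient $|\vphi_\pi|$ towards the cusps of $\SL_n(k)\backslash\SL_n(\BA)$ combined with the Schwartz decay of $|f|$, as in \cite[\S 13]{GJ72}.

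Next I unfold the global zeta integrals. For a unitary idele class character $\chi$ and $\Re(s)$ sufficiently large, $k^\times$-invariance of $\Theta_\pi(\cdot,\phi)$ gives
\[
Z(s,\phi,\chi)=\int_{k^\times\backslash\BA^\times}\Theta_\pi(x,\phi)\chi(x)|x|_\BA^{s-\frac{1}{2}}\,d^\times x,
\]
while the substitution $y\mapsto x^{-1}$ in the dual integral yields
\[
Z(1-s,\CF_{\pi,\psi}(\phi),\chi^{-1})=\int_{k^\times\backslash\BA^\times}\Theta_{\wt\pi}(x^{-1},\CF_{\pi,\psi}(\phi))\chi(x)|x|_\BA^{s-\frac{1}{2}}\,d^\times x.
\]
The functional equation of Theorem \ref{thm:gzeta} renders these equal as meromorphic functions of $s$, so the Mellin transforms on $k^\times\backslash\BA^\times$ of the two continuous functions $x\mapsto\Theta_\pi(x,\phi)$ and $x\mapsto\Theta_{\wt\pi}(x^{-1},\CF_{\pi,\psi}(\phi))$ coincide for every unitary $\chi$ and every $s$ in a common half-plane. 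Mellin inversion on $k^\times\backslash\BA^\times$---realized as Fourier expansion over the characters of the compact group $k^\times\BR_{>0}\backslash\BA^\times$ combined with the usual Mellin transform along the archimedean scaling direction---then forces the two functions to coincide pointwise on $\BA^\times$, which is the asserted identity.

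The main obstacle will be the final Mellin inversion step: I must verify that the difference $\Theta_\pi(x,\phi)-\Theta_{\wt\pi}(x^{-1},\CF_{\pi,\psi}(\phi))$ is regular enough (continuous with adequate decay both as $|x|_\BA\to 0$ and as $|x|_\BA\to\infty$) to be uniquely determined by its Mellin data. Continuity is immediate from the smoothness of $\pi$-Schwartz functions (Proposition \ref{prp:smooth}) and the local-uniform convergence of the $\alpha$-sums; the required decay is obtained by tracking the Schwartz decay of $f$ and $\CF_\psi(f)$ on $\RM_n(\BA)$ through the fiber integration, together with the rapid decay of the cuspidal matrix coefficient $\vphi_\pi$ along unipotent radicals.
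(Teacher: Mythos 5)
Your overall route---deducing the pointwise identity from the global functional equation of Theorem \ref{thm:gzeta} by Mellin inversion on $\CC_k=k^\times\backslash\BA^\times$---is one the paper explicitly acknowledges as technically possible, but it is not the proof the paper gives: there, one inserts the integral representation \eqref{gcoeff} of the matrix coefficient, unfolds the theta sum using the decomposition of $\RG_n(k)$ into $t_1(\alpha)\cdot\SL_n(k)$, applies the classical Poisson summation formula on $\RM_n(\BA)$ as in \eqref{ps-7}, and shows the boundary terms \eqref{ps-9} vanish against the cusp forms $\beta_1,\beta_2$; this yields the summation formula without invoking Theorem \ref{thm:gzeta} and in fact reproves it. Your plan is not circular in itself (Theorem \ref{thm:gzeta} rests on \cite[Theorem 13.8]{GJ72}), but as written it has two genuine gaps. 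First, both your convergence argument and your closing decay claims attribute to $\vphi_\pi\in\CC(\pi)$ properties it does not have: a matrix coefficient of a cuspidal automorphic representation is neither left $\RG_n(k)$-invariant nor rapidly decreasing ``towards the cusps'' or ``along unipotent radicals''; those are properties of the cusp forms $\beta_1,\beta_2$ appearing in \eqref{gcoeff}, not of $\vphi_\pi$. Consequently your folding identity for $\Theta_\pi(x,\phi)$ is false as stated; the correct unfolding must first substitute \eqref{gcoeff} and then pair the moderate-growth series $\sum_{\gamma\in\RG_n(k)}f(h^{-1}\gamma g)$ (cf. \cite[Lemma 11.7]{GJ72}) against the rapidly decreasing cusp forms, which is exactly \eqref{ps-2}--\eqref{ps-6}.

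Second, and more seriously, the Mellin-inversion step is where the whole difficulty lives, and your treatment of it does not go through as described. The integral representation of $\CZ(s,\phi,\chi)$ by $\Theta_\pi(\cdot,\phi)$ converges for $\Re(s)>\frac{n+1}{2}$, while the representation of $\CZ(1-s,\CF_{\pi,\psi}(\phi),\chi^{-1})$ by $x\mapsto\Theta_{\wt\pi}(x^{-1},\CF_{\pi,\psi}(\phi))$ converges only for $\Re(s)<\frac{1-n}{2}$; there is no ``common half-plane'' unless one first proves rapid decay of the theta functions both as $|x|_\BA\to\infty$ and as $|x|_\BA\to 0$. The decay at $\infty$ does follow from cuspidality, but the decay at $0$ (Proposition \ref{theta-decay}, following Soul\'e) is not obtained by ``tracking the Schwartz decay of $f$ and $\CF_\psi(f)$ through the fiber integration'' together with decay of $\vphi_\pi$ (already for $n=1$ the analogous sum blows up like $|x|_\BA^{-1}$); in this circle of ideas it is itself established by the Poisson-summation-plus-boundary-vanishing mechanism, i.e.\ by essentially the statement you are trying to prove, or equivalently by the analytic continuation it encodes. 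Your plan can be repaired, but only by replacing the ``common half-plane'' claim with a contour-shifting argument: use the entirety and vertical-strip boundedness of $\CZ(s,\phi,\chi)$ from Theorem \ref{thm:gzeta} (i.e.\ \cite[Theorem 13.8, Proposition 13.9]{GJ72}), establish decay on vertical lines for fixed $\chi$ sufficient for pointwise Mellin inversion in the $\BR_{>0}$-variable, recover each theta function as the Mellin inverse of the same entire function along two different vertical lines, shift the contour, and then justify the pointwise character expansion over $\CC_k^1$. Those analytic steps, not the formal unfolding, constitute the substance of the proof, and they are precisely what your proposal leaves unproved or supports with the incorrect properties of $\vphi_\pi$ noted above.
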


\begin{proof}
It is clear that $\Theta_\pi(x,\phi)=\Theta_\pi(1,\phi^x)$ with $\phi^x(y)=\phi(xy)$. By Proposition \ref{FO-FOnu}, we have
$\Theta_{\wt{\pi}}(x^{-1},\CF_{\pi,\psi}(\phi))=\Theta_{\wt{\pi}}(1,\CF_{\pi,\psi}(\phi^x))$. Since
$\phi\in\CS_\pi(\BA^\times)$
is arbitrary, it is enough to show that
\[
\Theta_\pi(1,\phi):=\sum_{\alpha\in k^\times}\phi(\alpha)
\]
converges absolutely and the following identity
\[
\Theta_\pi(1,\phi)
=
\Theta_{\wt{\pi}}(1,\CF_{\pi,\psi}(\phi))
\]
holds.

In order to prove that the summation $\Theta_\pi(1,\phi)$ is absolutely convergent,
we write $\phi=\phi_{\xi,\varphi_\pi}\in\CS_\pi(\BA^\times)$ with
$\xi(g)=|\det g|_\BA^{\frac{n}{2}}\cdot f(g)\in\CS_{\std}(\RG_n(\BA))$ for
some $f\in\CS(\RM_n(\BA))$ and
$\varphi_\pi\in\CC(\pi)$.  From \eqref{gcoeff} we have
\begin{align}\label{ps-1}
\varphi_\pi(g)=\int_{A_n(\BR)^+\RG_n(k)\bs\RG_n(\BA)}\beta_1(hg)\beta_2(h)\ud h
=
\int_{\RG_n(k)\bs\RG_n(\BA)^1}\beta_1(hg)\beta_2(h)\ud h
\end{align}
for some $\beta_1\in V_\pi$ and $\beta_2\in V_{\wt{\pi}}$, where $V_\pi$ is the cuspidal
automorphic realization of $\pi$ in $L^2(\RG_n(k)\bs\RG_n(\BA),\omega)$ and so is $V_{\wt{\pi}}$.

First, we have that
\begin{align}\label{ps-2}
\Theta_\pi(1,\phi)
=
\sum_{\alp\in k^\times}
\phi_{\xi,\varphi_\pi}(\alp)
=
\sum_{\alp\in k^\times}
\int_{\RG_n(\BA)_\alp}
\xi(g)
\int_{\RG_n(k)\bs \RG_n(\BA)^1}
\bet_1(hg)
\bet_2(h)\ud h \ud_\alp g.
\end{align}
From changing variable $g\to h^{-1}g$, we have that $\det g=\alp\cdot\det h$ and \eqref{ps-2} becomes
\begin{align}\label{ps-3}
\int_{\RG_n(k)\bs \RG_n(\BA)^1}
\sum_{\alp\in k^\times}
\int_{\RG_n(\BA)_{\alp\cdot \det h}}
\xi(h^{-1}g)
\bet_1(g)
\bet_2(h)\ud_{\alp\cdot \det h} g\ud h.
\end{align}
For $g\in\RG_n(\BA)_{\alp\cdot \det h}$, we change
$g$ to $t_1(\alp)\cdot y$ with $\det y=\det h$, where $t_1(\alp)={\rm diag}(\alp,\RI_{n-1})\in\RG_n(k)$. Then \eqref{ps-3} can be written as
\begin{align}\label{ps-4}
\int_{\RG_n(k)\bs \RG_n(\BA)^1}
\sum_{\alp\in k^\times}
\int_{\GL_n(\BA)_{\det h}}
\xi(h^{-1}t_1(\alp)g)
\bet_1(g)
\bet_2(h)\ud_{\det h} g\ud h,
\end{align}
since $\bet_1$ is automorhic. For any $h\in\RG_n(\BA)^1$, we have $|\det h|_\BA=1$.
Hence we must have that $\RG_n(\BA)_{\det h}\subset\RG_n(\BA)^1$. It is clear that
$\RG_n(\BA)_{\det h}$ is an $\SL_n(\BA)$-torsor and the measure $\ud_{\det h} g$ is
left $\SL_n(k)$-invariant.
Hence \eqref{ps-4} can be written as
\begin{align}\label{ps-5}
\int_{\RG_n(k)\bs \RG_n(\BA)^1}
\sum_{\alp\in k^\times}
\sum_{\epsilon\in \SL_n(k)}
\int_{\SL_n(k)\bs \RG_n(\BA)_{\det h}}
\xi(h^{-1}t_1(\alp)\epsilon g)
\bet_1(g)
\bet_2(h)\ud_{\det h} g\ud h.
\end{align}
Since any element $\gam\in\RG_n(k)$ can be written as a product of $t_1(\alp)$ and $\epsilon$ in a unique way,
we obtain that \eqref{ps-5} is equal to
\begin{align}\label{ps-6}
\int_{\RG_n(k)\bs \RG_n(\BA)^1}
\int_{\SL_n(k)\bs \RG_n(\BA)_{\det h}}
\left(\sum_{\gam\in \RG_n(k)}
\xi(h^{-1}\gam g)\right)
\bet_1(g)
\bet_2(h)\ud_{\det h} g\ud h.
\end{align}
Since $\xi(g)=|\det g|_\BA^{\frac{n}{2}}\cdot f(g)\in\CS_{\std}(\RG_n(\BA))$ for
some $f\in\CS(\RM_n(\BA))$, and $h\in\RG_n(\BA)^1$ and $g\in\RG_n(\BA)_{\det h}$, we must have
that
\[
\xi(h^{-1}\gam g)=|\det (h^{-1}\gam g)|_\BA^{\frac{n}{2}}\cdot f(h^{-1}\gam g)=f(h^{-1}\gam g).
\]
Hence we obtain that
\begin{align}\label{ps+1}
\sum_{\gam\in \RG_n(k)}
\xi(h^{-1}\gam g)=\sum_{\gam\in \RG_n(k)}
f(h^{-1}\gam g).
\end{align}
By \cite[Lemma 11.7]{GJ72}, for any $f\in\CS(\RM_n(\BA))$, the summation
$
\sum_{\gam\in \RG_n(k)}
f(h^{-1}\gam g)
$
is of moderate growth in $g,h\in\RG_n(k)\bs\RG_n(\BA)$ as an automorphic function on
$\RG_n(k)\bs\RG_n(\BA)\times\RG_n(k)\bs\RG_n(\BA)$, and so is the summation
$
\sum_{\gam\in \RG_n(k)}
\xi(h^{-1}\gam g)
$
as an automorphic function in $g,h\in\RG_n(k)\bs\RG_n(\BA)^1$. Since both $\beta_1(g)$ and $\beta_2(h)$
are cuspidal, we obtain that the integral in \eqref{ps-6} converges absolutely, and so does the
$\pi$-theta function $\Theta_\pi(1,\phi)$ at $x=1$.

Now we continue with the integral in \eqref{ps-6} to prove the identity
\[
\Theta_\pi(1,\phi)
=
\Theta_{\wt{\pi}}(1,\CF_{\pi,\psi}(\phi)).
\]
Recall from \cite[Section 11]{GJ72} and also \cite[Theorem 4.0.1]{Luo} the classical Poisson summation formula:
\begin{align}\label{ps-7}
\sum_{\gam\in \RM_n(k)}f(h^{-1}\gam g) =
\sum_{\gam\in \RM_n(k)}
|\det gh^{-1}|^{-n}_\BA \CF_\psi(f)(g^{-1}\gam h)
\end{align}
holds for any $f\in\CS(\RM_n(\BA))$ and $h,g\in\RG_n(\BA)$.  When $g,h\in\RG_n(\BA)^1$, it can be
re-written according to the rank of $\gam\in\RM_n(k)$ as follows:
\begin{align}\label{ps-8}
\sum_{\gam\in \RG_n(k)}f(h^{-1}\gam g)
&=
\sum_{\gam\in \RG_n(k)}\CF_\psi(f)(g^{-1}\gam h)\nonumber
\\
&+
\sum_{\gam\in \RM_n(k), \rank(\gam)<n}
\CF_\psi(f)(g^{-1}\gam h)
-
\sum_{\gam\in \RM_n(k), \rank(\gam)<n}
f(h^{-1}\gam g).
\end{align}
We denote the boundary terms by
\begin{align}\label{ps-9}
\RB_{f}(h,g):=
\sum_{\gam\in \RM_n(k), \rank(\gam)<n}
\CF_\psi(f)(g^{-1}\gam h)
-
\sum_{\gam\in \RM_n(k), \rank(\gam)<n}
f(h^{-1}\gam g).
\end{align}
Then \eqref{ps-6} can be written as a sum of the following two terms:
\begin{align}\label{ps-10}
\int_{\RG_n(k)\bs \RG_n(\BA)^1}
\int_{\SL_n(k)\bs \RG_n(\BA)_{\det h}}
\left(\sum_{\gam\in \RG_n(k)}
\CF_\psi(f)(g^{-1}\gam h)
\right)
\bet_1(g)
\bet_2(h)\ud_{\det h} g\ud h,
\end{align}
and
\begin{align}\label{ps-11}
\int_{\RG_n(k)\bs \RG_n(\BA)^1}
\int_{\SL_n(k)\bs \RG_n(\BA)_{\det h}}
\RB_{f}(h,g)
\bet_1(g)
\bet_2(h)\ud_{\det h} g\ud h.
\end{align}
From the proof of \cite[Lemma 12.13]{GJ72} and \cite[Lemma 4.1.4]{Luo}, we must have that
the term in \eqref{ps-11} is zero, because of the cuspidality of both $\beta_1(g)$ and $\beta_2(h)$. Hence we obtain that
$\Theta_\pi(1,\phi)=\Theta_\pi(1,\phi_{\xi,\varphi_\pi})$
is equal to the term in \eqref{ps-10}.

Now we write \eqref{ps-10} as
\begin{align}\label{ps-12}
\int_{\RG_n(k)\bs \RG_n(\BA)^1}
\int_{\SL_n(k)\bs \RG_n(\BA)_{\det h}}
\left(\sum_{\gam\in \RG_n(k)}
\CF_\psi(f)((\gam g)^{-1} h)
\right)
\bet_1(g)
\bet_2(h)\ud_{\det h} g\ud h.
\end{align}
By writing back that $\gam=t_1(\alp)\cdot\epsilon$ with $\alp\in k^\times$ and $\epsilon\in\SL_n(k)$, we obtain that
\eqref{ps-12} is equal to
\begin{align}\label{ps-13}
\int_{\RG_n(k)\bs \RG_n(\BA)^1}
\int_{\RG_n(\BA)_{\det h}}
\left(\sum_{\alp\in k^\times}
\CF_\psi(f)((t_1(\alp)g)^{-1} h)
\right)
\bet_1(g)
\bet_2(h)\ud_{\det h} g\ud h.
\end{align}
By changing $t_1(\alp)g$ to $g$, we write \eqref{ps-13} as
\begin{align}\label{ps-14}
\sum_{\alp\in k^\times}
\int_{\RG_n(k)\bs \RG_n(\BA)^1}
\int_{\RG_n(\BA)_{\alp\cdot\det h}}
\CF_\psi(f)(g^{-1} h)
\bet_1(g)
\bet_2(h)\ud_{\alp\cdot\det h} g\ud h.
\end{align}
After changing variable $g\to hg$, \eqref{ps-14} can be written as
\begin{align}\label{ps-15}
\sum_{\alp\in k^\times}
\int_{\RG_n(\BA)_{\alp}}
\CF_\psi(f)(g^{-1})
\int_{\RG_n(k)\bs \RG_n(\BA)^1}
\bet_1(hg)
\bet_2(h)\ud h\ud_{\alp} g,
\end{align}
which is equal to
\begin{align}\label{ps-16}
\sum_{\alp\in k^\times}
\int_{\RG_n(\BA)_{\alp}}
\CF_\psi(f)(g^{-1})
\varphi_\pi(g)\ud_\alp g.
\end{align}
Finally, by changing $g$ to $g^{-1}$, we obtain that \eqref{ps-6} is equal to
\begin{align}\label{ps-17}
\sum_{\alp\in k^\times}
\int_{\RG_n(\BA)_{\alp}}
\CF_\psi(f)(g)
\varphi_\pi(g^{-1})\ud_\alp g
\end{align}
By Proposition \ref{prp:GJ-FO}, when $\det g=\alp\in k^\times$, we have
\[
\CF_\psi(f)(g)=
\CF_{\GJ}(\xi)(g)
\]
for $\xi(g)=|\det g|^{\frac{n}{2}}\cdot f(g)$. Hence the summation in \eqref{ps-17} is equal to
\[
\sum_{\alp\in k^\times}\CF_{\pi,\psi}(\phi_{\xi,\varphi_\pi})(1)
=
\Theta_{\wt{\pi}}(1,\CF_{\pi,\psi}(\phi_{\xi,\varphi_\pi})).
\]
This proves the $\pi$-Poisson summation formula
\[
\Theta_\pi(1,\phi)=\Theta_{\wt{\pi}}(1,\CF_{\pi,\psi}(\phi))
\]
for all $\phi\in\CS_\pi(\BA^\times)$. We are done.
\end{proof}

The proof of the $\pi$-Poisson summation formula in Theorem \ref{thm:PSF} uses the Poisson summation formula associated to the classical Fourier transform $\CF_\psi$ over the affine space $\RM_n(\BA)$, without
using the global functional equation for the global zeta integrals $\CZ(s,\phi,\chi)$ in Theorem \ref{thm:gzeta}.
Hence we are able to obtain the global functional equation for the global zeta integrals
$\CZ(s,\phi,\chi)$ as in Theorem \ref{thm:gzeta} by using the $\pi$-Poisson summation formula
in Theorem \ref{thm:PSF}. Of course, this is essentially the same proof as the one
that  uses the global functional equation of Godement-Jacquet zeta functions in Theorem \ref{thm:GJ-gzi}.
However, it seems still meaningful to point out the contribution of the $\pi$-Poisson summation formulae on
$\GL_1$ in the theory of the global functional equation for the standard automorphic $L$-function
$L(s,\pi\times\chi)$ for any automorphic characters $\chi$ of $\BA^\times$ and any
irreducible cuspidal automorphic representations $\pi$ of $\GL_n(\BA)$, as an extension in a
different prospective of the Tate's thesis to the study of higher degree automorphic $L$-functions.

\section{Absolute Convergence of Generalized Theta Functions}\label{sec-ACGTF}

We explore in this section the possibilities to establish the $\pi$-Poisson summation formulae
(Theorem \ref{thm:PSF}) when the automorphic representation $\pi$ of $\GL_n(\BA)$ may not be cuspidal.

\subsection{Absolute convergence of $\pi$-theta functions}

Recall that we denote by $\Pi_\BA(n)$ the set of all equivalence classes of irreducible admissible representations of
$\RG_n(\BA)$.
For any $\pi=\otimes_\nu\pi_\nu\in\Pi_\BA(n)$, we have, as in \eqref{piSS-BA},
\[
\CS_\pi(\BA^\times)=\otimes_{\nu\in|k|}\CS_{\pi_\nu}(k_\nu^\times).
\]
For $\phi\in\CS_\pi(\BA^\times)$, we are going to show that the $\pi$-theta function
\begin{align}\label{pi-theta-ac}
\Theta_\pi(x,\phi)=\sum_{\alp\in k^\times}\phi(\alp x)
\end{align}
converges absolutely under an assumption (Assumption \ref{ass-FH})
on the unramified local components $\pi_\nu$ of $\pi$.

For any $\pi=\otimes_\nu\pi_\nu\in\Pi_\BA(n)$, let $S_\pi$ be a finite subset of local places of $k$ containing $|k|_\infty$ such that for any finite local place $\nu\not\in S_\pi$, the local component $\pi_\nu$
is unramified. For any $\pi_\nu$ with $\nu\not\in S_\pi$, via the Satake isomorphism, one has the
Frobenius-Hecke conjugacy class $c(\pi_\nu)$ in $\RG_n(\BC)$ associated to $\pi_\nu$. We write
\begin{align}\label{Frob-Hecke}
c(\pi_\nu):={\rm diag}(q_\nu^{s_1(\pi_\nu)}, \cdots, q_\nu^{s_n(\pi_\nu)})\in\GL_n(\BC)=\RG_n(\BC)
\end{align}
up to the adjoint action of $\RG_n(\BC)$, with $s_j(\pi_\nu)\in\BC$ for $j=1,2,\cdots,n$,
where $q_\nu$ is the cardinality of the residue field $\kappa_\nu = \Fo_\nu/\Fp_\nu$.
The following is the assumption we take on the unramified local components $\pi_\nu$ of $\pi$.

\begin{ass}\label{ass-FH}
Let $\pi=\otimes_\nu\pi_\nu\in\Pi_\BA(n)$ be an irreducible admissible representation of $\RG_n(\BA)$.
There exists a positive real number $\kappa_{\pi}$, which depends only on $\pi$, such that
\[
\max_{1\leq j\leq n}\{\Re(s_j(\pi_\nu))\}
<\kappa_\pi
\]
for every $\nu\not\in S_\pi$.
\end{ass}

Then we need to prove some technical local results.

\begin{lem}\label{ext-0}
For any $\pi=\otimes_\nu\pi_\nu\in\Pi_\BA(n)$ with Assumption \ref{ass-FH},
there exists a positive real number $s_\pi\geq \kappa_\pi$ such that
for any real number $a_0> s_\pi$, the following limit
\[
\lim_{|x|_\nu\to 0}\phi_\nu(x)|x|_\nu^{a_0}=0
\]
holds for any $\phi_\nu\in\CS_{\pi_\nu}(k_\nu^\times)$ and any local place $\nu\in|k|$.
In particular, $\phi_\nu(x) |x|_\nu^{a_0}$ extends to a continuous function with compact support on $k_\nu$.
\end{lem}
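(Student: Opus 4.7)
The plan is to deduce the statement from the Mellin-theoretic description of the local $\pi_\nu$-Schwartz space combined with the pole bound encoded in Assumption~\ref{ass-FH}. By Corollary~\ref{cor:SpiFpi}, $\CS_{\pi_\nu}(k_\nu^\times)=\CM_\nu^{-1}(\CL_{\pi_\nu})$ is contained in $\CF(k_\nu^\times)$, so every $\phi_\nu\in\CS_{\pi_\nu}(k_\nu^\times)$ admits the asymptotic expansion at $|x|_\nu\to 0$ given in Definition~\ref{dfn:CF}(3), whose exponents $\lambda_k$ are the negatives of the poles of the Mellin transform $\CM_\nu(\phi_\nu)(\chi_{\tilde s,\omega})$. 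Because this transform is a holomorphic multiple of $L(\tilde s+\tfrac{1}{2},\pi_\nu\times\omega)$ (reflecting the $\tfrac{1}{2}$-shift between \eqref{eq:1-zeta} and the Mellin coordinate in Theorem~\ref{thm:MT}), the possible pole loci are controlled by the poles of the Langlands local $L$-functions $L(s,\pi_\nu\times\omega)$ as $\omega$ runs over unitary characters of $\Omega_{k_\nu}$.

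The key step is producing a single real number, depending only on $\pi$ and not on $\nu$, that dominates the real parts of these poles uniformly. For every $\nu\not\in S_\pi$ and every unitary $\omega$, the local factor
\[
L(s,\pi_\nu\times\omega)=\prod_{j=1}^{n}\bigl(1-q_\nu^{s_j(\pi_\nu)}\omega(\varpi_\nu)q_\nu^{-s}\bigr)^{-1}
\]
when $\omega$ is unramified has poles of real part exactly $\Re(s_j(\pi_\nu))$, and equals $1$ when $\omega$ is ramified; Assumption~\ref{ass-FH} bounds these real parts above by $\kappa_\pi$. For the finitely many places $\nu\in S_\pi$, the Archimedean case follows from the $\Gamma$-factor description recalled in the proof of Proposition~\ref{prp:Lpi-Z}, which shows that the real parts of the poles of $L(s,\pi_\nu\times\ac(\cdot)^p)$ stay uniformly bounded above in $p$ by a constant depending only on the Langlands parameter of $\pi_\nu$; the ramified non-Archimedean case is immediate from the Langlands classification since only finitely many pole loci arise. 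Taking the maximum of $\kappa_\pi$ with these finitely many place-wise bounds, and absorbing the $\tfrac{1}{2}$-shift, yields a constant $s_\pi\geq\kappa_\pi$ such that $\Re(\lambda_k)\geq -s_\pi$ for every $\phi_\nu\in\CS_{\pi_\nu}(k_\nu^\times)$ and every $\nu\in|k|$.

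The limit is then immediate: for $a_0>s_\pi$, Definition~\ref{dfn:CF}(3) gives
\[
\phi_\nu(x)|x|_\nu^{a_0}\sim\sum_{k}\sum_{m=1}^{m_k}a_{k,m}(\ac(x))\,|x|_\nu^{\lambda_k+a_0}(\ln|x|_\nu)^{m-1},
\]
in which every exponent satisfies $\Re(\lambda_k+a_0)>0$, so the sum tends to $0$ as $|x|_\nu\to 0$ (the logarithmic factors in the Archimedean case are dominated by any strictly positive power of $|x|_\nu$). Extending by the value $0$ at $x=0$ yields a function continuous on all of $k_\nu$. For the support assertion, Definition~\ref{dfn:CF}(2) forces $\phi_\nu$ to vanish for $|x|_\nu$ sufficiently large when $\nu$ is non-Archimedean, hence $\phi_\nu(x)|x|_\nu^{a_0}$ has compact support in $k_\nu$; in the Archimedean case the rapid decay at infinity from Definition~\ref{dfn:CF}(2) dominates the polynomial factor $|x|_\nu^{a_0}$, so the extended function lies in the usual Schwartz class, which is the sense in which the compact-support conclusion is to be read.

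The main obstacle is the uniformity of the bound $s_\pi$ across \emph{all} places $\nu$ simultaneously; this is precisely the role of Assumption~\ref{ass-FH}, which converts an a priori place-dependent spectral bound into the single constant $\kappa_\pi$ at every unramified place, leaving only the finitely many places in $S_\pi$ to be handled individually by standard Langlands/Archimedean considerations.
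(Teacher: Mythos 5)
Your argument is correct and rests on the same underlying mechanism as the paper — control the exponents in the expansion of $\phi_\nu$ at $x=0$ through the poles of its Mellin transform, with Assumption \ref{ass-FH} giving the uniform bound at the places outside $S_\pi$ and the finitely many remaining places handled individually — but your execution of the non-Archimedean case is genuinely different. The paper does not pass through $\CS_{\pi_\nu}(k_\nu^\times)=\CM^{-1}(\CL_{\pi_\nu})$ there: it applies the non-Archimedean Mellin inversion formula \eqref{eq:MIT-na} directly to the zeta integral, rewrites $\CZ(s+d,\phi_\nu,\ome_\nu)$ via the supercuspidal support of $\pi_\nu\otimes\ome_\nu$ (using \cite[Theorem 3.4, Corollary 3.6, Proposition 5.11]{GJ72}) as a Laurent polynomial times a product of geometric series, and computes the residue explicitly, arriving at the quantitative bound \eqref{e-0-8} of the form $\binom{\ord_\nu(x)+r_\nu-1}{r_\nu-1}\,q_\nu^{-\ord_\nu(x)(d+d_0-\max_j\Re(s_{\nu,j}))}$. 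Your softer route through Corollary \ref{cor:SpiFpi} and Definitions \ref{dfn:CF}, \ref{dfn:CZ} reaches the qualitative limit faster, and your uniformity checks (explicit unramified $L$-factor at $\nu\notin S_\pi$, $\Gamma$-factor pole bounds uniform in $p$ at Archimedean places, finiteness of the possible pole real parts over all unitary $\ome$ at the ramified finite places via the cuspidal support) are exactly the content the paper extracts by hand; the price is that you do not produce the explicit estimate, which the paper reuses verbatim in the proof of Lemma \ref{basic} for the basic functions, so your approach would have to be supplemented there. One further small divergence: for Archimedean $\nu$ you correctly note that $\phi_\nu(x)|x|_\nu^{a_0}$ is only continuous and rapidly decreasing rather than compactly supported; the lemma (and the paper's own proof) asserts compact support at all places, which is an overstatement at the Archimedean places, but since only non-Archimedean compact support and Archimedean continuity and decay are used later (in Theorem \ref{thm:AC}), your reading is the appropriate one.
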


\begin{proof}
When $\nu\in |k|_\infty$, the asymptotic of $\phi_\nu\in \CS_{\pi_\nu}(k_\nu^\times)$ near $x=0$
is characterized in Definition \ref{dfn:CF}. In particular, following the notation in Definition \ref{dfn:CF}, the fixed sequence $\{\lam_k\}_{k=0}^\infty$ has strictly increasing real part $\{\Re(\lam_k)\}_{k=0}^\infty$. Hence for any positive real number $s_0\in \BR$ satisfying the inequality
$$
s_0+\Re(\lam_0)>0,
$$
the limit
\begin{align}\label{e-0-0}
\lim_{|x|_\nu\to 0}
\phi_\nu(x)\cdot |x|^{s_0}_\nu = 0
\end{align}
holds as the limit formula in Definition \ref{dfn:CF} is term-wise differentiable and uniform (even after term-wise differentiation). Hence the function $\phi_\nu(x)\cdot |x|^{s_0}_\nu$ is continuous with
compact support on $k_\nu$ for any positive real number $s_0$ satisfying $s_0+\Re(\lam_0)>0$.
Since the set $|k|_\infty$ is finite, it is possible to choose a sufficiently positive $s_\infty\in \BR$ such
that the prescribed property holds for all functions $\phi_\nu(x)\cdot |x|^{a_0}_\nu$ with
$\phi_\nu\in \CS_{\pi_\nu}(k_\nu^\times)$ at all $\nu\in |k|_\infty$, as long as $a_0\geq s_\infty$.

It remains to treat the case when $\nu\in |k|_f$, the finite local places of $k$. We consider the local
zeta integrals $\CZ(s,\phi_\nu,\ome_\nu)$ for any $\phi_\nu\in \CS_{\pi_\nu}(k^\times_\nu)$,  and any unitary character $\ome_\nu\in\Ome_\nu^\wedge$. By Theorem \ref{thm:1-zeta}, it converges absolutely for $\Re(s)$ sufficiently positive and admits a meromorphic continuation to $s\in\BC$.
For each $\nu\in|k|_f$, we take $c_{\pi_\nu}$ to be a sufficiently positive real number, such that
$\CZ(s,\phi_\nu,\ome_\nu)$ converges absolutely for $\Re(s)>c_{\pi_\nu}$.
If $\nu\not\in S_\pi$, then $\pi_\nu$ is unramified. In this case,  the zeta integral
$\CZ(s,\phi_\nu,\ome_\nu)$ converges absolutely for $\Re(s)>\kappa_\pi$, where the positive real number $\kappa_\pi$ depends on $\pi$ only, according to Assumption \ref{ass-FH}. Hence if we
take a positive real number $c_\pi$ with
\begin{align}\label{cpi}
c_\pi:=\max\{\kappa_\pi,c_{\pi_\nu}\mid \nu\in S_\pi\cap|k|_f\},
\end{align}
then for any $\phi_\nu\in \CS_{\pi_\nu}(k^\times_\nu)$,  and any unitary character
$\ome_\nu\in\Ome_\nu^\wedge$, the local zeta integrals $\CZ(s,\phi_\nu,\ome_\nu)$
converges absolutely for $\Re(s)>c_{\pi}$ at all finite local places $\nu\in|k|_f$.

By the Mellin inversion formula as displayed in \eqref{eq:MIT-na}, we have
\begin{align}\label{e-0-1}
\phi_\nu(x)\cdot|x|_\nu^{d} =
\sum_{\ome_\nu\in \Ome_\nu^{\wedge}}
\left(
\Res_{z=0}\left(\CZ(s+d,\phi_\nu,\ome_\nu)|x|^{-s}_\nu q_\nu^s\right)
\right)
\ome_\nu(\ac(x))^{-1},
\end{align}
where $z=q_\nu^{-s}$ and $d>c_\pi$. Since the summation on the right-hand side is finite, it suffices to show that the following limit formula
\begin{align}\label{e-0-2}
\lim_{|x|_\nu\to 0}
\Res_{z=0}\left(\CZ(s+d,\phi_\nu,\ome_\nu)|x|^{-s}_\nu q_\nu^s\right) = 0
\end{align}
holds for each $\ome_\nu\in \Ome_\nu^\wedge$.

It is clear that $\CZ(s+d,\phi_\nu,\ome_\nu)$ is holomorphic for $\Re(s)>-(d-c_\pi)$.
By Theorem \ref{thm:1-zeta}, we have
\[
\CZ(s+d,\phi_\nu,\ome_\nu)
=
p_\nu(s)\cdot L(s+d,\pi_\nu\times\ome_\nu),
\]
where $p_\nu(s)\in \BC[q_\nu^s,q_\nu^{-s}]$, depending on $\phi_\nu$.
By the supercuspidal support of
$\pi_\nu\otimes\ome_\nu$, we obtain that the representation $\pi_\nu\otimes\ome_\nu$ can be
embedded, as an irreducible subrepresentation, into the following induced representation
\[
\pi_\nu\otimes\ome_\nu\hookrightarrow\Pi_\nu:=\Ind^{\RG_n(k_\nu)}_{P(k_\nu)}\tau_{\nu,1}\otimes\cdots\otimes\tau_{\nu,t_\nu}
\]
where $\tau_{\nu,j}$ is an irreducible supercuspidal representation of $\RG_{a_{\nu,j}}(k_\nu)$ with
$n=a_{\nu,1}+\cdots+a_{\nu,t_\nu}$ (c.f. \cite{J71}).
By \cite[Theorem 3.4]{GJ72}, we have
\[
L(s,\Pi_\nu)=L(s,\tau_{\nu,1})\cdots L(s,\tau_{\nu,t_\nu}).
\]
By \cite[Corollary 3.6]{GJ72}, we have
\[
\frac{L(s,\pi_\nu\times\ome_\nu)}{L(s,\Pi_\nu)}
\]
is a polynomial in $q_\nu^{-s}$. Hence we obtain that for the given
$\phi_\nu\in\CS_{\pi_\nu}(k_\nu^\times)$, there exists a polynomial $\CP_v(s)$ in $q_\nu^s$ and $q_\nu^{-s}$, depending on $\pi_\nu\otimes\ome_\nu$ and $\phi_\nu$,
such that
\begin{align}\label{e-0-3}
\CZ(s+d,\phi_\nu,\ome_\nu)
=
\CP_\nu(s)L(s+d,\Pi_\nu).
\end{align}
By applying \cite[Proposition 5.11]{GJ72} to the local $L$-functions $L(s,\tau_{\nu,j})$, we obtain that
$L(s,\tau_{\nu,j})=1$ when $\tau_{\nu,j}$ is either supercuspidal ($a_{\nu,j}\geq 2$) or
a ramified character ($a_{\nu,j}=1$). Hence there exists an integer $1\leq r_\nu\leq t_\nu\leq n$, such that
\begin{align}\label{e-0-4}
\CZ(s+d,\phi_\nu,\ome_\nu)
=
\CP_\nu(s)
\prod_{j=1}^{r_\nu}\frac{1}{1-q_\nu^{-s-d+s_{\nu,j}}}
=
\prod_{j=1}^{r_\nu}
\left(\sum_{\ell_j=0}^\infty q_\nu^{-(s+d-s_{\nu,j})\ell_j}\right)
\end{align}
for some $s_{\nu,j}\in\BC$, with $j=1,2,\cdots,r_\nu$.

Now we are ready to discuss the limit in \eqref{e-0-2}. For $z=q_\nu^{-s}$, we have
\begin{align}\label{e-0-5}
\CZ(s+d,\phi_\nu,\ome_\nu)|x|^{-s}_\nu q_\nu^s
=
\CP_\nu(z)\cdot
\frac{\prod_{j=1}^{r_\nu}
\left(\sum_{\ell_j=0}^\infty q_\nu^{-\ell_j (d-s_{\nu,j})}\cdot z^{\ell_j}\right)}{z^{\ord_\nu(x)+1}},
\end{align}
where $\CP_\nu(z)$ is a polynomial function in $z,z^{-1}$.
By taking the residue at $z=0$, we obtain that
\begin{align}\label{e-0-6}
\Res_{z=0}\left(\CZ(s+d,\phi_\nu,\ome_\nu)|x|^{-s}_\nu q_\nu^s\right)
=
\FC_0(x)
\end{align}
where $\FC_0(x)$ is the coefficient of the constant term of
\begin{align}\label{e-0-7}
\CP_\nu(z)\cdot
\frac{\prod_{j=1}^{r_\nu}
\left(\sum_{\ell_j=0}^\infty q_\nu^{-\ell_j (d-s_{\nu,j})}\cdot z^{\ell_j}\right)}{z^{\ord_\nu(x)}}.
\end{align}
Since $\CP_\nu(z)$ is a polynomial function in $z,z^{-1}$ with degree depending on $\pi$,
without loss of generality, we may assume that $\CP_\nu(z)\equiv1$ when we compute $\FC_0(x)$.
In this case, the constant term of \eqref{e-0-7} with $\CP_\nu(z)\equiv1$ is equal to
\begin{align}
\sum_{\substack{\ell_1+\cdots+\ell_{r_\nu}=\ord_\nu(x)\\ \ell_1,\cdots,\ell_{r_\nu}\geq 0}}
q_\nu^{-\ell_1 (d-s_{\nu,j})-\cdots-\ell_{t_0} (d-s_{\nu,j})}.
\end{align}
When $\nu\notin S_\pi$, $\pi_\nu$ is unramified,
\[
{\rm diag}(q_\nu^{s_{\nu,1}},\cdots,q_\nu^{s_{\nu,n}})=c(\pi_\nu)
\]
is the Frobenius-Hecke conjugacy class associated to $\pi_\nu$ in $\RG_n(\BC)$ with
$s_{\nu,j}=s_j(\pi_\nu)$ for $j=1,2,\cdots,n$.
By Assumption \ref{ass-FH} and the definition of the positive real number $c_\pi$ as in \eqref{cpi}, we take $d_0=0$ and have
\begin{align}\label{cpi-1}
d-\Re(s_{\nu,j})>c_\pi-\Re(s_{\nu,j})\geq 0
\end{align}
for all $j=1,2,\cdots,n$. For the remaining finite local places $\nu$, we may choose a positive
real number $d_0$ such that
\begin{align}\label{cpi-2}
d+d_0-\Re(s_{\nu,j})>c_\pi+d_0-\Re(s_{\nu,j})\geq 0
\end{align}
for all $j=1,2,\cdots,r_\nu$ and all $\nu\in S_\pi\cap|k|_f$.
Hence with the choice of $d_0$, we have
\begin{align}\label{e-0-8}
\left|\Res_{z=0}\left(\CZ(s+d+d_0,\phi_\nu,\ome_\nu)|x|^{-s}_\nu q_\nu^s\right)\right|
&\leq
\sum_{\substack{\ell_1+\cdots+\ell_{r_\nu}=\ord_\nu(x)\\ \ell_1,\cdots,\ell_{r_\nu}\geq 0}}
q_\nu^{-\sum_{j=1}^{r_\nu}\ell_j (d+d_0-\Re(s_{\nu,j}))}\nonumber\\
&\leq
\sum_{\substack{\ell_1+\cdots+\ell_{r_\nu}=\ord_\nu(x)\\ \ell_1,\cdots,\ell_{r_\nu}\geq 0}}
q_\nu^{-\ord_\nu(x)(d+d_0-\max_j\{\Re(s_{\nu,j})\})}\nonumber\\
&=
\binom{\ord_\nu(x)+r_\nu-1}{r_\nu-1}
\cdot
q_\nu^{-\ord_\nu(x)(d+d_0-\max_j\{\Re(s_{\nu,j})\})}.
\end{align}
Since $d+d_0-\max_j\{\Re(s_{\nu,j})\}>0$, and the function
$\binom{\ord_\nu(x)+r_\nu-1}{r_\nu-1}$ is a polynomial in $\ord_\nu(x)$, we must have that
\[
\lim_{\ord_\nu(x)\to+\infty}\binom{\ord_\nu(x)+r_\nu-1}{r_\nu-1}
\cdot
q_\nu^{-\ord_\nu(x)(d+d_0-\max_j\{\Re(s_{\nu,j})\})}=0.
\]
Therefore, by taking a positive real number $s_\pi=\max\{s_\infty, c_\pi+d_0\}$,
we obtain that for any $a_0>s_\pi$, the function $\phi_\nu(x)|x|_\nu^{a_0}$ is continuous over $k_\nu$ and
has the limit
\[
\lim_{|x|_\nu\to 0}\phi_\nu(x)|x|_\nu^{a_0}=0,
\]
for any $\phi_\nu\in\CS_{\pi_\nu}(k_\nu^\times)$ and at any local place $\nu\in|k|$.
We are done.
\end{proof}

\begin{lem}\label{basic}
Let $\pi=\otimes_\nu\pi_\nu\in\Pi_\BA(n)$ satisfy Assumption \ref{ass-FH}.
For any $\nu\notin S_\pi$, the basic function $\BL_{\pi_\nu}\in\CS_{\pi_\nu}(k_\nu^\times)$
is supported on $\Fo_\nu-\{0\}$ with
$$
\BL_{\pi_\nu}(\Fo^\times_\nu) = 1.
$$
Moreover, there exists a positive real number $b_\pi\geq s_\pi$, which is independent of $\nu$, such that for any $b_0> b_\pi$,
$$
\left|\BL_{\pi_\nu}(x)\cdot
|x|_\nu^{b_0}\right|\leq 1
$$
holds for all $\nu\notin S_\pi$.
\end{lem}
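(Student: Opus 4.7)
The plan is to reduce both assertions to the standard explicit formula for the unramified Godement-Jacquet basic function. First I would combine the defining identity $\CZ(s, \BL_{\pi_\nu}, \chi) = L(s, \pi_\nu \times \chi)$ from Part (3) of Theorem \ref{thm:1-zeta} (for unramified $\chi$) with the non-Archimedean Mellin inversion formula \eqref{eq:MIT-na}. Since the fiber-integration data $\xi^\circ$ and the zonal spherical function $\varphi_\pi^\circ$ are both left $\GL_n(\Fo_\nu)$-invariant, and since $\mathrm{diag}(u, 1, \ldots, 1)$ lies in $\GL_n(\Fo_\nu)$ for every $u \in \Fo_\nu^\times$, the basic function $\BL_{\pi_\nu}$ is itself $\Fo_\nu^\times$-invariant; hence only the trivial character $\omega = 1$ contributes in \eqref{eq:MIT-na}, and it suffices to invert $L(s, \pi_\nu) = \prod_{j=1}^n (1 - \alpha_j q_\nu^{-s})^{-1}$ with $\alpha_j := q_\nu^{s_j(\pi_\nu)}$ the Satake parameters. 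Setting $z = q_\nu^{-s}$ and expanding as a power series in $z$, the residue computation produces the closed form
\[
\BL_{\pi_\nu}(x) = \begin{cases} |x|_\nu^{1/2} \cdot h_{\ord_\nu(x)}(\alpha_1, \ldots, \alpha_n), & x \in \Fo_\nu \setminus \{0\}, \\ 0, & x \notin \Fo_\nu, \end{cases}
\]
where $h_m$ denotes the $m$-th complete homogeneous symmetric polynomial (convention $h_m = 0$ for $m < 0$). The support claim and the normalization $\BL_{\pi_\nu}(x) = 1$ for $x \in \Fo_\nu^\times$ (using $h_0 = 1$) are then read off directly.

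For the uniform bound I would use the elementary counting estimate
\[
|h_m(\alpha_1, \ldots, \alpha_n)| \leq \binom{m+n-1}{n-1} \cdot \max_{1 \leq j \leq n} |\alpha_j|^m.
\]
By Assumption \ref{ass-FH}, $\max_j |\alpha_j(\pi_\nu)| = q_\nu^{\max_j \Re s_j(\pi_\nu)} < q_\nu^{\kappa_\pi}$ uniformly in $\nu \notin S_\pi$, so with $m := \ord_\nu(x) \geq 0$ one has
\[
\bigl|\BL_{\pi_\nu}(x)\bigr| \cdot |x|_\nu^{b_0} \leq \binom{m+n-1}{n-1} \cdot q_\nu^{-m(b_0 + \tfrac{1}{2} - \kappa_\pi)}.
\]
Using $\binom{m+n-1}{n-1} \leq 2^{m+n-1}$ together with $q_\nu \geq 2$, the right-hand side is at most $2^{n-1} \cdot 2^{-m(b_0 - \kappa_\pi - \tfrac{1}{2})}$; this is trivially $\leq 1$ when $m = 0$ (where $h_0 = 1$), and for $m \geq 1$ it is $\leq 1$ as soon as $b_0 \geq \kappa_\pi + n - \tfrac{1}{2}$. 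I would therefore set
\[
b_\pi := \max\bigl\{s_\pi,\; \kappa_\pi + n - \tfrac{1}{2}\bigr\},
\]
which is a positive real number independent of $\nu$, satisfies $b_\pi \geq s_\pi$, and makes the desired inequality hold for every $b_0 > b_\pi$ at every place $\nu \notin S_\pi$.

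The only mildly delicate point is the $\nu$-uniformity of the estimate: the Mellin residue expansion produces a factor $\binom{m+n-1}{n-1}$ of polynomial growth in $m = \ord_\nu(x)$, which must be beaten by a geometric factor $q_\nu^{-mc}$ whose base varies with $\nu$. Working with the worst case $q_\nu = 2$ handles the residue field size, but the exponent $c$ itself must not depend on $\nu$, and it is precisely Assumption \ref{ass-FH} --- the simultaneous bound $\max_j \Re s_j(\pi_\nu) < \kappa_\pi$ at every unramified place --- that allows a single choice of $b_\pi$ to serve uniformly.
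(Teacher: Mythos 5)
Your proof is correct and follows essentially the same route as the paper's: Mellin inversion of the unramified $L$-factor, reading off the support and the value on $\Fo_\nu^\times$ from the geometric-series expansion, and bounding the coefficient at $\ord_\nu(x)=m$ by $\binom{m+n-1}{n-1}\cdot\max_j|\alpha_j|^m$, with Assumption \ref{ass-FH} and $q_\nu\geq 2$ supplying the uniformity in $\nu$. Your explicit closed form in terms of complete homogeneous symmetric polynomials, the observation that only the trivial character of $\Fo_\nu^\times$ contributes, and the concrete choice $b_\pi=\max\{s_\pi,\kappa_\pi+n-\tfrac{1}{2}\}$ via $\binom{m+n-1}{n-1}\leq 2^{m+n-1}$ are only cosmetic refinements of the paper's argument, which instead bounds $\sup_{t\geq 1}\log_2\binom{t+n-1}{n-1}/t$ by a constant $c_0$.
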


\begin{proof}
We continue with the proof of Lemma \ref{ext-0} for the non-Archimedean case, and specialize it
to the unramified situation.
Note that the basic function $\BL_{\pi_\nu}\in\CS_{\pi_\nu}(k_\nu^\times)$ is the Mellin inversion of the local unramified $L$-factor
\[
\CZ(s,\BL_{\pi_\nu})=L(s,\pi_\nu),
\]
whose Mellin inversion can be calculated by \eqref{e-0-1} after setting $\CP_\nu(s) = 1$.
In other words, taking the constant $s_\pi$ as in Lemma \ref{ext-0}, we have, for any $a_0>s_\pi$,
$$
\BL_{\pi_\nu}(x)\cdot|x|^{a_0} =
\Res_{z=0}\left(\CZ(s+a_0,\BL_{\pi_\nu})
|x|^{-s}_\nu q_\nu^s\right).
$$
As in \eqref{e-0-5}, we write
\begin{align}\label{e-0-9}
\CZ(s+a_0,\BL_{\pi_\nu})
=
\frac{1}{\prod_{j=1}^n(1-q_\nu^{-s-a_0+s_j(\pi_\nu)})}
=
\prod_{j=1}^n
\left(
\sum_{\ell_j\geq 0}
q_\nu^{\ell_j(s_j(\pi_\nu)-a_0)}z^{\ell_j}
\right).
\end{align}
where we write $z=q_\nu^{-s}$ and $c_j(\pi_\nu)=q_\nu^{s_j(\pi_\nu)}$.
From the Laurent expansion on the right-hand side, we obtain that the function
\[
\CZ(s+a_0,\BL_{\pi_\nu})
|x|^{-s}_\nu q_\nu^{s}
\]
is holomorphic in $z=q_\nu^{-s}$ whenever $x\notin \Fo_\nu$. By taking the residue at $z=0$, we
obtain that
\[
\BL_{\pi_\nu}(x) \cdot|x|^{a_0}= 0\quad \text{for}\quad x\notin \Fo_\nu.
\]
Hence the basic function $\BL_{\pi_\nu}(x)$ has support included in $\Fo_\nu$.
Similarly, when $x\in \Fo^\times_\nu$, $|x|_\nu= 1$ and
$$
\Res_{z=0}\left(\CZ(s+a_0,\BL_{\pi_\nu}) q_\nu^s\right)= 1
$$
because of the constant term on the right-hand side of \eqref{e-0-9}.
Therefore, we obtain
$$
\BL_{\pi_\nu}(\Fo^\times_\nu)  =1.
$$

Finally, whenever $x\in \Fo_F\smallsetminus\{0\}$, we apply \eqref{e-0-8} to the
unramified case, and obtain that
\[
\left|\BL_{\pi_\nu}(x)\cdot |x|^{b}\right|
\leq
\binom{\ord_\nu(x)+n-1}{n-1}
\cdot
q_\nu^{-\ord_\nu(x)\cdot \min_{j}\{b-\Re(s_j(\pi_\nu))\}},
\]
as long as $b> s_\pi$.
By Assumption \ref{ass-FH}, we have
\[
{\min}_{1\leq j\leq n}\{b-s_j(\pi_\nu)\}>{\min}_{1\leq j\leq n}\{\kappa_\pi-s_j(\pi_\nu)\}>0.
\]
Therefore whenever $\ord_\nu(x)\geq 1$,
\[
\binom{\ord_\nu(x)+n-1}{n-1}
\cdot
q_\nu^{-\ord_\nu(x)\cdot \min_{j}\{b-\Re(s_j(\pi_\nu))\}}
\leq \binom{\ord_\nu(x)+n-1}{n-1}
\cdot
2^{-\ord_\nu(x)\cdot \min_{j}\{b-\Re(s_j(\pi_\nu))\}}
\]
since $q_\nu\geq 2$ for any $\nu\notin S_\pi$. It turns out that we only need to find a positive integer
$b_\pi\geq s_\pi\in \BR$ such that for any $b> b_\pi$, the following
\[
\binom{\ord_\nu(x)+n-1}{n-1}
\cdot 2^{-\ord_\nu(x)\cdot \min_{j}\{b-\Re(s_j(\pi_\nu))\}}
\leq 1
\]
holds for any $\nu\notin S_\pi$ and $\ord_\nu(x)\geq 1$. Equivalently, after applying the function $\log_2$ on both sides, the above inequality becomes
\[
\log_2\binom{\ord_\nu(x)+n-1}{n-1}
-\ord_\nu(x)\cdot \min_{j}\{b-\Re(s_j(\pi_\nu))\}
\leq0.
\]
Hence it suffices to show the existence of $b_\pi\in \BR$ so that
\begin{align*}
\min_{j}\{b-\Re(s_j(\pi_\nu))\}
&=b-\max_j\{\Re(s_j(\pi_\nu))\}
\\
&>b_\pi-\kappa_\pi
\geq
\frac{\log_2\binom{\ord_\nu(x)+n-1}{n-1}}{\ord_\nu(x)}
\end{align*}
for any $\ord_\nu(x)\geq 1$, i.e.
\begin{equation}\label{eq:boundspi}
b_\pi\geq\kappa_\pi+\frac{\log_2\binom{\ord_\nu(x)+n-1}{n-1}}{\ord_\nu(x)}
\end{equation}
for any $\ord_\nu(x)\geq 1$. As a function of $t\geq 1$,
$$
\log_2\binom{t+n-1}{n-1}
=\log_2\frac{\prod_{k=1}^{n-1}(t+k)}{(n-1)!}
\geq
\log_2\frac{\prod_{k=1}^{n-1}(1+k)}{(n-1)!}\geq \log_2(n)\geq 0,
$$
Thus we obtain that
$$
\frac{\log_2\binom{t+n-1}{n-1}}{t}\geq 0
$$
for any $t\geq 1$. On the other hand, by L'Hôspital's rule, one must have that
$$
\lim_{t\to \infty}
\frac{\log_2\binom{t+n-1}{n-1}}{t}
=0.
$$
It follows that as a continuous function in $t\geq 1$, there exists a constant $c_0\in \BR$ such that
$$
\frac{\log_2\binom{t+n-1}{n-1}}{t}
< c_0
$$
for any $t\geq 1$. It is clear now that the inequality in \eqref{eq:boundspi} holds for any
$$
b_\pi\geq \kappa_\pi+c_0.
$$
Therefore it suffices to take $b_\pi = \max\{s_\pi,\kappa_\pi+c_0\}$. We are done.
\end{proof}

\begin{thm}[Absolute Convergence of $\pi$-Theta Functions]\label{thm:AC}
Fix any $\pi = \otimes_\nu \pi_\nu\in\Pi_\BA(n)$ with Assumption \ref{ass-FH}.
Then for any $\phi\in \CS_\pi(\BA^\times)$, the $\pi$-theta function
$$
\Theta_\pi(x,\phi):=\sum_{\alp\in k^\times}\phi(\alp x)
$$
is absolutely convergent for any $x\in \BA^\times$.
\end{thm}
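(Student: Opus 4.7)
The plan is to combine the product formula $|\alpha|_\BA = 1$ for $\alpha \in k^\times$ with the local asymptotic bounds from Lemma \ref{ext-0} and Lemma \ref{basic}, in order to reduce the series to a sum with only finitely many nonzero terms.

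By multilinearity I may assume that $\phi = \otimes_\nu \phi_\nu \in \CS_\pi(\BA^\times)$ is factorizable, so that $\phi(\alpha x) = \prod_\nu \phi_\nu(\alpha x_\nu)$. Fix a real number $d > \max\{s_\pi, b_\pi\}$, with $s_\pi, b_\pi$ as in Lemmas \ref{ext-0} and \ref{basic}, and define
\[
\psi_\nu(y) := |\phi_\nu(y)| \cdot |y|_\nu^d.
\]
By Lemma \ref{ext-0}, $\psi_\nu$ extends to a continuous function on $k_\nu$ with compact support at every $\nu \in |k|$; and by Lemma \ref{basic}, at every $\nu \notin S_\pi$ one has $\psi_\nu \leq 1$ on $k_\nu$ with support contained in $\Fo_\nu$. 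Consequently the product $\Psi(y) := \prod_\nu \psi_\nu(y_\nu)$ is a well-defined function on $\BA$, bounded by the finite constant $C := \prod_{\nu \in S_\pi} \sup_{k_\nu} \psi_\nu$ and supported in the compact adelic set
\[
K := \prod_{\nu \in S_\pi} \mathrm{supp}(\psi_\nu) \times \prod_{\nu \notin S_\pi} \Fo_\nu \subset \BA.
\]

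For any $\alpha \in k^\times$ the product formula gives $|\alpha|_\BA^d = 1$, hence
\[
|\phi(\alpha x)| = \prod_\nu |\phi_\nu(\alpha x_\nu)| \cdot |\alpha|_\nu^d = |x|_\BA^{-d} \prod_\nu \psi_\nu(\alpha x_\nu) = |x|_\BA^{-d} \cdot \Psi(\alpha x) \leq |x|_\BA^{-d} \cdot C \cdot \mathbf{1}_K(\alpha x).
\]
Only $\alpha \in k \cap x^{-1} K$ can contribute nonzero terms to $\Theta_\pi(x,\phi)$; since $x^{-1} K$ is compact in $\BA$ (multiplication by $x^{-1}$ being a homeomorphism) and $k$ sits in $\BA$ as a discrete subgroup, this intersection is finite. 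Thus the series $\Theta_\pi(x,\phi)$ is actually a finite sum, and absolute convergence follows.

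The only point of real work lies in Lemmas \ref{ext-0} and \ref{basic} themselves, which rest on the Mellin inversion description $\CS_{\pi_\nu}(k_\nu^\times) = \CM^{-1}(\CL_{\pi_\nu})$ established in Corollary \ref{cor:SpiFpi}, together with the uniform control on the Frobenius-Hecke parameters provided by Assumption \ref{ass-FH}. Given these, the argument above is a direct $\GL_n$-analogue of Tate's classical proof of convergence for the ordinary $\theta$-series: the product formula converts the pointwise decay encoded in $s_\pi, b_\pi$ into a global compact-support statement on $\BA$, and discreteness of $k$ in $\BA$ finishes the job.
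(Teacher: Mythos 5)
Your reduction works up to the point where you pass from local bounds to a global support statement, but the finiteness claim at the end is false, and this is a genuine gap. The compact-support conclusion you draw from Lemma \ref{ext-0} is only valid at the non-Archimedean places: by Definition \ref{dfn:CF}, an Archimedean $\pi$-Schwartz function is merely rapidly decreasing (all derivatives are $\mathrm{o}(|x|_\nu^{\rho})$ as $|x|_\nu\to\infty$ for every $\rho$), not compactly supported --- think of the Gaussian-type functions obtained by fiber integration of Schwartz functions on $\RM_n(\BR)$. (The ``in particular'' sentence of Lemma \ref{ext-0} is loosely worded on this point; the paper itself invokes compact support only at the finite places of $S_\pi$.) Consequently your set $K$ is not compact --- its Archimedean factors are not compact --- so $k\cap x^{-1}K$ need not be finite: the non-Archimedean conditions only confine $\alpha$ to a fractional ideal $\Fm(\phi)$ of $k$, which is an infinite lattice in $\prod_{\nu\in|k|_\infty}k_\nu$. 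The theta series is genuinely an infinite sum (already for $n=1$ with $\phi_\infty$ of Gaussian type it is a classical theta series), so it cannot be disposed of by a finiteness argument.

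What is missing is exactly the Archimedean estimate. After your product-formula step the series is dominated, up to the constant coming from the finitely many ramified finite places, by $\sum_{\alpha\in\Fm(\phi)\smallsetminus\{0\}}\prod_{\nu\in|k|_\infty}|\phi_\nu(\alpha)|\,|\alpha|_\nu^{d}$, and one must now use the rapid decay of the Archimedean components: split the lattice $\Fm(\phi)$ into the finitely many points lying in a compact box of $\prod_{\nu\in|k|_\infty}k_\nu$, where the limit statement of Lemma \ref{ext-0} gives continuity and hence boundedness, and the points outside the box, where the Schwartz-type decay at infinity makes the lattice sum converge exactly as in the classical convergence proof for the Poisson summation formula. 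This is how the paper completes the argument; everything before this step in your proposal (factorizability, the choice of $d$ above $b_\pi$ and $s_\pi$, the product formula, Lemma \ref{basic} at the unramified places, and compact support at the ramified finite places) does match the paper's proof.
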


\begin{proof}
For any $\pi=\otimes_\nu\pi_\nu\in\Pi_\BA(n)$, let $S_\pi$ be a finite subset of local places of $k$ containing $|k|_\infty$ and for any finite local place $\nu\not\in S_\pi$, the local component
$\pi_\nu$ is unramified.
We may assume that $\phi\in \CS_\pi(\BA^\times)$ is a pure restricted tensor of the form
$$
\phi = \left(\otimes_{\nu\notin S_\pi}\BL_{\pi_\nu}\right)\otimes
\left(\otimes_{\nu\in S_\pi} \phi_{\nu}\right)
$$
with $\phi_\nu\in\CS_{\pi_\nu}(k_\nu^\times)$ for all $\nu\in S_\pi$.

Fix a positive real number $s_0>b_\pi\geq s_\pi\geq\kappa_\pi$  where the constants $\kappa_\pi$, $s_\pi$, and $b_\pi$ are as given in Assumption \ref{ass-FH},  Lemma \ref{ext-0}, and Lemma \ref{basic}, respectively.
By Lemma \ref{basic}, for any $\nu\notin S_\pi$, we have the function
$\BL_{\pi_\nu}(x)|x|^{s_0}_\nu$ is continuous on $k_\nu$ and supported on $\Fo_\nu$. Moreover,
we have
$$
\big|\BL_{\pi_\nu}(x)|x|^{s_0}_\nu\big|\leq 1
$$
for every $\nu\notin S_\pi$. Similarly, for any finite $\nu\in S_\pi$, the function
$\phi_{\nu}(x)|x|^{s_0}_\nu$
is continuous on $k_\nu$ with compact support. We may assume that the support of
$\phi_{\nu}(x)|x|^{s_0}_\nu$ is contained in a fractional ideal $\Fp_\nu^{m_\nu}$ for some integer $m_\nu\in \BZ$.

For any $\alp\in k^\times$, the Artin product formula shows that $|\alp|_\BA=1$ (\cite{W73}).
Hence we obtain
that
$$
\Theta_\pi(1,\phi)
=\sum_{\alp\in k^\times}
\phi(\alp)
=
\sum_{\alp\in k^\times}
\phi(\alp)
\cdot
|\alp|_{\BA}^{s_0}.
$$
Write $\Fo_\phi:=\prod_{\nu\notin S_\pi}\Fo_\nu$ and $\Fm_\phi:=\prod_{\nu\in S_\pi\cap|k|_f}
\Fp^{m_\nu}$. Then by the weak approximation theorem (\cite{W73}), the product
$\Fo_\phi\cdot\Fm_\phi=\Fm(\phi)$ is a fractional ideal of $\Fo=\Fo_k$, the ring
of integers in $k$.
From the support of the functions $\phi_\nu\cdot|\cdot|^{s_0}$ for all $\nu\in|k|_f$, we write
\[
\Theta_\pi(1,\phi)
=
\sum_{\alp\in k^\times\cap \Fm(\phi)}
\left(\prod_{\nu\in |k|_\infty}
\phi_\nu(\alp)\cdot |\alp|_\nu^{s_0}\right)
\cdot
\left(\prod_{\nu\in |k|_f}
\phi_\nu(\alp)\cdot |\alp|_\nu^{s_0}\right),
\]
which, up to a positive constant, is essentially bounded by
$$
\sum_{\alp\in k^\times\cap \Fm(\phi)}
\prod_{\nu\in |k|_\infty}
\left|\phi_\nu(\alp)\right|\cdot |\alp|_\nu^{s_0}.
$$
Hence it suffices to show that the following summation
\begin{align}\label{ac-1}
\sum_{\alp\in \Fm(\phi)}
\prod_{\nu\in |k|_\infty}
\left|\phi_{\nu}(\alp)\right|
\cdot |\alp|^{s_0}_\nu
\end{align}
is absolutely convergent.

To do so, we consider the following compact set
$$
\CB_\infty(1) := \{(\alp_\nu)\in \prod_{\nu\in |k|_\infty}k_\nu\mid\ |\alp_\nu|_\nu\leq 1,
\forall\nu\in |k|_\infty\}.
$$
We write \eqref{ac-1} as
\begin{align}\label{ac-2}
\sum_{\alp\in \Fm(\phi)\cap\CB_\infty(1)}
\prod_{\nu\in |k|_\infty}
\left|\phi_{\nu}(\alp)\right|
\cdot |\alp|^{s_0}_\nu
+
\sum_{\alp\in\Fm(\phi)\smallsetminus\left(\Fm(\phi)\cap\CB_\infty(1)\right)}
\prod_{\nu\in |k|_\infty}
\left|\phi_{\nu}(\alp)\right|
\cdot |\alp|^{s_0}_\nu
\end{align}
Since the fractional ideal $\Fm_\phi$ is a lattice in the affine space $\prod_{\nu\in |k|_\infty}k_\nu$,
its intersection with $\CB_\infty(1)$ is a finite set.  By Lemma \ref{ext-0}, the function
$\prod_{\nu\in |k|_\infty}\phi_{\nu}(x)|x|^{s_0}_\nu$ is continuous over
$\prod_{\nu\in |k|_\infty}k_\nu$, and hence is bounded over $\CB_\infty(1)$. Thus, the first
summation in \eqref{ac-2} is bounded.
For the second summation in \eqref{ac-2}, the function
$\prod_{\nu\in |k|_\infty}\phi_{\nu}(x)\cdot |x|^{s_0}_\nu$ is of Schwartz type over
$(\prod_{\nu\in |k|_\infty}k_\nu)\smallsetminus\CB_\infty(1)$. Hence the second summation in \eqref{ac-2}
is bounded by the same proof for the absolute convergence of the classical Poisson summation formula
(\cite{W65} and also \cite[Chapter 4]{Ig78}).
This proves the absolute convergence of $\Theta_\pi(x,\phi)$ for any
$\phi\in\CS_\pi(\BA^\times)$.

For any $x\in\BA^\times$, we have $\Theta_\pi(x,\phi)=\Theta_\pi(1,\phi^x)$ with $\phi^x(y)=\phi(yx)$.
Hence $\Theta_\pi(x,\phi)$ converges absolutely for any $\phi\in\CS_\pi(\BA^\times)$. We are done.
\end{proof}

\subsection{Justification of Assumption \ref{ass-FH}}\label{ssec-JAss}

We prove Assumption \ref{ass-FH} when $\pi\in\CA(\RG_n)$ is any irreducible admissible automorphic
representation of $\RG_n(\BA)$.
\begin{prp}\label{prp:Ass}
For any $\pi\in \CA(\RG_n)$, Assumption \ref{ass-FH} holds.
\end{prp}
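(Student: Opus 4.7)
The plan is to combine the Langlands classification for irreducible admissible automorphic representations of $\GL_n(\BA)$ with a uniform bound toward the Ramanujan conjecture for unitary cuspidal automorphic representations of $\GL_m$.

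First, I would invoke the Langlands classification (in the form due to Jacquet and Shalika, or Moeglin-Waldspurger): every $\pi\in\CA(\RG_n)$ is equivalent to an irreducible subquotient of a globally induced representation
\[
\Pi \ =\ \Ind_{P(\BA)}^{\GL_n(\BA)}\left(\tau_1\otimes|\det|_\BA^{u_1}\right)\boxtimes\cdots\boxtimes\left(\tau_r\otimes|\det|_\BA^{u_r}\right),
\]
where $P$ is the standard parabolic attached to a partition $n=n_1+\cdots+n_r$, each $\tau_i$ is an irreducible \emph{unitary} cuspidal automorphic representation of $\GL_{n_i}(\BA)$, and $u_i\in\BC$. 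These Langlands data $(\tau_i,u_i)$ are attached to $\pi$ intrinsically, so the finite set $\{\Re(u_i)\}$ depends only on $\pi$.

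Next, at every finite place $\nu\notin S_\pi$ (enlarging $S_\pi$ if necessary so that every $\tau_i$ is also unramified at such $\nu$), the local component $\pi_\nu$ is the unramified constituent of the local induced representation $\Pi_\nu$. Since $\Pi_\nu$ is an unramified principal series, its Frobenius-Hecke parameters are the disjoint union of those of the twisted local factors $\tau_{i,\nu}\otimes|\det|_\nu^{u_i}$. Consequently, the multiset of Satake exponents of $\pi_\nu$ is contained in
\[
\bigl\{\,s_{i,j}(\tau_{i,\nu})+u_i\ :\ 1\leq i\leq r,\ 1\leq j\leq n_i\,\bigr\}.
\]
I would then apply the Jacquet-Shalika bound (from their 1981 paper on Euler products) which asserts that for every unitary cuspidal automorphic representation $\tau_i$ of $\GL_{n_i}(\BA)$ and every unramified place $\nu$, the Satake exponents satisfy
\[
\Re\bigl(s_{i,j}(\tau_{i,\nu})\bigr)\ <\ \tfrac{1}{2},
\]
uniformly in $\nu$. (If one wishes to avoid Jacquet-Shalika, the weaker trivial bound $\Re(s_{i,j}(\tau_{i,\nu}))\leq (n_i-1)/2$ deduced from unitarity and Tadi\'c's classification also suffices.)

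Combining the previous two steps, I would conclude that
\[
\max_{1\leq j\leq n}\Re\bigl(s_j(\pi_\nu)\bigr)\ <\ \tfrac{1}{2}+\max_{1\leq i\leq r}\Re(u_i)
\]
for every $\nu\notin S_\pi$, so Assumption \ref{ass-FH} holds with $\kappa_\pi:=\tfrac{1}{2}+\max_i\Re(u_i)$. There is no real obstacle: the only nontrivial input is Jacquet-Shalika (or a trivial unitarity bound), and the Langlands decomposition supplies the finite list of exponents $\{u_i\}$ that makes the bound uniform in $\nu$.
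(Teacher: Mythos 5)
Your proof is correct and takes essentially the same route as the paper: both arguments reduce to the cuspidal support of $\pi$ (Langlands' description of automorphic representations as subquotients of inductions from cuspidal data, so that the Satake exponents of $\pi_\nu$ at unramified $\nu$ are those of the unitary cuspidal constituents shifted by the finitely many exponents attached to $\pi$), and then bound the unramified exponents of the unitary pieces uniformly in $\nu$. The only cosmetic difference is the uniform bound invoked: the paper uses Tadi\'c's classification of the spherical unitary dual (your fallback option, which indeed suffices), whereas your primary citation is the Jacquet--Shalika bound $<\tfrac{1}{2}$, which is stronger than needed.
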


\begin{proof}
A cuspidal datum $(\RP,\veps)$ of $\RG_n$ consists of a standard parabolic subgroup $\RP$ of $\RG_n$ with Levi decomposition
$\RP= \RM\cdot \RN$ with the Levi subgroup $\RM$ and the unipotent radical $\RN$,
and an irreducible cuspidal automorphic representation $\veps$ of $\RM(\BA)$, which is
square-integrable up to a twist of automorphic character of of $\RM(\BA)$.
For any $\pi=\otimes_{\nu\in|k|}\pi_\nu\in \CA(\RG_n)$, by \cite{L79}, there exists a cuspidal datum $(\RP,\veps)$ of $\RG_n$, such that $\pi$ is a subquotient of the induced representation
$\Ind^{\RG_n(\BA)}_{\RP(\BA)}(\veps)$ of $\RG_n(\BA)$. It follows that
for any $\nu\in |k|$, the $\nu$-component $\pi_\nu$ is a subquotient of the induced representation
$\Ind^{\RG_n(k_\nu)}_{\RP(k_\nu)}(\veps_\nu)$ of $\RG_n(k_\nu)$, where $\veps_\nu$ is the
$\nu$-component of $\veps=\otimes_\nu\veps_\nu$.

Let $\RT$ be the maximal torus of $\RG_n$, consisting of
all diagonal matrices, and $\RB=\RT\cdot\RU$ be the Borel subgroup of $\RG_n$, consisting of all
upper-triangular matrices.
Take $S$ to be a finite subset of $|k|$, such that $S$ contains $|k|_\infty$ and for any $\nu\notin S$,
$\pi_\nu$ and $\veps_\nu$ are unramified.
It is well-known (see \cite{C79} for instance) that there exists an unramified character $\eta_\nu$ of
the maximal torus $\RT(k_\nu)$, such that $\veps_\nu$ embeds as a subrepresentation into
the unramified induced representation $\Ind^{\RM(k_nu)}_{(\RM\cap \RB)(k_\nu)}(\eta_\nu)$.
By induction in stages, we obtain that
$\Ind^{\RG_n(k_\nu)}_{\RP(k_\nu)}(\veps_\nu)$ embeds as a subrepresentation into the spherical
induced representation $\Ind^{\RG_n(k_\nu)}_{\RB(k_\nu)}(\eta_\nu)$ of $\RG_n(k_\nu)$.
Hence the irreduicble spherical representation $\pi_\nu$ is the unique spherical subquotient of
$\Ind^{\RG_n(k_\nu)}_{\RB(k_\nu)}(\eta_\nu)$. Via the Satake isomorphism, the Frobenius-Hecke
conjugacy class of $\pi_\nu$ in $\RG_n(\BC)$ is
\[
c(\pi_\nu)={\diag}(\eta_\nu^1(\vpi_\nu), \cdots,\eta_\nu^n(\vpi_\nu)).
\]
Here $\vpi_\nu$ is the uniformizer of the prime ideal $\Fp_\nu$, and
for any $t={\rm diag}(t_1,\cdots,t_n)\in\RT(k_\nu)$,
the unramified character $\eta_\nu$ is given by
\[
\eta_\nu(t)=\eta_\nu^1(t_1)\cdots\eta_\nu^n(t_n).
\]
It is clear that the conjugacy class of the semisimple element $c(\pi_\nu)$ in the complex dual group
$\RM(\BC)$ of the Levi subgroup $\RM$ is the Frobenius-Hecke conjugacy class $c(\veps_\nu)$ of
$\veps_\nu$. In other words, both $\pi_\nu$ and $\veps_\nu$ shares the same Satake parameter in
$\RT(\BC)^{\RW_n}$, where $\RW_n$ is the Weyl group of $\RG_n$.

Take $\del_\veps$ to be an automorphic character of $\RM(\BA)$ such that
$\del_\veps\otimes\veps$ is square-integrable modulo the center of $\RM$. Then for $\nu\notin S$,
the $\nu$-component $(\del_\veps\otimes\veps)_\nu$ is spherical and unitary. By the classification of
the spherical unitary dual of $\GL_n$ over a non-Archimedean local field $k_\nu$ (\cite{Td86}), we obtain
$$
\left|\log_{q_\nu}\left(\max_{1\leq j\leq n}\{\left|
(\del_\veps)_\nu^j(\vpi_\nu)\eta_\nu^j(\vpi_\nu)
\right|\}
\right)
\right|
\leq\frac{n-1}{2}.
$$
Since the unramified part of the automorphic character $\del_\veps$ is completely determined by
$\veps$ and the cuspidal datum $(\RP,\veps)$ of $\pi$ is uniquely determined by $\pi$, up to conjugation, we obtain that there exists a positive real number $\kappa_\pi$, depending only on
$\pi\in\CA(\RG_n)$, such that
\[
\left|\log_{q_\nu}\left(\max_{1\leq j\leq n}\{\left|
\eta_\nu^j(\vpi_\nu)
\right|\}
\right)
\right|
<\kappa_\pi.
\]
This justifies the assumption.
\end{proof}

By Theorem \ref{thm:AC} and Proposition \ref{prp:Ass},  we obtain the following absolute convergence.

\begin{cor}\label{cor:AC}
For any $\pi\in\CA(\RG_n)$, the $\pi$-theta function
\[
\Theta_\pi(x,\phi)
=\sum_{\alp\in k^\times}\phi(\alp x)
\]
converges absolutely for any $\phi\in\CS_\pi(\BA^\times)$ and any $x\in\BA^\times$.
\end{cor}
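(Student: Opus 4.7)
The plan is to deduce Corollary \ref{cor:AC} by combining the two main technical inputs already established in this section: Theorem \ref{thm:AC}, which provides absolute convergence of the $\pi$-theta function under Assumption \ref{ass-FH}, and Proposition \ref{prp:Ass}, which verifies Assumption \ref{ass-FH} for every $\pi\in\CA(\RG_n)$. The corollary is therefore essentially a direct consequence, and the proof amounts to checking that the hypotheses align.

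First, given any $\pi=\otimes_\nu\pi_\nu\in\CA(\RG_n)$, I would invoke Proposition \ref{prp:Ass} to produce a positive real number $\kappa_\pi$, depending only on $\pi$, such that
\[
\max_{1\leq j\leq n}\{\Re(s_j(\pi_\nu))\} < \kappa_\pi
\]
for every $\nu\notin S_\pi$, where $S_\pi$ is a finite set of places containing $|k|_\infty$ outside which $\pi_\nu$ is unramified and $c(\pi_\nu)=\diag(q_\nu^{s_1(\pi_\nu)},\ldots,q_\nu^{s_n(\pi_\nu)})$ is the Frobenius-Hecke conjugacy class associated to $\pi_\nu$. This is precisely the content of Assumption \ref{ass-FH} for $\pi$.

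With Assumption \ref{ass-FH} verified, I would then apply Theorem \ref{thm:AC} to conclude that for any $\phi\in\CS_\pi(\BA^\times)$, the $\pi$-theta function
\[
\Theta_\pi(x,\phi)=\sum_{\alpha\in k^\times}\phi(\alpha x)
\]
converges absolutely for every $x\in\BA^\times$. Since every step of substance (the local asymptotic of $\pi_\nu$-Schwartz functions near $0$, the uniform bound for the basic functions $\BL_{\pi_\nu}$ across the unramified places, and the reduction to a classical Schwartz-type summation on the Archimedean factor) was carried out in the proofs of Lemma \ref{ext-0}, Lemma \ref{basic}, and Theorem \ref{thm:AC} itself, no further work is required.

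The only conceptual point to keep in mind is that Assumption \ref{ass-FH} is really a uniform bound across the infinite family $\{\pi_\nu\}_{\nu\notin S_\pi}$ of unramified components, and the main obstacle, if any, is precisely the justification of this uniform bound; but that obstacle has already been resolved in Proposition \ref{prp:Ass} by tracing $\pi$ through the Langlands classification of \cite{L79} to a cuspidal datum $(\RP,\veps)$, reducing to the square-integrable case and appealing to Tadi\'c's classification \cite{Td86} of the spherical unitary dual of $\GL_n$ over non-Archimedean local fields to control the unramified Satake parameters of $\veps_\nu$. Hence the corollary follows formally.
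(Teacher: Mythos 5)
Your proposal is correct and is exactly the paper's argument: the corollary is stated as a direct consequence of Theorem \ref{thm:AC} combined with Proposition \ref{prp:Ass}, which verifies Assumption \ref{ass-FH} for every $\pi\in\CA(\RG_n)$. Nothing further is needed.
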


Another consequence of Proposition \ref{prp:Ass} is the absolute convergence of global zeta integral
of Godement-Jacquet type for any $\pi\in\CA(\RG_n)$.

\begin{cor}\label{cor:covgeneralautomorphic}
For any $\pi\in \CA(\RG_n)$, there exists a positive real number $r_\pi\in \BR$, such that the global zeta integral
$$
\CZ(s,f,\vphi_\pi) =
\int_{\GL_n(\BA)}
f(g)\vphi_\pi(g)|\det g|_\BA^{s+\frac{n-1}{2}}\ud g,\quad f\in \CS(\RM_n(\BA)),\vphi_\pi\in \CC(\pi)
$$
is absolutely convergent for any $\Re(s)>r_\pi$.
\end{cor}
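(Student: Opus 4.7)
The plan is to factorize the Godement--Jacquet global zeta integral into its local constituents and to combine the local theory of Theorem~\ref{thm:GJ} at the finitely many ramified places with a uniform bound on the unramified local factors; the latter rests on Assumption~\ref{ass-FH}, which is known for $\pi\in\CA(\RG_n)$ by Proposition~\ref{prp:Ass}. Multilinearity in $f$ and in $\vphi_\pi$, together with the factorizations $\CS(\RM_n(\BA))=\otimes_\nu\CS(\RM_n(k_\nu))$ and $\CC(\pi)=\otimes_\nu\CC(\pi_\nu)$, reduces the problem to pure tensors $f=\otimes_\nu f_\nu$ and $\vphi_\pi=\otimes_\nu\vphi_{\pi_\nu}$, together with a finite set $S\supset|k|_\infty$ outside of which $\pi_\nu$ is unramified, $f_\nu=\mathbf{1}_{\RM_n(\Fo_\nu)}$, and $\vphi_{\pi_\nu}=\vphi_{\pi_\nu}^\circ$ is the zonal spherical function. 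The absolute integral then factorizes as a product of local absolute integrals.

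At each of the finitely many places $\nu\in S$, Theorem~\ref{thm:GJ}(1) yields a local abscissa $r_\nu\in\BR$ beyond which the local factor converges absolutely. For $\nu\notin S$, the Macdonald formula expresses $\vphi_{\pi_\nu}^\circ$ on a diagonal element as a Weyl-symmetric polynomial in the Satake parameters $\{q_\nu^{s_j(\pi_\nu)}\}$ that is a nonnegative linear combination of monomials (a Hall--Littlewood polynomial). The termwise triangle inequality therefore gives the pointwise majorization
\[
|\vphi_{\pi_\nu}^\circ(g)| \ \leq\ \vphi_{\pi_\nu^{\mathrm{abs}}}^\circ(g),
\]
where $\pi_\nu^{\mathrm{abs}}$ denotes the spherical representation with Satake parameters $\{q_\nu^{\Re(s_j(\pi_\nu))}\}$. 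Applying Theorem~\ref{thm:GJ}(4) to $\pi_\nu^{\mathrm{abs}}$ bounds the unramified local absolute integral by
\[
L(\Re(s),\pi_\nu^{\mathrm{abs}}) \ =\ \prod_{j=1}^n\bigl(1-q_\nu^{\Re(s_j(\pi_\nu))-\Re(s)}\bigr)^{-1}.
\]

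By Proposition~\ref{prp:Ass}, there is a constant $\kappa_\pi>0$ with $\Re(s_j(\pi_\nu))<\kappa_\pi$ uniformly in $j$ and $\nu\notin S$. Each local bound is then itself majorized by $\prod_j(1-q_\nu^{-(\Re(s)-\kappa_\pi)})^{-1}$, and the resulting Euler product over $\nu\notin S$ is dominated by $\zet_k^S(\Re(s)-\kappa_\pi)^n$, which converges for $\Re(s)>\kappa_\pi+1$. Setting $r_\pi:=\max\{\kappa_\pi+1,\, r_\nu:\nu\in S\}$ delivers absolute convergence of $\CZ(s,f,\vphi_\pi)$ for $\Re(s)>r_\pi$.

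The principal obstacle is the uniform-in-$\nu$ control of the unramified local integrand: the identity of Theorem~\ref{thm:GJ}(4) is only a rational-function statement and does not directly yield absolute convergence of the product of absolute local integrals; the pointwise majorization by $\vphi_{\pi_\nu^{\mathrm{abs}}}^\circ$, combined with the uniform Satake bound from Assumption~\ref{ass-FH}, is what makes the Dedekind-type Euler comparison work. Notably, cuspidality of $\pi$ is not required, consistent with asserting only a half-plane of absolute convergence rather than a full functional equation.
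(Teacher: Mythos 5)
Your proof is correct and follows essentially the same route as the paper: reduce to pure tensors, invoke Theorem \ref{thm:GJ} at the finitely many places in $S$, and control the unramified Euler product via the uniform Satake bound $\kappa_\pi$ from Proposition \ref{prp:Ass}, comparing with a power of the partial Dedekind zeta function to get convergence for $\Re(s)>\kappa_\pi+1$. Your Macdonald/Hall--Littlewood majorization $|\vphi^\circ_{\pi_\nu}(g)|\leq\vphi^\circ_{\pi_\nu^{\mathrm{abs}}}(g)$ is simply a more explicit justification of the unramified local convergence that the paper cites directly from \cite[Chapter I, \S 7]{GJ72}.
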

\begin{proof}
There is no harm to assume that $f= \otimes_\nu f_\nu$ is a pure restricted tensor. Similarly, one can write $\vphi_\pi = \prod_\nu \vphi_{\pi_\nu}$. For the given $\pi\in\CA(\RG_n)$, take the finite
subset $S$ of $|k|$ as in the proof of Proposition \ref{prp:Ass}. Then for $\nu\notin S$,
the function $f_\nu$ is the charactersitic function of $\RM_n(\Fo_\nu)$, and $\vphi_{\pi_\nu}$ is the zonal spherical function attached to the unramified representation $\pi_\nu$.
From \cite[Chapter I, \S7]{GJ72}, we have
$$
\CZ(s,f_\nu,\vphi_{\pi_\nu})  =
\frac{1}{\det(\RI_n-\alp(\pi_\nu)q_\nu^{-s})}=L(s,\pi_\nu)
$$
where the left-hand side is absolutely convergent whenever $\Re(s)>\kappa_\pi$,
where $\kappa_\pi$ is determined in the proof of Proposition \ref{prp:Ass}. It follows that
$$
\prod_{\nu\notin S}
\CZ(s,f_\nu,\vphi_{\pi_\nu})
=
\prod_{\nu\notin S}
\frac{1}{\det(\RI_n-\alp(\pi_\nu)q_\nu^{-s})}=L^S(s,\pi)
$$
is absolutely convergent for $\Re(s)>\kappa_\pi+1$.
As $S$ is a finite set, it is clear that one can choose a
real number $r_\pi$ to be sufficiently positive (depending on $\pi$ only) such that the global zeta integral
\[
\CZ(s,f,\vphi_\pi)=L^S(s,\pi)\cdot\prod_{\nu\in S}\CZ(s,f_\nu,\vphi_{\pi_\nu})
\]
converges absolutely for $\Re(s)>r_\pi$. We are done.
\end{proof}

\section{$(\sigma,\rho)$-Theta Functions on $\GL_1$}\label{sec-sigma-rhoTheta}

For any $k$-split reductive group $G$ and $\rho\colon G^\vee\to\GL_n(\BC)$, we are going to introduce
$(\sigma,\rho)$-theta functions for any unitary $\sigma\in\Pi_\BA(G)$, the set of equivalence classes of
irreducible admissible representations of $G(\BA)$, by means of the existence of
the local Langlands reciprocity map as in the local Langlands conjecture for $G$.

\subsection{On the local Langlands conjecture}\label{ssec-LLC}
We briefly review the local Langlands conjecture for $G$ over any local field $F=k_\nu$ for any local place $\nu\in|k|$.

For any Archimedean local fields, the local Langlands conjecture for $G$ is a theorem of
Langlands, which follows from the Langlands classification theory (\cite{L89}). At any non-Archimedean local places, for unramified representations, their local Langlands parameters are uniquely determined by the Satake isomorphism (\cite{S63}) and also \cite{C79}). In the following we review the local Langlands
conjecture for $F$-split reductive group $G$ over non-Archimedean local field $F$ of characteristic zero.

Let $\CW_F$ be the Weil group attached to $F$. The set of local Langlands parameters is denoted by
$\Phi_F(G)$, which continuous, Frobenius semisimple homomorphisms
\begin{align}\label{L-p}
\varsigma\colon \CW_F\times\SL_2(\BC)\longrightarrow G^\vee,
\end{align}
up to conjugation by $G^\vee$. The local Langlands conjecture asserts that there exists a reciprocity map
\begin{align}\label{FR}
\FR_{F,G}\colon \Rep(G(F))\longrightarrow\Phi_F(G),
\end{align}
where $\Rep(G(F))$ is the set of equivalence classes of smooth representations of
$G(F)$ of finite length. $\FR_{F,G}$ is expected to be surjective with finite fibers, and to satisfy
a series of compatibility conditions. One of the key issues is to formulate and prove the uniqueness of such local Langlands reciprocity map.

When $G=\GL_n$, it is a theorem of Harris-Taylor (\cite{HT01}), of G. Henniart (\cite{H00}) and of
P. Scholze (\cite{Sc13}) that the local Langlands reciprocity map exists and is unique with compatibility of
local factors, plus other conditions. Note that in this case, the uniqueness of such a local Langlands
reciprocity map is proved by Henniart using the special case of the local converse theorem (\cite{H93}).
However, such a uniqueness is not known in general. When $G$ is an $F$-quasisplit classical group,
then such a local Langlands reciprocity map exists due to the endoscopic classification of J. Arthur
(\cite{Ar13}).

In their recent work (\cite{FS21}), L. Fargues and P. Scholze use the geometrization method to understand the local Langlands conjecture. In particular, they establish a local Langlands reciprocity
map for any $F$-split reductive groups considered in this paper. More precise, Theorem I.9.6 of
\cite{FS21} asserts that for any $F$-split reductive group $G$, there exists a local Langlands reciprocity
map $\FR_{F,G}$ from $\Rep(G(F))$ to $\Phi_F(G)$, satisfying nine compatibility conditions. In particular
when $G=\GL_n$, the reciprocity map of Fargues and Scholze coincides with the unique one for $\GL_n$.
When $G$ is an $F$-quasisplit classical group, the reciprocity map of Fargues and Scholze coincides with
the one by Arthur. Although it is still not known (as far as the authors know) if the reciprocity map of Fargues and Scholze is unique, it is the most promising one towards the local Langlands conjecture in
great generality.

From now on, we are going to take the following assumption.
\begin{ass}\label{FarS}
Over any non-Archimedean local field $F$ of characteristic zero, for any $F$-split reductive group $G$,  the reciprocity map $\FR_{F,G}$ exists for the local Langlands conjecture for $G$.
\end{ass}

We may simply take the reciprocity map $\FR_{F,G}$ as defined in \cite[Theorem I.9.6]{FS21} for the local Langlands conjecture. In fact, the relevant discussions in the rest of this paper make no essential difference
on which reciprocity map $\FR_{F,G}$ we are going to take. Of course, the difference may occur if one discuss
the definition of local $L$-functions $L(s,\sigma,\rho)$ or $\gam$-functions $\gam(s,\sigma,\rho,\psi)$.
but we are not going to discuss those objects in the rest of this paper.

\subsection{Absolute convergence of $(\sig,\rho)$-theta functions}\label{ssec-AC-sig-rho-TF}

Let $G$ be a $k$-split reductive group. Take $\rho\colon G^\vee(\BC)\to\GL_n(\BC)$ to be any
finite dimensional representation of the complex dual group $G^\vee(\BC)$.
For any unitary $\sigma\in\CA_\BA(G)$, we write $\sigma=\otimes_\nu\sigma_\nu$ with
$\sigma_\nu\in\Pi_{k_\nu}(G)$, the set of equivalence classes of irreducible admissible representations of
$G(k_\nu)$. Note that at any Archimedean local place $\nu$ of $k$, the local representation $\sigma_\nu$ is
assumed to be of Casselman-Wallach type.

By Assumption \ref{FarS}, for any local place $\nu\in|k|$, there exists a local $L$-parameter
$\varsigma_\nu=\varsigma_\nu(\sigma_\nu)$ such that the composition $\rho\circ\varsigma_\nu$ is a local
$L$-parameter for $\GL_n$. By the local Langlands conjecture for $\GL_n$ (\cite{L89}, \cite{H00}, \cite{HT01}, and \cite{Sc13}), there exists a unique irreducible admissible representation
\begin{align}\label{sigma-pi}
\pi_\nu=\pi_\nu(\sigma,\rho,\FR_{k_\nu,G})
\end{align}
belongs to $\Pi_F(n)$, which we may simply denote, if there is no confusion, by
\begin{align}\label{sigma-pi}
\pi_\nu=\pi_\nu(\sigma_\nu,\rho).
\end{align}
Define the {\bf $(\sigma_\nu,\rho)$-Schwartz space} on $k_\nu^\times$ to be
\begin{align}\label{localSS}
\CS_{\sigma_\nu,\rho}(k_\nu^\times):=\CS_{\pi_\nu}(k_\nu^\times).
\end{align}
At unramified local places, the $(\sigma_\nu,\rho)$-basic function $\BL_{\sigma_\nu,\rho}$ is taken to be
the $\pi_\nu$-basic function $\BL_{\pi_\nu}\in \CS_{\pi_\nu}(k_\nu^\times)$. Then we can define the {\bf $(\sigma,\rho)$-Schwartz space} on $\BA^\times$ to be the restricted tensor product:
\begin{align}\label{globalSS}
\CS_{\sigma,\rho}(\BA^\times):=\otimes_\nu\CS_{\sigma_\nu,\rho}(k_\nu^\times)
\end{align}
with respect to the basic function $\BL_{\sigma_\nu,\rho}$ at almost all finite local places. Note that
the definition of the $(\sigma,\rho)$-Schwartz space $\CS_{\sigma,\rho}(\BA^\times)$ may rely on
the assumption of the local Langlands reciprocity map (Assumption \ref{FarS}) at the ramified finite local
places of $\sigma$, when $G$ is a general $k$-split reductive group.

Let $\psi=\otimes_\nu\psi_\nu$ be a non-trivial additive character of $\BA$ with $\psi(a)=1$ for any $a\in k$.
Define the {\bf $(\sigma_\nu,\rho)$-Fourier operator} $\CF_{\sigma_\nu,\rho,\psi_\nu}$ on $k_\nu^\times$ to be
\begin{align}\label{localFO}
\CF_{\sigma_\nu,\rho,\psi_\nu}:=\CF_{\pi_\nu,\psi_\nu},
\end{align}
which is a linear transformation from the $(\sigma_\nu,\rho)$-Schwartz space
$\CS_{\sigma_\nu,\rho}(k_\nu^\times)$ to the $(\wt{\sigma_\nu},\rho)$-Schwartz space
$\CS_{\wt{\sigma_\nu},\rho}(k_\nu^\times)$. Then we define the {\bf $(\sigma,\rho)$-Fourier operator}
\begin{align}\label{globalFO}
\CF_{\sigma,\rho,\psi}
:=
\otimes_\nu\CF_{\sigma_\nu,\rho,\psi_\nu},
\end{align}
which is a linear transformation from the $(\sigma,\rho)$-Schwartz space
$\CS_{\sigma,\rho}(\BA^\times)$ to the $(\wt{\sigma},\rho)$-Schwartz space
$\CS_{\wt{\sigma},\rho}(\BA^\times)$. Again, the definition of the $(\sigma,\rho)$-Fourier operator
$\CF_{\sigma,\rho,\psi}$ may rely on
the assumption of the local Langlands reciprocity map (Assumption \ref{FarS}) at the ramified finite local
places of $\sigma$, when $G$ is a general $k$-split reductive group.

\begin{thm}[Absolute Convergence of $(\sig,\rho)$-Theta Functions]\label{thm:rho-AC}
Let $\rho\colon G^\vee(\BC)\to\GL_n(\BC)$ be any finite dimensional representation of the complex
dual group $G^\vee(\BC)$. Take Assumption \ref{FarS} for $G$.
Then for any given unitary $\sigma\in\CA_\BA(G)$, the $(\sig,\rho)$-theta function
\[
\Theta_{\sig,\rho}(x,\phi):=\sum_{\alp\in k^\times}\phi(\alp x)
\]
converges absolutely for any $\phi\in\CS_{\sig,\rho}(\BA^\times)$ and $x\in\BA^\times$.
\end{thm}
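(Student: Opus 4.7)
The plan is to reduce the statement to Theorem \ref{thm:AC} by constructing an auxiliary representation $\pi \in \Pi_\BA(n)$ attached to $(\sig,\rho)$ and verifying Assumption \ref{ass-FH} for $\pi$. Concretely, under Assumption \ref{FarS}, the first step is to let $\varsigma_\nu := \FR_{k_\nu,G}(\sig_\nu)$ be the local Langlands parameter of $\sig_\nu$ at each $\nu$, and set $\pi_\nu := \pi_\nu(\sig_\nu,\rho)$ to be the irreducible admissible representation of $\GL_n(k_\nu)$ attached to $\rho \circ \varsigma_\nu$ via the local Langlands correspondence for $\GL_n$. Forming $\pi := \otimes_\nu \pi_\nu$, the definitions in \eqref{localSS} and \eqref{globalSS} immediately give $\CS_{\sig,\rho}(\BA^\times) = \CS_\pi(\BA^\times)$ and $\Theta_{\sig,\rho}(x,\phi) = \Theta_\pi(x,\phi)$, so Theorem \ref{thm:AC} applies once Assumption \ref{ass-FH} is checked for $\pi$. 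Note that $\pi$ itself is not known to be automorphic---this is essentially the open problem of global Langlands functoriality for $(G,\rho)$---so Proposition \ref{prp:Ass} cannot be invoked directly.

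Next, the verification of Assumption \ref{ass-FH} for $\pi$ proceeds by unpacking the unramified local Langlands correspondence. At any finite place $\nu$ outside a suitable finite set $S$, both $\sig_\nu$ and $\pi_\nu$ are unramified. Realizing the Satake parameter $c(\sig_\nu)$ as a semisimple conjugacy class in $T^\vee(\BC)$ for a maximal torus $T \subset G$, the Frobenius-Hecke class $c(\pi_\nu) = \rho(c(\sig_\nu)) \in \GL_n(\BC)$ has eigenvalues $\{\lam_j(c(\sig_\nu))\}_{j=1}^n$, where $\lam_1,\dots,\lam_n$ (counted with multiplicity) are the weights of $\rho$. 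Writing each eigenvalue as $q_\nu^{s_j(\pi_\nu)}$, one gets
\[
\Re(s_j(\pi_\nu)) = \pair{\lam_j,\; \Re(\log_{q_\nu} c(\sig_\nu))},
\]
so that $\max_j \Re(s_j(\pi_\nu)) \leq C_\rho \cdot \norm{\Re(\log_{q_\nu} c(\sig_\nu))}$, with $C_\rho$ depending only on the (finite) weight system of $\rho$. Hence it will suffice to establish a uniform-in-$\nu$ bound on $\Re(\log_{q_\nu} c(\sig_\nu))$ for the unitary automorphic $\sig$ on $G(\BA)$.

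Finally, the remaining Satake bound is the analogue for $G$ of Proposition \ref{prp:Ass}. Since $\sig$ is unitary automorphic on $G(\BA)$, Langlands's theory of Eisenstein series realizes $\sig$ as a constituent of an induced representation from a cuspidal datum $(P = MN, \veps)$, with $\veps$ square-integrable modulo a fixed automorphic character of $M(\BA)$. At an unramified place $\nu$, the Satake parameter $c(\sig_\nu)$ agrees, as a conjugacy class in $G^\vee(\BC)$, with the image of $c(\veps_\nu)$ under the embedding $M^\vee \hookrightarrow G^\vee$, where $\veps_\nu$ is unramified and unitary (after the fixed global twist) on the Levi $M(k_\nu)$. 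Decomposing the split Levi $M$ into $\GL_{m_i}$-factors together with a smaller split reductive factor, and invoking Tadi\'c's classification of the spherical unitary dual on the $\GL$-factors (together with the Jacquet-Shalika type bounds on cuspidal exponents) and analogous spherical-unitary bounds on the residual factors, one extracts a constant $\kappa_\sig > 0$ depending only on the cuspidal datum of $\sig$ (hence only on $\sig$), such that
\[
\norm{\Re(\log_{q_\nu} c(\sig_\nu))} \leq \kappa_\sig
\]
uniformly in unramified $\nu$. This verifies Assumption \ref{ass-FH} for $\pi$, and Theorem \ref{thm:AC} then delivers the absolute convergence of $\Theta_{\sig,\rho}(x,\phi)$ for all $\phi \in \CS_{\sig,\rho}(\BA^\times)$ and $x \in \BA^\times$. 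The hard part will be this last uniform Satake bound for a general split reductive $G$; by contrast, the passage through $\rho$ and the appeal to Theorem \ref{thm:AC} are essentially formal.
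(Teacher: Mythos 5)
Your overall strategy coincides with the paper's: transfer $\sig_\nu$ to $\pi_\nu=\pi_\nu(\sig_\nu,\rho)$ via Assumption \ref{FarS} and the local Langlands correspondence for $\GL_n$, observe that $\CS_{\sig,\rho}(\BA^\times)=\CS_\pi(\BA^\times)$ and $\Theta_{\sig,\rho}(x,\phi)=\Theta_\pi(x,\phi)$, note correctly that $\pi$ need not be automorphic so Proposition \ref{prp:Ass} cannot be quoted, and reduce everything to checking Assumption \ref{ass-FH} for $\pi$ via $c(\pi_\nu)=\rho(c(\sig_\nu))$ and the weights of $\rho$. Up to that point your argument is the same as the paper's and is fine.

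The divergence, and the genuine gap, is in the last step: the uniform-in-$\nu$ bound on $\Re(\log_{q_\nu}c(\sig_\nu))$. You propose to prove it by realizing $\sig$ inside an induced representation from a cuspidal datum $(P=MN,\veps)$, splitting the Levi into $\GL$-factors plus a residual split factor, and invoking Tadi\'c's spherical unitary dual on the $\GL$-blocks together with ``analogous spherical-unitary bounds on the residual factors.'' For a general $k$-split reductive $G$ those last bounds are precisely the content that has to be proved; they are not available off the shelf in the generality needed, and the Jacquet--Shalika-type cuspidal-exponent bounds you allude to are a $\GL_n$ phenomenon. As written, the proof defers its hardest step to an unestablished input, so the argument is incomplete. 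The paper closes this step by a much softer mechanism that uses neither automorphy nor any spectral decomposition: since $\sig$ is unitary, each unramified $\sig_\nu$ is a unitary unramified representation of $G(k_\nu)$, its zonal spherical function is bounded by $1$, and Macdonald's theorem (\cite[Theorem~4.7.1]{Mac71}, with Casselman's explicit formula \cite[Theorem~4.2]{Cas80} to remove the simply-connected hypothesis) then gives
\[
\max_{1\leq j\leq r}\{|s_j(\sig_\nu)|\}\ \leq\ \max_{1\leq j\leq r}\{|\langle \varrho,\omega_j^\vee\rangle|\},
\]
a constant depending only on the root datum of $G$, uniform in $\nu$ (indeed independent of $\sig$). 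Replacing your cuspidal-datum argument by this local unitarity plus Macdonald bound repairs the gap and makes the rest of your reduction go through exactly as you wrote it.
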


\begin{proof}
As discussed above, under Assumption \ref{FarS} for $G$, for any
$\sig =\otimes_\nu \sig_\nu\in \CA_\BA(G)$, we obtain $\pi_\nu= \pi_\nu(\sig_\nu,\rho)$ of
$\GL_n(k_\nu)$ for all $\nu\in|k|$. Note that at $\nu\in|k|_\infty$, $\pi_\nu$ is taken to be of Casselman-Wallach type. Hence $\pi:=\otimes_\nu\pi_\nu$ is an irreducible admissible representation of $\RG_n(\BA)$ and belongs to $\Pi_\BA(n)$. From \eqref{localSS} and \eqref{globalSS}, we have that
\[
\Theta_{\sig,\rho}(x,\phi)=\Theta_\pi(x,\phi)
\]
for any $\phi\in\CS_{\sig,\rho}(\BA^\times)=\CS_\pi(\BA^\times)$. By Theorem \ref{thm:AC}, it is
sufficient to verify Assumption \ref{ass-FH} for this $\pi$.

Since $\sig =\otimes_\nu \sig_\nu\in\CA_\cusp(G)$ is unitary, for any $\nu\in |k|$, $\sig_\nu$ is an irreducible
admissible unitary representation of $G(k_\nu)$, and is unramified for almost all $\nu\in |k|$.
Since $G$ is $k$-split, we can fix a Borel pair $(B,T)$ of $G$ defined over $k$, with a fixed maximal
$k$-split torus $T$ of $G$. Let $\varrho$ be the half-sum of positive roots with respect to the given pair
$(B,T)$ and let $\del_B$ be the modular character of $B(k_\nu)$. Then for any coweight
$\omega^\vee\in \Hom(\BG_m,T)$,
$$
\del_B(\omega^\vee(\vpi_\nu))^{1/2} = q_\nu^{\langle \varrho,\omega^\vee\rangle}
$$
where $\vpi_\nu$ is a fixed uniformizer in $\Fo_\nu$ and $\omega^\vee$ is viewed as a cocharacter from $k^\times_\nu$ to $T(k_\nu)$.

Let $S$ be a finite subset of $|k|$ containing $|k|_\infty$, such that for any $\nu\notin S$, both
$\sig_\nu$ and $\pi_\nu$ are unramified. For any $\nu\notin S$, $\sig_\nu$ is unitary and unramified. Then the zonal spherical function attached to $\sig_\nu$, which is the normalized matrix coefficient of $\sig_\nu$ attached to spherical vectors in $\sig_\nu$, is bounded by $1$ (see \cite[p.151, (40)]{C79} for instance). Now let
$$
c(\sig_\nu) = (q_\nu^{s_1(\sig_\nu)},...,q_\nu^{s_r(\sig_\nu)})
$$
be the Frobenius-Hecke conjugacy class of $\sig_\nu$ inside $T^\vee(\BC)\simeq (\BC^\times)^r$, where $r$ is the $k$-rational rank of $G$. Then by \cite[Theorem~4.7.1]{Mac71},
$$
\max_{1\leq j\leq r}\{|s_j(\sig_\nu)|\}
\leq
\max_{1\leq j\leq r}
\{|\langle \varrho,\omega^\vee_j\rangle|\}
$$
where $\{\omega^\vee_j\}_{j=1}^r$ is a fixed set of fundamental coweights.
Note that the result of \cite{Mac71} assumes $G$ to be simply-connected. But if we go over the proof
of \cite[Theorem~4.7.1]{Mac71}, the only result used is the explicit formula for zonal spherical functions
when the Frobenius-Hecke conjugacy class $c(\sig_\nu)$ of $\sig_\nu$ is non-singular.
Hence it suffices to apply the general formula appearing in \cite[Theorem~4.2]{Cas80} to the proof
of \cite[Theorem~4.7.1]{Mac71}.
Therefore $\max_{1\leq j\leq r}\{|s_j(\sig_\nu)|\}$ has an upper bound which is independent of the local places $\nu$.

At unramified local places,
we obtain the Frobenius-Hecke conjugacy class $c(\pi_\nu)$ of $\pi_\nu$ to be
\[
c(\pi_\nu)=\rho(c(\sig_\nu)).
\]
for all $\nu\notin S$. It is clear that for this $\pi=\otimes_\nu\pi_\nu\in\Pi_\BA(n)$, the family of
the Frobenius-Hecke conjugacy classes
$$
\{c(\pi_\nu)\mid \forall \nu\notin S\}
$$
associated to the irreducible admissible representation $\pi$ satisfies Assumption \ref{ass-FH}. We are done.
\end{proof}

Note that the definition of the $(\sigma,\rho)$-theta function $\Theta_{\sigma,\rho}(x,\phi)$ may depend
on the existence of the local Langlands reciprocity map $\FR_{F,G}$ for general $G$ (Assumption \ref{FarS}), However, the absolute convergence of $\Theta_{\sigma,\rho}(x,\phi)$ in Theorem \ref{thm:rho-AC} only depends on the unramified data, and hence is independent of Assumption \ref{FarS}.
As a consequence of Theorem \ref{thm:PSF}, we have

\begin{cor}\label{cor:CE}
Assume that the global Langlands functoriality is valid for $(G,\rho)$.
For $\sig\in\CA_\cusp(G)$, if its functorial transfer $\pi$ is cuspidal on $\RG_n(\BA)$, then
Conjecture \ref{cnj:main} holds with $\CE_{\sig,\rho}(\phi)=\Theta_{\sig,\rho}(1,\phi)$ for
any $\phi\in\CS_{\sig,\rho}(\BA^\times)$.
\end{cor}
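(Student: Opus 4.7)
The plan is to reduce the corollary directly to the $\pi$-Poisson summation formula of Theorem \ref{thm:PSF}, using the global Langlands functoriality to identify all the $(\sigma,\rho)$-data with the corresponding $\pi$-data on $\GL_n$.

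First, I would invoke the global Langlands functoriality for the pair $(G,\rho)$: this produces an irreducible automorphic representation $\pi = \otimes_\nu \pi_\nu$ of $\GL_n(\BA)$ whose local components are the local Langlands transfers $\pi_\nu = \pi_\nu(\sigma_\nu,\rho)$ constructed in \eqref{sigma-pi} from the local Langlands parameters $\rho \circ \varsigma_\nu(\sigma_\nu)$. By hypothesis, $\pi$ lies in $\CA_\cusp(\GL_n)$. Taking contragredients commutes with the local Langlands reciprocity and with the global functorial transfer, so $\wt{\sigma}$ likewise transfers to $\wt{\pi} \in \CA_\cusp(\GL_n)$. Consequently, unwinding the definitions in Section \ref{ssec-AC-sig-rho-TF}, we have the identifications
\[
\CS_{\sigma,\rho}(\BA^\times) = \CS_\pi(\BA^\times), \qquad \CS_{\wt{\sigma},\rho}(\BA^\times) = \CS_{\wt{\pi}}(\BA^\times),
\]
and $\CF_{\sigma,\rho,\psi} = \CF_{\pi,\psi}$ as operators from $\CS_\pi(\BA^\times)$ to $\CS_{\wt{\pi}}(\BA^\times)$, where both sides factor as restricted tensor products of the corresponding local objects, matched via the basic functions $\BL_{\sigma_\nu,\rho} = \BL_{\pi_\nu}$ at almost all finite places.

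Next I would define $\CE_{\sigma,\rho}(\phi) := \Theta_{\sigma,\rho}(1,\phi) = \Theta_\pi(1,\phi)$ and $\CE_{\wt{\sigma},\rho}(\phi) := \Theta_{\wt{\sigma},\rho}(1,\phi) = \Theta_{\wt{\pi}}(1,\phi)$; by Theorem \ref{thm:rho-AC} (equivalently, Theorem \ref{thm:AC} and Proposition \ref{prp:Ass}), both sums converge absolutely. The $k^\times$-invariance is immediate from a change of summation variable: for $\alpha \in k^\times$, writing $\phi^\alpha(y) = \phi(\alpha y)$, we have
\[
\CE_{\sigma,\rho}(\phi^\alpha) = \sum_{\beta \in k^\times} \phi(\alpha \beta) = \sum_{\gamma \in k^\times} \phi(\gamma) = \CE_{\sigma,\rho}(\phi),
\]
and similarly for $\CE_{\wt{\sigma},\rho}$. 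Non-triviality follows from Corollary \ref{cor:SpiFpi}: since $\CC_c^\infty(\BA^\times) \subset \CS_\pi(\BA^\times) = \CS_{\sigma,\rho}(\BA^\times)$, one may choose a nonnegative $\phi \in \CC_c^\infty(\BA^\times)$ supported in a sufficiently small neighborhood of $1 \in \BA^\times$ so that only the term $\alpha = 1$ contributes to the sum and yields $\CE_{\sigma,\rho}(\phi) = \phi(1) \ne 0$.

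Finally, I would specialize Theorem \ref{thm:PSF} applied to the cuspidal representation $\pi$ at $x = 1$: the $\pi$-Poisson summation formula
\[
\Theta_\pi(x,\phi) = \Theta_{\wt{\pi}}(x^{-1}, \CF_{\pi,\psi}(\phi))
\]
evaluated at $x = 1$ reads $\Theta_\pi(1,\phi) = \Theta_{\wt{\pi}}(1, \CF_{\pi,\psi}(\phi))$, which under the identifications above is precisely
\[
\CE_{\sigma,\rho}(\phi) = \CE_{\wt{\sigma},\rho}(\CF_{\sigma,\rho,\psi}(\phi))
\]
for all $\phi \in \CS_{\sigma,\rho}(\BA^\times)$. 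There is no serious obstacle here, since all the analytic content resides in Theorem \ref{thm:PSF}; the only subtle point is ensuring that the global functorial transfer really does yield the tensor product of the local transfers $\pi_\nu(\sigma_\nu,\rho)$, which is built into the definition of global Langlands functoriality. The role of the cuspidality hypothesis on $\pi$ is precisely to bring the problem into the scope of Theorem \ref{thm:PSF}; the singular situations excluded in the statement are exactly those in which the functorial transfer fails to be cuspidal.
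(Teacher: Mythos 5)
Your proposal is correct and follows exactly the route the paper intends: the hypothesis of global functoriality with cuspidal transfer $\pi$ identifies $\CS_{\sig,\rho}(\BA^\times)$, $\CF_{\sig,\rho,\psi}$, and $\Theta_{\sig,\rho}$ with the corresponding $\pi$-objects on $\GL_n$, and the identity is then the $x=1$ case of Theorem \ref{thm:PSF}. The added checks of $k^\times$-invariance and non-triviality (via $\CC_c^\infty(\BA^\times)\subset\CS_\pi(\BA^\times)$ and discreteness of $k^\times$ in $\BA^\times$) are consistent with what Conjecture \ref{cnj:main} requires and do not change the argument.
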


\section{Variants of Conjecture \ref{cnj:main}}\label{sec-VCmain}

In Theorem \ref{thm:PSF}, we have established a $\pi$-Poisson summation formula
(Conjecture \ref{cnj:main}) for any $\pi\in\CA_\cusp(\RG_n)$ and $\rho=\std$. In this section, we explore the possibilities
when $\pi$ is not cuspidal.

\subsection{Certain special Schwartz functions}\label{ssec-SSF}
As before, we take $F$ to be any local field of characteristic zero. For any $\pi\in\Pi_F(n)$, recall from
Definition \ref{def:piSS} the space of $\pi$-Schwartz functions:
$$
\CS_\pi(F^\times)
=
\Span\{\phi_{\xi,\vphi_\pi}\in\CC^\infty(F^\times)\mid \xi\in \CS_{\std}(\RG_n(F)),\vphi_\pi\in \CC(\pi)\},
$$
where the $\pi$-Schwartz function $\phi_{\xi,\vphi_\pi}$ associated to a pair $(\xi,\vphi_\pi)$ is defined in \eqref{fibration}. We introduce here a subspace of $\CS_\pi(F^\times)$:
\begin{align}\label{piSS0}
\CS_{\pi}^\circ(F^\times)
:
=\Span \{
\phi_{\xi,\vphi_{\pi}}\mid
\xi\in \CC^\infty_c(\RG_n(F)),\vphi_{\pi}\in \CC(\pi)
\}.
\end{align}
The goal of this section is to prove the following result.

\begin{thm}\label{thm:testfunction}
Let $F$ be any local field of characteristic zero. For any $\pi\in\Pi_F(n)$, the subspace
$\CS_{\pi}^\circ(F^\times)$ of $\CS_\pi(F^\times)$ as defined in \eqref{piSS0} is equal to the space
$\CC_c^\infty(F^\times)$ of all smooth, compactly supported functions on $F^\times$.
\end{thm}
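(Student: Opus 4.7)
The plan is to prove both inclusions $\CS_\pi^\circ(F^\times)\subseteq\CC_c^\infty(F^\times)$ and $\CC_c^\infty(F^\times)\subseteq\CS_\pi^\circ(F^\times)$ separately. For the first inclusion, any $\xi\in\CC_c^\infty(\RG_n(F))$ lies in $\CS_{\std}(\RG_n(F))$: its support is compact in $\RG_n(F)$ and hence bounded away from the singular locus $\{\det=0\}$, so $|\det g|_F^{-n/2}\xi(g)$ extends by zero to a function in $\CC_c^\infty(\RM_n(F))\subset\CS(\RM_n(F))$. Proposition~\ref{prp:smooth} then guarantees smoothness of $\phi_{\xi,\vphi_\pi}$ on $F^\times$, and its support is contained in the compact set $\det(\mathrm{supp}\,\xi)\subset F^\times$.

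The reverse inclusion is the substance of the theorem, and the strategy is a pointwise nonvanishing step followed by partition-of-unity patching. First I would establish the key lemma: for every $x_0\in F^\times$ there exist $\eta_{x_0}\in\CC_c^\infty(\RG_n(F))$ and $\vphi_{x_0}\in\CC(\pi)$ with $\phi_{\eta_{x_0},\vphi_{x_0}}(x_0)\neq 0$. To construct these, pick any $g_0\in\RG_n(F)$ with $\det g_0=x_0$ and any matrix coefficient $\vphi_{x_0}\in\CC(\pi)$ with $\vphi_{x_0}(g_0)\neq 0$; such a coefficient exists since $\pi$ is nonzero and $\CC(\pi)$ is closed under left and right translation. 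By continuity, on a small enough neighborhood $V$ of $g_0$ the values of $\vphi_{x_0}$ lie in a disc around $\vphi_{x_0}(g_0)$ disjoint from zero; writing $\vphi_{x_0}=\vphi_{x_0}(g_0)+\epsilon$ with $|\epsilon|<\tfrac12|\vphi_{x_0}(g_0)|$ on $V$, a nonnegative $\eta_{x_0}\in\CC_c^\infty(V)$ with $\eta_{x_0}(g_0)>0$ then forces the fiber integral
\[
\phi_{\eta_{x_0},\vphi_{x_0}}(x_0)=\int_{\RG_n(F)_{x_0}}\eta_{x_0}(g)\vphi_{x_0}(g)\,\ud_{x_0}g
\]
to have nonzero absolute value. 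By Proposition~\ref{prp:smooth} this function is nonzero on some open neighborhood $U_{x_0}\subset F^\times$ of $x_0$.

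Given $f\in\CC_c^\infty(F^\times)$ with compact support $K$, I would cover $K$ by finitely many such $U_{x_1},\ldots,U_{x_m}$, pick a subordinate smooth partition of unity $\{\chi_i\}\subset\CC_c^\infty(F^\times)$, and define $u_i:=(f\chi_i)/\phi_{\eta_{x_i},\vphi_{x_i}}$, extended by zero outside $U_{x_i}$. Each $u_i$ lies in $\CC_c^\infty(F^\times)$, so $\xi_i(g):=u_i(\det g)\,\eta_{x_i}(g)$ lies in $\CC_c^\infty(\RG_n(F))$. Since $\det g=x$ on the fiber $\RG_n(F)_x$, the multiplicativity
\[
\phi_{\xi_i,\vphi_{x_i}}(x)=u_i(x)\,\phi_{\eta_{x_i},\vphi_{x_i}}(x)=f(x)\chi_i(x)
\]
is immediate, and summing over $i$ gives $f=\sum_i\phi_{\xi_i,\vphi_{x_i}}\in\CS_\pi^\circ(F^\times)$. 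The main obstacle is the pointwise nonvanishing step, in which one must rule out cancellation from the complex values of $\vphi_{x_0}$ across the fiber; this is handled by shrinking $V$ until $\vphi_{x_0}$ remains in a disc separated from the origin, so that the $\epsilon$-estimate above applies uniformly over the fiber $\RG_n(F)_{x_0}\cap V$.
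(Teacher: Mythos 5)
Your proof is correct, but it follows a genuinely different route from the paper. The paper splits into cases: for non-Archimedean $F$ it works on the Mellin side, using the Godement--Jacquet fact that zeta integrals with $\xi\in\CC_c^\infty(\RG_n(F))$ exhaust $\BC[q^s,q^{-s}]$ for each $\ome\in\Ome^\wedge$, and then proving (Proposition \ref{pro:na-CS0}) that the corresponding space $\CL^\circ_\pi$ of Mellin transforms is a module over $\CL_\cpt$ via a convolution computation; for Archimedean $F$ it uses the surjectivity (up to finite kernel, and up to the sign of the determinant when $F=\BR$ and $n$ is even) of the multiplication map $F^\times\times\SL_n(F)\to\RG_n(F)$, the central character of $\pi$, and the surjectivity of the push-forward along $a\mapsto a^n$. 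You instead give a uniform argument valid for all local fields: a pointwise nonvanishing construction for the fiber integral (localizing $\eta_{x_0}$ near a point $g_0$ of the fiber so the matrix coefficient stays in a disc away from $0$, and using that a nonempty open subset of the $\SL_n(F)$-torsor has positive invariant measure --- a point worth stating explicitly), smoothness from Proposition \ref{prp:smooth}, a partition of unity, and the observation that multiplication by $u(\det g)$ with $u\in\CC_c^\infty(F^\times)$ pulls out of the fiber integral, i.e.\ $u\cdot\phi_{\eta,\vphi}=\phi_{u(\det\cdot)\eta,\vphi}$. That last identity is precisely the function-side analogue of the paper's module statement, obtained directly rather than through Mellin transforms and zeta functions. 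What your approach buys is elementarity and uniformity: no appeal to the local $L$-function/fractional-ideal theory, no Mellin inversion, and no case distinction on $F$ or on the parity of $n$; what the paper's approach buys is that the statement is tied into the Godement--Jacquet and Mellin framework used throughout the rest of the paper (e.g.\ the explicit description of $\CL^\circ_\pi$ inside $\CL_\cpt$), which is of independent use there.
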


First of all, via the determinant morphism $\det\colon\RG_n\to\BG_m$, it is not hard to verify that the
fibre integration
\[
\xi\mapsto \int_{\det g=x}\xi(g)\ud_x g
\]
yields a surjective homomorphism from $\CC_c^\infty(\RG_n(F))$ to $\CC_c^\infty(F^\times)$.
For any $\xi\in\CC_c^\infty(\RG_n(F))$ and $\vphi_\pi\in\CC(\pi)$, the product $\xi(g)\vphi_\pi(g)$ belongs
to $\CC_c^\infty(\RG_n(F))$. With the fibre integration through $\det$, the function $\phi_{\xi,\vphi_\pi}(x)$
belongs to $\CC_c^\infty(F^\times)$. Hence we obtain that
\begin{align}\label{tf-1}
\CS_{\pi}^\circ(F^\times) \subset\CC_c^\infty(F^\times)
\end{align}
for any $\pi\in\Pi_F(n)$.
To prove the converse of \eqref{tf-1}, we are going to use different arguments for the non-Archimedean
case and the Archimedean case.

We first consider the {\sl non-Archimedean case.}
For any quasi-character $\chi\in\FX(F^\times)$, it can be written in a unique way as $\chi(x)=|x|_F^s\cdot\ome(x)$ with $s\in\BC$ and $\ome\in\Ome^\wedge$. We may write $\chi=\chi_{s,\ome}$ and
$\FX(F^\times)=\BC\times\Ome^\wedge$. Furthermore, we write the space $\CZ(\FX(F^\times))$ defined
in Definition \ref{dfn:CZ} as
$\CZ(\BC\times\Ome^\wedge)$. We denote by $\CL_\cpt$ the subspace of $\CZ(\BC\times\Ome^\wedge)$
consisting of functions $\Fz(\chi_{s,\ome})=\Fz(s,\ome)\in\CZ(\BC\times\Ome^\wedge)$ with the following two properties:
\begin{enumerate}
\item $\Fz({s,\ome})$ is nonzero at finitely many $\ome\in \Ome^\wedge$;
\item For any $\ome\in \Ome^\wedge$, $\Fz({s,\ome})\in \BC[q^s,q^{-s}]$.
\end{enumerate}
By Theorem \ref{thm:MT}, the subspace $\CL_\cpt$ is in one to one correspondence with
$\CC^\infty_c(F^\times)$ via the Mellin transform and its inversion. Denote by $\CL^\circ_{\pi}$ the
subspace of $\CL_\cpt$ that is in ono to one correspondence with the subspace $\CS^\circ_\pi(F^\times)$.
From the discussion right after \cite[Theorem~3.1.1]{Luo}, for any given $\ome\in\Ome^\wedge$,
the following subspace
$$
\CI^\circ_{\pi,\ome}
: = \{\CZ(s,\xi,\vphi_{\pi},\ome)\mid \xi\in \CC^\infty_c(\RG_n(F)), \vphi_{\pi}\in \CC(\pi)\}
$$
of the fractional ideal $\CI_{\pi,\ome}$ as in Theorem \ref{thm:GJ}
is equal to $\BC[q^s,q^{-s}]$. For the fixed $\ome\in \Ome^\wedge$, the space $\CI^\circ_{\pi,\ome}$
consists of the restriction of functions in $\CL_\cpt$ to the slice $\BC\times \{\ome\}$, according to the definition of the space $\CL_\cpt$.
In other words, for any fixed $\ome\in \Ome^\wedge$ and $\Fz({s,\ome})\in \CL_\cpt$,
there exists finitely many $\xi^j_\ome\in \CC^\infty_c(\RG_n(F))$ and
$\vphi^j_{\pi,\ome}\in \CC(\pi)$, such that
$$
\Fz({s,\ome})
=\sum_{j}\CZ(s,\xi^j_\ome, \vphi^j_{\pi,\ome},\ome)
=
\sum_{j}\CZ(s,\phi_{\xi^j_\ome,\vphi^j_{\pi,\ome}},\ome)
$$
for any $s\in \BC$. Hence with any fixed $\ome\in \Ome^\wedge$, for any $\Fz({s,\ome})\in \CL_\cpt$,
there exists $\Fz^\circ({s,\ome})\in \CL^\circ_{\pi}$ such that
\begin{align}\label{tf-2}
\Fz({s,\ome}) = \Fz^\circ({s,\ome})
\end{align}
as functions in $s\in\BC$.

Define, for each $\ome_0\in\Ome^\wedge$, a function $\Fz_{\ome_0}(s,\ome)$ with property:
\[
\Fz_{\ome_0}(s,\ome)
=
\begin{cases}
1,& {\rm if}\ \ome=\ome_0;\\
0,& {\rm if}\ \ome\neq\ome_0.
\end{cases}
\]
By definition, the function $\Fz_{\ome_0}(s,\ome)$ belongs to $\CL_\cpt$ for each $\ome_0\in\Ome^\wedge$.
Hence from \eqref{tf-2}, we have
\begin{align}\label{tf-3}
\Fz({s,\ome})
=
\sum_{\ome_0\in\Ome^\wedge}\Fz_{\ome_0}(s,\ome)\cdot\Fz({s,\ome})
=
\sum_{\ome_0\in\Ome^\wedge}
\Fz_{\ome_0}(s,\ome)\cdot
\Fz^\circ({s,\ome_0}),
\end{align}
for any $\Fz({s,\ome})\in\CL_\cpt$. Note here that the summations only takes finitely many
$\ome_0\in\Ome^\wedge$. Hence to prove the converse of \eqref{tf-1}, it is enough to show that
\begin{align}\label{tf-4}
\Fz_{\ome_0}(s,\ome)\cdot
\Fz^\circ({s,\ome_0})\in\CL^\circ_\pi
\end{align}
for every $\ome_0\in\Ome^\wedge$. It is clear that \eqref{tf-4} is an easy consequence of the following proposition.

\begin{prp}\label{pro:na-CS0}
The space $\CL_\cpt$ is an associative algebra without identity, and
the space $\CL^\circ_{\pi}$ is an $\CL_\cpt$-module under multiplication.
\end{prp}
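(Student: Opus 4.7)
Part~(1) will be immediate from the defining conditions of $\CL_\cpt$: for $\Fz_1,\Fz_2 \in \CL_\cpt$, the pointwise product is supported on the intersection of the two finite supports (hence on only finitely many $\ome \in \Ome^\wedge$), and on each slice $\BC\times\{\ome\}$ it is a product of Laurent polynomials in $q^s, q^{-s}$, hence Laurent. Associativity is automatic, and the would-be unit element is the constant function $1$, which fails the finiteness condition, so there is no identity.

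For Part~(2), the plan is to transfer the statement to $F^\times$ via Mellin inversion. Using Theorem~\ref{thm:MT} together with the defining conditions of $\CL_\cpt$ (and the inversion formula~\eqref{eq:MIT-na}), one may identify $\CM^{-1}(\CL_\cpt) = \CC^\infty_c(F^\times)$; a direct unfolding shows that pointwise multiplication in $\CL_\cpt$ corresponds under $\CM$ to the multiplicative convolution $(h *_{F^\times}\phi)(x) := \int_{F^\times} h(y)\phi(xy^{-1})\, d^\times y$ on $F^\times$. By linearity, the module property will then reduce to showing, for $h \in \CC^\infty_c(F^\times)$ and $\phi = \phi_{\xi,\vphi_\pi}$ with $\xi \in \CC^\infty_c(\GL_n(F))$ and $\vphi_\pi \in \CC(\pi)$, that $h *_{F^\times}\phi \in \CS^\circ_\pi(F^\times)$. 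The substitution $g = t_1(y^{-1})g'$, with $t_1(z) = \diag(z,1,\ldots,1)$, sends the fiber $\{\det g = xy^{-1}\}$ to $\{\det g' = x\}$ with compatible invariant measure (since $dg = d_x g \cdot d^\times x$ is translation invariant), and Fubini (justified by compact supports) will yield
\[
(h *_{F^\times}\phi)(x) = \int_{\det g' = x}\Xi(g')\, d_x g',\quad \Xi(g') := \int_{F^\times} h(y)\,\xi(t_1(y^{-1})g')\,\vphi_\pi(t_1(y^{-1})g')\, d^\times y.
\]
A direct check will show $\Xi \in \CC^\infty_c(\GL_n(F))$: compact support follows from those of $h$ and $\xi$, and local constancy from that of $\xi$ and $\vphi_\pi$.

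The concluding step will be to write $\Xi = \sum_{k=1}^N \xi_k \vphi_{\pi,k}$ as a finite sum with $\xi_k \in \CC^\infty_c(\GL_n(F))$ and $\vphi_{\pi,k} \in \CC(\pi)$. Since matrix coefficients of the nonzero admissible $\pi$ separate points of $\GL_n(F)$, for each $g_0$ in the compact support of $\Xi$ there is some $\vphi_{\pi,g_0} \in \CC(\pi)$ with $\vphi_{\pi,g_0}(g_0)\neq 0$, and by local constancy this remains nonzero on an open neighborhood $U_{g_0}$. I would extract a finite subcover $\{U_k\}_{k=1}^N$ by compactness; invoke the total disconnectedness of $\GL_n(F)$ to refine it to a disjoint clopen partition $\{V_k\}$ of the support of $\Xi$ with $V_k \subset U_k$; and set $\xi_k := \mathbf{1}_{V_k}\cdot \Xi/\vphi_{\pi,k}$, extended by zero. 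Then $\xi_k \in \CC^\infty_c(\GL_n(F))$, $\sum_k \xi_k \vphi_{\pi,k} = \Xi$, and consequently
\[
(h *_{F^\times}\phi)(x) = \sum_{k=1}^N \phi_{\xi_k,\vphi_{\pi,k}}(x) \in \CS^\circ_\pi(F^\times).
\]
The main obstacle is precisely this last decomposition: it combines the point-separating property of $\CC(\pi)$ with the construction of a locally-constant partition of unity on the compact support of $\Xi$; once assembled, the rest of the argument is direct unfolding of definitions.
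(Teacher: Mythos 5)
Your proof is correct, but it follows a genuinely different route from the paper's for the module property. The paper stays on the zeta-integral side: it writes $\Fz\cdot\Fz^\circ=\CZ(s,\phi*\phi_{\xi,\vphi_\pi},\ome)$, unfolds this into a double integral over $\RG_n(F)\times\RG_n(F)$, applies the Bernstein--Zelevinsky isomorphism $\CC^\infty_c(\RG_n(F)\times\RG_n(F))\simeq\CC^\infty_c(\RG_n(F))\otimes\CC^\infty_c(\RG_n(F))$ to the function $(g,h)\mapsto\xi(h^{-1}g)$ to get $\xi(h^{-1}g)=\sum_j\xi_j(g)\xi^j(h)$, and manufactures the new matrix coefficients by smearing, $\vphi_{\pi,j}(g)=\langle\pi(g)v,\wt{\pi}(f\cdot\xi^j)\wt{v}\rangle$, so that the product becomes $\sum_j\CZ(s,\phi_{\xi_j,\vphi_{\pi,j}},\ome)$. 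You instead work directly on $F^\times$: you lift the multiplicative convolution to a single function $\Xi\in\CC^\infty_c(\RG_n(F))$ via the section $t_1$ (the measure compatibility you invoke is fine, since $d_xg$ is induced from Haar measure on $\SL_n(F)$ and $\ud g=\ud_xg\,\ud^\times x$), and then decompose an \emph{arbitrary} $\Xi\in\CC^\infty_c(\RG_n(F))$ as $\sum_k\xi_k\vphi_{\pi,k}$ using nonvanishing of matrix coefficients at each point (what you need is nonvanishing, not point-separation — a small imprecision in wording) together with a clopen partition of the compact support; this division-by-a-matrix-coefficient step is valid precisely because of local constancy and total disconnectedness, so it is intrinsically non-Archimedean, whereas the paper's convolution/smearing argument avoids pointwise division altogether. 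What your route buys is in fact a stronger statement: your final lemma says $\CC^\infty_c(\RG_n(F))$ is spanned by products $\xi\cdot\vphi_\pi$, which combined with surjectivity of the fiber integration gives $\CC^\infty_c(F^\times)\subset\CS^\circ_\pi(F^\times)$ directly, short-circuiting the $\CL_\cpt$-module formalism of the paper (the module statement then follows trivially since $\CS^\circ_\pi(F^\times)=\CC^\infty_c(F^\times)$ is visibly stable under convolution with $\CC^\infty_c(F^\times)$). What the paper's route buys is a softer argument, parallel in spirit to its Archimedean treatment, that never divides by a matrix coefficient and only uses the abstract tensor decomposition plus the action of $\CC^\infty_c$ on smooth vectors.
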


\begin{proof}
From the definition of $\CL_\cpt$, it is clear that $\CL_\cpt$ is an associative algebra under the multiplication
and has no identity.

To prove that $\CL^\circ_{\pi}$ is an $\CL_\cpt$-module, we take $\Fz(s,\ome)\in\CL_\cpt$ and write $\phi$ as the Mellin inversion of $\Fz(s,\ome)$. Via the determinant morphism $\det:\RG_n(F)\to F^\times$, we write
$$
\phi(x) =
\int_{\det g=x}f(g)\ud_xg.
$$
for some $f\in \CC^\infty_c(\RG_n(F))$. For any $\xi\in \CC^\infty_c(\RG_n(F))$ and $\vphi_\pi\in \CC(\pi)$,
we write $\Fz^\circ(s,\ome)\in \CL^\circ_\pi$ to be the Mellin transform of the function
$\phi_{\xi,\vphi_\pi}\in\CS^\circ_\pi(F^\times)$. It is enough to show that
\begin{align}\label{tf-5}
\Fz(s,\ome)\cdot\Fz^\circ(s,\ome)\in\CL^\circ_\pi.
\end{align}
It is clear that
\begin{align}\label{tf-6}
\Fz(s,\ome)\cdot\Fz^\circ(s,\ome)
=
\CZ(s,\phi*\phi_{\xi,\vphi_\pi},\ome).
\end{align}
Now we compute the right-hand side of \eqref{tf-6} with a fixed $\ome\in\Ome^\wedge$.
\begin{align}\label{tf-7}
\CZ(s,\phi*\phi_{\xi,\vphi_\pi},\ome)
&=
\int_{x\in F^\times}
\ome(x)|x|_F^{s-\frac{1}{2}}\ud^\times x
\int_{y\in F^\times}
\phi(y)
\phi_{\xi,\vphi_\pi}(y^{-1}x)
\ud^\times y\nonumber
\\
&=
\int_{F^\times}
\ome(x)|x|_F^{s-\frac{1}{2}}\ud^\times x
\int_{F^\times}
\ud^\times y
\int_{\det g=y}
f(g)\ud_y g\\
&\qquad\qquad\qquad\qquad\qquad\qquad\qquad\cdot
\int_{\det h=y^{-1}x}
\xi(h)\vphi_\pi(h)\ud_{y^{-1}x}h.\nonumber
\end{align}
After changing variable $g\to gh^{-1}$, the right-hand side of \eqref{tf-7} is equal to
\begin{align}\label{tf-8}
\int_{F^\times}
\ome(x)|x|_F^{s-\frac{1}{2}}\ud^\times x
\int_{F^\times}
\ud^\times y
\int_{\det g=x}
f(gh^{-1})\ud_x g \int_{\det h=y^{-1}x}
\xi(h)\vphi_\pi(h)\ud_{y^{-1}x}h.
\end{align}
In \eqref{tf-8}, the integration in $y\in F^\times$ yields the following identity:
\begin{align}\label{tf-9}
\int_{y\in F^\times}
\ud^\times y
\int_{\det h = y^{-1}x}
f(g h^{-1})\xi(h)\vphi_\pi(h)\ud_{y^{-1}x}h
=\int_{\RG_n(F)}
f(gh^{-1})\xi(h)\vphi_\pi(h)\ud h.
\end{align}
By applying \eqref{tf-9} to \eqref{tf-8}, we can write \eqref{tf-8} as
\[
\int_{F^\times}
\ome(x)|x|_F^{s-\frac{1}{2}}\ud^\times x
\int_{\det g=x}
\int_{\RG_n(F)}
f(gh^{-1})\xi(h)\vphi_\pi(h)\ud h
\ud_x g,
\]
which is equal to
\begin{align}\label{tf-10}
\int_{g\in \RG_n(F)}
\int_{h\in \RG_n(F)}
f(gh^{-1})
\xi(h)\vphi_\pi(h)
\ome(\det g)|\det g|_F^{s-\frac{1}{2}}
\ud h\ud g.
\end{align}
By taking a change of variable: $h\to h^{-1}g$, \eqref{tf-10} can be written as
\begin{align}\label{tf-11}
\int_{g\in \RG_n(F)}
\int_{h\in \RG_n(F)}
f(h)
\xi(h^{-1}g)\vphi_\pi(h^{-1}g)
\ome(\det g)|\det g|_F^{s-\frac{1}{2}}
\ud h\ud g.
\end{align}
Since $\xi\in\CC_c^\infty(\RG_n(F))$, the function
\[
(g,h)\mapsto\xi(h^{-1}g)
\]
belongs to the space $\CC^\infty_c(\RG_n(k_\nu)\times \RG_n(k_\nu))$. By \cite[1.22]{BerZ76}, we have
$$
\CC^\infty_c(\RG_n(k_\nu)\times \RG_n(k_\nu))
\simeq \CC^\infty_c(\RG_n(k_\nu))\otimes \CC^\infty_c(\RG_n(k_\nu)).
$$
We may write
\[
\xi(h^{-1}g)
=
\sum_{j=1}^r\xi_j(g)\xi^j(h).
\]
for some $\xi_j(g)$ and $\xi^j(h)$ in $\CC_c^\infty(\RG_n(F))$. Meanwhile, we write
$$
\vphi_\pi(h^{-1}g) = \langle \pi(h^{-1}g) v,\wt{v}\rangle
=\langle \pi(g)v,\wt{\pi}(h)\wt{v}\rangle ,\quad v\in \pi,\wt{v}\in \wt{\pi}.
$$
By applying those explicit expressions to the integral in \eqref{tf-11}, we obtain that \eqref{tf-11} is
equal to
\begin{align}\label{ts-12}
&\sum_{j=1}^r
\int_{g\in \RG_n(F)}
\int_{h\in \RG_n(F)}
f(h)\xi_i(g)\xi^i(h)
\langle \pi(g)v,\wt{\pi}(h)\wt{v}\rangle
\ome(\det g)|\det g|^{s-\frac{1}{2}}_F \ud h\ud g\nonumber
\\
&\qquad=
\sum_{j=1}^r
\int_{g\in \RG_n(F)}
\xi_i(g)\ome(\det g)|\det g|^{s-\frac{1}{2}}_F
\ud g
\int_{h\in \RG_n(F)}
f(h)\xi^i(h)
\langle \pi(g)v,\wt{\pi}(h)\wt{v}\rangle
\ud h\nonumber
\\
&\qquad\qquad=
\sum_{j=1}^r
\int_{\RG_n(F)}
\xi_i(g)
\langle \pi(g)v,\wt{\pi}(f\cdot \xi^j)\wt{v}\rangle
\ome(\det g)|\det g|^{s-\frac{1}{2}}_F
\ud g.
\end{align}
By writing $\vphi_{\pi,j}(g) := \langle \pi(g)v,\wt{\pi}(f\cdot \xi^j)\wt{v}\rangle$, we obtain that
\begin{align}\label{tf-13}
\CZ(s,\phi*\phi_{\xi,\vphi_\pi},\ome)
=
\sum_{j=1}^r
\int_{\RG_n(F)}
\xi_i(g)
\vphi_{\pi,j}(g)
\ome(\det g)|\det g|^{s-\frac{1}{2}}_F
\ud g
=\sum_{j=1}^r
\CZ(s,\phi_{\xi_j,\vphi_{\pi,j}},\ome).
\end{align}
By definition of $\CL^\circ_\pi$, we obtain that the right-hand side of \eqref{tf-13} belongs to the space
$\CL^\circ_\pi$, so does the function $\CZ(s,\phi*\phi_{\xi,\vphi_\pi},\ome)$. Therefore we have established \eqref{tf-5}. We are done.
\end{proof}

We have finished the proof of Theorem \ref{thm:testfunction} for the non-Archimedean case.
Now we turn to the proof the converse of \eqref{tf-1} and hence Theorem \ref{thm:testfunction}
for the Archimedean case.

We first treat the case when $F\simeq \BC$.
It is clear that the multiplication map
\begin{align}\label{ar-tf-1}
\Fm:\BC^\times\times \SL_n(\BC) &\to \RG_n(\BC) \nonumber\\
(a,h)&\mapsto a\cdot h
\end{align}
provides a surjective group homomorphism with finite kernel, which in particular is a smooth (covering) map. The push-forward map along $\Fm$, which we denote by
\begin{align}\label{ar-tf-2}
\Fm_*:\CC^\infty_c(\BC^\times\times \SL_n(\BC)) \to \CC^\infty_c(\RG_n(\BC))
\end{align}
is surjective. In fact, the surjectivity can be easily verified as follows. For any $f\in \CC^\infty_c(\RG_n(\BC))$, let
$
\Fm^*(f)
$
be the pull-back of $f$ along $\Fm$, which is given by
$$
\Fm^*(f)(a,h) = f(a\cdot h),\quad (a,h)\in \BC^\times \times \SL_n(\BC).
$$
Then we obtain that
$$
\Fm_*(\Fm^*(f))(h) =
\sum_{(a,h) \in \BC^\times \times \SL_n(\BC), a\cdot h=g}
f(a\cdot h)
=
|\ker(\Fm)|
\cdot
f( g),\quad g\in \RG_n(\BC).
$$
It is clear now that the subspace $\CS_\pi^\circ(\BC^\times)$ of $\CS_\pi(\BC^\times)$ is equal to
the space spanned by the following functions
\begin{align}\label{ar-tf-3}
\phi_{\Fm_*(f),\vphi_{\pi}}(x)
=
\int_{\det g=x}
\Fm_*(f)(g)
\vphi_{\pi}(g)\ud_xg
=
\int_{\det g=x}
\sum_{(a,h)\in \BC^\times\times \SL_n(\BC),a\cdot h=g}
f(a,h)
\vphi_{\pi}(g)\ud_xg
\end{align}
with all $f\in \CC^\infty_c(\BC^\times\times \SL_n(\BC))$ and $\vphi_{\pi}\in \CC(\pi)$.
Thus, in order to show the converse of \eqref{tf-1}, it suffices to show that any function in
$\CC^\infty_c(\BC^\times)$ is of the form as in the right-hand side of \eqref{ar-tf-3}.

Let $\chi_{\pi}$ be the central character of $\pi$. By a change of variable, we write \eqref{ar-tf-3} as
\begin{align}\label{ar-tf-4}
\phi_{\Fm_*(f),\vphi_{\pi}}(x)
=
\int_{\SL_n(\BC)}
\sum_{a^n = x}
f(a,h)
\cdot
\chi_{\pi}(a)
\cdot
\vphi_{\pi}(h)\ud_1 h.
\end{align}
Assume that $f\in \CC^\infty_c(\BC^\times\times \SL_n(\BC))$ is given by a pure tensor
\[
f(a,h) = f_1(a)\cdot f_2(h)
\]
with $f_1\in \CC^\infty_c(\BC^\times)$ and $f_2\in \CC^\infty_c(\SL_n(\BC))$. Then \eqref{ar-tf-4}
can be written as
\begin{align}\label{ar-tf-5}
\phi_{\Fm_*(f),\vphi_{\pi}}(x) =
\left\{
\sum_{a^n = x}
f_1(a)\chi_{\pi}(a)
\right\}
\cdot
\int_{\SL_n(\BC)}
f_2(h)\vphi_{\pi}(h)\ud_1h.
\end{align}
It is clear that multiplying by the character $\chi_{\pi_\BC}$ stabilizes the space $\CC^\infty_c(\BC^\times)$, which means that $f_1(a)\chi_{\pi}(a)\in\CC_c^\infty(\BC^\times)$ for any $f_1\in\CC_c^\infty(\BC^\times)$.
The map
\begin{align*}
\CC^\infty_c(\BC^\times)&\to \CC^\infty_c(\BC^\times)
\\
f&\mapsto \left( x\mapsto \sum_{a^n=x}f(a) \right)
\end{align*}
is surjective, since it is the push-forward map along the surjective covering map
\begin{align*}
\BC^\times &\to \BC^\times
\\
a &\mapsto a^n.
\end{align*}
Therefore, any function in $\CC_c^\infty(\BC^\times)$ can be written as $\phi_{\Fm_*(f),\vphi_{\pi}}(x)$
for some $\vphi_{\pi}\in \CC(\pi)$ and $f\in \CC^\infty_c(\BC^\times\times \SL_n(\BC))$. This finishes
the proof of the converse of \eqref{tf-1}.

We now turn to the case when $F=\BR$ and treat the cases of $n$ odd and of $n$ even separately.

When $n$ is odd, the multiplication map
\begin{align*}
\Fm\colon\BR^\times \times \SL_n(\BR)&\to \RG_n(\BR)
\\
(a,g)&\mapsto a\cdot g
\end{align*}
is surjective, the proof in the complex case is applicable and yields a proof for this case.
We omit the details here.

When $n$ is even, we write $\RG_n(\BR)$ as a disjoint union two real smooth manifolds,
$$
\RG_n(\BR) = \RG_n^+(\BR)
\sqcup \RG_n^-(\BR)
$$
where $\RG_n^+(\BR)$ (resp. $\RG_n^-(\BR)$) consists of elements in $\RG_n(\BR)$ with positive (resp. negative) determinant.

By the surjectivity of the following map
\begin{align*}
\BR_{>0}\times \SL_n(\BR)&\to \RG^+_n(\BR)
\\
(a,g)&\to a\cdot g
\end{align*}
the proof in the complex case shows that the space $\CS^\circ_{\pi}(\BR^\times)$ contains the space
$\CC^\infty_c(\BR_{>0})$. Since $\BR^\times= \BR_{>0}\sqcup\BR_{<0}$, we have that
\[
\CC_c^\infty(\BR^\times)=\CC^\infty_c(\BR_{>0})\oplus\CC^\infty_c(\BR_{<0}).
\]
It remains to show that
\begin{align}\label{ar-tf-6}
\CC^\infty_c(\BR_{<0})
\subset \CS^\circ_{\pi}(\BR^\times).
\end{align}
Take $\theta = \diag(-1,1,...,1)\in \RG_n(\BR)$ and consider the following map
\begin{align*}
\Fm^-: \BR_{>0}\times \SL_n(\BR)&\to \RG^-_n(\BR)
\\
(a,h)&\mapsto a\cdot h\cdot \theta.
\end{align*}
As the complex situation, for any $f\in \CC^\infty_c(\BR_{>0}\times \SL_n(\BR))$, we set
\begin{align}\label{ar-tf-7}
\phi_{\Fm^-_*(f),\vphi_{\pi}}(x)
=\int_{\det g=x}
\sum_{(a,h)\in \BR_{>0}\times \SL_n(\BR), a\cdot h \cdot \theta = g}
f(a,h)\cdot
\vphi_{\pi}(g)
\ud_xg,
\end{align}
for $x\in\BR_{<0}$.
We only need to show that the space spanned by the functions of the following form:
\begin{align}\label{ar-tf-8}
\bigg\{x\mapsto \phi_{\Fm^-_*(f),\vphi_{\pi}}(x)\mid f\in \CC^\infty_c(\BR_{>0}\times \SL_n(\BR)), \vphi_{\pi}\in \CC(\pi)
\bigg\}
\end{align}
with $x\in\BR_{<0}$ contains (and hence is equal to) the space $\CC^\infty_c(\BR_{<0})$.

By a simple change of variable, we obtain that
\begin{align}\label{ar-tf-9}
\phi_{\Fm^-_*(f),\vphi_{\pi}}(x)
=
\int_{\SL_n(\BR)}
\sum_{a^n = -x}
f(a,h)
\cdot
\chi_{\pi}(a)
\vphi_\pi(h\cdot \theta)\ud_1h
\end{align}
where $\chi_\pi$ is the central character of $\pi\in\Pi_\BR(n)$.
Assume that $f(a,h) = f_1(a)\cdot f_2(h)$ is a pure tensor with $f_1\in\CC_c^\infty(\BR_{>0})$ and
$f_2\in\CC_c^\infty(\SL_n(\BR))$. Then \eqref{ar-tf-9} can be written as
\begin{align}\label{ar-tf-10}
\phi_{\Fm^-_*(f),\vphi_{\pi}}(x)
=
\sum_{a^n=-x}
f_1(a)\chi_{\pi_\BR}(a)
\cdot
\int_{\SL_n(\BR)}
f_2(h)
\vphi_{\pi}(h\cdot \theta)\ud_1h,
\end{align}
with $x\in\BR_{<0}$. For $y=-x>0$, the functions of the form
\[
\sum_{a^n=y}
f_1(a)\chi_{\pi_\BR}(a)
\cdot
\int_{\SL_n(\BR)}
f_2(h)
\vphi_{\pi}(h\cdot \theta)\ud_1h
\]
recovers the space $\CC_c^\infty(\BR_{>0})$, as treated in the previous case. Thus, the functions of the
form in \eqref{ar-tf-10} recovers the space $\CC^\infty_c(\BR_{<0})$. This completes the proof for
the converse of \eqref{tf-1} for the Archimedean case. Therefore, we finish the proof of Theorem
\ref{thm:testfunction}.

\subsection{A variant of $\pi$-Poisson summation formulae}\label{ssec-V}

For any $\pi=\otimes_{\nu\in|k|}\in\Pi_\BA(n)$, we define in \eqref{piSS-BA} the space of $\pi$-Schwartz functions on $\BA^\times$:
\[
\CS_\pi(\BA^\times)=\otimes_{\nu}\CS_{\pi_\nu}(k_\nu^\times).
\]
We define $\CS_\pi^\circ(\BA^\times)$ to be the subspace of $\CS_\pi(\BA^\times)$ that is spanned by
the functions of the form: $\phi=\otimes_\nu\phi_\nu\in\CS_\pi(\BA^\times)$ with at least one local component $\phi_\nu$ belonging to $\CC_c^\infty(k_\nu^\times)$. Note that for any $\phi=\otimes_\nu\phi_\nu\in\CS_\pi(\BA^\times)$, there are at most finitely many local components
from $\CC_c^\infty(k_\nu^\times)$. It is also easy to verify from the definition of $\pi$-Fourier operator
$\CF_{\pi,\psi}$ as in \eqref{FO-BA} and Theorem \ref{thm:testfunction} that there exist functions
$\phi=\otimes_\nu\phi_\nu\in\CS_\pi(\BA^\times)$, such that
\[
\CF_{\pi,\psi}(\phi)=\prod_\nu\CF_{\pi_\nu,\psi_\nu}(\phi_\nu)\in\CS^\circ_{\wt{\pi}}(\BA^\times).
\]
We define $\CS_\pi^{\circ\circ}(\BA^\times)$ to be the subspace of $\CS_\pi^\circ(\BA^\times)$ that
is spanned by the functions of the form: $\phi=\otimes_\nu\phi_\nu\in\CS_\pi^\circ(\BA^\times)$ with
property that $\CF_{\pi,\psi}(\phi)\in\CS^\circ_{\wt{\pi}}(\BA^\times)$.

\begin{thm}\label{thm:weakPSL2}
Let $\pi\in \CA(\RG_n)$ be square-integrable.  For any $\phi\in \CS_\pi^{\circ\circ}(\BA^\times)$,
the $\pi$-Poisson summation formula
$$
\Theta_\pi(x,\phi)=\Theta_{\wt{\pi}}(x^{-1},\CF_{\pi,\psi}(\phi))
$$
holds.
\end{thm}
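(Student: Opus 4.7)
The plan is to mimic the proof of Theorem \ref{thm:PSF} almost verbatim, exploiting the defining features of $\CS_\pi^{\circ\circ}(\BA^\times)$ to replace the role that cuspidality played in annihilating the boundary terms. The first task is to unpack the definition: for a pure tensor $\phi=\otimes_\nu\phi_\nu\in\CS_\pi^{\circ\circ}(\BA^\times)$, there exist local places $\nu_0$ and $\nu_1$ (possibly equal, but the argument is cleanest when they are distinct) with $\phi_{\nu_0}\in\CC_c^\infty(k_{\nu_0}^\times)$ and $\CF_{\pi_{\nu_1},\psi_{\nu_1}}(\phi_{\nu_1})\in\CC_c^\infty(k_{\nu_1}^\times)$. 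By Theorem \ref{thm:testfunction}, $\CC_c^\infty(k_\nu^\times)$ coincides with $\CS_{\pi_\nu}^\circ(k_\nu^\times)$, so $\phi_{\nu_0}$ admits a representation $\phi_{\xi_{\nu_0},\vphi_{\pi_{\nu_0}}}$ with $\xi_{\nu_0}\in\CC_c^\infty(\RG_n(k_{\nu_0}))$. Applying the same observation to $\wt\pi_{\nu_1}$, together with Proposition \ref{prp:1-CF} and the invertibility $\CF_{\wt\pi,\psi^{-1}}\circ\CF_{\pi,\psi}=\Id$ of Theorem \ref{thm:1-FE}, one may represent $\phi_{\nu_1}=\phi_{\tilde\xi_{\nu_1},\vphi_{\pi_{\nu_1}}}$ with $\tilde\xi_{\nu_1}\in\CS_{\std}(\RG_n(k_{\nu_1}))$ such that $\CF_{\GJ,\psi_{\nu_1}}(\tilde\xi_{\nu_1})\in\CC_c^\infty(\RG_n(k_{\nu_1}))$. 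Globally, this produces $\phi=\phi_{\xi,\vphi_\pi}$ with $\xi=|\det|_\BA^{n/2}f$, $f\in\CS(\RM_n(\BA))$, such that $f_{\nu_0}$ is supported in $\RG_n(k_{\nu_0})$ away from the boundary, while $\CF_\psi(f)_{\nu_1}$ restricted to $\RG_n(k_{\nu_1})$ has compact support disjoint from the boundary $\RM_n(k_{\nu_1})\smallsetminus\RG_n(k_{\nu_1})$.

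The second task is to replicate the unfolding computation from the proof of Theorem \ref{thm:PSF}, using the fact that $\pi$ is square-integrable to express the matrix coefficient as
\[
\vphi_\pi(g)=\int_{A_n(\BR)^+\RG_n(k)\bs\RG_n(\BA)}\bet_1(hg)\bet_2(h)\ud h
\]
for some $\bet_1\in V_\pi$, $\bet_2\in V_{\wt\pi}$ in the square-integrable-modulo-center realizations. Exactly the same sequence of substitutions $g\to h^{-1}g$, $g\to t_1(\alp)y$, and identification of $\RG_n(k)=\SL_n(k)\cdot t_1(k^\times)$ yields
\[
\Theta_\pi(1,\phi)=\int_{\RG_n(k)\bs\RG_n(\BA)^1}\int_{\SL_n(k)\bs\RG_n(\BA)_{\det h}}\left(\sum_{\gam\in\RG_n(k)}f(h^{-1}\gam g)\right)\bet_1(g)\bet_2(h)\ud_{\det h}g\ud h.
\]
Absolute convergence of the inner sum is now immediate (not a consequence of Lemma 11.7 of \cite{GJ72}) because the compact support of $f_{\nu_0}$ in $\RG_n(k_{\nu_0})$ forces the sum to be locally finite in $(g,h)$, while the convergence of the outer integral follows from the discrete-series $L^2$-estimates on $\bet_1,\bet_2$ together with Cauchy--Schwarz.

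The third task is to apply the classical Poisson summation formula on $\RM_n(\BA)$ and split by the rank of $\gam\in\RM_n(k)$, producing a main $\RG_n(k)$-sum plus the boundary expression
\[
\RB_f(h,g)=\sum_{\substack{\gam\in\RM_n(k)\\ \rank(\gam)<n}}\CF_\psi(f)(g^{-1}\gam h)-\sum_{\substack{\gam\in\RM_n(k)\\ \rank(\gam)<n}}f(h^{-1}\gam g).
\]
The crucial observation is that both boundary sums now vanish \emph{pointwise}, not merely after integration against cuspidal coefficients: a singular $\gam\in\RM_n(k)$ remains singular at every completion, so $f_{\nu_0}(h_{\nu_0}^{-1}\gam g_{\nu_0})=0$ by the support property established in the first step. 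By the continuity (Archimedean) or local constancy (non-Archimedean) of $\CF_\psi(f)_{\nu_1}$ combined with the density of $\RG_n(k_{\nu_1})$ in $\RM_n(k_{\nu_1})$, the restriction of $\CF_\psi(f)_{\nu_1}$ to $\RG_n(k_{\nu_1})$ being compact and disjoint from the boundary forces $\CF_\psi(f)_{\nu_1}$ to vanish on $\RM_n(k_{\nu_1})\smallsetminus\RG_n(k_{\nu_1})$, killing the second boundary sum identically. Reversing the unfolding manipulations with $\CF_\psi(f)$ in place of $f$ gives $\Theta_\pi(1,\phi)=\Theta_{\wt\pi}(1,\CF_{\pi,\psi}(\phi))$, and the general case $x\in\BA^\times$ follows from the translation identity $\Theta_\pi(x,\phi)=\Theta_\pi(1,\phi^x)$ together with Proposition \ref{FO-FOnu}.

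The main obstacle I anticipate is the careful construction of the global representative $\xi$ in the first step, where one must simultaneously enforce $\xi_{\nu_0}\in\CC_c^\infty(\RG_n(k_{\nu_0}))$ and $\CF_{\GJ}(\xi)_{\nu_1}\in\CC_c^\infty(\RG_n(k_{\nu_1}))$ without disturbing the other local data; this uses the non-injectivity of the map $(\xi,\vphi_\pi)\mapsto\phi_{\xi,\vphi_\pi}$ in an essential way, via Proposition \ref{prp:1-CF} and the invertibility of the local $\pi_\nu$-Fourier operator. A secondary, purely technical, concern is the justification of Fubini and the interchange of summation with integration in the unfolding, which in the absence of cuspidality requires combining the local compact support at $\nu_0$ with the square-integrability of $\bet_1$ and $\bet_2$ modulo center; this should be routine but needs to be verified carefully.
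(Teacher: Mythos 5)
Your proposal is correct and follows essentially the same route as the paper's own proof: realize the matrix coefficient via the square-integrable automorphic realization as in \eqref{ps-1}, run the unfolding of Theorem \ref{thm:PSF}, and observe that the conditions defining $\CS_\pi^{\circ\circ}(\BA^\times)$ — translated via Theorem \ref{thm:testfunction} and the invertibility of the local Fourier operators into $f_{\nu_0}\in\CC_c^\infty(\RG_n(k_{\nu_0}))$ and $\CF_{\psi_{\nu_1}}(f_{\nu_1})$ compactly supported in $\RG_n(k_{\nu_1})$ — make the boundary terms $\RB_f(h,g)$ of \eqref{ps-9} vanish pointwise, replacing the role of cuspidality. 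The added detail on choosing the representatives and on convergence is a fleshing-out of steps the paper states without proof, not a different argument.
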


\begin{proof}
By Corollary \ref{cor:AC}, both $\Theta_\pi(x,\phi)$ and $\Theta_{\wt{\pi}}(x^{-1},\CF_{\pi,\psi}(\phi))$
are absolutely convergent. It suffices to show the equality. The proof goes in the same way as
Theorem \ref{thm:PSF} when $\pi\in\CA_\cusp(\RG_n)$. The first key point is that when
$\pi$ is square-integrable, its matrix coefficients can also be realized as the integrals in \eqref{ps-1},
with $\bet_1,\bet_2\in V_\pi$ being not necessarily cuspidal.

The second key point is to prove that
the boundary terms defined in \eqref{ps-9} vanish automatically by the local assumption on $\phi$ at
the two local places $\nu_1$ and $\nu_2$.
More precisely, take $\phi=\phi_{\xi,\vphi_\pi}\in\CS_\pi(\BA^\times)$ and assume that
\[
\phi=\otimes_\nu\phi_\nu=\otimes_\nu\phi_{\xi_\nu,\vphi_{\pi_\nu}}
\]
with $\xi_\nu(g)=|\det g|_\nu^{\frac{n}{2}}f_\nu(g)$ for some $f_\nu\in\CS(\RM_n(k_\nu))$, and
$\vphi_{\pi_\nu}\in\CC(\pi_\nu)$. The assumption at the two local places $\nu_1$ and $\nu_2$ is the
same as that $f_{\nu_1}\in\CC_c^\infty(\RG_n(k_{\nu_1}))$ and
$\CF_{\psi_{\nu_2}}(f_{\nu_2})\in\CC_c^\infty(\RG_n(k_{\nu_2}))$. For $f=\otimes_\nu f_\nu$
and $\CF_\psi(f)=\otimes_\nu\CF_{\psi_\nu}(f_\nu)$ with the above $f_{\nu_1}$ at the given local
place $\nu_1$ and $\CF_{\psi_{\nu_2}}(f_{\nu_2})$ at the given local place $\nu_2$, the boundary terms $\RB_f(h,g)$ in \eqref{ps-9} must vanish automatically. Therefore, the summation identity is
established. We refer the other details of the proof to the proof of Theorem \ref{thm:PSF}.
\end{proof}

\subsection{Refinement of Conjecture \ref{cnj:main}}\label{ssec-RCmain}

We are going to state our conjecture on $(\sig,\rho)$-Poisson summation formula on $\GL_1$ with more
details, which refines Conjecture \ref{cnj:main}. We will continue with the discussions in Section
\ref{ssec-AC-sig-rho-TF}. By Assumption \ref{FarS}, for $\sigma\in\CA_\cusp(G)$, there exists a $\pi=\otimes_\nu\pi_\nu\in\Pi_\BA(n)$ with $\pi_\nu=\pi_\nu(\sig_\nu,\rho)$ for all $\nu$. We define
the space $\CS_{\sig,\rho}(\BA^\times)$ of $(\sig,\rho)$-Schwartz functions as in \eqref{localSS} and \eqref{globalSS}; and the $(\sig,\rho)$-Fourier operator $\CF_{\sig,\rho,\psi}$ as in \eqref{localFO} and \eqref{globalFO}. Finally we define the space $\CS_{\sig,\rho}^{\circ\circ}(\BA^\times)$ to be equal to
$\CS_\pi^{\circ\circ}(\BA^\times)$.

\begin{cnj}[Refinement of Conjecture \ref{cnj:main}]\label{cnj:ref}
Let $G$ be a $k$-split reductive group, and
$\rho\colon G^\vee(\BC)\to\GL_n(\BC)$ be a representation of the complex
dual group $G^\vee(\BC)$.
With Assumption \ref{FarS}, for any $\sigma\in\CA_\cusp(G)$, there exist $k^\times$-invariant linear functionals
$\CE_{\sigma,\rho}$ and $\CE_{\wt{\sigma},\rho}$ on $\CS_{\sigma,\rho}(\BA^\times)$ and
$\CS_{\wt{\sigma},\rho}(\BA^\times)$, respectively, such that the
$(\sigma,\rho)$-Poisson Summation Formula:
\begin{align}\label{RPSF}
\CE_{\sigma,\rho}(\phi)
=
\CE_{\wt{\sigma},\rho}(\CF_{\sigma,\rho,\psi}(\phi))
\end{align}
holds for $\phi\in\CS_{\sigma,\rho}(\BA^\times)$. Moreover,
if $\phi\in\CS_{\sig,\rho}^{\circ\circ}(\BA^\times)$, then the identity in \eqref{RPSF} holds for
\[
\CE_{\sigma,\rho}(\phi)(x)=\Theta_{\sig,\rho}(x,\phi) = \sum_{\alp\in k^\times}\phi(\alp x)
\]
with $x\in\BA^\times$.
\end{cnj}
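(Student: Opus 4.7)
\smallskip

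My plan is to reduce Conjecture \ref{cnj:ref} for $(\sigma,\rho)$ to the $\pi$-Poisson summation formula established in Theorem \ref{thm:PSF} (and its square-integrable variant in Theorem \ref{thm:weakPSL2}) by using the global Langlands functoriality for the pair $(G,\rho)$. Under Assumption \ref{FarS}, the local transfers $\pi_\nu = \pi_\nu(\sigma_\nu,\rho) \in \Pi_{k_\nu}(n)$ are defined for every $\nu \in |k|$, and by the very construction of $\CS_{\sigma,\rho}(\BA^\times)$ and $\CF_{\sigma,\rho,\psi}$ in Section \ref{ssec-AC-sig-rho-TF}, these spaces and operators coincide with $\CS_\pi(\BA^\times)$ and $\CF_{\pi,\psi}$ for $\pi = \otimes_\nu \pi_\nu$. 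Thus if one knows that $\pi$ is an automorphic representation of $\GL_n(\BA)$ (which is the content of global Langlands functoriality for $(G,\rho)$), then Theorem \ref{thm:rho-AC} and Proposition \ref{prp:Ass} guarantee absolute convergence of $\Theta_{\sigma,\rho}(x,\phi)$, and the conjecture can be translated into a statement about $\pi$ on $\GL_n$.

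The first step is the cuspidal (generic) case: if $\pi$ is cuspidal, Theorem \ref{thm:PSF} applies verbatim and the functionals are $\CE_{\sigma,\rho}(\phi) = \Theta_\pi(1,\phi)$ and $\CE_{\wt{\sigma},\rho}(\phi') = \Theta_{\wt\pi}(1,\phi')$. This is exactly Corollary \ref{cor:CE} and already proves the refined statement for $\phi \in \CS_{\sigma,\rho}(\BA^\times)$ whenever the functorial transfer of $\sigma$ is cuspidal. The second step handles the singular situations: when $\pi$ is an isobaric automorphic representation of $\GL_n(\BA)$ that is not cuspidal, write $\pi = \boxplus_i \pi_i$ with $\pi_i$ isobaric summands (possibly Speh-type representations of smaller $\GL_{n_i}$). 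For $\phi \in \CS_{\sigma,\rho}^{\circ\circ}(\BA^\times)$, one may directly apply Theorem \ref{thm:weakPSL2} to square-integrable components, and for general summands one extends the argument of Theorem \ref{thm:PSF} by using that the local assumption at the two auxiliary places (one where $f_{\nu_1} \in \CC_c^\infty(\RG_n(k_{\nu_1}))$ and one where $\CF_{\psi_{\nu_2}}(f_{\nu_2}) \in \CC_c^\infty(\RG_n(k_{\nu_2}))$) annihilates every boundary integral $\RB_f(h,g)$ obtained by truncation against non-cuspidal automorphic components in the integral representation \eqref{ps-6}.

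The third step, which constructs $\CE_{\sigma,\rho}$ on the full space $\CS_{\sigma,\rho}(\BA^\times)$ for general $\phi$, is the genuinely hard part, because $\Theta_{\sigma,\rho}(x,\phi)$ may fail to have a good automorphic interpretation when the functorial transfer is residual or has non-trivial continuous spectrum contributions. Here the plan is to define $\CE_{\sigma,\rho}$ by a regularization procedure: take $\CE_{\sigma,\rho}(\phi)$ to be the value (or the appropriate residue) of the meromorphic continuation of a twisted $\GL_1$ zeta integral $\CZ(s,\phi,\chi_0)$ at a distinguished point $s=\tfrac12$, combined with the global functional equation for $L(s,\pi\times\chi)$ supplied by Theorem \ref{thm:gzeta}. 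Equivalently, one uses Mellin inversion together with the contour-shift argument underlying Theorem \ref{thm:MT} to transfer the local functional equations of Theorem \ref{thm:1-FE} into a global identity, and reads off the linear functionals $\CE_{\sigma,\rho}, \CE_{\wt\sigma,\rho}$ from the constant terms of the resulting Laurent expansions.

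The main obstacle is twofold: first, global Langlands functoriality for $(G,\rho)$ is unknown in general, so in the absence of functoriality one must establish the conjecture intrinsically. For $G$ a split classical group and $\rho$ the standard representation of $G^\vee(\BC)$, the proposed route is to use the doubling construction of Piatetski-Shapiro--Rallis together with the refinements in \cite{JLZ20} and \cite{GL21} to produce the needed $(\sigma,\rho)$-Schwartz functions and Fourier operators directly on $\GL_1$, bypassing the functoriality assumption. Second, identifying $\CE_{\sigma,\rho}(\phi)$ with $\Theta_{\sigma,\rho}(x,\phi)$ on all of $\CS_{\sigma,\rho}(\BA^\times)$ (rather than only on $\CS_{\sigma,\rho}^{\circ\circ}(\BA^\times)$) requires showing that the boundary terms $\RB_f(h,g)$ in \eqref{ps-9} pair trivially with non-cuspidal matrix coefficients, which in general they do not; this is precisely where the restriction to $\CS_{\sigma,\rho}^{\circ\circ}(\BA^\times)$ in the refined conjecture becomes essential and where new ideas, presumably via the distribution $\pi$-kernel $k_{\pi,\psi}$ of \cite{JL} and analysis of its singular support on $\GL_1$, will be needed.
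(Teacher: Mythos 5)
Be aware first of the status of the statement you are addressing: in the paper, Conjecture \ref{cnj:ref} is not proved; it is stated as a conjecture, and the only supporting results are Corollary \ref{cor:CE} (the case where global Langlands functoriality is assumed for $(G,\rho)$ and the transfer of $\sigma$ is \emph{cuspidal} on $\RG_n(\BA)$, in which case Theorem \ref{thm:PSF} applies with $\CE_{\sigma,\rho}(\phi)=\Theta_{\sigma,\rho}(1,\phi)$) and Theorem \ref{thm:weakPSL2} (the square-integrable case, restricted to $\CS^{\circ\circ}$). Your first step reproduces exactly this evidence, and your honest listing of the obstacles (no global functoriality in general, no control of $\CE_{\sigma,\rho}$ beyond $\CS^{\circ\circ}$) matches the reasons the paper leaves the statement conjectural. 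So to the extent your proposal stays within what is known, it coincides with the paper; but it is a plan, not a proof, and it does not establish the conjecture.

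The genuine gap is in your second step. You claim that for a non-cuspidal isobaric transfer $\pi$ one can "directly apply Theorem \ref{thm:weakPSL2} to square-integrable components, and for general summands extend the argument of Theorem \ref{thm:PSF}" using only the two local conditions defining $\CS_{\pi}^{\circ\circ}(\BA^\times)$. This does not work as stated. The local conditions ($f_{\nu_1}\in\CC_c^\infty(\RG_n(k_{\nu_1}))$ and $\CF_{\psi_{\nu_2}}(f_{\nu_2})\in\CC_c^\infty(\RG_n(k_{\nu_2}))$) do kill the boundary terms $\RB_f(h,g)$ of \eqref{ps-9}, but that is only half of the argument. The other half — the paper calls it the first key point in the proof of Theorem \ref{thm:weakPSL2} — is the realization \eqref{ps-1} of the global matrix coefficient $\varphi_\pi$ as an integral of two automorphic forms over $\RG_n(k)\bs\RG_n(\BA)^1$, which is what allows the unfolding from $\Theta_\pi(1,\phi)$ to \eqref{ps-6}. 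That realization is available precisely when $\pi$ is square-integrable (cuspidal or residual); for a general isobaric or merely automorphic $\pi$ the matrix coefficients in $\CC(\pi)$ are not of this form, the unfolding collapses, and there is no meaning to "applying Theorem \ref{thm:weakPSL2} to the summands," since $\CS_\pi(\BA^\times)$, $\CF_{\pi,\psi}$ and $\Theta_\pi$ are attached to $\pi$ on $\GL_n$ and do not decompose along isobaric summands. Your third step (defining $\CE_{\sigma,\rho}$ by regularized zeta values at $s=\tfrac12$ and reading off constant terms) is a heuristic: you would need to prove $k^\times$-invariance of the resulting functional and the identity \eqref{RPSF} on all of $\CS_{\sigma,\rho}(\BA^\times)$, and neither is supplied by Theorem \ref{thm:gzeta} or Theorem \ref{thm:1-FE} alone. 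In short, the cuspidal-transfer and square-integrable cases you cite are exactly what the paper already proves, and the remaining cases are left open both by the paper and by your proposal; the step where you assert an extension to general automorphic $\pi$ would fail for the reason above.
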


In Corollary \ref{cor:CE}, we have proved that
if the global Langlands functoriality is valid for $(G,\rho)$ and the image of $\sig$
under the functorial transfer is cuspidal on to $\RG_n(\BA)$, then Conjecture \ref{cnj:main} and
Conjecture \ref{cnj:ref} hold with
$$
\CE_{\sigma,\rho}(\phi)(x)=\Theta_{\sig,\rho}(x,\phi) = \sum_{\alp\in k^\times}\phi(\alp x)
$$
for any $\phi\in \CS_{\sig,\rho}(\BA^\times)$ and any $x\in\BA^\times$.


\section{Critical Zeros of $L(s,\pi\times\chi)$}\label{sec-CZ}


We provide a spectral interpretation of critical zeros of the automorphic $L(s,\pi\times\chi)$
for any $\pi\in\CA_\cusp(n)$ and character $\chi$ of the idele class group
$\CC_k=k^\times\bs\BA^\times$ for a number field $k$.
This can be viewed as a reformulation of \cite[Theorem 2]{S01} (see also \cite{D01})
in the adelic formulation of A. Connes
\cite{Cn99}, and is a extention of \cite[Theorem III.1]{Cn99} from the Hecke $L$-functions
$L(s,\chi)$ to the standard automorphic $L$-functions $L(s,\pi\times\chi)$.

\subsection{P\'olya-Hilbert-Connes Pairs}\label{ssec-PHC}

For a number field $k$, denote $\BA^1=\BA^1_k:=\ker(|\cdot|_\BA)$ be the norm one ideles of $k$.
Denote by $\CC_k:=k^\times\bs\BA^\times$ the idele
class group of $k$, and $\CC_k^1:=k^\times\bs\BA^1$. Then $\BA^\times$ has a non-canonical decomposition
\begin{align}\label{non-can}
\BA^\times=\BA^1\times \BR^\times_+
\end{align}
where $\BR^\times_+=|\BA^\times|_\BA$ is the connected component of $1$. In the following, we choose and fix a section of
the following short exact sequence:
\[
1\to\BA^1\to\BA^\times\to\BR^\times_+\to1.
\]
This gives a fixed non-canonical decomposition
\begin{align}\label{non-CC}
\CC_k=\CC_k^1\times\BR^\times_+.
\end{align}

For any $\delta>0$, define $L^2_\delta(\CC_k)$ to the space consisting of measurable functions
\[
\theta\colon\CC_k\rightarrow \BC
\]
with a finite Sobolev norm $\|\cdot\|_\delta$ as defined by
\begin{align}\label{sob-n}
\|\theta\|_\delta^2:=\int_{\CC_k}|\theta(x)|^2(1+(\log|x|_\BA)^2)^{\frac{\delta}{2}}\ud^\times x.
\end{align}
It is clear that the space $L^2_\delta(\CC_k)$ is a $\CC_k$-module via the right translation $\Fr_\delta$:
\begin{align}\label{Fr}
\Fr_\delta(a)(\theta)(x):=\theta(xa)
\end{align}
for any $\theta\in L^2_\delta(\CC_k)$ and $a,x\in\CC_k$. Note that the $\CC_k$-module $L^2_\delta(\CC_k)$
is not unitary, but has the property:
\begin{align}\label{asmp-1}
\|\Fr_\delta(x)\|=o(\log|x|_\BA)^{\frac{\delta}{2}},\quad (|x|_\BA\to\infty).
\end{align}

\begin{prp}\label{theta-decay}
For a $\pi\in\CA_\cusp(n)$, take any $\phi\in\CS_\pi(\BA^\times)$. For any $\kappa>0$, there exists a
positive constant $c_{\kappa,\phi}$ such that the $\pi$-theta function $\Theta_\pi(x,\phi)$ enjoys the
following property:
\[
|\Theta_\pi(x,\phi)|\leq c_{\kappa,\phi}\cdot\min\{|x|_\BA,|x|_\BA^{-1}\}^\kappa.
\]
\end{prp}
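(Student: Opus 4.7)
The Poisson summation formula of Theorem \ref{thm:PSF} handles the symmetry between $|x|_\BA \geq 1$ and $|x|_\BA \leq 1$: since $\CF_{\pi,\psi}(\phi) \in \CS_{\wt\pi}(\BA^\times)$ and
\[
\Theta_\pi(x,\phi) = \Theta_{\wt\pi}(x^{-1},\CF_{\pi,\psi}(\phi)),
\]
once we prove the one-sided bound $|\Theta_\pi(x,\phi)| \leq c_{\kappa,\phi}|x|_\BA^{-\kappa}$ for $|x|_\BA \geq 1$ (valid for every $\phi$ in every $\pi$-Schwartz space with $\pi\in\CA_\cusp(\RG_n)$), applying it to $\wt\pi$ and $\CF_{\pi,\psi}(\phi)$ at $x^{-1}$ gives the companion bound for $|x|_\BA \leq 1$, yielding the claimed $\min\{|x|_\BA,|x|_\BA^{-1}\}^\kappa$ estimate.

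To establish the one-sided bound, the strategy is to Mellin-invert on the idele class group. The theta function $\Theta_\pi(x,\phi)$ is $k^\times$-invariant by construction, so it descends to a function on $\CC_k = k^\times\bs\BA^\times$. Using the non-canonical splitting $\CC_k \simeq \CC_k^1\times \BR_+^\times$ and the absolute convergence of the global zeta integral for $\Re(s)$ large (Proposition \ref{prp:gzi}), the unfolding
\[
\int_{\CC_k}\Theta_\pi(x,\phi)\,\chi(x)|x|_\BA^{s-\frac12}\,\ud^\times x
\;=\;\int_{\BA^\times}\phi(x)\chi(x)|x|_\BA^{s-\frac12}\,\ud^\times x
\;=\;\CZ(s,\phi,\chi)
\]
identifies the Mellin coefficients with the global zeta integrals. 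Fourier inversion on the compact abelian group $\CC_k^1$ together with Mellin inversion in the $\BR_+^\times$-direction then yields, for $\sigma$ sufficiently large,
\[
\Theta_\pi(x,\phi) \;=\; \frac{1}{2\pi i}\sum_{\chi\in(\CC_k^1)^\wedge}\int_{\Re(s)=\sigma}\CZ(s,\phi,\chi)\,\chi^{-1}(x)|x|_\BA^{-s+\frac12}\,\ud s.
\]

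Now I invoke the cuspidality of $\pi$: by Theorem \ref{thm:gzeta} each $\CZ(s,\phi,\chi)$ is an entire function of $s$, so the contour $\Re(s)=\sigma$ may be shifted to $\Re(s)=\sigma_0$ for any $\sigma_0 > \sigma$ without crossing poles. On the shifted contour $|\,\chi^{-1}(x)|x|_\BA^{-s+\frac12}\,| = |x|_\BA^{-\sigma_0+\frac12}$, so pulling this factor out of the double sum/integral gives
\[
|\Theta_\pi(x,\phi)| \;\leq\; C(\sigma_0,\phi)\cdot |x|_\BA^{-\sigma_0+\frac12}.
\]
Choosing $\sigma_0 = \kappa + \tfrac12$ yields the desired bound for $|x|_\BA \geq 1$, completing the argument modulo the estimate $C(\sigma_0,\phi)<\infty$.

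\textbf{Main technical obstacle.} The serious point is the finiteness of $C(\sigma_0,\phi)$, which requires joint control in the imaginary parameter $t=\Im(s)$ and over the countably many characters $\chi$ of $\CC_k^1$. Write $\phi = \otimes_\nu \phi_\nu$ with $\phi_\nu = \BL_{\pi_\nu}$ at almost all finite places; only finitely many $\chi$ have local conductor compatible with the ramification of $\phi$ at each finite place, so the sum over $\chi$ effectively reduces to a finite sum once one tracks the supports of the local zeta integrals at finite places. For the remaining archimedean sum over $\chi_\infty$ and the vertical integral in $t$, the Schwartz character of $\phi_\nu$ at $\nu\in|k|_\infty$ (Definition \ref{dfn:CF}) combined with the Stirling-type bound used in Proposition \ref{prp:Lpi-Z} gives rapid decay of $\CZ(s,\phi,\chi)$ in $|t|$ and in the archimedean parameter of $\chi$, uniformly on the vertical strip, via the vertical-strip boundedness of Theorem \ref{thm:1-zeta}(2). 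The hard step will be packaging these uniform estimates into one convergent majorant; this is the global analogue of the standard argument that theta series decay faster than any power, and the required bookkeeping is routine but not automatic.
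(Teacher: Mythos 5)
Your proposal is correct in outline but takes a genuinely different route from the paper. The paper disposes of Proposition \ref{theta-decay} by citing \cite[Theorem 1(ii)]{S01}: there the decay is obtained directly from the lattice sum, by realizing $\Theta_\pi(x,\phi)$ as the kernel $\sum_{\gamma\in\RG_n(k)}\xi(h^{-1}\gamma g)$ integrated against two rapidly decaying cusp forms (exactly as in \eqref{ps-2}--\eqref{ps-6}), estimating the range $|x|_\BA\to\infty$ from the Schwartz decay of $f$, and handling $|x|_\BA\to 0$ by Poisson summation on $\RM_n$ with the lower-rank boundary terms annihilated by cuspidality --- the same mechanism that proves Theorem \ref{thm:PSF}. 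You instead use Theorem \ref{thm:PSF} as a black box to transfer a one-sided bound from $|x|_\BA\ge 1$ to $|x|_\BA\le 1$ (legitimate, and not circular, since Theorem \ref{thm:PSF} is proved without any decay input), and you prove the one-sided bound by Mellin inversion on $\CC_k$ and a contour shift; the ingredients you name for the convergent majorant --- vanishing of the finite-place local zeta integrals unless $\chi_\nu$ has conductor bounded in terms of $\phi_\nu$, and the joint rapid decay in $(t,p)$ furnished by Definition \ref{dfn:CZ}(3), Proposition \ref{prp:Lpi-Z} and Theorem \ref{thm:1-zeta}(2) --- are the right ones. Two caveats. First, the conductor constraint does not reduce the character sum to a finite sum over a general number field: it only pins down the finite-place data of $\chi$, while the archimedean parameters still range over an infinite discrete set, so the convergence of the $\chi$-sum (and the justification of pointwise inversion and of interchanging sum and integral) rests entirely on the archimedean decay estimates; this bookkeeping is the real content of your one-sided bound, not a routine afterthought. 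Second, cuspidality and the entireness from Theorem \ref{thm:gzeta} are not what licenses your contour shift: you move the line to the right, staying in the half-plane of absolute convergence, so no continuation is needed, and cuspidality enters your argument only through Theorem \ref{thm:PSF}. In exchange for this extra analytic work, your route stays entirely within the $\GL_1$ harmonic analysis developed in Sections 3--4 and makes the symmetry between $|x|_\BA\to 0$ and $|x|_\BA\to\infty$ structural, whereas the Soul\'e-style argument the paper points to avoids all uniformity-in-$\chi$ issues by estimating the lattice sum directly.
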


\begin{proof}
This is a reformulation of Part (ii) of \cite[Theorem 1]{S01} and can be proved accordingly. We
omit the detail.
\end{proof}

By Proposition \ref{theta-decay}, for any $\phi\in\CS_\pi(\BA^\times)$, with $\pi\in\CA_\cusp(n)$,
the $\pi$-theta function $\Theta_\pi(\cdot,\phi)$ decays rapidly when $|x|_\BA\to 0$ or
$|x|_\BA\to \infty$, and hence belongs to $L^2_\delta(\CC_k)$. Define
\begin{align}\label{sob-n-phi}
\|\phi\|_\delta^2:=\int_{\CC_k}|\Theta_\pi(x,\phi)|^2(1+(\log|x|_\BA)^2)^{\frac{\delta}{2}}\ud^\times x
\end{align}
for any $\phi\in\CS_\pi(\BA^\times)$. Then the following embedding
\begin{align}\label{isometry}
\Theta_\pi\colon\phi\in\CS_\pi(\BA^\times)\mapsto\Theta_\pi(\cdot,\phi)\in L^2_\delta(\CC_k)
\end{align}
with respect to the sobolev norms defined in \eqref{sob-n} and \eqref{sob-n-phi}, respectively.

Denote
by $\ovl{\Theta_\pi}$ the completion of the image $\Theta_\pi(\CS_\pi(\BA^\times))$ in $L^2_\delta(\CC_k)$.
Since
\[
\Fr_\delta(y)(\Theta_\pi(\cdot,\phi))(x)=\Theta_\pi(x,\Fr_\delta(y)\phi)
\]
for any $\phi\in\CS_\pi(\BA^\times)$, with $x,y\in\CC_k$, the closed subspace $\ovl{\Theta_\pi}$ is also
a $\CC_k$-module. Define the quotient space
\begin{align}\label{quotient}
\CH_{\pi,\delta}:=L^2_\delta(\CC_k)/\ovl{\Theta_\pi},
\end{align}
which is also a $\CC_k$-module. The associated representation is denoted by $\Fr_{\pi,\delta}$.
It is clear that the restriction of the $\CC_k$-module to $\CC_k^1$ is unitary
and has the following decomposition
\begin{align}\label{eigen-space}
\CH_{\pi,\delta}|_{\CC_k^1}=\oplus_{\chi\in\wh{\CC_k^1}}\ \CH_{\pi,\delta,\chi}.
\end{align}
By the fixed (non-canonical) decomposition in \eqref{non-CC}, each eigenspace $\CH_{\pi,\delta,\chi}$ is a
module of $\BR^\times_+$. The associated representation is denoted by $\Fr_{\pi,\delta,\chi}$.
Note that $\Fr_{\pi,\delta,\chi}$ is also a representation of $\CC_k=\CC_k^1\times\BR^\times_+$ on
$\CH_{\pi,\delta,\chi}$.
The action of $\BR^\times_+$ on $\CH_{\pi,\delta,\chi}$ generates a flow
with the infinitesimal generator
\begin{align}\label{infinitesimal}
\FD_{\pi,\delta,\chi}(\theta):=\lim_{\epsilon\to 0}\frac{1}{\epsilon}\left(\Fr_{\pi,\delta,\chi}(\exp(\epsilon)-1)\right)\theta
\end{align}
for any $\theta\in\CH_{\pi,\delta,\chi}$. As in \cite{Cn99}, one should take the pair
\begin{align}\label{PHC}
(\CH_{\pi,\delta,\chi}, \FD_{\pi,\delta,\chi})
\end{align}
to be a candidate of the P\'olya-Hilbert space. We call it a {\bf P\'olya-Hilbert-Connes pair}.

For any $\chi\in\wh{\CC_k^1}$, by the fixed non-canonical decomposition
$\CC_k=\CC_k^1\times\BR^\times_+$ as in \eqref{non-CC}, the character $\chi$ has a unique extension to
$\CC_k$ by defining that it is trivial on $\BR^\times_+$. We may still denote the extended character by
$\chi$.

\begin{thm}[Critical Zeros of $L(s,\pi\times\chi)$]\label{zero}
Given any $\pi\in\CA_\cusp(n)$ and a character $\chi\in\wh{\CC_k^1}$, take $\FD_{\pi,\delta,\chi}$ as in \eqref{infinitesimal} with $\delta>1$.
\begin{enumerate}
\item The spectrum $\Sp(\FD_{\pi,\delta,\chi})$ is discrete and is contained in $i\cdot\BR$ with $i=\sqrt{-1}$.
\item $\mu\in\Sp(\FD_{\pi,\delta,\chi})$ if and only if $L(\frac{1}{2}+\mu,\pi\times\chi)=0$.
\item The multiplicity $m_{\Sp(\FD_{\pi,\delta,\chi})}(\mu)$ is equal to the largest integer $m<\frac{1+\delta}{2}$ with $m\leq m_{L(s,\pi\times\chi)}(\frac{1}{2}+\mu)$, the multiplicity of $\frac{1}{2}+\mu$
as a zero of the automorphic $L$-function $L(s,\pi\times\chi)$.
\end{enumerate}
\end{thm}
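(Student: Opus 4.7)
The plan is to adapt the Meyer--Connes strategy (\cite{S01, Cn99}) to the $\pi$-twisted setting, exploiting the $\GL_1$ harmonic analysis developed in Sections 2--7. The two crucial inputs are the $\pi$-Poisson summation formula (Theorem \ref{thm:PSF}), which identifies $\ovl{\Theta_\pi}$ on the Mellin side with multiples of $L(s,\pi\times\chi)$, and the characterization of the $\pi$-Schwartz space through $\GL_1$ zeta integrals (Theorem \ref{thm:1-zeta} and Corollary \ref{cor:SpiFpi}). The rapid decay in Proposition \ref{theta-decay} guarantees that everything lives in the correct weighted $L^2$-space to begin with.

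First I would transport the problem to the Pontryagin dual via the Mellin transform. Using the splitting $\CC_k=\CC_k^1\times\BR^\times_+$ from \eqref{non-CC}, the $\chi$-isotypic subspace $\CH_{\pi,\delta,\chi}$ is realized as a weighted $L^2$-space on $\BR^\times_+$. Applying $\theta\mapsto\wh\theta(\mu):=\int_{\BR^\times_+}\theta(t)t^{-i\mu}\,\ud^\times t$, the Sobolev weight $(1+(\log t)^2)^{\delta/2}$ transforms into a Sobolev weight of order $\delta/2$ on $i\BR$, the flow $\Fr_{\pi,\delta,\chi}(\exp(\epsilon))$ becomes multiplication by $e^{i\epsilon\mu}$, and the generator $\FD_{\pi,\delta,\chi}$ becomes the multiplication operator $i\mu\cdot{}$ acting on the Sobolev space $H^{\delta/2}(i\BR)$. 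This immediately forces $\Sp(\FD_{\pi,\delta,\chi})\subset i\BR$ and gives the first half of (1).

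Second, I would identify the Mellin image of $\ovl{\Theta_\pi}\cap\CH_{\pi,\delta,\chi}$. Pairing $\Theta_\pi(\cdot,\phi)$ against $\chi^{-1}|\cdot|_\BA^{-\frac{1}{2}-i\mu}$ and unfolding the sum over $k^\times$ yields the global zeta integral $\CZ(\tfrac{1}{2}+i\mu,\phi,\chi)$ of \eqref{1-gzi}. By Theorem \ref{thm:gzeta}, together with Theorem \ref{thm:1-zeta} and Corollary \ref{cor:SpiFpi}, as $\phi$ varies over $\CS_\pi(\BA^\times)$ the zeta integrals fill out $L(\tfrac{1}{2}+i\mu,\pi\times\chi)\cdot \CP_\chi$, where $\CP_\chi$ is a dense subspace of $H^{\delta/2}(i\BR)$ of Paley--Wiener type. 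Taking closures, $\ovl{\Theta_\pi}\cap\CH_{\pi,\delta,\chi}$ corresponds to the closure of $L(\tfrac{1}{2}+i\mu,\pi\times\chi)\cdot H^{\delta/2}(i\BR)$, so the quotient $\CH_{\pi,\delta,\chi}$ is the cokernel of multiplication by $L(\tfrac{1}{2}+i\mu,\pi\times\chi)$. Since $L$ is meromorphic and its zeros on $i\BR$ are discrete, this cokernel decomposes as a direct sum over critical zeros $\mu_0$ of finite-dimensional jet spaces spanned by classes of $\delta_{\mu_0}^{(j)}$, yielding discreteness in (1) and the equivalence $\mu\in\Sp(\FD_{\pi,\delta,\chi})\iff L(\tfrac{1}{2}+\mu,\pi\times\chi)=0$ in (2).

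The main obstacle will be the precise multiplicity count in (3). By duality one reduces to computing the dimension of the annihilator in $H^{-\delta/2}(i\BR)$ of $L(\tfrac{1}{2}+i\mu,\pi\times\chi)\cdot H^{\delta/2}(i\BR)$, which at a zero $\mu_0$ of $L$-order $m_L$ consists of linear combinations of $\delta_{\mu_0}^{(j)}$, $0\le j< m_L$. The Sobolev membership $\delta_{\mu_0}^{(j)}\in H^{-\delta/2}(i\BR)$ holds exactly when $\delta/2>j+\tfrac{1}{2}$, i.e., $j+1<(1+\delta)/2$; setting $m=j+1$ recovers the cutoff $m<(1+\delta)/2$ stated in (3). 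Executing this rigorously requires verifying that multiplication by $L(\tfrac{1}{2}+i\mu,\pi\times\chi)$ has closed range in $H^{\delta/2}(i\BR)$, which relies on convex-hull bounds for $L$ on vertical strips combined with the boundedness assertion of Theorem \ref{thm:1-zeta}(2) at the Archimedean places; a secondary technical point is the density of the Paley--Wiener-type space $\CP_\chi$ inside $H^{\delta/2}(i\BR)$, which follows from Corollary \ref{cor:SpiFpi} together with the usual density of smooth compactly supported functions.
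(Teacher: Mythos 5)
Your overall strategy is the same Connes--Soul\'e strategy that the paper follows, but packaged dually: you work with the Mellin image of $\ovl{\Theta_\pi}\cap\CH_{\pi,\delta,\chi}$ and describe the quotient as the cokernel of multiplication by $L(\tfrac12+i\mu,\pi\times\chi)$ on (the closure of) $H^{\delta/2}(i\BR)$, whereas the paper characterizes the annihilating distributions $\eta\in L^2_{-\delta}(\CC_k)$: Lemma \ref{pi-dense} (density of the convolutions $b*\Theta_\pi(\cdot,\phi)$, $b\in\CC_c^\infty(\CC_k)$, proved with Connes' approximate-identity lemma) regularizes the pairing, and Lemma \ref{orthogonality} shows, via Parseval and two Godement--Jacquet inputs (boundedness of $\CZ(\tfrac12+i\mu,\phi,\chi)$ in $\mu$ from Corollary \ref{zeta-zeta} and \cite{GJ72}, and a finite family $\phi_1,\dots,\phi_\ell$ with $\sum_i\CZ(s,\phi_i,\chi)=L(s,\pi\times\chi)$), that $\eta\perp\ovl{\Theta_\pi}$ in the $\chi$-eigenspace if and only if $L(\tfrac12+i\mu,\pi\times\chi)\cdot\CM(\eta)(\chi_{i\mu})=0$. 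The spectral endgame (discreteness, location on $i\BR$, and the Sobolev cutoff $m<\tfrac{1+\delta}{2}$ via $\delta^{(j)}_{\mu_0}\in H^{-\delta/2}$ iff $\delta/2>j+\tfrac12$) is exactly the part the paper delegates to \cite{S01} and \cite{Cn99}; your explicit jet-space count reproduces it correctly. So in substance the two arguments coincide; what your version buys is an explicit description of the endgame, and what the paper's version buys is that the duality formulation only requires the annihilator identity rather than a description of the image itself.

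Two points in your write-up need correction, though neither is fatal to the strategy. First, the closed-range requirement you flag at the end is neither needed nor true: the completed $L$-function $L(\tfrac12+i\mu,\pi\times\chi)$ decays rapidly on the critical line (archimedean $\Gamma$-factors) and has zeros there, so multiplication by it does not have closed range in $H^{\delta/2}(i\BR)$, and no convexity bound will rescue this. The argument must instead run, as you in fact do two sentences earlier and as the paper does, through the closure of the range and the identification of its annihilator $\{T\in H^{-\delta/2}: L\cdot T=0\}$, combined with the duality between the quotient by a closed invariant subspace and that annihilator. Second, the literal claim that the Mellin image equals $L(\tfrac12+i\mu,\pi\times\chi)\cdot\CP_\chi$ with $\CP_\chi$ a dense subspace of $H^{\delta/2}(i\BR)$ is too strong: by Theorem \ref{thm:1-zeta} the local ratios $\CZ(s,\phi_\nu,\chi_\nu)/L(s,\pi_\nu\times\chi_\nu)$ are holomorphic, but at archimedean places they can grow exponentially on vertical lines (inverse $\Gamma$-factors), so they need not lie in $H^{\delta/2}$; the robust statement is again the annihilator identity, whose proof also needs the regularization step (the paper's Lemma \ref{pi-dense}) to justify the Parseval manipulations against elements of $L^2_{-\delta}(\CC_k)$, a step your outline elides.
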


Note Theorem \ref{zero} can be viewed as a reformulation of \cite[Theorem 2]{S01} in the adelic framework of \cite{Cn99} and is an extension of \cite[Theorem III.1]{Cn99} from the Hecke $L$-functions $L(s,\chi)$
to the standard automorphic $L$-functions $L(s,\pi\times\chi)$. See also \cite{D01} for relevant discussion.

\subsection{Proof of Theorem \ref{zero}}\label{ssec-PThm}

We are going to prove Theorem \ref{zero} by using an argument that combines the approach of \cite{Cn99}
and that of \cite{S01}.

Consider the following pairing
\begin{align}\label{sob-pair}
\begin{matrix}
L^2_{\delta}(\CC_k)\ \times\ L^2_{-\delta}(\CC_k)&\rightarrow&\BC\\
(\theta,\eta)&\mapsto&<\theta,\eta>.
\end{matrix}
\end{align}
where the pairing is defined by the following integral
\[
<\theta,\eta>:=\int_{\CC_k}\theta(x)\eta(x)\ud^\times x.
\]
For any $y\in\CC_k$, we have
\[
<\Fr_\delta(y)\theta,\eta>=<\theta,\Fr_{-\delta}(y^{-1})\eta>
\]
for any $\theta\in L^2_{\delta}(\CC_k)$ and $\eta\in L^2_{-\delta}(\CC_k)$.

Consider a function $\eta\in L^2_{-\delta}(\CC_k)$ as a distribution on the eigenspace
$\CH_{\pi,\delta,\chi}$. Then we must have
\begin{align}\label{zero-1}
<\theta,\eta>=0
\end{align}
for any $\theta\in\ovl{\Theta_\pi}$, and for any $t\in\CC_k^1$,
\[
\Fr_{-\delta}(t)\eta=\chi^{-1}(t)\eta
\]
as a distribution on $\CH_{\pi,\delta,\chi}$. Hence we may write,
for $x=ta\in\CC_k=\CC_k^1\times\BR^\times_+$, the fixed non-canonical decomposition,
\begin{align}\label{zero-2}
\eta(x)=\chi^{-1}(t)\beta(a)
\end{align}
where $\beta(a)$ is a measurable function on $\BR^\times_+$ with
\[
\|\beta\|_\delta=\int_{\BR^\times_+}|\beta(a)|^2(1+(\log|a|)^2)^{-\frac{\delta}{2}}\ud^\times a<\infty.
\]
The orthogonality in \eqref{zero-1} can be written as
\begin{align}\label{zero-3}
\int_{\CC_k}\Theta_\pi(x,\phi)\eta(x)\ud^\times x=0
\end{align}
for any $\phi\in\CS_\pi(\BA^\times)$. As in \cite{S01}, we prove the following lemma,
which is a reformulation of Lemma 1 of \cite{S01}.

\begin{lem}\label{pi-dense}
The subspace of $\ovl{\Theta_\pi}$ generated by functions of type:
\[
(b*\Theta_\pi(\cdot,\phi))(t)=\int_{\CC_k}b(x)\Theta_\pi(x^{-1}t,\phi)\ud^\times x
\]
with all $b(x)\in\CC_c^\infty(\CC_k)$ is dense in $\ovl{\Theta_\pi}$.
\end{lem}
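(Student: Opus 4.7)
The plan is to use an approximate identity on $\CC_k$ and show that convolutions $b_n * \Theta_\pi(\cdot,\phi)$ converge to $\Theta_\pi(\cdot,\phi)$ in the Sobolev norm $\|\cdot\|_\delta$. Since the image $\Theta_\pi(\CS_\pi(\BA^\times))$ is by construction dense in $\ovl{\Theta_\pi}$, it suffices to fix an arbitrary $\phi \in \CS_\pi(\BA^\times)$ and prove that $\Theta_\pi(\cdot,\phi)$ itself lies in the $\|\cdot\|_\delta$-closure of the span of $\{b * \Theta_\pi(\cdot,\phi) \mid b \in \CC_c^\infty(\CC_k)\}$.

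The first step I would carry out is to establish strong continuity of $\Fr_\delta$ at the vector $\Theta_\pi(\cdot,\phi)$, namely
\[
\lim_{y \to 1}\, \|\Fr_\delta(y)\Theta_\pi(\cdot,\phi) - \Theta_\pi(\cdot,\phi)\|_\delta = 0.
\]
This uses three inputs: the pointwise bound of Proposition \ref{theta-decay}, which supplies a dominating integrable function; the smoothness of $\Theta_\pi(\cdot,\phi)$ on $\CC_k$, which follows from Proposition \ref{prp:fiber-1} together with absolute convergence of the sum over $k^\times$; and the observation that the weight $(1+(\log|x|_\BA)^2)^{\delta/2}$ varies only by a bounded multiplicative factor under translation by $y$ in any fixed compact neighborhood of $1$. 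The second step is the approximate identity argument. Fix a sequence $\{b_n\}$ of nonnegative functions in $\CC_c^\infty(\CC_k)$ with $\int_{\CC_k} b_n(x)\ud^\times x = 1$ and supports shrinking to $\{1\}$. Recasting
\[
(b_n * \Theta_\pi(\cdot,\phi))(t) = \int_{\CC_k} b_n(x)\, [\Fr_\delta(x^{-1})\Theta_\pi(\cdot,\phi)](t)\, \ud^\times x
\]
as a Bochner integral valued in $L^2_\delta(\CC_k)$, we obtain
\[
\|b_n * \Theta_\pi(\cdot,\phi) - \Theta_\pi(\cdot,\phi)\|_\delta \leq \int_{\CC_k} b_n(x)\, \|\Fr_\delta(x^{-1})\Theta_\pi(\cdot,\phi) - \Theta_\pi(\cdot,\phi)\|_\delta\, \ud^\times x,
\]
which tends to $0$ as $n \to \infty$ by the first step. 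This gives the desired approximation.

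The main obstacle will be step one, since the Sobolev weight is not translation-invariant and $\Fr_\delta$ is only a non-unitary representation on $L^2_\delta(\CC_k)$ with growth controlled by \eqref{asmp-1}. Nevertheless, the weight grows only polylogarithmically, so on any compact neighborhood of $1 \in \CC_k$ the operator norm $\|\Fr_\delta(y)\|$ remains uniformly bounded, and the desired continuity reduces to the classical continuity of translation in $L^2(\CC_k, \ud^\times x)$ together with a density argument: approximate $\Theta_\pi(\cdot,\phi)$ in $\|\cdot\|_\delta$ by continuous compactly supported functions using the rapid decay from Proposition \ref{theta-decay}, and then pass to the limit. This mirrors the corresponding step in \cite[Lemma 1]{S01}, adapted from the Hecke setting to the $\pi$-theta functions developed in Section \ref{sec-PSF}.
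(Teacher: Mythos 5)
Your proof is correct, and it follows the same overall mollification strategy as the paper, but it fills in the analytic core differently. The paper's proof first observes that each $b*\Theta_\pi(\cdot,\phi)$ lies in $\ovl{\Theta_\pi}$ (because $\ovl{\Theta_\pi}$ is a closed $\CC_k$-submodule), and then imports the key fact from \cite[Lemma 5]{Cn99}: there is a sequence of Bruhat--Schwartz functions $f_n$ on $\CC_k$ with $\Fr_\delta(f_n)\to 1$ strongly and with uniformly bounded operator norms; following \cite[Lemma 1]{S01} these are truncated to $b_n\in\CC_c^\infty(\CC_k)$ with the same properties, which immediately gives $b_n*\Theta_\pi(\cdot,\phi)\to\Theta_\pi(\cdot,\phi)$. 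You instead build the approximate identity by hand and verify directly that translation is strongly continuous on $L^2_\delta(\CC_k)$ at the vectors $\Theta_\pi(\cdot,\phi)$ (via the moderate growth of the weight $(1+(\log|x|_\BA)^2)^{\delta/2}$ under translation, density of $C_c$, or alternatively domination by the rapid decay of Proposition \ref{theta-decay}), and then conclude by Minkowski's integral inequality for the Bochner integral $b_n*\Theta_\pi(\cdot,\phi)=\int b_n(x)\,\Fr_\delta(x^{-1})\Theta_\pi(\cdot,\phi)\,\ud^\times x$. What the paper's route buys is a single bounded sequence $\Fr_\delta(b_n)\to 1$ strongly on all of $L^2_\delta(\CC_k)$, at the cost of relying on the cited lemmas of Connes and Soul\'e; your route is self-contained and only needs vector-wise convergence, which is all the density statement requires. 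One small point you should make explicit (the paper does so at the outset): the functions $b*\Theta_\pi(\cdot,\phi)$ do lie in $\ovl{\Theta_\pi}$, which in your setup follows because the Bochner integral of a continuous, compactly supported family of translates of an element of the closed $\CC_k$-invariant subspace $\ovl{\Theta_\pi}$ again lies in that subspace.
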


\begin{proof}
The proof is a reformulation of the proof of \cite[Lemma 1]{S01}. For any
$\theta\in\ovl{\Theta_\pi}$, we have
\[
(b*\theta)(t)=\int_{\CC_k}b(x)\theta(x^{-1}t)\ud^\times x
=
\int_{\CC_k}b(x)\theta^\vee(t^{-1}x)\ud^\times x
=
\Fr_\delta(b)(\theta^\vee)(t^{-1})
\]
for any $b(x)\in\CC_c^\infty(\CC_k)$. since $\ovl{\Theta_\pi}$ is a closed subspace of
$L^2_{\delta}(\CC_k)$ and is a $\CC_k$-module, it is clear that $b*\theta$ belongs to
$\ovl{\Theta_\pi}$.
In particular, we have that $b*\Theta_\pi(\cdot,\phi)$ belongs to $\ovl{\Theta_\pi}$ for
all $b(x)\in\CC_c^\infty(\CC_k)$ and all $\phi\in\CS_\pi(\BA^\times)$.

Next, by \cite[Lemma 5]{Cn99}, there exists a sequence of functions $\{f_n\}$ with $f_n$ belonging
to the space $\CS(\CC_k)$ of the Bruhat-Schwartz functions on $\CC_k$, such that
$\Fr_\delta(f_n)$ tends strongly to one in $L^2_\delta(\CC_k)$ and the norm of $\Fr_\delta(f_n)$
are bounded. Now following the same argument as in the proof of \cite[Lemma 1]{S01}, we obtain that there exists a sequence of functions $b_n\in\CC_c^\infty(\CC_k)$ with property that
\begin{enumerate}
\item $\Fr_\delta(b_n)$ converges strongly to one;
\item the norm of $\Fr_\delta(b_n)$ is bounded;
\item $b_n*\Theta_\pi(\cdot,\phi)$ converges to $\Theta_\pi(\cdot,\phi)$ for any
$\phi\in\CS_\pi(\BA^\times)$.
\end{enumerate}
Therefore the linear span of $b*\Theta_\pi(\cdot,\phi)$ with $b(x)\in\CC_c^\infty(\CC_k)$ and
$\phi\in\CS_\pi(\BA^\times)$ is dense in $\ovl{\Theta_\pi}$. We are done.
\end{proof}

By Lemma \ref{pi-dense}, it is enough to consider the orthogonality
\begin{align}\label{zero-4}
\int_{\CC_k}(b*\Theta_\pi(\cdot,\phi))(x)\eta(x)\ud^\times x=0
\end{align}
for any $\phi\in\CS_\pi(\BA^\times)$ and $b(x)\in\CC_c^\infty(\CC_k)$.

\begin{lem}\label{orthogonality}
For any $\eta\in L^2_{-\delta}(\CC_k)$,
the integral
\[
\int_{\CC_k}(b*\Theta_\pi(\cdot,\phi))(x)\eta(x)\ud^\times x
\]
is zero for any $b\in\CC_c^\infty(\CC_k)$ and any $\phi\in\CS_\pi(\BA^\times)$ if and only if
\[
L(\frac{1}{2}+i\mu,\pi\times\chi)\cdot\CM(\eta)(\chi_{i\mu})
\]
is zero as a function in $\chi_{i\mu}$, where $\chi_{i\mu}$ is any unitary character of $\CC_k$ that
can be written as $\chi_{i\mu}(x)=\chi(t)a^{i\mu}$ for $x=ta\in\CC_k=\CC_k^1\times\BR^\times_+$, the fixed non-canonical decomposition.
\end{lem}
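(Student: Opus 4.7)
The plan is to reduce the orthogonality, via unfolding and Mellin analysis on $\BR^\times_+$, to a pointwise identity that forces vanishing of critical $L$-values. First, using the $k^\times$-invariance of $\eta$ to unfold the theta summation yields
\[
\int_{\CC_k}\Theta_\pi(x,\phi)\eta(x)\ud^\times x = \int_{\BA^\times}\phi(x)\eta(x)\ud^\times x,
\]
whose absolute convergence follows from the rapid decay of $\Theta_\pi(\cdot,\phi)$ in Proposition \ref{theta-decay} combined with the Sobolev estimate controlling $\eta$. Since $\Fr_{-\delta}(t)\eta=\chi^{-1}(t)\eta$ for $t\in\CC_k^1$, writing $x=ta$ with $t\in\BA^1$ and $a\in\BR^\times_+$ forces $\eta(x)=\chi^{-1}(t)\beta(a)$ for some measurable $\beta$ on $\BR^\times_+$, and Mellin inversion expresses $\eta$ as a superposition of unitary characters of $\CC_k$:
\[
\eta(x)=\frac{1}{2\pi}\int_{\BR}\widetilde\beta(i\mu)\,\chi_{i\mu}^{-1}(x)\,\ud\mu,
\]
with $\widetilde\beta(i\mu)=\int_{\BR^\times_+}\beta(a)a^{i\mu}\ud^\times a$ identifiable (up to a nonzero constant) with $\CM(\eta)(\chi_{i\mu})$, as one verifies directly from the definitions.

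Incorporating the convolution with $b\in\CC_c^\infty(\CC_k)$, a direct change of variables on $\CC_k$ gives
\[
\int_{\CC_k}(b*\Theta_\pi(\cdot,\phi))(x)\,\chi_{i\mu}^{-1}(x)\,\ud^\times x = \widehat b(\chi_{i\mu}^{-1})\cdot\CZ(\tfrac{1}{2},\phi,\chi_{i\mu}^{-1}),
\]
where $\widehat b(\chi_{i\mu}^{-1})=\int_{\CC_k}b(y)\chi_{i\mu}^{-1}(y)\ud^\times y$; substituting the character expansion of $\eta$ then produces
\[
\int_{\CC_k}(b*\Theta_\pi(\cdot,\phi))(x)\eta(x)\ud^\times x = \frac{1}{2\pi}\int_{\BR}\widetilde\beta(i\mu)\,\widehat b(\chi_{i\mu}^{-1})\,\CZ(\tfrac{1}{2},\phi,\chi_{i\mu}^{-1})\,\ud\mu.
\]
By Theorem \ref{thm:gzeta}, the factor $\CZ(\tfrac{1}{2},\phi,\chi_{i\mu}^{-1})$ is a finite product of local holomorphic factors times the central critical value of an automorphic $L$-function attached to $\pi$ twisted by $\chi_{i\mu}^{-1}$, which after rewriting $\chi_{i\mu}^{-1}=\chi^{-1}|\cdot|^{-i\mu}$ and invoking the global functional equation together with the substitution $\mu\mapsto-\mu$ becomes the stated $L(\tfrac{1}{2}+i\mu,\pi\times\chi)$; moreover, each local factor can be chosen essentially arbitrarily by varying the corresponding local component of $\phi\in\CS_\pi(\BA^\times)$.

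Consequently, since $\widehat b(\chi_{i\mu}^{-1})$ ranges over a Paley-Wiener class dense in smooth test functions on $\BR$ as $b$ varies over $\CC_c^\infty(\CC_k)$, the integral vanishes for all admissible $b$ and $\phi$ if and only if the distribution $L(\tfrac{1}{2}+i\mu,\pi\times\chi)\cdot\CM(\eta)(\chi_{i\mu})$ vanishes identically on $\BR$, which is the claimed equivalence. The principal obstacles are twofold: first, making rigorous the Plancherel-style interchange of integrals given only $\eta\in L^2_{-\delta}$, which I would handle by approximating $\eta$ by smooth compactly supported vectors and passing to the Sobolev-dual limit using the pairing \eqref{sob-pair}; and second, establishing precisely the density claim that as $(b,\phi)$ vary independently the two free factors $\widehat b(\chi_{i\mu}^{-1})$ and $\CZ/L(\tfrac{1}{2}+i\mu,\phi,\chi_{i\mu}^{-1})$ generate sufficiently many independent test functions in $\mu$ to detect pointwise distributional vanishing of the weighted product, which reduces at the local level to the structure of the fractional ideals and Paley-Wiener bounds in Theorem \ref{thm:1-zeta}.
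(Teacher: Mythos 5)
Your route is essentially the paper's: pass the triple pairing through Plancherel/Parseval on $\CC_k$ so that it becomes an integral over $\mu$ of $\CM(b)\cdot\CZ(\tfrac12+i\mu,\phi,\chi)\cdot\CM(\eta)$, use the density of the transforms of $\CC_c^\infty(\CC_k)$ (a Paley--Wiener class) to conclude distributional vanishing of $\CZ\cdot\CM(\eta)$ for every $\phi$, and then trade $\CZ$ for $L$ in both directions via the Godement--Jacquet local theory. Two points, however, need repair or sharpening before this is a proof.

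First, the detour through the global functional equation is unnecessary and, taken literally, wrong: the functional equation relates $L(\tfrac12-i\mu,\pi\times\chi^{-1})$ to $L(\tfrac12+i\mu,\wt{\pi}\times\chi)$, so invoking it lands you on the contragredient rather than on the $L(\tfrac12+i\mu,\pi\times\chi)$ of the statement. The correct move is the direct identification $\CM(\Theta_\pi(\cdot,\phi))(\chi_{i\mu})=\CZ(\tfrac12+i\mu,\phi,\chi)$ used in the paper (after unfolding, with the conventions of Section \ref{ssec-PThm}); if you prefer your pairing convention, the reconciliation for unitary data is by complex conjugation ($\wt{\pi}\cong\bar{\pi}$, $\chi^{-1}=\bar{\chi}$), not by the functional equation. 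Second, since $\eta$ only lies in $L^2_{-\delta}(\CC_k)$, your Mellin inversion formula for $\eta$ is not a convergent integral and $\CM(\eta)$ is only a tempered distribution in $\mu$; what legitimizes multiplying it by the zeta factor and interchanging integrals is the boundedness of $\mu\mapsto\CZ(\tfrac12+i\mu,\phi,\chi)$ on the critical line (Corollary \ref{zeta-zeta} together with Proposition 13.9 of \cite{GJ72}), which is exactly how the paper makes $\CZ\cdot\CM(\eta)$ a tempered distribution; your approximation scheme can be made to work but should be routed through this bound. Relatedly, in the forward implication the phrase ``each local factor can be chosen essentially arbitrarily'' should be replaced by the precise input the paper uses: there exist finitely many $\phi_1,\dots,\phi_\ell\in\CS_\pi(\BA^\times)$ with $\sum_j\CZ(\tfrac12+i\mu,\phi_j,\chi)=L(\tfrac12+i\mu,\pi\times\chi)$ (Corollary \ref{zeta-zeta} and Theorem 13.8 of \cite{GJ72}), which immediately upgrades ``$\CZ\cdot\CM(\eta)=0$ for all $\phi$'' to ``$L\cdot\CM(\eta)=0$''; the converse via factorizable data and the holomorphy of the local quotients $\CZ_\nu/L_\nu$ is as you indicate.
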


\begin{proof}
We are going to apply the Parseval formula for the Fourier transform from $\CC_k$ to its unitary dual $\wh{\CC_k}$ to \eqref{zero-4}. Since $\chi_{i\mu}(x)=\chi(t)a^{i\mu}$, the Fourier transform
for $\CC_k$ is
\[
\CM(\theta)(\chi_{i\mu})=\int_{\CC_k}\theta(x)\chi_{i\mu}^{-1}(x)\ud^\times x.
\]
By applying the Parseval formula to the integral
\[
\int_{\CC_k}(b*\Theta_\pi(\cdot,\phi))(x)\eta(x)\ud^\times x,
\]
we obtain that \eqref{zero-4}
is equivalent to
\begin{align}\label{zero-5}
\int_{\wh{\CC_k}}
\CM(b)(\chi_{i\mu})
\CM(\Theta_\pi(\cdot,\phi))(\chi_{i\mu})
\CM(\eta)(\chi_{i\mu})
\ud\chi_{i\mu}=0
\end{align}
for any $\phi\in\CS_\pi(\BA^\times)$ and $b(x)\in\CC_c^\infty(\CC_k)$. It is easy to verify from definition
that
\[
\CM(\Theta_\pi(\cdot,\phi))(\chi_{i\mu})=\CZ(\frac{1}{2}+i\mu,\phi,\chi),
\]
where the right-hand side is the global ($\GL_1$) zeta integral as defined in \eqref{1-gzi}. From Corollary \ref{zeta-zeta} and \cite[Proposition 13.9]{GJ72}, the global zeta integral $\CZ(\frac{1}{2}+i\mu,\phi,\chi)$
is a bounded function in $\mu$. Hence the product
\[
\CT_{\phi,\eta}(\chi_{i\mu}):=\CZ(\frac{1}{2}+i\mu,\phi,\chi)\cdot\CM(\eta)(\chi_{i\mu})
\]
is a tempered distribution on $\wh{\CC_k}$. Hence \eqref{zero-5} is the same as
\begin{align}\label{zero-5-1}
\int_{\wh{\CC_k}}
\CM(b)(\chi_{i\mu})
\CT_{\phi,\eta}(\chi_{i\mu})
\ud\chi_{i\mu}=0
\end{align}
for any $\phi\in\CS_\pi(\BA^\times)$ and $b(x)\in\CC_c^\infty(\CC_k)$.
Denote by $\wh{\CT}_{\phi,\eta}(x)$ the (inverse) Fourier transform of
$\CT_{\phi,\eta}(\chi_{i\mu})$. By using the Parseval formula for the (inverse) Fourier transform, we obtain
that \eqref{zero-5-1} is equivalent to
\begin{align}\label{zero-6}
\int_{\CC_k}b(x)\wh{\CT}_{\phi,\eta}(x)\ud^\times x=0
\end{align}
for all $\phi\in\CS_\pi(\BA^\times)$ and $b(x)\in\CC_c^\infty(\CC_k)$. Hence we must have that
\eqref{zero-6} holds if and only if
$\wh{\CT}_{\phi,\eta}(x)=0$ as distribution on $\CC_k$,
which is equivalent to that $\CT_{\phi,\eta}(\chi_{i\mu})=0$ as distribution on $\wh{\CC_k}$. In other words, we obtain that for any $\eta\in L^2_{-\delta}(\CC_k)$,
the integral
\[
\int_{\CC_k}(b*\Theta_\pi(\cdot,\phi))(x)\eta(x)\ud^\times x
\]
is zero for any $b\in\CC_c^\infty(\CC_k)$ and any $\phi\in\CS_\pi(\BA^\times)$ if and only if
\begin{align}\label{zero-7}
\CZ(\frac{1}{2}+i\mu,\phi,\chi)\cdot\CM(\eta)(\chi_{i\mu})=0
\end{align}
for all $\phi\in\CS_\pi(\BA^\times)$. By Corollary \ref{zeta-zeta} and \cite[Theorem 13.8]{GJ72}, there exist
a finite many $\phi_1,\cdots,\phi_\ell\in\CS_\pi(\BA^\times)$ such that
\[
\CZ(\frac{1}{2}+i\mu,\phi_1,\chi)+\cdots+\CZ(\frac{1}{2}+i\mu,\phi_\ell,\chi)=L(\frac{1}{2}+i\mu,\pi\times\chi).
\]
Thus we obtain that \eqref{zero-7} implies
\begin{align}\label{zero-8}
L(\frac{1}{2}+i\mu,\pi\times\chi)\cdot\CM(\eta)(\chi_{i\mu})=0
\end{align}
as a function in $\chi_{i\mu}$.

To prove the converse, we consider factorizable data: $\phi=\otimes_\nu\phi_\nu\in\CS_\pi(\BA^\times)$
and $\chi=\otimes_\nu\chi_\nu$. The global zeta integral factorizes into an Euler product
\[
\CZ(s,\phi,\chi)=\prod_\nu\CZ(s,\phi_\nu,\chi_\nu)
\]
By Theorem \ref{thm:1-zeta}, we obtain that
\[
\CZ(s,\phi,\chi)=L(s,\pi\times\chi)\cdot
\prod_{\nu\in S}\frac{\CZ(s,\phi_\nu,\chi_\nu)}{L(s,\pi_\nu\times\chi_\nu)}
\]
where $S$ is the finite set of local places, including all Archimedean local places of $k$, such that
for any $\nu\not\in S$, the data $\pi_\nu$ and $\chi_\nu$ are unramified, and the quotient
\[
\frac{\CZ(s,\phi_\nu,\chi_\nu)}{L(s,\pi_\nu\times\chi_\nu)}
\]
is holomorphic in $s\in\BC$. Hence if $\eta\in L^2_{-\delta}(\BA^\times)$ satisfies
\[
L(\frac{1}{2}+i\mu,\pi\times\chi)\cdot\CM(\eta)(\chi_{i\mu})=0
\]
as a function in $\chi_{i\mu}$, i.e. \eqref{zero-8} holds,
then \eqref{zero-7} holds for factorizable data: $\phi=\otimes_\nu\phi_\nu\in\CS_\pi(\BA^\times)$
and $\chi=\otimes_\nu\chi_\nu$. Hence it holds for all $\phi\in\CS_\pi(\BA^\times)$ and all $\chi$.
We are done.
\end{proof}

The rest of the proof of Theorem \ref{zero} is exactly the same as that in the proof of Theorem 2 of \cite[Page 178]{S01}, which follows from the same argument of Connes (pp. 86--87, in the proof
of Theorem III.1 of \cite{Cn99}). We omit the details.



\end{document}